\numberwithin{equation}{subsection}
\theoremstyle{plain}
\newtheorem{theorem}{Theorem}[section]
\newtheorem{theorem*}{Theorem}
\newtheorem{proposition}[theorem]{Proposition}
\newtheorem{lemma}[theorem]{Lemma}
\newtheorem{corollary}[theorem]{Corollary}
\newtheorem{conjecture}[theorem]{Conjecture}
\theoremstyle{remark}
\newtheorem{remark}[theorem]{Remark}
\theoremstyle{definition}
\newtheorem{definition}[theorem]{Definition}
\newtheorem{example}[theorem]{Example}
\newtheorem{hypothesis}[theorem]{Hypothesis}
\newtheorem*{acknowledgments}{Acknowledgements}
\font\russ=wncyr10  1
\def\sha{\hbox{\russ\char88}}
\DeclareMathOperator{\Gal}{Gal}
\DeclareMathOperator{\Hom}{Hom}
\DeclareMathOperator{\im}{im}
\DeclareMathOperator{\coker}{coker}
\newcommand{\bA}{\mathbb{A}}
\newcommand{\TT}{\mathbb{T}}
\newcommand{\QQ}{\mathbb{Q}}
\newcommand{\cD}{\mathcal{D}}
\newcommand{\cL}{\mathcal{L}}
\newcommand{\cO}{\mathcal{O}}
\newcommand{\cQ}{\mathcal{Q}}
\newcommand{\cR}{\mathcal{R}}
\newcommand{\fq}{\mathfrak{q}}
\newcommand{\fp}{\mathfrak{p}}
\newcommand{\fz}{\mathfrak{z}}
\newcommand{\fQ}{\mathfrak{Q}}
\newcommand{\CC}{\mathbb{C}}
\newcommand{\FF}{\mathbb{F}}
\newcommand{\GG}{\mathbb{G}}
\newcommand{\RR}{\mathbb{R}}
\newcommand{\ZZ}{\mathbb{Z}}
\newcommand{\sL}{\mathscr{L}}
\newcommand{\rgamma}{\mathbf{R}\Gamma}
\newcommand{\rhom}{\mathbf{R}\Hom}
\newcommand{\lotimes}{\otimes^{\mathbf{L}}}
\begin{document}

%%%%%%%%%%%%%%%%%%%%%%%%%%%%%%%
%%% TITLE, AUTHOR, ABSTRACT %%%
%%%%%%%%%%%%%%%%%%%%%%%%%%%%%%%
\title[]{Derived Bockstein regulators and anticyclotomic $p$-adic Birch and Swinnerton-Dyer conjectures}

\author{Takamichi Sano}

\begin{abstract}
We introduce ``derived Bockstein regulators" by using an idea of Nekov\'a\v r. We establish a general descent formalism involving derived Bockstein regulators. We give three applications of this formalism. Firstly, we show that a conjecture of Birch and Swinnerton-Dyer type for Heegner points formulated by Bertolini and Darmon in 1996 follows from Perrin-Riou's Heegner point main conjecture up to a $p$-adic unit. Secondly, we show that a $p$-adic Birch and Swinnerton-Dyer conjecture for the Bertolini-Darmon-Prasanna $p$-adic $L$-function recently formulated by Agboola and Castella follows from the Iwasawa-Greenberg main conjecture up to a $p$-adic unit. Finally, we extend conjectures and results on derivatives of Euler systems for a general motive given by Kataoka and the present author into a natural derived setting. 
\end{abstract}

\address{Osaka Metropolitan University,
Department of Mathematics,
3-3-138 Sugimoto\\Sumiyoshi-ku\\Osaka\\558-8585,
Japan}
\email{tsano@omu.ac.jp}

\maketitle

\tableofcontents

\section{Introduction}

\subsection{Background and main results}
In the pioneering work \cite{MT}, Mazur and Tate proposed a conjecture which refines the celebrated Birch and Swinnerton-Dyer conjecture. Their conjecture can be regarded as a ``tame analogue" of the $p$-adic Birch and Swinnerton-Dyer conjecture formulated earlier by Mazur, Tate and Teitelbaum in \cite{MTT}. Inspired by their work, Gross considered an analogue for $L$-functions over totally real number fields in \cite{Gp} and \cite{G}, which is often referred to as the ``Gross-Stark conjecture". %Also, Darmon formulated an analogue for cyclotomic units in \cite{D}. 

In the series of works \cite{bks1}, \cite{bks2}, \cite{bks4} and \cite{bks5}, Burns, Kurihara and the present author intensively studied such refined conjectures and observed that the conjectures of Mazur-Tate, Mazur-Tate-Teitelbaum and Gross can be treated, in principle, by a single formalism. The key idea behind this formalism goes back to the systematic study of Bockstein maps and height pairings by Nekov\'a\v r in \cite[\S 11]{nekovar}. 

In the settings considered by Burns, Kurihara and the present author, natural $p$-adic regulators constructed by using Bockstein maps are expected to be non-zero. In other words, in these settings, $p$-adic height pairings are conjectured to be non-degenerate. However, as observed by Bertolini and Darmon in \cite{BDMT} and \cite{BD der}, {\it a natural $p$-adic height pairing is always degenerate in the anticyclotomic setting and in this case one needs to consider ``derived heights"}. 

The aim of the present work is to extend the Bockstein formalism of Burns, Kurihara and the present author into ``derived" settings. To achieve this, we develop the theory of the Bockstein spectral sequence by Nekov\'a\v r in \cite[\S 11.6]{nekovar}. Using ideas in loc. cit., we introduce ``derived Bockstein regulators" and study their algebraic properties. Our main algebraic result is to establish a descent formalism involving derived Bockstein regulators (see Theorem \ref{thm descent}). Our descent formalism is more general than that of Nekov\'a\v r in the sense that we do not assume that cohomology is $\Lambda$-torsion (see Remark \ref{more general}). 

In this paper, we give three applications of our formalism. Firstly, we show that {\it a conjecture of Birch and Swinnerton-Dyer type for Heegner points formulated by Bertolini and Darmon in \cite{BD} follows from Perrin-Riou's Heegner point main conjecture up to a $p$-adic unit} (see Theorem \ref{main}). Combining this result with the recent work of Burungale, Castella and Kim \cite{BCK} on the Heegner point main conjecture, we obtain an unconditional result on the conjecture of Bertolini and Darmon (see Corollary \ref{BCK}). As far as the author is aware, this is the first theoretical result on the conjecture in the higher rank case. 

Secondly, we show that {\it a $p$-adic Birch and Swinnerton-Dyer conjecture for the Bertolini-Darmon-Prasanna $p$-adic $L$-function recently formulated by Agboola and Castella in \cite{AC} follows from the Iwasawa-Greenberg main conjecture up to a $p$-adic unit} (see Theorem \ref{improved AC}). A similar result is obtained by Agboola and Castella in \cite[Theorem 6.2]{AC}, but we work under milder hypotheses: we do not assume that $p$ does not divide the Tamagawa factors or that $p$ is non-anomalous. 

Finally, {\it we give refinements of conjectures and results given by Kataoka and the present author in \cite{ks}, which generalize those of Burns, Kurihara and the present author in \cite{bks4} to a general setting of motives}. In particular, we formulate a conjecture on derivatives of Euler systems for a general motive by using derived Bockstein regulators (see Conjecture \ref{leading}). In a forthcoming work, we show that this conjecture essentially generalizes the conjecture of Bertolini and Darmon. As shown in \cite{bks2},  \cite{bks4} and \cite{bks5}, Conjecture \ref{leading} also generalizes the Mazur-Tate conjecture, the Mazur-Tate-Teitelbaum conjecture and the Gross-Stark conjecture (and, more generally, a conjecture of Mazur and Rubin \cite{MRGm} and the present author \cite{sano}). Thus, one may regard Conjecture \ref{leading} as a ``refined conjecture of Birch and Swinnerton-Dyer type for a general motive". Applying our descent formalism, we give natural descent results concerning Conjecture \ref{leading}: see Theorems \ref{strategy}, \ref{alg from IMC} and \ref{bsd strategy}.

It seems that our descent formalism has many other arithmetic applications. Although we focus on the anticyclotomic setting in this paper, our formalism can of course be used in the classical cyclotomic setting. For example, it is possible to show that the Mazur-Tate-Teitelbaum conjecture \cite{MTT} follows from the cyclotomic Iwasawa main conjecture for elliptic curves over $\QQ$ up to a $p$-adic unit by using our formalism (even in the split multiplicative case), and thus recover the classical results of Perrin-Riou \cite{PR} and Schneider \cite{schneider}, \cite{schneider2}. (Since cohomology is $\Lambda$-torsion in the cyclotomic case, this result is actually covered by Nekov\'a\v r's formalism given in \cite[\S 11.7]{nekovar}.) It would be interesting to apply our formalism also in Hida theoretic settings. 

Although we consider only Iwasawa theoretic settings in this paper, our formalism would be extended to a more general setting over finite group rings. In such a way, we expect that the conjecture of Darmon on Heegner points in \cite{DH}, which can be regarded as a tame analogue of the conjecture of Bertolini and Darmon, is also treated by a similar formalism. 

\begin{remark}
After the author wrote a first draft of this paper, Francesc Castella told him that Castella-Hsu-Kundu-Lee-Liu were working on the extension of the Agboola-Castella conjecture to supersingular primes. They obtain a similar result to our Theorem \ref{improved AC}, including the supersingular case (see \cite{CHKLL}). 
\end{remark}

\subsection{General idea}

For the reader's convenience, we explain the general idea of our formalism. 

Let $\Lambda$ be an Iwasawa algebra over $\ZZ_p$ (for simplicity)  and $Q(\Lambda)$ the total quotient ring of $\Lambda$. Suppose that a complex $C$ of finitely generated $\Lambda$-modules acyclic outside degrees one and two is given. (In practice, we take $C$ to be a Selmer complex.) Let $\sigma$ and $\tau$ be the $\Lambda$-ranks of $H^1(C)$ and $H^2(C)$ respectively. For a commutative ring $R$, let $\det_R$ denote the determinant functor. Then we have a canonical isomorphism
$$\pi: Q(\Lambda)\otimes_\Lambda {\det}^{-1}_\Lambda(C) \xrightarrow{\sim} Q(\Lambda) \otimes_\Lambda \left({\bigwedge}_\Lambda^\sigma H^1(C) \otimes_\Lambda {\bigwedge}_\Lambda^\tau H^2(C)^\ast\right),$$
where $(-)^\ast$ denotes the dual $\Hom_\Lambda(-,\Lambda)$. 
Suppose that a special element
$$z \in Q(\Lambda) \otimes_\Lambda \left({\bigwedge}_\Lambda^\sigma H^1(C) \otimes_\Lambda {\bigwedge}_\Lambda^\tau H^2(C)^\ast\right)$$
is given. The reader should think of this as a $p$-adic $L$-function (when $\sigma=\tau=0$) or an Euler system (in general). Consider the following condition:
$$\text{(IMC)} \quad \pi \left({\det}_\Lambda^{-1}(C)\right) = \Lambda\cdot z.$$
This condition corresponds to an Iwasawa main conjecture. For example, if $H^1(C)=0$ and $H^2(C)$ is $\Lambda$-torsion, then we have $\pi\left({\det}_\Lambda^{-1}(C)\right) = {\rm char}_\Lambda(H^2(C))$ and so this condition is the usual formulation of the Iwasawa main conjecture for a $p$-adic $L$-function. 
%If $\sigma = 1$ and $\tau =0$, then one sees that (IMC) is equivalent to
%$${\rm char}_\Lambda\left( H^1(C)/\Lambda \cdot z \right) = {\rm char}_\Lambda(H^2(C)),$$
%which is again the usual formulation of the Iwasawa main conjecture for a (rank one) Euler system. 

Let $I$ be the augmentation ideal of $\Lambda$ and for a $\Lambda$-module $M$ denote the localization at $I$ by $M_I$. Let $\varrho$ be the length of the torsion submodule of $H^2(C)_I$. If (IMC) holds, then one can show that $\varrho$ coincides with the ``order of vanishing" of $z$, namely, the maximal integer $\varrho$ such that
$$z \in I^\varrho \cdot  \left({\bigwedge}_\Lambda^\sigma H^1(C) \otimes_\Lambda {\bigwedge}_\Lambda^\tau H^2(C)^\ast\right)_I.$$

We can then consider the ``leading term" of $z$. Suppose that a complex $C_0$ of $\ZZ_p$-modules which satisfies the ``control theorem"
$$C\lotimes_\Lambda \ZZ_p  \simeq C_0$$
is given. (Selmer complexes have this property.) 
The leading term 
$$\cD(z) \in \QQ_p\otimes_{\ZZ_p}\left({\bigwedge}_{\ZZ_p}^\sigma H^1(C_0)\otimes_{\ZZ_p} {\bigwedge}_{\ZZ_p}^\tau H^2(C_0)^\ast\right) \otimes_{\ZZ_p} I^{\varrho}/I^{\varrho+1}$$
of $z$ is defined to be the image of $z$ under the ``modulo $I^{\varrho+1}$ map"
$$\cD: I^\varrho \cdot  \left({\bigwedge}_\Lambda^\sigma H^1(C) \otimes_\Lambda {\bigwedge}_\Lambda^\tau H^2(C)^\ast\right)_I \to \QQ_p\otimes_{\ZZ_p}\left({\bigwedge}_{\ZZ_p}^\sigma H^1(C_0)\otimes_{\ZZ_p} {\bigwedge}_{\ZZ_p}^\tau H^2(C_0)^\ast\right) \otimes_{\ZZ_p} I^{\varrho}/I^{\varrho+1}.$$

We are interested in describing the leading term $\cD(z)$. To do this we need several maps. First, we have a canonical ``descent" surjection 
$${\det}_\Lambda^{-1}(C) \twoheadrightarrow {\det}_\Lambda^{-1}(C)\otimes_\Lambda \ZZ_p \simeq {\det}_{\ZZ_p}^{-1}(C\lotimes_\Lambda \ZZ_p) \simeq {\det}_{\ZZ_p}^{-1}(C_0).$$
Next, let $d$ and $e$ be the $\ZZ_p$-ranks of $H^1(C_0)$ and $H^2(C_0)$ respectively. Then we have a canonical isomorphism
$$\pi_0: \QQ_p\otimes_{\ZZ_p} {\det}_{\ZZ_p}^{-1}(C_0) \xrightarrow{\sim} \QQ_p\otimes_{\ZZ_p}\left({\bigwedge}_{\ZZ_p}^d H^1(C_0)\otimes_{\ZZ_p} {\bigwedge}_{\ZZ_p}^e H^2(C_0)^\ast\right).$$
(Here $(-)^\ast$ means the $\ZZ_p$-dual.) Finally, we construct a ``$p$-adic derived Bockstein regulator map"
$$\cR: \QQ_p\otimes_{\ZZ_p}\left({\bigwedge}_{\ZZ_p}^d H^1(C_0)\otimes_{\ZZ_p} {\bigwedge}_{\ZZ_p}^e H^2(C_0)^\ast\right) \to \QQ_p\otimes_{\ZZ_p}\left({\bigwedge}_{\ZZ_p}^\sigma H^1(C_0)\otimes_{\ZZ_p} {\bigwedge}_{\ZZ_p}^\tau H^2(C_0)^\ast\right) \otimes_{\ZZ_p} I^{\varrho}/I^{\varrho+1}.$$
(This is actually injective.) We prove that the following diagram is commutative:
$$
\footnotesize
\xymatrix@C=10pt@R=30pt{
\displaystyle {\det}_\Lambda^{-1}(C) \ar[r]^-{\pi } \ar@{->>}[dd] &\displaystyle I^{\varrho}\cdot \left({\bigwedge}_{\Lambda}^\sigma H^1(C)  \otimes_{\Lambda} {\bigwedge}_{\Lambda}^\tau H^2(C)^\ast \right)_I \ar[rd]^-{\cD} & \\
 & & \displaystyle  \QQ_p\otimes_{\ZZ_p}\left({\bigwedge}_{\ZZ_p}^\sigma H^1(C_0)\otimes_{\ZZ_p} {\bigwedge}_{\ZZ_p}^\tau H^2(C_0)^\ast \right) \otimes_{\ZZ_p}  I^{\varrho}/I^{\varrho +1 } \\
\displaystyle{\det}_{\ZZ_p}^{-1}(C_0) \ar[r]_-{\pi_0} &\displaystyle  \QQ_p\otimes_{\ZZ_p}\left({\bigwedge}_{\ZZ_p}^{d}H^1(C_0) \otimes_{\ZZ_p} {\bigwedge}_{\ZZ_p}^e H^2(C_0)^\ast  \right). \ar[ru]_-{ \cR }&%\displaystyle \QQ_p\otimes_{\ZZ_p} {\bigwedge}_\cO^r \ker \beta^{(\infty)} \otimes_\cO Q^\rho \ar[u]^\cup. 
}
$$
(See Theorem \ref{thm descent}.) If (IMC) holds, then by the commutativity of the diagram we have
$$\text{($p$BSD)} \quad  \ZZ_p\cdot \cD(z) =  \cR\left(  \pi_0 \left({\det}_{\ZZ_p}^{-1}(C_0) \right)\right). $$
This is an abstract formulation of the ``$p$-adic Birch and Swinnerton-Dyer formula up to a $p$-adic unit". In fact, the left hand side corresponds to the leading term of a $p$-adic $L$-function, and the right hand side involves a $p$-adic (derived) regulator. 

In this paper, we explicitly calculate the right hand side of ($p$BSD) in some arithmetic settings. 
%Arithmetic invariants such as the order of the Tate-Shafarevich group, Tamagawa factors, Euler factors, etc. appear naturally as torsion of cohomology of $C_0$. 
Interestingly, if we analyze ($p$BSD) in concrete arithmetic settings, we obtain the exact formulas predicted by Mazur-Tate-Teitelbaum \cite{MTT}, Bertolini-Darmon \cite{BD} and Agboola-Castella \cite{AC}. Thus we are able to obtain results of the type  ``The Iwasawa main conjecture implies the $p$-adic Birch and Swinnerton-Dyer conjecture up to a $p$-adic unit" (see Theorems \ref{main}, \ref{improved AC} and \ref{alg from IMC}). 

We can also study the (complex, not $p$-adic) Birch and Swinnerton-Dyer conjecture via our formalism. More generally, we study the Tamagawa number conjecture of Bloch-Kato \cite{BK}. The basic idea is as follows. Suppose that a special element
$$\eta\in \CC_p\otimes_{\ZZ_p} \left({\bigwedge}_{\ZZ_p}^d H^1(C_0)\otimes_{\ZZ_p} {\bigwedge}_{\ZZ_p}^e H^2(C_0)^\ast\right)$$
is given, and consider the following condition:
$$\text{(TNC)} \quad \pi_0\left({\det}_{\ZZ_p}^{-1}(C_0)\right) = \ZZ_p\cdot \eta.$$
In arithmetic settings, this condition corresponds to the Tamagawa number conjecture. We naturally expect that the special elements $z$ and $\eta$ are related by
$$\text{($p$LTC)} \quad \cD(z) = \cR(\eta), $$
which we call the ``$p$-adic leading term conjecture" (see Conjecture \ref{leading}). Using the commutative diagram above, we can prove the implication
$$\text{(IMC) and ($p$LTC)} \Rightarrow \text{(TNC)}.$$
This is what we study in \S \ref{ks application} (see Theorems \ref{strategy} and \ref{bsd strategy}). Note that the key is the injectivity of the regulator map $\cR$, which is a consequence of our derived construction. (However, determining the order of vanishing $\varrho$ seems to be difficult.)

One of the important topics we do not study in this paper is a connection with the theory of Coleman maps. Let us briefly sketch an idea. Suppose that a morphism of Selmer complexes $C'\to C$ is given. We think of the mapping cone of this morphism as a local cohomology complex. Let $z$ and $z'$ be the special elements for $C$ and $C'$ respectively. Then a Coleman map should give a basis of the determinant of the local cohomology complex, and it induces an isomorphism
$${\rm Col}: {\det}_\Lambda^{-1}(C)\xrightarrow{\sim} {\det}_\Lambda^{-1}(C') ,$$
which sends $z$ to $z'$. By using this Coleman map, one may prove a ``Rubin formula" which relates $\cD(z)$ with $\cD(z')$ (as in \cite[Theorem 6.2]{bks4}). Via the Rubin formula, one may prove the equivalence between ($p$LTC) for $C$ and $C'$ (as in \cite[Corollary 6.7]{bks4}). The author wishes to study this topic in a future work. 

\subsection{Notation}
For any field $K$, we denote the absolute Galois group of $K$ by $G_K$. The algebraic closure of $\QQ$ is denoted by $\overline \QQ$. 

Let $p$ be a prime number and $X$ an abelian group. We set
$$X[p]:=\{x\in X\mid px =0\} \text{ and }X[p^\infty] := \{x \in X \mid p^n x = 0\text{ for some $n$}\}.$$
If $X$ is a $\ZZ_p$-module, then the Pontryagin dual of $X$ is denoted by $X^\vee := \Hom_{\ZZ_p}(X,\QQ_p/\ZZ_p)$. 

Let $R$ be a commutative noetherian ring. For an $R$-module $X$, we set
$$X^\ast:=\Hom_R(X,R).$$
%For an integer $r\geq 0$, the $r$-th exterior power bidual of $X$ is define by
%$${\bigcap}_R^r X:= \left({\bigwedge}_R^r (X^\ast) \right)^\ast.$$

If $R$ is a domain with quotient field $Q$, then for a finitely generated $R$-module $X$ we define the rank of $X$ by
$${\rm rk}_R(X):=\dim_Q(Q\otimes_R X). $$
We denote the $R$-torsion submodule of $X$ by $X_{\rm tors}$ and define the torsion-free quotient by 
$$X_{\rm tf}:=X/X_{\rm tors}.$$

For a finitely generated projective $R$-module $X$, we denote by ${\det}_R(X)$ the determinant module (for the definition, see \cite[Chapter I, \S 3]{kbook} for example). As in \cite[\S 3.2]{bks1}, we regard ${\det}_R(X)$ as a graded invertible $R$-module. We set ${\det}_R^{-1}(X):={\det}_R(X^\ast)$. For a bounded complex $C=[\cdots \to C^i \to C^{i+1}\to \cdots]$ of finitely generated projective $R$-modules, we set
$${\det}_R(C):= \bigotimes_{ i\in \ZZ} {\det}_R^{(-1)^i}(C^i) \text{ and }{\det}_R^{-1}(C):=\bigotimes_{i \in \ZZ}{\det}_R^{(-1)^{i+1}}(C^i).$$
We note that ${\det}_R$ defines a functor from the derived category of perfect complexes of $R$-modules to the category of graded invertible $R$-modules (see \cite{KM}). In particular, we can define ${\det}_R(X)$ for a finitely generated $R$-module $X$ of finite projective dimension.

The following basic properties of the determinant functor are frequently used in this paper. 

\begin{itemize}
\item[(i)] For a finitely generated $R$-module $X$ of finite projective dimension, we have a canonical ``evaluation" isomorphism
$${\rm ev}: {\det}_R(X) \otimes_R {\det}_R^{-1}(X)\xrightarrow{\sim} R.$$
\item[(ii)] For an exact sequence 
$$0\to Y\to X \to Z\to 0$$
of finitely generated $R$-modules of finite projective dimension, we have a canonical isomorphism
$${\det}_R(X) \simeq {\det}_R(Y)\otimes_R {\det}_R(Z).$$
\item[(iii)] Let $C$ be a perfect complex of $R$-modules. If each cohomology $H^i(C)$ has finite projective dimension, then there are canonical isomorphisms
$${\det}_R(C)\simeq \bigotimes_{i \in \ZZ} {\det}_R^{(-1)^i}(H^i(C)) \text { and }{\det}_R^{-1}(C)\simeq \bigotimes_{i \in \ZZ}{\det}_R^{(-1)^{i+1}}(H^i(C)).$$
%\item[(iii)] For a perfect complex $C$ of $R$-modules, we have a canonical isomorphism
 %$${\det}_R(\rhom_R(C,R)) \simeq {\det}_R^{-1}(C).$$
%\item[(iii)] For an exact triangle of perfect complexes of $R$-modules
%$$C_1\to C_2 \to C_3,$$
%we have a canonical isomorphism
%$${\det}_R(C_2) \simeq {\det}_R(C_1)\otimes_R {\det}_R(C_3).$$
\item[(iv)] For a commutative $R$-algebra $R'$ and a perfect complex $C$ of $R$-modules, we have a canonical isomorphism
$$R'\otimes_R {\det}_R(C) \simeq {\det}_{R'}(R'\lotimes_R C). $$
\item[(v)] Suppose that $R$ is a principal ideal domain with quotient field $Q$. Then for a finitely generated $R$-module $X$ of rank $r:={\rm rk}_R(X)$ we have
$$\im \left( {\det}_R^{-1}(X) \hookrightarrow Q \otimes_R {\det}_R^{-1}(X) \simeq Q \otimes_R {\bigwedge}_R^{r} X^\ast \right) =  {\rm Fitt}_R(X_{\rm tors})\cdot {\bigwedge}_R^{r} X^\ast,$$
where ${\rm Fitt}_R$ denotes the Fitting ideal over $R$. In particular, when $R=\ZZ_p$, we have a canonical isomorphism
$${\det}_{\ZZ_p}^{-1}(X)\simeq \# X_{\rm tors} \cdot {\bigwedge}_{\ZZ_p}^{r} X^\ast \subset \QQ_p\otimes_{\ZZ_p} {\bigwedge}_{\ZZ_p}^{r}X^\ast.$$
%\item[(vi)] If $R$ is a regular local ring of dimension two, then we similarly have
%$$\im \left( {\det}_R^{-1}(X) \hookrightarrow Q \otimes_R {\det}_R^{-1}(X) \simeq Q \otimes_R {\bigwedge}_R^{r} X^\ast \right) =  {\rm char}_R(X_{\rm tors})\cdot {\bigwedge}_R^{r} X^\ast,$$
%where ${\rm char}_R$ denotes the characteristic ideal\footnote{$X^\ast$ is a free module and so ${\bigcap}_R^r X^\ast = {\bigwedge}_R^r X^\ast$. }. (See \cite[Lemma 3.11]{ks}.)
\end{itemize}

\section{The Bockstein formalism}\label{sec derived}

This section is purely algebraic. First, in \S \ref{def bock}, we introduce the notion of ``derived Bockstein maps". Using this, we construct ``derived Bockstein regulators" in \S \ref{sec reg}. Next, in \S \ref{sec structure}, we study the structure of cohomology in terms of derived Bockstein maps. Finally, in \S \ref{sec descent}, we establish a descent formalism involving derived Bockstein regulators (see Theorem \ref{thm descent}). 

\subsection{Derived Bockstein maps}\label{def bock}
Let $p$ be a prime number. 
Let $\cO$ be the ring of integers of a finite extension of $\QQ_p$, and consider the Iwasawa algebra $\Lambda:=\cO[[t]]$. Let
$$I:=\ker (\Lambda \twoheadrightarrow \cO)=(t)$$
be the augmentation ideal. For an integer $k$, we set
$$Q^k:=I^k/I^{k+1}. $$
Note that $Q^k$ is a free $\cO$-module of rank one generated by $t^k$. 
%If $k<0$, then $Q^k$ is understood to be $\Hom_\cO(Q^{-k},\cO)$. 
We have a natural identification $Q^k \otimes_\cO Q^{l} =Q^{k+l}$. 

Let $C$ be a bounded complex of finitely generated $\Lambda$-modules. Let $\Lambda_I$ be the localization of $\Lambda$ at the prime ideal $I \subset \Lambda$. We suppose that $C_I:=C\lotimes_\Lambda \Lambda_I$ is acyclic outside degrees one and two and $H^1(C_I)$ is $\Lambda_I$-free. This is equivalent to saying that $C_I$ is represented by
\begin{equation}\label{standard complex}
 \left[ P \to P'\right]
\end{equation}
in the derived category, where $P$ and $P'$ are free $\Lambda_I$-modules of finite rank and $P$ is placed in degree one. We also assume that 
$${\rm rk}_{\Lambda}(H^1(C)) \geq {\rm rk}_{\Lambda}(H^2(C)),$$
i.e., ${\rm rk}_{\Lambda_I}(P)\geq {\rm rk}_{\Lambda_I}(P')$. 
%In particular, note that $C_I$ is acyclic outside degrees one and two. 
We set 
$$C_0:= C \lotimes_\Lambda \cO .$$ 
%Note that $H^1(C_0)$ is a free $\cO$-module of finite rank. 

Consider the exact sequence of complexes
$$0\to I^{k+1} C\to I^k C \to C_0 \otimes_\cO Q^k \to 0.$$
From this, we obtain a long exact sequence
$$
 H^1(C_0)\otimes_\cO Q^k  \xrightarrow{\delta_k} H^2(I^{k+1}C) \xrightarrow{\iota_k} H^2(I^kC) \xrightarrow{\pi_k} H^2(C_0)\otimes_\cO Q^k .
$$

The usual Bockstein map for $C$ is defined to be the composition map
\begin{equation}\label{beta composition}
\beta = \beta(C): H^1(C_0) \xrightarrow{\delta_0} H^2(IC) \xrightarrow{\pi_1} H^2(C_0)\otimes_\cO Q^1.
\end{equation}
We set $\beta^{(1)}:=\beta$ and define% the {\it second derived Bockstein map} for $C$
$$\beta^{(2)}=\beta^{(2)}(C) : \ker \beta^{(1)} \to \coker \beta^{(1)}\otimes_\cO Q^1$$
as follows. Take an element $a \in \ker \beta^{(1)}$. Then, by the definition of $\beta^{(1)}$, we have $\delta_0(a) \in \ker \pi_1 = \im \iota_1$. So there exists $\widetilde a \in H^2(I^2C)$ such that $\delta_0(a) = \iota_1(\widetilde a)$. Then we define $\beta^{(2)}(a) $ to be the image of $ \pi_2(\widetilde a) \in H^2(C_0) \otimes_\cO Q^2$ in $\coker \beta^{(1)} \otimes_\cO Q^1$. One checks that $\beta^{(2)}(a)$ is well-defined: it is independent of the choice of $\widetilde a$. 

In general, $\beta^{(k)}$ is defined as follows. 

\begin{definition}\label{main def}
Let $k\geq 2$. 
We define the {\it $k$-th derived Bockstein map} for $C$
$$\beta^{(k)} = \beta^{(k)}(C): \ker \beta^{(k-1)} \to \coker \beta^{(k-1)}\otimes_\cO Q^1$$
inductively as follows. Take an element $a \in \ker \beta^{(k-1)}$. Then one sees that there exists $\widetilde a \in H^2(I^{k}C )$ such that $\delta_0(a) = \iota_1\circ \cdots \circ \iota_{k-1} (\widetilde a)$.  %(see Lemma \ref{boc lemma} below). 
We define $\beta^{(k)}(a)$ to be the image of $\pi_{k}(\widetilde a) \in H^2(C_0)\otimes_\cO Q^{k}$ in $\coker \beta^{(k-1)}\otimes_\cO Q^1$. (One checks that this is well-defined.) 
\end{definition}

\begin{remark}\label{spec remark}
The complex $C$ has a natural filtration by $I^k$ and we have a spectral sequence starting with $E_1^{i,j} = H^{i+j}(C_0)\otimes_\cO Q^i $. The definition of the map $\beta^{(k)}$ above is an explicit description of the differential $d_k^{0,1}$ of this spectral sequence. 
%associated to the filtered complex $C$. 
\end{remark}

\begin{remark}
The maps $\beta^{(k)}$ define a filtration
\begin{equation}\label{filtration}
H^1(C_0) \supset  \ker \beta^{(1)} \supset \ker \beta^{(2)} \supset \cdots.
\end{equation}
Also, there are natural surjections
\begin{equation}\label{quotient}
H^2(C_0) \otimes_\cO Q^k \twoheadrightarrow \coker \beta^{(1)}\otimes_\cO Q^{k-1} \twoheadrightarrow  \coker \beta^{(2)}\otimes_\cO Q^{k-2} \twoheadrightarrow \cdots .
\end{equation}
\end{remark}

\begin{remark}\label{abstract height}
In arithmetic settings, natural height pairings are constructed by using Bockstein maps in the following way. Suppose that ${\rm rk}_\cO(H^1(C_0)) = {\rm rk}_\cO(H^2(C_0))$ and that a ``cup product"
$$\cup: H^2(C_0)\times H^1(C_0) \to \QQ_p\otimes_{\ZZ_p}\cO$$
is given. (This is usually a perfect pairing.) Then we can define a height pairing 
$$h^{(1)}: H^1(C_0)\times H^1(C_0) \to \QQ_p\otimes_{\ZZ_p} Q^1$$
by
$$h^{(1)}(a, b) := \beta^{(1)}(a)\cup b.$$
More generally, if a cup product
$$\cup_k: \coker \beta^{(k-1)} \times \ker \beta^{(k-1)} \to Q^{k-1}$$
is given, then we can define a $k$-th derived height pairing 
$$h^{(k)}: \ker \beta^{(k-1)}\times \ker \beta^{(k-1)} \to Q^k$$
by
$$h^{(k)}(a,b):= \beta^{(k)}(a) \cup_k b.$$
\end{remark}

\begin{remark}
We remark that Venerucci \cite{venerucci} uses the notion of derived Bockstein maps in a different sense: a morphism in a derived category which induces a Bockstein map is called a derived Bockstein map in loc. cit. So in his sense ``derived" comes from derived categories. On the other hand, our ``derived" comes from ``derived heights" of Bertolini and Darmon \cite{BDMT}, \cite{BD der}. 
\end{remark}

\subsection{Derived Bockstein regulators}\label{sec reg}

We set
$$r:={\rm rk}_\Lambda(H^1(C)) - {\rm rk}_\Lambda(H^2(C)) = {\rm rk}_\cO(H^1(C_0)) - {\rm rk}_{\cO}(H^2(C_0))
%{\rm rk}_\Lambda(P) - {\rm rk}_\Lambda(P') \text{ and } 
$$
and 
$$e:={\rm rk}_\cO(H^2(C_0)).$$
Note that
$$  {\rm rk}_\cO(H^1(C_0))=r +  e.$$
For $k\geq 1$, we set
$$\sigma_k:={\rm rk}_\cO (\ker \beta^{(k)}), \ \tau_k:={\rm rk}_\cO(\coker \beta^{(k)}) \text{ and } e_k :={\rm rk}_\cO(\im \beta^{(k)}).$$
%Note that ${\rm rk}_\cO(H^1(C_0))=r+e$. 

\begin{lemma}\label{elementary}\ 
\begin{itemize}
\item[(i)] We have
$$\sigma_1 \geq \sigma_2 \geq \cdots \geq \sigma_k \geq \cdots $$
and 
$$\tau_1 \geq \tau_2 \geq \cdots \geq \tau_k \geq \cdots.$$
%Suppose that $\coker \beta^{(k_0)}$ is finite for some $k_0 \geq 1$. Then 
\item[(ii)] For any $k\geq 1$, we have
$$r= \sigma_k-\tau_{k}$$
and
$$e_{k+1}=\sigma_{k}-\sigma_{k+1} =\tau_{k}-\tau_{k+1}.$$
%\item[(iii)] There exists $k_0\geq 1$ such that $e_{k}=0$ for all $k> k_0$. 
%\item[(iii)] We have
%$${\rm rk}_\cO(H^1(C_0))=\sigma_1 +e_1 \text{ and }{\rm rk}_\cO(H^2(C_0))=\tau_1+e_1.$$
%\item[(iv)] For any $k\geq 1$, we have
%$${\rm rk}_\cO(H^1(C_0))-{\rm rk}_\cO(H^2(C_0))=\sigma_k-\tau_k.$$
\item[(iii)] For any $k\geq 1$, we have
$$e=e_1+e_2+\cdots + e_{k}+\tau_{k}$$
and
$$e +\sum_{l=1}^{k-1}\tau_l =\sum_{l=1}^{k}l e_l +k\tau_{k}.$$
\item[(iv)] If $\tau_1=0$ (i.e., $\coker \beta^{(1)}$ is torsion), then we have $e_k =0$ for all $k >1$ and $e=e_1$. 
\end{itemize}
\end{lemma}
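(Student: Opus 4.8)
The plan is to extract everything from the single defining property of the derived Bockstein maps, namely that $\beta^{(k)}\colon \ker\beta^{(k-1)}\to\coker\beta^{(k-1)}\otimes_\cO Q^1$ is (up to the twist by $Q^1$) the differential $d_k^{0,1}$ of the Bockstein spectral sequence, with source $\ker\beta^{(k-1)}\otimes_\cO Q^{k-1}$ the $E_k^{0,1}$-term and target $\coker\beta^{(k-1)}\otimes_\cO Q^k$ the $E_k^{k,-k+1}$-term (Remark \ref{spec remark}). Since we only care about $\cO$-ranks, i.e.\ about $Q(\cO)$-dimensions after inverting $p$, the twists by powers of $Q^1$ are irrelevant and I will suppress them. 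The key structural inputs are: (a) $\ker\beta^{(k)}=\ker(d_k^{0,1})$ inside $\ker\beta^{(k-1)}=E_k^{0,1}$, and (b) $\coker\beta^{(k)}=\coker(d_k^{0,1})$ is a quotient of $\coker\beta^{(k-1)}=E_k^{k,-k+1}$. Part (i) is then immediate: the kernels shrink because $\ker\beta^{(k)}\subset\ker\beta^{(k-1)}$ by (a), and the cokernels shrink because $\coker\beta^{(k)}$ is a quotient of $\coker\beta^{(k-1)}$ by (b). (One should of course note that $\beta^{(k)}$ is only defined on $\ker\beta^{(k-1)}$, so these inclusions and surjections make sense.)

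For (ii), first observe the base case $r=\sigma_1-\tau_1$. By definition $\beta=\beta^{(1)}\colon H^1(C_0)\to H^2(C_0)\otimes_\cO Q^1$, so rationally $\dim\ker\beta^{(1)}-\dim\coker\beta^{(1)}=\dim H^1(C_0)-\dim H^2(C_0)=r$, using ${\rm rk}_\cO H^1(C_0)=r+e$ and ${\rm rk}_\cO H^2(C_0)=e$. For the inductive step, apply the rank–nullity identity to $\beta^{(k)}\colon\ker\beta^{(k-1)}\to\coker\beta^{(k-1)}\otimes_\cO Q^1$: we get $\sigma_{k-1}=\sigma_k+e_k$ and $\tau_{k-1}=\tau_k+e_k$ (the latter because $\im\beta^{(k)}$ sits inside $\coker\beta^{(k-1)}\otimes Q^1$ with cokernel $\coker\beta^{(k)}\otimes Q^1$, all of the same rank after $\otimes\QQ_p$). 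Subtracting gives $\sigma_{k-1}-\tau_{k-1}=\sigma_k-\tau_k$, so $r=\sigma_k-\tau_k$ for all $k$ by induction, and the two displayed equalities $e_{k+1}=\sigma_k-\sigma_{k+1}=\tau_k-\tau_{k+1}$ are exactly the rank–nullity relations just derived (with $k$ shifted).

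Part (iii) is then bookkeeping. The first identity: telescoping $e=\sigma_0$? — more carefully, start from $e={\rm rk}_\cO H^2(C_0)=\tau_0$ in the convention $\beta^{(0)}=0$, $\coker\beta^{(0)}=H^2(C_0)$; then $\tau_{l-1}-\tau_l=e_l$ for $l\ge1$ telescopes to $e=\tau_0=e_1+\cdots+e_k+\tau_k$. Alternatively, avoid the degenerate convention and just prove $e=e_1+\tau_1$ directly from the definition of $\beta^{(1)}$ (its image has rank $e_1$, its cokernel rank $\tau_1$), then telescope $\tau_l=e_{l+1}+\tau_{l+1}$ for $l\ge1$. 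For the second identity $e+\sum_{l=1}^{k-1}\tau_l=\sum_{l=1}^{k}le_l+k\tau_k$, substitute $\tau_l=e_{l+1}+\cdots+e_k+\tau_k$ (from the first identity applied at level $l$, i.e.\ $\tau_l = \sum_{j>l,\,j\le k} e_j + \tau_k$) into the left side and count how many times each $e_j$ appears; $e_j$ occurs once in the ``$e$'' term (via $e=\sum_{l\ge1}e_l+\tau_k$, wait—) — the cleanest route is induction on $k$: for $k=1$ the claim is $e=e_1+\tau_1$, already known; assuming it for $k$, add $\tau_k$ to both sides and use $\tau_k=e_{k+1}+\tau_{k+1}$ on the right to pass from $k$ to $k+1$, since $\sum_{l=1}^k le_l+k\tau_k+\tau_k=\sum_{l=1}^k le_l+(k+1)\tau_k=\sum_{l=1}^k le_l+(k+1)(e_{k+1}+\tau_{k+1})=\sum_{l=1}^{k+1}le_l+(k+1)\tau_{k+1}$. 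Finally (iv): if $\tau_1=0$ then by (i) $\tau_k=0$ for all $k$, so by (ii) $e_{k+1}=\tau_k-\tau_{k+1}=0$ for all $k\ge1$, and the first formula of (iii) with $k=1$ reads $e=e_1+\tau_1=e_1$. I expect no real obstacle here; the only point requiring care is the consistent treatment of the degenerate index $k=0$ (or, equivalently, pinning down the base case $e=e_1+\tau_1$ cleanly), and making sure the rational rank–nullity is applied to the correct source and target of each $\beta^{(k)}$.
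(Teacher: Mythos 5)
Your proposal is correct and follows essentially the same route as the paper: (i) from the filtration of kernels and the chain of quotients of cokernels, (ii) and the telescoping identities in (iii) from rank--nullity applied to the four-term exact sequence $0\to\ker\beta^{(k+1)}\to\ker\beta^{(k)}\to\coker\beta^{(k)}\otimes_\cO Q^1\to\coker\beta^{(k+1)}\to 0$ (together with the base case $e=e_1+\tau_1$ coming from the definition of $\beta^{(1)}$), and (iv) as an immediate consequence. The only cosmetic remark is that the final write-up should fix one of the two clean routes you sketch for the first identity in (iii) (either the $\beta^{(0)}$ convention or the direct base case $e=e_1+\tau_1$) rather than presenting the hesitation.
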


\begin{proof}
(i) is obvious from (\ref{filtration}) and (\ref{quotient}). (ii) follows from the exact sequence
$$0\to \ker \beta^{(k+1)} \to \ker \beta^{(k)}\xrightarrow{\beta^{(k+1)}}  \coker \beta^{(k)} \otimes_\cO Q^1 \to \coker \beta^{(k+1)} \to 0.$$
%and an isomorphism 
%$$\ker \beta^{(k)}/\ker \beta^{(k+1)}\simeq \im \beta^{(k+1)}.$$
%The first claim is obvious from (\ref{filtration}). The second follows from the isomorphism $E_l^{k,2-k} \simeq E_l^{l,2-l}\simeq \coker \beta^{(l-1)}$ in Lemma \ref{boclem}(ii) and (\ref{quotient}). 
%(ii) This follows from the exact sequences
%$$0\to \ker \beta^{(k+1)} \to \ker \beta^{(k)} \to \im \beta^{(k+1)} \to 0,$$
%$$0\to \im \beta^{(k+1)}\to \coker \beta^{(k)}\ \otimes_\cO Q^1 \to \coker \beta^{(k+1)}\to 0.$$
%(iii) follows from (ii) by noting that the decreasing sequences in (i) must stabilize. 
%(iii) follows from the exact sequence 
%$$ 0 \to \ker \beta^{(1)}\to H^1(C_0) \xrightarrow{\beta^{(1)}} H^2(C_0)\otimes_\cO Q^1 \to \coker \beta^{(1)}\to 0.$$
%(iv) follows from (ii) and (iii). 
(iii) follows by noting
$$\tau_l=e_{l+1}+\tau_{l+1}=e_{l+1}+e_{l+2}+ \cdots +e_{k} +\tau_{k}$$
and
%\begin{equation}\label{describe e}
$$e=e_1+\tau_1.$$%=e_1+e_2+\cdots + e_{k}+\tau_{k}.$$
%\end{equation}
(iv) is easily verified. 
%hen $k>1$, we have an exact sequence
%$$0 \to \ker \beta^{(k)}\to \ker \beta^{(k-1)}\xrightarrow{\beta^{(k)}} \coker \beta^{(k-1)}\otimes_\cO Q^1 \to \coker \beta^{(k)}\to 0,$$
%from which the claim follows. 
%By induction one sees that
%$${\rm rk}_\cO(H^1(C_0))-{\rm rk}_\cO(H^2(C_0))={\rm rk}_\cO(\ker \beta^{(k)}) -{\rm rk}_\cO(\coker \beta^{(k)})$$
%holds for any $k\geq 1$. The claim follows by letting $k=k_0$. 
\end{proof}

\begin{definition}\label{def der}
Let $k\geq 1$. We set
$$\rho_k:=e+ \sum_{l=1}^{k-1}  \tau_l=\sum_{l=1}^{k}l e_l +k\tau_{k}.$$
We define the {\it $k$-th derived Bockstein regulator isomorphism} for $C$
\begin{multline*}
R^{(k)}=R^{(k)}(C): \QQ_p \otimes_{\ZZ_p}\left( {\bigwedge}_\cO^{r+e} H^1(C_0)\otimes_\cO{\bigwedge}_\cO^e H^2(C_0)^\ast\right)\\
 \xrightarrow{\sim} \QQ_p \otimes_{\ZZ_p}  \left({\bigwedge}_\cO^{\sigma_k} \ker \beta^{(k)} \otimes_\cO  {\bigwedge}_\cO^{\tau_k} \coker \beta^{(k),\ast} \otimes_\cO Q^{\rho_k}\right)
\end{multline*}
as follows. 
%Suppose first that $\coker \beta^{(k_0)}$ is finite. In this case, note that $r={\rm rk}_\cO (\ker \beta^{(k_0)})$ by Lemma \ref{rk lem}. 
First, by the exact sequence
$$ 0 \to \ker \beta^{(1)}\to H^1(C_0) \xrightarrow{\beta^{(1)}} H^2(C_0)\otimes_\cO Q^1 \to \coker \beta^{(1)}\to 0,$$
we obtain a canonical isomorphism
\begin{equation}\label{map1}
{\det}_\cO (H^1(C_0)) \otimes_\cO {\det}_\cO^{-1} (H^2(C_0)) \simeq {\det}_\cO (\ker \beta^{(1)}) \otimes_\cO {\det}_\cO^{-1}(\coker \beta^{(1)}) \otimes_\cO Q^{e}.
\end{equation}
Next, by the exact sequence
$$0 \to \ker \beta^{(2)}\to \ker \beta^{(1)}\xrightarrow{\beta^{(2)}} \coker \beta^{(1)}\otimes_\cO Q^1 \to \coker \beta^{(2)}\to 0,$$
we obtain a canonical isomorphism
\begin{equation}\label{map2}
{\det}_\cO (\ker \beta^{(1)})\otimes_\cO {\det}_\cO^{-1}(\coker \beta^{(1)}) \simeq {\det}_\cO (\ker \beta^{(2)}) \otimes_\cO {\det}_\cO^{-1}(\coker \beta^{(2)}) \otimes_\cO Q^{\tau_1}.
\end{equation}
For $l > 1$, we similarly obtain a canonical isomorphism 
\begin{equation}\label{map3}
{\det}_\cO (\ker \beta^{(l)})\otimes_\cO {\det}_\cO^{-1}(\coker \beta^{(l)}) \simeq {\det}_\cO (\ker \beta^{(l+1)}) \otimes_\cO {\det}_\cO^{-1}(\coker \beta^{(l+1)}) \otimes_\cO Q^{\tau_{l}}.
\end{equation}
Composing (\ref{map1}), (\ref{map2}) and (\ref{map3}) for $l=2,3,\ldots,k-1$, we obtain an isomorphism
$${\det}_\cO(H^1(C_0))\otimes_\cO {\det}_\cO^{-1}(H^2(C_0))  \simeq {\det}_\cO(\ker \beta^{(k)}) \otimes_\cO   {\det}_\cO^{-1}(\coker \beta^{(k)}) \otimes_\cO Q^{\rho_k}.$$
We define $ R^{(k)}$ to be the map induced by this isomorphism. 
\end{definition}

\subsection{Structure of cohomology}\label{sec structure}

We use the notation in the previous subsection. The following number $k_0$ plays an important role. 

\begin{definition}\label{def k0}
Let $k_0\geq 1$ be the minimal integer such that
$$e_{k}:={\rm rk}_\cO(\im \beta^{(k)})=0 \text{ for all $k> k_0$}.$$ 
\end{definition}

\begin{remark}\ 
\begin{itemize}
\item[(i)] We see by Lemma \ref{elementary}(iv) that $k_0=1$ if $\tau_1=0$. 
\item[(ii)] By Lemma \ref{elementary}(ii), we have
$$\sigma_1 \geq \sigma_2\geq \cdots \geq \sigma_{k_0}=\sigma_{k_0+1}=\cdots$$
and 
$$\tau_1 \geq \tau_2\geq \cdots \geq \tau_{k_0}=\tau_{k_0+1}=\cdots.$$
\item[(iii)] The maps $R^{(k)}$ stabilize at $k=k_0$. Namely, if $k \geq k_0$, then $ R^{(k)}$ is identified with $ R^{(k_0)}$. In fact, by the definition of $k_0$, we have
$$\ker \beta^{(k)}=\ker \beta^{(k_0)} \text{ and }\coker \beta^{(k)}=\coker \beta^{(k_0)}\otimes_\cO Q^{k-k_0}$$
for any $k\geq k_0$. Noting that $\sigma_k=\sigma_{k_0}$, $\tau_k=\tau_{k_0}$ and $\rho_k=\rho_{k_0}+ (k-k_0)\tau_{k_0}$, we have an identification
\begin{multline*}
\QQ_p \otimes_{\ZZ_p} \left( {\bigwedge}_\cO^{\sigma_{k_0}} \ker \beta^{(k_0)} \otimes_\cO  {\bigwedge}_\cO^{\tau_{k_0}} \coker \beta^{(k_0),\ast} \otimes_\cO Q^{\rho_{k_0}}\right)\\
 = \QQ_p \otimes_{\ZZ_p} \left( {\bigwedge}_\cO^{\sigma_k} \ker \beta^{(k)} \otimes_\cO  {\bigwedge}_\cO^{\tau_k} \coker \beta^{(k),\ast} \otimes_\cO Q^{\rho_k} \right).
 \end{multline*}
The claim follows from this and the definition of $R^{(k)}$. 
\end{itemize}
\end{remark}

In the following, for a $\Lambda$-module $M$, we denote by $M_I := M \otimes_\Lambda \Lambda_I$ the localization of $M$ at the prime ideal $I \subset \Lambda$. 

\begin{proposition}\label{str h2}
We have an isomorphism
$$H^2(C)_I \simeq (\Lambda_I/I\Lambda_I)^{e_1}\oplus (\Lambda_I /I^2\Lambda_I)^{e_2}\oplus \cdots \oplus (\Lambda_I/I^{k_0}\Lambda_I)^{e_{k_0}} \oplus \Lambda_I^{\tau_{k_0}}.$$
In particular, we have
$${\rm rk}_\Lambda(H^2(C)) = \tau_{k_0},$$
$${\rm length}_{\Lambda_I}(H^2(C)_{I,{\rm tors}})=\sum_{k=1}^{k_0} k e_k$$
and
$$k_0=\min \{ k \mid I^k\cdot H^2(C)_{I,{\rm tors}}=0\}.$$
\end{proposition}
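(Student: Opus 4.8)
The plan is to reduce everything to the structure theory of finitely generated modules over the discrete valuation ring $\Lambda_I$ together with an explicit computation of the derived Bockstein maps of one elementary complex. First I would use that $\Lambda_I$ is a discrete valuation ring with uniformizer $t$ and residue field $E:=\Lambda_I/I\Lambda_I=\operatorname{Frac}(\cO)$. Since $\Lambda_I$ is flat over $\Lambda$, applying $(-)\otimes_\Lambda\Lambda_I$ to the exact sequences and long exact sequences used to define the maps $\beta^{(k)}$ in Definition~\ref{main def}, and using $(C_0)_I=C_0\otimes_\cO E$ and $Q^k\otimes_\cO E=(I\Lambda_I)^k/(I\Lambda_I)^{k+1}$, identifies $\beta^{(k)}(C)\otimes_\cO E$ with the $k$-th derived Bockstein map of the $\Lambda_I$-complex $C_I$ relative to the ideal $I\Lambda_I$, whose ``$C_0$'' is $\bar C:=C_I\otimes_{\Lambda_I}E=C_0\otimes_\cO E$. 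As $E$ is $\cO$-flat, this gives $\sigma_k=\dim_E\ker\beta^{(k)}(C_I)$, $\tau_k=\dim_E\coker\beta^{(k)}(C_I)$ and $e_k=\dim_E\im\beta^{(k)}(C_I)$, and $k_0$ is the corresponding invariant of $C_I$. Replacing $C_I$ by the quasi-isomorphic two-term complex $D=[P\xrightarrow{d}P']$ of finite free $\Lambda_I$-modules furnished by \eqref{standard complex} (the Bockstein construction is the spectral sequence of the $I$-adic filtration of a perfect complex, hence a quasi-isomorphism invariant), we are reduced to computing the Bockstein tower of $D$; note $H^2(C)_I=H^2(C_I)=H^2(D)=\coker d$.

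Since $\Lambda_I$ is a DVR and $\ker d$ is free, I would then put $d$ in Smith normal form: choose bases so that $P=\ker d\oplus\bigoplus_{i=1}^{s}\Lambda_I e_i$ and $P'=\bigoplus_{i=1}^{s}\Lambda_I e_i'\oplus\bigoplus_{j=1}^{\tau}\Lambda_I g_j$ with $d(e_i)=t^{m_i}e_i'$, $0\le m_1\le\cdots\le m_s$ and $d|_{\ker d}=0$; here $\tau:={\rm rk}_{\Lambda_I}\coker d={\rm rk}_\Lambda H^2(C)$. Discarding the summands with $m_i=0$ already yields
\[
H^2(C)_I\;\cong\;\bigoplus_{m\ge1}(\Lambda_I/I^m\Lambda_I)^{a_m}\ \oplus\ \Lambda_I^{\tau},\qquad a_m:=\#\{i:m_i=m\},
\]
and correspondingly $D$ splits, as a complex, into $[\ker d\to0]$, the acyclic complexes $[\Lambda_I\xrightarrow{1}\Lambda_I]$ with $m_i=0$, the elementary complexes $D_i:=[\Lambda_I\xrightarrow{t^{m_i}}\Lambda_I]$ (in degrees $1$ and $2$) with $m_i\ge1$, and $[0\to\Lambda_I^{\tau}]$. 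The Bockstein construction is additive for direct sums of complexes, and the summands $[\ker d\to0]$, $[0\to\Lambda_I^{\tau}]$ and the acyclic ones contribute nothing to any $\im\beta^{(k)}$; so it remains to compute the Bockstein tower of a single $D_m:=[\Lambda_I\xrightarrow{t^m}\Lambda_I]$ with $m\ge1$.

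For $D_m$ one has $H^2(I^lD_m)=t^l\Lambda_I/t^{l+m}\Lambda_I$, the maps $\iota_l$, $\delta_0$, $\pi_l$ are the evident ones, and a direct calculation with the recipe of Definition~\ref{main def} gives $\delta_0(\bar 1)=[t^m]$ and shows that $\beta^{(k)}(D_m)$ is the zero map for $k\ne m$ and an isomorphism $E\xrightarrow{\sim}E\otimes_\cO Q^1$ for $k=m$ (concretely $\beta^{(k)}(\bar 1)$ is represented by the class of $t^m$ in $t^k\Lambda_I/t^{k+1}\Lambda_I\cong E\otimes_\cO Q^k$, which is nonzero exactly when $m=k$). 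Summing over the $D_i$ therefore yields $e_k=\#\{i:m_i=k\}=a_k$ for every $k\ge1$.

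From $e_k=a_k$ and Definition~\ref{def k0} we get $k_0=\max\{m:a_m>0\}$ (with $k_0=1$ if $H^2(C)_I$ is $\Lambda_I$-torsion-free), so $a_m=0$ for $m>k_0$, the displayed isomorphism becomes $H^2(C)_I\cong\bigoplus_{k=1}^{k_0}(\Lambda_I/I^k\Lambda_I)^{e_k}\oplus\Lambda_I^{\tau}$, and $\tau=\tau_{k_0}$ by Lemma~\ref{elementary}(ii) (indeed $\tau_{k_0}=\tau+\sum_{m>k_0}a_m=\tau$), which is the asserted isomorphism. The three ``in particular'' clauses then follow at once: ${\rm rk}_\Lambda H^2(C)=\tau=\tau_{k_0}$; ${\rm length}_{\Lambda_I}(H^2(C)_{I,{\rm tors}})=\sum_i m_i=\sum_{m\ge1}m\,a_m=\sum_{k=1}^{k_0}k\,e_k$ (consistent with Lemma~\ref{elementary}(iii)); and $I^kH^2(C)_{I,{\rm tors}}=0$ if and only if $k\ge\max\{m:a_m>0\}=k_0$. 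I expect the main obstacle to be the explicit computation for $D_m$ together with the two structural reductions feeding it: one must verify additivity of the Bockstein construction under direct sums of complexes and its quasi-isomorphism invariance, and then carry the bookkeeping of $\delta_0$, the $\iota_l$'s and the $\pi_l$'s through the filtration $\{I^lD\}$ carefully enough that the identification $e_k=a_k$ is unambiguous. Once this is in place, the proposition is a formal consequence of the structure theorem over the DVR $\Lambda_I$ and of Lemma~\ref{elementary}.
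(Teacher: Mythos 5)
Your proposal is correct and takes essentially the same route as the paper: both pass to the discrete valuation ring $\Lambda_I$, put the differential of the representative $[P\to P']$ from (\ref{standard complex}) into Smith normal form, and identify $e_k$ with the multiplicity of $\Lambda_I/I^k\Lambda_I$ by an explicit computation of the derived Bockstein maps (the paper, like you, implicitly uses that the construction localizes and is computed on this representative, cf.\ Remark \ref{spec remark}). The only difference is organizational: the paper computes $\beta^{(k)}$ directly on the chosen bases $\{\overline b_i\},\{\overline c_i\}$ instead of splitting the complex into elementary summands $[\Lambda_I\xrightarrow{t^m}\Lambda_I]$ and invoking additivity, but the underlying calculation is identical.
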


\begin{proof}
We first note that $R:=\Lambda_I$ is a discrete valuation ring with uniformizer $t$. By the structure theorem for finitely generated modules over a discrete valuation ring, we can write
$$H^2(C)_I \simeq (R/(t))^{f_1}\oplus (R/(t^2))^{f_2}\oplus \cdots \oplus (R/(t^l))^{f_l} \oplus R^s.$$
Our aim is to show that $l=k_0$, $f_i = e_i$ and $s=\tau_{k_0}$.

Let $\cQ$ be the quotient field of $\cO$, which is identified with the residue field $R/(t)$. Note that $\cQ \otimes_\cO H^2(C_0)$ is identified with $H^2(C)_I/tH^2(C)_I$ and so
\begin{eqnarray}\label{describe e 2}
e:={\rm rk}_\cO(H^2(C_0)) = {\dim}_\cQ (H^2(C)_I/ t H^2(C)_I) = \sum_{k=1}^l f_k + s.
\end{eqnarray}
%Let
Recall that $C_I$ is represented by $[P \xrightarrow{\psi} P']$ (see (\ref{standard complex})). 
%$$C\otimes_{\Lambda}\Lambda_I =[P_I\xrightarrow{\psi} P'_I]$$
%be the localization of the complex $C$ in (\ref{standard complex}) and 
We set $d:= {\rm rk}_R(P)$. Note that ${\rm rk}_R(P') = d-r$. 

By choosing suitable bases of $P$ and $P'$, we may assume that $\psi: P\to P'$ is represented by a $(d-r)\times d$ matrix of the form
$$(
{\rm diag}(\underbrace{1,\ldots,1}_{d-r-e}, \overbrace {\underbrace{t, \ldots,t}_{f_1}, \underbrace{t^2,\ldots,t^2}_{f_2},\ldots \ldots,\underbrace{ t^l,\ldots, t^l}_{f_l}, \underbrace{0,\ldots,0}_s}^e)  \mid  O_{d-r,r}),$$
where ${\rm diag}(-)$ denotes the diagonal matrix and $O_{d-r,r}$ denotes the zero matrix of size $(d-r)\times r$. Let $\{b_1,\ldots, b_d\}$ and $\{c_1,\ldots,c_{d-r}\}$ be such bases of $P$ and $P'$ respectively. Then we have
$$\psi(b_i) = \begin{cases}
c_i & \text{ if $1\leq i \leq d-r-e$,}\\
tc_i & \text{ if $d-r-e < i \leq d-r-e+f_1$,}\\
t^2 c_i & \text{ if $d-r-e+f_1< i \leq d-r-e+f_1+f_2$,}\\
\vdots  \\
t^l c_i & \text{ if $d-r-e+ f_1+\cdots + f_{l-1} < i \leq d-r-e+ f_1+\cdots  + f_l$,} \\
0&\text{ if $ d-r-e+f_1+\cdots +f_l < i \leq d$.}
\end{cases}$$

We set $F := P/t P$ and $F' := P'/tP'$. Let $\varphi : F \to F'$ be the map induced by $\psi$. Note that the complex $C_0\lotimes_\cO \cQ$ is identified with $[F \xrightarrow{\varphi} F']$. We denote the images of $b_i \in P$ and $c_i \in P'$ by $\overline b_i \in F$ and $\overline c_i \in F'$ respectively. Then the map $\varphi$ is given by
$$\varphi (\overline b_i) = \begin{cases}
\overline c_i &\text{ if $1\leq i \leq d-r-e$,}\\
0 &\text{ if $d-r-e<i\leq d$.}
\end{cases}$$
So we see that $\cQ\otimes_\cO H^1(C_0) = \ker \varphi$ is generated by $\{\overline b_i \mid d-r-e < i\leq d\}$. Also, $\cQ\otimes_\cO H^2(C_0) = \coker \varphi$ is generated by $\{\overline c_i \mid d-r-e < i \leq d-r\}$. 

We shall now describe the derived Bockstein maps $\beta^{(k)}$ explicitly. Note that the connecting homomorphism $\delta_0$ used in the definition of $\beta=\beta^{(1)}$ (see (\ref{beta composition})) is the ``snake map" associated to the diagram
$$
\xymatrix{
0 \ar[r] & tP \ar[r] \ar[d] & P \ar[r] \ar[d]^\psi & F \ar[r] \ar[d]^\varphi & 0 \\
0 \ar[r]& tP' \ar[r] & P' \ar[r] & F'\ar[r] & 0.
}
$$
So we see that the map $\beta^{(1)}: \cQ\otimes_\cO H^1(C_0) \to \cQ \otimes_\cO H^2(C_0)\otimes_\cO Q^1$ is given by
$$\beta^{(1)}(\overline b_i) = \begin{cases}
\overline c_i \otimes t  & \text{ if $d-r-e< i \leq d-r-e+f_1$,}\\
0 & \text{ if $d-r-e+f_1 < i \leq d$.}
\end{cases}
$$
This shows that $e_1:= {\rm rk}_\cO(\im \beta^{(1)}) = f_1$. We see that $\ker \beta^{(1)}$ is generated by $\{\overline b_i \mid d-r-e+f_1<i \leq d\}$. Similarly, we have
$$\beta^{(2)} (\overline b_i) = \begin{cases}
\overline c_i \otimes t^2 &\text{ if $d-r-e+f_1 < i \leq d-r-e + f_1 + f_2$},\\
0 &\text{ if $d-r-e+f_1+f_2 < i \leq d$}
\end{cases}$$
and $e_2 = f_2$. In general, we have
$$\beta^{(k)}(\overline b_i) = \begin{cases}
\overline c_i \otimes t^k &\text{ if $d-r-e + f_1+\cdots +f_{k-1} < i \leq d-r-e+f_1+\cdots + f_k$,}\\
0 &\text{ if $d-r-e+f_1+\cdots + f_k < i \leq d$}
\end{cases}$$
and $e_k = f_k$. We also see that $\beta^{(k)}$ becomes zero when $k>l$ and so we have $l  = k_0$ by the definition of $k_0$. Since we have
$$s= e-\sum_{k=1}^{k_0}f_k = e-\sum_{k=1}^{k_0} e_k = \tau_{k_0}$$
by (\ref{describe e 2}) and Lemma \ref{elementary}(iii), we have proved the proposition. 
\end{proof}

%\begin{corollary}\label{cor torsion}
%If $H^2(C)$ is $\Lambda$-torsion, then we have $\tau_{k_0}=0$. 
%\end{corollary}

\begin{remark}
Let $h^{(1)}$ be the height pairing defined in Remark \ref{abstract height} and suppose that the cup product pairing induces an isomorphism $\QQ_p\otimes_{\ZZ_p} H^2(C_0) \simeq \QQ_p\otimes_{\ZZ_p} H^1(C_0)^\ast$. Then, by definition, $h^{(1)}$ is non-degenerate if and only if $\tau_1=0$. Hence, by Proposition \ref{elementary}(iv) and Proposition \ref{str h2}, we see that $h^{(1)}$ is non-degenerate if and only if $H^2(C)$ is $\Lambda$-torsion and
$H^2(C)_I \simeq (\Lambda_I/ I \Lambda_I)^e$. 
\end{remark}

The following proposition claims that the subspace $\QQ_p\otimes_{\ZZ_p} \ker \beta^{(k_0)} \subset \QQ_p\otimes_{\ZZ_p}H^1(C_0)$ coincides with the subspace of ``universal norms". 

\begin{proposition}\label{str h1}
We have a natural isomorphism
$$\QQ_p\otimes_{\ZZ_p} \ker \beta^{(k_0)}  \simeq H^1(C)_I/I  H^1(C)_I.$$
In particular, we have
$${\rm rk}_\Lambda(H^1(C)) = \sigma_{k_0}.$$
\end{proposition}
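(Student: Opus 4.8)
The plan is to identify the asserted isomorphism with the base-change map on $H^1$ localized at $I$, and to analyze it using the explicit normal form for $C_I$ set up in the proof of Proposition \ref{str h2}. The morphism $C\to C_0=C\lotimes_\Lambda\cO$ induces a base-change map $H^1(C)\otimes_\Lambda\cO\to H^1(C_0)$; localizing at the prime $I\subset\Lambda$ (using that $\cO_I=\cQ$, that $p$ becomes a unit in $\Lambda_I$ so that $H^1(C_0)_I=\cQ\otimes_\cO H^1(C_0)$, and exactness of localization) gives a canonical $\cQ$-linear map
\[
\nu:\ H^1(C)_I/IH^1(C)_I\ \longrightarrow\ \cQ\otimes_\cO H^1(C_0)=\QQ_p\otimes_{\ZZ_p}H^1(C_0),
\]
where we use that $\QQ_p\otimes_{\ZZ_p}M=\cQ\otimes_\cO M$ for any $\cO$-module $M$ since $\cQ=\QQ_p\otimes_{\ZZ_p}\cO$. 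I claim that $\nu$ is injective with image exactly $\QQ_p\otimes_{\ZZ_p}\ker\beta^{(k_0)}$; this gives the isomorphism, and the rank statement then follows by counting $\cQ$-dimensions.

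First I would show $\im\nu\subseteq \QQ_p\otimes_{\ZZ_p}\ker\beta^{(k_0)}$. This is formal: if $\bar x\in H^1(C_0)$ lifts to a class in $H^1(C)$, then $\delta_0(\bar x)=0$, since $\delta_0$ is the connecting map of $0\to IC\to C\to C_0\to 0$ (the case $k=0$ of the long exact sequence in \S\ref{def bock}). Running through Definition \ref{main def} with the choice $\widetilde a=0$ at every stage shows $\beta^{(k)}(\bar x)=0$ for every $k\ge1$, so $\bar x\in\bigcap_k\ker\beta^{(k)}=\ker\beta^{(k_0)}$. Since the image of $H^1(C)\to H^1(C_0)$ therefore lands in $\ker\beta^{(k_0)}$, localizing at $I$ gives the containment.

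Next I would establish injectivity and surjectivity onto that subspace by passing to the model $[P\xrightarrow{\psi}P']$ of $C_I$ with the bases $\{b_i\}$ of $P$ and $\{c_i\}$ of $P'$ chosen as in the proof of Proposition \ref{str h2}, so that $H^1(C)_I=H^1(C_I)=\ker\psi$. From the normal form of $\psi$, the kernel $\ker\psi$ is the $\Lambda_I$-span of precisely those basis vectors $b_i$ with $\psi(b_i)=0$ — there are $r+\tau_{k_0}=\sigma_{k_0}$ of them, namely the indices $d-r-\tau_{k_0}<i\le d$ — so $\ker\psi$ is a coordinate direct summand of $P$; consequently reduction modulo $t=I$ is injective on it, i.e.\ $\nu$ is injective, and its image is the $\cQ$-span of the corresponding $\overline b_i\in F=P/tP$. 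By the explicit computation of the $\beta^{(k)}$ in that same proof, this $\cQ$-span is exactly $\cQ\otimes_\cO\ker\beta^{(k_0)}=\QQ_p\otimes_{\ZZ_p}\ker\beta^{(k_0)}$, the index set $d-r-\tau_{k_0}<i\le d$ being the same in both descriptions. Hence $\nu$ restricts to an isomorphism onto $\QQ_p\otimes_{\ZZ_p}\ker\beta^{(k_0)}$, and ${\rm rk}_\Lambda H^1(C)={\rm rk}_{\Lambda_I}\ker\psi=\sigma_{k_0}$, using $\sum_{l=1}^{k_0}e_l=e-\tau_{k_0}$ and $r=\sigma_{k_0}-\tau_{k_0}$ from Lemma \ref{elementary}.

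Most of this is mechanical once Proposition \ref{str h2} is in hand; the one point carrying real content is the injectivity of $\nu$, which rests on $H^1(C)_I=\ker\psi$ being a direct summand of $P$. This is transparent from the normal form, but also holds abstractly: over the discrete valuation ring $\Lambda_I$ the quotient $P/\ker\psi\cong\im\psi$ is a submodule of the free module $P'$, hence free, so the sequence $0\to\ker\psi\to P\to\im\psi\to0$ splits. Finally, to justify calling the isomorphism natural, one notes that $\nu$ is built from the canonical morphism $C\to C_0$ and is independent of all auxiliary choices of bases.
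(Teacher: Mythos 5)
Your argument is correct and follows essentially the same route as the paper: the proof there likewise reads off from the normal form in Proposition \ref{str h2} that $H^1(C)_I=\ker\psi$ is spanned by the $b_i$ with $\psi(b_i)=0$ while $\QQ_p\otimes_{\ZZ_p}\ker\beta^{(k_0)}$ is spanned by the corresponding $\overline b_i$, with the same index set. The additional details you supply (the canonical construction of $\nu$ via base change, the split-summand argument for injectivity, the formal containment of the image in $\ker\beta^{(k_0)}$) merely make explicit what the paper's terser proof leaves implicit.
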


\begin{proof}
By the proof of Proposition \ref{str h2}, we see that $\cQ \otimes_{\cO}\ker \beta^{(k_0)} (= \QQ_p\otimes_{\ZZ_p}\ker \beta^{(k_0)})$ is generated by $\{\overline b_i \mid d-r-e+e_1+\cdots +e_{k_0} < i \leq d\}$. On the other hand, by the description of the map $\psi$ given there, we see that $H^1(C)_I = \ker \psi$ is generated by $\{b_i \mid  d-r-e+e_1+\cdots + e_{k_0} < i\leq d\}$. This proves the claim. 
\end{proof}

\subsection{The descent formalism}\label{sec descent}

%The aim of this subsection is to prove Theorem \ref{thm descent} below. 

%We begin with the following useful definition. 
We use the notation in \S \ref{sec reg}. Let $k_0$ be as in Definition \ref{def k0} and set
$$\sigma:=\sigma_{k_0}, \ \tau:=\tau_{k_0} \text{ and }\varrho:=\sum_{k=1}^{k_0} ke_k.$$
Let
\begin{multline*}
R^{(k_0)}: \QQ_p \otimes_{\ZZ_p}\left( {\bigwedge}_\cO^{r+e} H^1(C_0)\otimes_\cO{\bigwedge}_\cO^e H^2(C_0)^\ast\right)\\
 \xrightarrow{\sim} \QQ_p \otimes_{\ZZ_p}  \left({\bigwedge}_\cO^{\sigma} \ker \beta^{(k_0)} \otimes_\cO  {\bigwedge}_\cO^{\tau} \coker \beta^{(k_0),\ast} \otimes_\cO Q^{\rho_{k_0}}\right)
\end{multline*}
be the $k_0$-th derived Bockstein regulator isomorphism in Definition \ref{def der}. Note that  
$$\rho_{k_0} := \sum_{k=1}^{k_0} ke_k + k_0 \tau = \varrho + k_0\tau.$$
Since we have a natural surjection
$$H^2(C_0)\otimes_\cO Q^{k_0} \twoheadrightarrow \coker \beta^{(k_0)}$$
(see (\ref{quotient})), there is a natural injection
$$\QQ_p\otimes_{\ZZ_p}{\bigwedge}_\cO^\tau \coker \beta^{(k_0),\ast} \otimes_\cO Q^{\rho_{k_0}} \hookrightarrow \QQ_p\otimes_{\ZZ_p}{\bigwedge}_\cO^\tau H^2(C_0)^\ast \otimes_\cO Q^\varrho.$$
Since we have $\ker \beta^{(k_0)} \subset H^1(C_0)$, we can regard $R^{(k_0)}$ as a map
\begin{multline} \label{def der map}
R^{(k_0)}: \QQ_p\otimes_{\ZZ_p} \left( {\bigwedge}_\cO^{r+e} H^1(C_0)\otimes_\cO {\bigwedge}_{\cO}^e H^2(C_0)^\ast\right) \\
 \to \QQ_p\otimes_{\ZZ_p} \left({\bigwedge}_{\cO}^\sigma H^1(C_0) \otimes_\cO {\bigwedge}_\cO^{\tau} H^2(C_0)^\ast \otimes_\cO  Q^{\varrho}\right).
\end{multline}

\begin{theorem}\label{thm descent}
We have a commutative diagram:
$$
\small
\xymatrix@C=10pt@R=30pt{
\displaystyle {\det}_\Lambda^{-1}(C) \ar[r]^-{\pi } \ar@{->>}[dd] &\displaystyle I^{\varrho}\cdot {\bigwedge}_{\Lambda_I}^\sigma H^1(C)_I  \otimes_{\Lambda_I} {\bigwedge}_{\Lambda_I}^\tau H^2(C)_I^\ast \ar[rd]^-{\cD} & \\
 & & \displaystyle  \QQ_p\otimes_{\ZZ_p}\left({\bigwedge}_\cO^\sigma H^1(C_0)\otimes_\cO {\bigwedge}_\cO^\tau H^2(C_0)^\ast \otimes_\cO Q^{\varrho}\right)  \\
\displaystyle{\det}_\cO^{-1}(C_0) \ar[r]_-{\pi_0} &\displaystyle  \QQ_p\otimes_{\ZZ_p}\left({\bigwedge}_\cO^{r+e}H^1(C_0) \otimes_\cO {\bigwedge}_\cO^e H^2(C_0)^\ast  \right) \ar[ru]_-{ R^{(k_0)}},&%\displaystyle \QQ_p\otimes_{\ZZ_p} {\bigwedge}_\cO^r \ker \beta^{(\infty)} \otimes_\cO Q^\rho \ar[u]^\cup. 
}
$$
where the left vertical map is the natural surjection and the maps $\pi$, $\cD$ and $\pi_0$ are defined below. 
\end{theorem}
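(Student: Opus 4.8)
The plan is to make the maps $\pi$, $\pi_0$, $\cD$ explicit, reduce the commutativity to the value on a single $\Lambda$-basis vector of ${\det}_\Lambda^{-1}(C)$, and then compute both composites in the normal-form model of the proof of Proposition~\ref{str h2}. First I would spell the maps out. The map $\pi$ is the localization ${\det}_\Lambda^{-1}(C)\to{\det}_{\Lambda_I}^{-1}(C_I)$ composed with the canonical isomorphism of property~(iii) over the discrete valuation ring $\Lambda_I$,
$${\det}_{\Lambda_I}^{-1}(C_I)\simeq{\det}_{\Lambda_I}(H^1(C)_I)\otimes_{\Lambda_I}{\det}_{\Lambda_I}^{-1}(H^2(C)_I),$$
together with property~(v): the first factor is ${\bigwedge}_{\Lambda_I}^\sigma H^1(C)_I$ (as $H^1(C)_I$ is free), while the image of the second in $Q(\Lambda)\otimes_{\Lambda_I}{\bigwedge}_{\Lambda_I}^\tau H^2(C)_I^\ast$ is ${\rm Fitt}_{\Lambda_I}(H^2(C)_{I,{\rm tors}})\cdot{\bigwedge}_{\Lambda_I}^\tau H^2(C)_I^\ast$, which equals $I^\varrho\Lambda_I\cdot{\bigwedge}_{\Lambda_I}^\tau H^2(C)_I^\ast$ by Proposition~\ref{str h2} (the torsion submodule has length $\sum_{k=1}^{k_0}ke_k=\varrho$). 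Likewise $\pi_0$ is property~(iii) over $\cO$ followed by property~(v), the left vertical map is the reduction ${\det}_\Lambda^{-1}(C)\twoheadrightarrow{\det}_\Lambda^{-1}(C)\otimes_\Lambda\cO={\det}_\cO^{-1}(C_0)$ of property~(iv), and $R^{(k_0)}$ in the diagram is the composite~(\ref{def der map}). Finally I would \emph{define} $\cD$ as reduction modulo $I^{\varrho+1}$: writing $M:={\bigwedge}_{\Lambda_I}^\sigma H^1(C)_I\otimes_{\Lambda_I}{\bigwedge}_{\Lambda_I}^\tau H^2(C)_I^\ast$, a free $\Lambda_I$-module of rank one, $\cD$ is the natural surjection $I^\varrho M\twoheadrightarrow Q^\varrho\otimes_\cO(M/IM)$ composed with the identification of $M/IM$ with a subspace of $\QQ_p\otimes_{\ZZ_p}({\bigwedge}_\cO^\sigma H^1(C_0)\otimes_\cO{\bigwedge}_\cO^\tau H^2(C_0)^\ast)$ that uses Proposition~\ref{str h1} (so $H^1(C)_I/IH^1(C)_I\simeq\QQ_p\otimes_{\ZZ_p}\ker\beta^{(k_0)}$) for the $H^1$-factor and the natural injection $H^2(C)_I^\ast/IH^2(C)_I^\ast\hookrightarrow\QQ_p\otimes_{\ZZ_p}H^2(C_0)^\ast$ for the $H^2$-factor.

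Next I would reduce to one generator. Both composites $\cD\circ\pi$ and the lower path are $\Lambda$-linear maps out of the free rank-one module ${\det}_\Lambda^{-1}(C)$, with $\Lambda$ acting on the common target $\QQ_p\otimes_{\ZZ_p}({\bigwedge}_\cO^\sigma H^1(C_0)\otimes_\cO{\bigwedge}_\cO^\tau H^2(C_0)^\ast\otimes_\cO Q^\varrho)$ through $\Lambda\twoheadrightarrow\cO$; for the upper path this is because $\pi$ is $\Lambda_I$-linear and $t\cdot I^\varrho M\subseteq I^{\varrho+1}M$ is annihilated by $\cD$. Hence it suffices to check that the two maps agree on a single $\Lambda$-generator $z_0$ of ${\det}_\Lambda^{-1}(C)$. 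Localizing at $I$, the image of $z_0$ is a $\Lambda_I$-basis of ${\det}_{\Lambda_I}^{-1}(C_I)$; representing $C_I$ by $[P\xrightarrow{\psi}P']$ in the adapted bases $\{b_i\}$, $\{c_j\}$ of the proof of Proposition~\ref{str h2}, in which $\psi$ is diagonal with entries $1,\dots,1,t,\dots,t,\dots,t^{k_0},\dots,t^{k_0},0,\dots,0$, this basis vector is a unit of $\Lambda_I$ times the standard generator $g:=(b_1\wedge\cdots\wedge b_d)\otimes(c_1^\ast\wedge\cdots\wedge c_{d-r}^\ast)$ of ${\det}_{\Lambda_I}^{-1}(C_I)$, and the unit together with its reduction modulo $I$ is absorbed identically on both sides. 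Thus everything comes down to verifying the identity for $g$, where $H^1(C)_I$, $H^2(C)_I$, $H^i(C_0)$ and all the $\beta^{(k)}$ are given by the explicit formulas in the proofs of Propositions~\ref{str h2} and~\ref{str h1}.

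For the two computations: on the $\pi$-side the property-(iii) isomorphism carries $g$ to a generator of $I^\varrho M$ of the shape $t^\varrho\cdot(b_{d-\sigma+1}\wedge\cdots\wedge b_d)\otimes(\text{dual wedge on the free part of }H^2(C)_I)$, the power $t^\varrho$ accounting for the $I^\varrho$-torsion of $\coker\psi$ recorded by the diagonal entries of $\psi$; then $\cD$ detaches $t^\varrho$ as the generator of $Q^\varrho$ and reduces the two wedges modulo $I$ onto the corresponding wedges over $\ker\beta^{(k_0)}$ and $H^2(C_0)^\ast$ (using that the last $\sigma$ vectors $b_i$ also form a basis of $\ker\psi$, by Proposition~\ref{str h1}). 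On the lower path, $\pi_0$ sends the reduction of $g$ to the analogous element over $\cQ$, and $R^{(k_0)}$ is the composite of the determinant isomorphisms (\ref{map1}), (\ref{map2}), (\ref{map3}) attached to the four-term Bockstein exact sequences; in the model $\beta^{(k)}$ sends the $k$-th block of reduced vectors $\overline b_i$ isomorphically onto $\overline c_i\otimes t^k$, so (\ref{map1}) contributes the factor $Q^e$, each of (\ref{map2}) and (\ref{map3}) contributes $Q^{\tau_l}$, the total power is $Q^{\rho_{k_0}}$, and the wedges produced are exactly those on the $\pi$-side; under the injection ${\bigwedge}_\cO^\tau\coker\beta^{(k_0),\ast}\otimes_\cO Q^{\rho_{k_0}}\hookrightarrow{\bigwedge}_\cO^\tau H^2(C_0)^\ast\otimes_\cO Q^\varrho$ the power $Q^{\rho_{k_0}}$ becomes $Q^\varrho$. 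Matching the outputs termwise, including the signs in the determinant-functor isomorphisms, yields the commutativity.

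The main obstacle is the exact compatibility of the two ``canonical isomorphism'' recipes under reduction modulo $I$: property~(iii) over $\Lambda_I$ used in $\pi$ versus the chain (\ref{map1})--(\ref{map3}) over $\cO$ used in $R^{(k_0)}$. One must check that the spectral sequence of the $I$-adic filtration on $C$ (Remark~\ref{spec remark}), whose differentials $d_k^{0,1}$ are the $\beta^{(k)}$, is at the level of graded determinant modules precisely the associated graded of the exact triangles computing $H^\bullet(C)_I$ from $C_I$, so that no spurious unit or sign intervenes between the two descriptions — in particular that the power $t^\varrho$ produced on the two sides is the same element of $Q^\varrho$, not merely a unit multiple of it. I would verify this directly on the normal-form model, where the filtration jumps of $\coker\psi$, the diagonal powers of $t$ in $\psi$, and the successive blocks detected by the $\beta^{(k)}$ are manifestly the same data; the remaining work is routine sign-bookkeeping in the determinant formalism.
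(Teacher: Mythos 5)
Your proposal is correct and follows essentially the same route as the paper's own proof: reduce to the normal-form model $[P\xrightarrow{\psi}P']$ with the adapted bases from the proof of Proposition \ref{str h2}, compute $\pi$, $\cD$, $\pi_0$ and $R^{(k_0)}$ explicitly on the standard generator using the diagonal form of $\psi$ and the explicit formulas for the $\beta^{(k)}$, and match the outputs (the paper packages the "sign-bookkeeping" you defer into the explicit formula of Lemma \ref{det lemma}, applied once for each of the three canonical isomorphisms).
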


\begin{remark}
When $k_0=1$, Theorem \ref{thm descent} is essentially proved in \cite[Lemma 5.22]{bks1} and used in various settings (such as \cite[Lemma 5.17]{bks2}, \cite[Theorem 7.8]{bks4} and \cite[Proposition 5.4]{bks5}). Theorem \ref{thm descent} is a generalization of this result for derived Bockstein maps. 
\end{remark}

\begin{remark}\label{more general}
When $\sigma = \tau =0$ (i.e., $H^1(C)$ and $H^2(C)$ are $\Lambda$-torsion), Theorem \ref{thm descent} is essentially proved by Nekov\'a\v r in \cite[(11.6.12.2)]{nekovar}. 
\end{remark}

We shall define the maps appearing in the diagram in Theorem \ref{thm descent}. 

First, $\pi$ is defined as follows. Note that, by Propositions \ref{str h2} and \ref{str h1}, we have
$$\sigma = {\rm rk}_\Lambda(H^1(C)), \ \tau = {\rm rk}_\Lambda (H^2(C)) \text{ and } {\rm Fitt}_{\Lambda_I}(H^2(C)_{I, {\rm tors}}) = I^\varrho \Lambda_I.$$
We define $\pi$ to be map induced by the canonical isomorphism
$${\det}_\Lambda^{-1}(C)_I \simeq  {\det}_{\Lambda_I}(H^1(C)_I )\otimes_{\Lambda_I} {\det}_{\Lambda_I}^{-1} (H^2(C)_I) = I^{\varrho} \cdot {\bigwedge}_{\Lambda_I}^\sigma H^1(C)_I \otimes_{\Lambda_I} {\bigwedge}_{\Lambda_I}^\tau H^2(C)_I^\ast.$$
%We define $\pi$ to be the composition of this isomorphism with the natural inclusion
%$$I^{\varrho_{k_0}} \cdot {\bigwedge}_{\Lambda_I}^\sigma H^1(C)_I \otimes_{\Lambda_I} {\bigwedge}_{\Lambda_I}^\tau H^2(C)_I^\ast \hookrightarrow I^{\varrho_{k}} \cdot {\bigwedge}_{\Lambda_I}^\sigma H^1(C)_I \otimes_{\Lambda_I} {\bigwedge}_{\Lambda_I}^\tau H^2(C)_I^\ast.$$
%(Note that $\varrho_k \leq \varrho_{k_0}$.)

Next, we define $\cD$. For $x \in {\bigwedge}_{\Lambda_I}^\sigma H^1(C)_I$, we denote the image of $x$ under the natural map
$${\bigwedge}_{\Lambda_I}^\sigma H^1(C)_I \twoheadrightarrow \left( {\bigwedge}_{\Lambda_I}^\sigma H^1(C)_I\right) \otimes_{\Lambda_I} \Lambda_I/ I\Lambda_I \simeq \QQ_p \otimes_{\ZZ_p} {\bigwedge}_\cO^\sigma \ker \beta^{(k_0)} \subset \QQ_p\otimes_{\ZZ_p} {\bigwedge}_\cO^\sigma H^1(C_0)$$
by $\overline x$. (Here we used Proposition \ref{str h1} for the last isomorphism.) Similarly, for $y \in {\bigwedge}_{\Lambda_I}^\tau H^2(C)_I^\ast$, we denote its natural image in $\QQ_p\otimes_{\ZZ_p} {\bigwedge}_\cO^\tau H^2(C_0)^\ast$ by $\overline y$. Also, the image of $a \in I^{\varrho}$ in $Q^{\varrho} = I^{\varrho}/I^{\varrho+1}$ is denoted by $\overline a$. Now we define $\cD$ by
$$\cD: I^{\varrho}\cdot {\bigwedge}_{\Lambda_I}^\sigma H^1(C)_I  \otimes_{\Lambda_I} {\bigwedge}_{\Lambda_I}^\tau H^2(C)_I^\ast \to  \QQ_p\otimes_{\ZZ_p}\left({\bigwedge}_\cO^\sigma H^1(C_0)\otimes_\cO {\bigwedge}_\cO^\tau H^2(C_0)^\ast \otimes_\cO Q^{\varrho}\right)$$
$$ a \cdot x\otimes y \mapsto \overline x \otimes \overline y \otimes \overline a.$$

Finally, $\pi_0$ is defined to be the map induced by the canonical  isomorphism
\begin{multline*}
\pi_0 : \QQ_p\otimes_{\ZZ_p}{\det}_\cO^{-1}(C_0) \simeq \QQ_p\otimes_{\ZZ_p} \left( {\det}_\cO(H^1(C_0)) \otimes_\cO {\det}_\cO^{-1}(H^2(C_0))\right) \\
=\QQ_p\otimes_{\ZZ_p}\left( {\bigwedge}_\cO^{r+e}H^1(C_0) \otimes_\cO {\bigwedge}_\cO^e H^2(C_0)^\ast\right).
\end{multline*}

In the proof of Theorem \ref{thm descent}, we will use the following lemma, which will be useful when we describe $\pi$, $\pi_0$ and $R^{(k_0)}$ explicitly.

\begin{lemma}\label{det lemma}
Let $L$ be a field and 
$$0 \to W \to U\xrightarrow{\varphi} V \to X\to 0$$
an exact sequence of finite dimensional $L$-vector spaces. We set
$$a := \dim_L(U), \ b:= \dim_L(V) \text{ and } c:= \dim_L(W). $$
(Note that $\dim_L(X)=b-a+c$.) Let $\{ u_1,\ldots,u_a\}$ and $\{v_1,\ldots,v_b\}$ be bases of $U$ and $V$ respectively. We assume that $\varphi$ is given by
\begin{equation}\label{phi describe}
\varphi(u_i) = \begin{cases}
\alpha_i v_i & \text{ if $1\leq i \leq a-c$,}\\
0 &\text{ if $a-c< i \leq a$}
\end{cases}
\end{equation}
for some $\alpha_i \in L^\times$. We identify $W$ with the subspace of $U$ generated by $\{u_i \mid a-c <i\leq a\}$. Let $\{v_1^\ast,\ldots,v_b^\ast\}$ be the dual basis of $\{v_1,\ldots,v_b\}$. We identify $X^\ast$ with the subspace of $V^\ast$ generated by $\{v_i^\ast \mid a-c < i \leq b\}$. 

Then the canonical isomorphism
\begin{equation}\label{uvisom}
{\bigwedge}_L^a U \otimes_L {\bigwedge}_L^b V^\ast = {\det}_L(U)\otimes_L {\det}_L^{-1}(V) \xrightarrow{\sim} {\det}_L(W)\otimes_L {\det}_L^{-1}(X) = {\bigwedge}_L^c W \otimes_L {\bigwedge}_L^{b-a+c} X^\ast
\end{equation}
is explicitly given by
$$u_1\wedge\cdots \wedge u_a \otimes v_1^\ast \wedge\cdots \wedge v_b^\ast \mapsto \alpha_1\cdots \alpha_{a-c}\cdot u_{a-c+1}\wedge\cdots \wedge u_a \otimes v_{a-c+1}^\ast \wedge\cdots \wedge v_b^\ast.$$
\end{lemma}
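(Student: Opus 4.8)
The plan is to prove Lemma~\ref{det lemma} by tracking explicitly how the canonical determinant isomorphism of part~(ii) of the basic properties behaves on the four-term exact sequence, after first breaking it into short exact sequences. First I would split
$0 \to W \to U \xrightarrow{\varphi} V \to X \to 0$
into the two short exact sequences
$0 \to W \to U \to \im\varphi \to 0$ and $0 \to \im\varphi \to V \to X \to 0$,
and recall that the isomorphism (\ref{uvisom}) is by definition the composite of the isomorphisms
${\det}_L(U) \simeq {\det}_L(W) \otimes_L {\det}_L(\im\varphi)$ and ${\det}_L(V) \simeq {\det}_L(\im\varphi) \otimes_L {\det}_L(X)$
(the second one tensored with $-^{-1}$, and with the ${\det}_L(\im\varphi)$ factors cancelling via the evaluation isomorphism~(i)). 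So the computation reduces to understanding each short exact sequence separately on chosen bases.

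Next I would fix the bases dictated by the normal form (\ref{phi describe}). For the first sequence, $\{u_{a-c+1},\dots,u_a\}$ is a basis of $W$ and $\{\varphi(u_1),\dots,\varphi(u_{a-c})\} = \{\alpha_1 v_1,\dots,\alpha_{a-c}v_{a-c}\}$ is a basis of $\im\varphi$; the determinant isomorphism for a short exact sequence sends $u_1 \wedge \cdots \wedge u_a$ to $(u_{a-c+1}\wedge\cdots\wedge u_a) \otimes (\varphi(u_1)\wedge\cdots\wedge\varphi(u_{a-c}))$, and rewriting $\varphi(u_i) = \alpha_i v_i$ pulls out the scalar $\alpha_1\cdots\alpha_{a-c}$ together with the wedge $v_1\wedge\cdots\wedge v_{a-c}$ in ${\det}_L(\im\varphi)$. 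For the second sequence, I would use the dual basis: $\{v_1^\ast,\dots,v_{a-c}^\ast\}$ restricts to a basis of $(\im\varphi)^\ast$ (viewing $\im\varphi$ as the span of $v_1,\dots,v_{a-c}$, whose dual basis is the restriction of $v_1^\ast,\dots,v_{a-c}^\ast$), and $\{v_{a-c+1}^\ast,\dots,v_b^\ast\}$ spans $X^\ast$ under the stated identification of $X^\ast$ with a subspace of $V^\ast$. The determinant isomorphism for the dualized second sequence then sends $v_1^\ast\wedge\cdots\wedge v_b^\ast$ to $(v_1^\ast\wedge\cdots\wedge v_{a-c}^\ast) \otimes (v_{a-c+1}^\ast\wedge\cdots\wedge v_b^\ast)$ in ${\det}_L^{-1}(\im\varphi)\otimes_L{\det}_L^{-1}(X)$.

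Finally I would assemble the two pieces: the element $u_1\wedge\cdots\wedge u_a \otimes v_1^\ast\wedge\cdots\wedge v_b^\ast$ maps to
$\alpha_1\cdots\alpha_{a-c}\,(u_{a-c+1}\wedge\cdots\wedge u_a) \otimes (v_1\wedge\cdots\wedge v_{a-c}) \otimes (v_1^\ast\wedge\cdots\wedge v_{a-c}^\ast) \otimes (v_{a-c+1}^\ast\wedge\cdots\wedge v_b^\ast)$,
and the middle factor $(v_1\wedge\cdots\wedge v_{a-c})\otimes(v_1^\ast\wedge\cdots\wedge v_{a-c}^\ast) \in {\det}_L(\im\varphi)\otimes_L {\det}_L^{-1}(\im\varphi)$ is sent to $1$ under the evaluation isomorphism~(i), since $v_1,\dots,v_{a-c}$ and $v_1^\ast,\dots,v_{a-c}^\ast$ are dual bases. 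This leaves exactly $\alpha_1\cdots\alpha_{a-c}\,(u_{a-c+1}\wedge\cdots\wedge u_a)\otimes(v_{a-c+1}^\ast\wedge\cdots\wedge v_b^\ast)$, as claimed.

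The main obstacle, and the only place demanding care rather than routine bookkeeping, is pinning down the precise sign and normalization conventions in the determinant functor: the isomorphism in property~(ii) and the evaluation map in property~(i) carry implicit orderings and Koszul-type signs, so I need to check that with the ordering of bases chosen above no extra sign intervenes (in particular that $u_1\wedge\cdots\wedge u_a$ decomposes as the $W$-part wedged with the $\im\varphi$-part in that order, not the reverse). Once the conventions of \cite{kbook} and \cite{bks1} are fixed, this is a direct verification, and the scalar $\alpha_1\cdots\alpha_{a-c}$ is manifestly convention-independent; the essential content of the lemma is just that the canonical isomorphism computes this product of the nonzero elementary divisors of $\varphi$.
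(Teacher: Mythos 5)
Your argument is correct: splitting the four-term sequence into $0 \to W \to U \to \im\varphi \to 0$ and $0 \to \im\varphi \to V \to X \to 0$, computing each determinant isomorphism on the bases adapted to the normal form of $\varphi$, and cancelling the $\det_L(\im\varphi)$ factors by evaluation is exactly the standard verification; the paper itself declares the lemma well-known and omits the proof, so you are supplying the argument the author had in mind rather than deviating from it. The one point you flag --- possible Koszul-type signs from the ordering of the $W$-part and the $\im\varphi$-part --- is genuine but is absorbed by the graded-determinant conventions of \cite{kbook} and \cite{bks1} that the paper fixes at the outset, and in any case the scalar $\alpha_1\cdots\alpha_{a-c}$, which is the content actually used in the proof of Theorem \ref{thm descent}, is unaffected.
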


\begin{proof}
This is well-known and we omit the proof. 
\end{proof}

\begin{remark}
More generally, if we assume
$$\varphi(u_i) = \begin{cases}
\displaystyle \sum_{j=1}^{a-c} \alpha_{ij} v_j &\text{ if $1\leq i \leq a-c$,}\\
0 &\text{ if $a-c < i \leq a$}
\end{cases}$$
(for some $\alpha_{ij} \in L$) instead of (\ref{phi describe}), then the isomorphism (\ref{uvisom}) is given by
$$u_1 \wedge \cdots \wedge u_a\otimes v_1^\ast \wedge \cdots \wedge v_b^\ast \mapsto  \det(\alpha_{ij})_{1\leq i,j\leq a-c} \cdot u_{a-c+1}\wedge\cdots \wedge u_a \otimes v_{a-c+1}^\ast \wedge\cdots \wedge v_b^\ast.$$
\end{remark}

We now give a proof of Theorem \ref{thm descent}. 

\begin{proof}[Proof of Theorem \ref{thm descent}]
We set $R:=\Lambda_I$. Let $\cQ$ be the quotient field of $\cO$, which is identified with the residue field of $R$. Recall from (\ref{standard complex}) that $C_I$ is represented by $[P \xrightarrow{\psi} P']$ and set $d:={\rm rk}_R(P)$. Recall from the proof of Proposition \ref{str h2} that we can choose bases $\{b_1,\ldots,b_d\}$ and $\{c_1,\ldots,c_{d-r}\}$ of $P$ and $P'$ respectively so that $\psi$ is given by
$$\psi(b_i) = \begin{cases}
c_i & \text{ if $1\leq i \leq d-r-e$,}\\
tc_i & \text{ if $d-r-e < i \leq d-r-e+e_1$,}\\
t^2 c_i & \text{ if $d-r-e+e_1< i \leq d-r-e+e_1+e_2$,}\\
\vdots  \\
t^{k_0} c_i & \text{ if $d-r-e+ e_1+\cdots + e_{k_0-1} < i \leq d-r-e+ e_1+\cdots  + e_{k_0}$,} \\
0&\text{ if $ d-r-e+e_1+\cdots +e_{k_0} < i \leq d$.}
\end{cases}$$
Note that $\sigma = r+\tau = r+ e-e_1-\cdots -e_{k_0}$ (see Lemma \ref{elementary}(iii)). Letting $\fQ$ be the quotient field of $R$ and applying Lemma \ref{det lemma} to the exact sequence
$$0\to \fQ\otimes_R H^1(C)_I \to \fQ\otimes_R P \xrightarrow{\psi} \fQ\otimes_R P'\to \fQ\otimes_R H^2(C)_I \to 0,$$
we see that the map
$$\pi: {\det}_\Lambda^{-1}(C)_I = {\bigwedge}_R^d P \otimes_R {\bigwedge}_R^{d-r} (P')^\ast \to I^{\varrho}\cdot {\bigwedge}_{R}^\sigma H^1(C)_I  \otimes_{R} {\bigwedge}_{R}^\tau H^2(C)_I^\ast$$
is explicitly given by
$$b_1\wedge\cdots \wedge b_d \otimes c_1^\ast\wedge\cdots \wedge c_{d-r}^\ast \mapsto t^{\varrho}\cdot b_{d-\sigma+1}\wedge\cdots \wedge b_d \otimes c_{d-\sigma+1}^\ast \wedge\cdots \wedge c_{d-r}^\ast.$$
So we have
$$(\cD \circ \pi) (b_1\wedge\cdots \wedge b_d \otimes c_1^\ast\wedge\cdots \wedge c_{d-r}^\ast)= \overline {b_{d-\sigma+1}\wedge\cdots \wedge b_d} \otimes \overline {c_{d-\sigma+1}^\ast \wedge\cdots \wedge c_{d-r}^\ast}\otimes t^\varrho.$$

As in the proof of Proposition \ref{str h2}, we set $F:=P/tP$ and $F':=P'/tP'$ and let $\varphi: F\to F'$ be the map induced by $\psi$. Let $\overline b_i \in F$ and $\overline c_i \in F'$ be the images of $b_i \in P$ and $c_i \in P'$ respectively. Then, since $\varphi$ is given by
$$\varphi (\overline b_i) = \begin{cases}
\overline c_i &\text{ if $1\leq i \leq d-r-e$,}\\
0 &\text{ if $d-r-e<i\leq d$,}
\end{cases}$$
Lemma \ref{det lemma} implies that the map
$$\pi_0: \QQ_p \otimes_{\ZZ_p} {\det}_\cO^{-1}(C_0) = {\bigwedge}_\cQ^d F \otimes_\cQ {\bigwedge}_\cQ^{d-r} (F')^\ast \to \QQ_p\otimes_{\ZZ_p}\left({\bigwedge}_\cO^{r+e}H^1(C_0) \otimes_\cO {\bigwedge}_\cO^e H^2(C_0)^\ast  \right) $$
is explicitly given by
$$\overline b_1\wedge\cdots \wedge \overline b_d \otimes \overline c_1^\ast \wedge\cdots \wedge \overline c_{d-r}^\ast \mapsto  \overline b_{d-r-e+1}\wedge \cdots \wedge \overline b_d \otimes \overline c_{d-r-e+1}^\ast \wedge\cdots \wedge \overline c_{d-r}^\ast.$$

To complete the proof, it is now sufficient to show that the map $R^{(k_0)}$ is given by
$$\overline b_{d-r-e+1}\wedge \cdots \wedge \overline b_d \otimes \overline c_{d-r-e+1}^\ast \wedge\cdots \wedge \overline c_{d-r}^\ast \mapsto \overline b_{d-\sigma+1}\wedge\cdots \wedge \overline b_d \otimes \overline c_{d-\sigma+1}^\ast \wedge\cdots \wedge \overline c_{d-r}^\ast\otimes t^\varrho.$$
But this is proved by applying Lemma \ref{det lemma} again, since each $\beta^{(k)}$ is explicitly given by
$$\beta^{(k)}(\overline b_i) = \begin{cases}
\overline c_i \otimes t^k &\text{ if $d-r-e + e_1+\cdots +e_{k-1} < i \leq d-r-e+e_1+\cdots + e_k$,}\\
0 &\text{ if $d-r-e+e_1+\cdots + e_k < i \leq d$.}
\end{cases}$$
(See the proof of Proposition \ref{str h2}.)
\end{proof}

\section{A conjecture of Bertolini and Darmon}\label{first application}

In this section, as an application of the descent formalism in the previous section, we prove that a conjecture of Birch and Swinnerton-Dyer type for Heegner points formulated by Bertolini and Darmon in \cite{BD} follows from the Heegner point main conjecture up to a $p$-adic unit (see Theorem \ref{main}). 

\subsection{Notation and hypotheses}\label{sec bd}
Let $p$ be an odd prime number and $K$ an imaginary quadratic field in which $p$ does not ramify. Let $E$ be an elliptic curve defined over $\QQ$ with conductor $N$. We often regard $E$ as an elliptic curve over $K$. We denote the Tate-Shafarevich group for $E/K$ by $\sha(E/K)$. Throughout this section, we assume the following. 
\begin{hypothesis}\label{heegner hyp}\ 
\begin{itemize}
%\item[(i)] $p$ is an odd prime number which does not ramify in $K$,
\item[(i)] (Heegner hypothesis) every prime divisor of $N$ splits in $K$,
\item[(ii)] $E$ has good ordinary reduction at $p$,
\item[(iii)] $E(K)[p]=0$,
\item[(iv)] $\sha(E/K)[p^\infty]$ is finite. 
\end{itemize}
\end{hypothesis}
Let $T:=T_p(E)$ be the $p$-adic Tate module of $E$. We set
$$V:= \QQ_p\otimes_{\ZZ_p} T \text{ and }A:=V/T.$$
%Let $T^\ast(1):=\Hom_{\ZZ_p}(T,\ZZ_p(1))$ be the Kummer dual and we set $V^\ast(1):=\QQ_p\otimes_{\ZZ_p} T^\ast(1)$ and $A^\ast(1):=V^\ast(1)/T^\ast(1)$. 
Let $K_\infty/K$ be the anticyclotomic $\ZZ_p$-extension and $K_n$ its $n$-th layer. We set
$$\Gamma_n:=\Gal(K_n/K), \ \Gamma:=\Gal(K_\infty/K) \text{ and }\Lambda:=\ZZ_p[[\Gamma]].$$
Let
$$\TT:=T\otimes_{\ZZ_p}\Lambda $$
be the induced module, on which $G_K$ acts by 
$$\sigma\cdot (t \otimes\lambda) := \sigma t \otimes \overline \sigma^{-1}\lambda \quad (\sigma \in G_K, \ t \in T, \ \lambda \in \Lambda),$$
where $\overline \sigma^{-1} \in \Gamma$ denotes the image of $\sigma^{-1} \in G_K$. $\TT$ is denoted by $\mathscr{F}_\Gamma(T) $ in \cite[(8.3.1)]{nekovar} (see also \cite[Proposition 8.4.4.1]{nekovar}). Note that $\TT$ is identified with the inverse limit $\varprojlim_n {\rm Ind}_{K_n/K}(T)$. We also consider the direct limit
$$\bA:= \varinjlim_n {\rm Ind}_{K_n/K}(A),$$
which is denoted by $F_\Gamma(A)$ in \cite[(8.3.1)]{nekovar}. 
%$\bA^-$ is defined in the same way by replacing $A$ by $A^-$. 
%Let $K, K_\infty,E,T$ be as in Example \ref{ex}(iii). In this section, we assume that $E$ has good ordinary reduction at $p$. Throughout this section, we assume Hypothesis \ref{hypk}.

\subsection{Selmer complexes and height pairings}\label{review height}

Let $S$ be the set of places of $K$ which divide $pN \infty$. 
Let $X \in \{T,V,A,\TT, \bA\}$. Since $E$ has good ordinary reduction at $v \mid p$, we have a canonical $G_{K_v}$-submodule $X^+ =X^+_v\subset X$ (see \cite[(9.6.7.2)]{nekovar} for example). We set $X^-:=X/X^+$. For $v\mid N$, we denote the maximal unramified extension of $K_v$ by $K_v^{\rm ur}$ and set 
$$\rgamma_{\rm ur}(K_v, X):=\rgamma(K_v^{\rm ur}/K_v, X^{G_{K_v^{\rm ur}}}) \text{ and } \rgamma_{/\rm ur}(K_v,X):={\rm Cone}\left(\rgamma_{\rm ur}(K_v,X) \to \rgamma(K_v,X) \right).$$
We define a Selmer complex $\widetilde \rgamma_f(K,X)$ by the exact triangle
\begin{equation}\label{triangle}
\widetilde \rgamma_f(K,X)\to \rgamma(\cO_{K,S},X) \to \bigoplus_{v \mid p}\rgamma(K_v,X^-) \oplus \bigoplus_{v \mid N} \rgamma_{/\rm ur}(K_v,X).
\end{equation}
(See \cite[(6.1.3.2)]{nekovar}.) Here we set $\rgamma(\cO_{K,S}, X):=\rgamma_{\rm cont}(G_{K,S},X)$ (see \cite[(5.1)]{nekovar}). We denote $H^i(\widetilde \rgamma_f(K,X))$ by $\widetilde H^i_f(K,X)$. 

%Note that $H^0(K,X)=0$, so we have $\widetilde H_f^0(K,X)=0$. 

%Our assumption $E(K)[p]=0$ implies that $\widetilde \rgamma_f(K,\TT)$ is acyclic outside degrees one and two (see \cite[Proposition 9.7.2(iv)]{nekovar}). A standard argument shows that 
Let $I$ be the augmentation ideal of $\Lambda$. 
By \cite[Proposition 9.7.6(iii)]{nekovar}, we know that $\widetilde \rgamma_f(K,\TT) \lotimes_\Lambda \Lambda_I$ is represented by a complex of the form (\ref{standard complex}) (with $P=P'$). Also, we know that the ``control theorem"
$$\widetilde \rgamma_f(K,\TT) \lotimes_\Lambda \ZZ_p \simeq \widetilde \rgamma_f(K,T)$$
holds (see \cite[Proposition 8.10.1]{nekovar}). 

%We apply constructions in \S \ref{sec derived} for the Selmer complex
%$$C_\infty=\widetilde \rgamma_{f}(K,\TT).$$
%In this case, note that
%$$C_0:=C_\infty\lotimes_\Lambda \ZZ_p \simeq \widetilde \rgamma_f(K,T). $$

Hence, we can apply the construction in \S \ref{sec derived} with $C= \widetilde \rgamma_f(K,\TT)$. Recall that we set $Q^k:=I^k/I^{k+1}$. 
Let $\widetilde \beta^{(k)}:=\beta^{(k)}(\widetilde \rgamma_f(K,\TT))$ be the derived Bockstein map (see Definition \ref{main def}):
$$\widetilde\beta^{(1)}: \widetilde H^1_f(K,T) \to \widetilde H^2_f(K,T) \otimes_{\ZZ_p} Q^1,$$
$$\widetilde\beta^{(k)}: \ker \widetilde\beta^{(k-1)}\to \coker \widetilde\beta^{(k-1)}\otimes_{\ZZ_p} Q^1.$$

By our assumption that $\sha(E/K)[p^\infty]$ is finite, we have a natural identification
$$\widetilde H_f^1(K,V) = \QQ_p \otimes_\ZZ E(K). $$
(See \cite[Lemma 9.6.3 and Lemma 9.6.7.3(iv)]{nekovar}.) Moreover, the Kummer dual $V^\ast(1)$ of $V$ is identified with $V$ via the Weil pairing and so 
$$\widetilde H^2_f(K,V) \simeq \widetilde H^1_f(K,V)^\ast$$
by duality (see \cite[Proposition 9.7.3(iii)]{nekovar}). Therefore, the Bockstein map $\widetilde \beta^{(1)}$ induces a pairing
$$\widetilde h^{(1)}: E(K)\times E(K) \to \QQ_p\otimes_{\ZZ_p}Q^1. $$
This coincides (up to sign) with the classical $p$-adic height pairing (see \cite[Theorem 11.3.9]{nekovar}). 

As noted in Remark \ref{spec remark}, $\widetilde \beta^{(k)}$ coincides with the differential $d_k^{0,1}$ of the spectral sequence starting with $E_1^{i,j}= \widetilde H^{i+j}_f(K,T)\otimes_{\ZZ_p} Q^i $. 
So we have 
$$\ker \widetilde \beta^{(k-1)} = E_k^{0,1} \text{ and }\coker \widetilde \beta^{(k-1)}\otimes_{\ZZ_p}Q^1 = E_k^{k,2-k}.$$
Let
$$\cup_k: E_k^{k,2-k}\times E_k^{0,1} \to Q^k$$
be the cup product defined in \cite[(11.5.3)]{nekovar}.
Following \cite[(11.5.5)]{nekovar}, we define a $k$-th derived height pairing
$$\widetilde h^{(k)}: \ker \widetilde \beta^{(k-1)} \times \ker \widetilde \beta^{(k-1)} \to Q^k$$
by
$$\widetilde h^{(k)}(a,b) := \widetilde \beta^{(k)}(a)\cup_k b.$$
This pairing has the following properties. 

\begin{proposition}[Nekov\'a\v r]\label{higher height}\ 
\item[(i)] $\widetilde h^{(k)}$ is symmetric (resp. alternating) if $k$ is odd (resp. even). 
\item[(ii)] Let $\tau \in \Gal(K/\QQ)$ be the complex conjugation. Then we have
$$\widetilde h^{(k)}(\tau a, \tau b)= (-1)^k \widetilde h^{(k)}(a,b). $$
\item[(iii)] $\widetilde h^{(k)}$ coincides (up to sign) with the ``derived $p$-adic height" defined by Howard in \cite{howard der}. 
\end{proposition}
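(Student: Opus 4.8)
The plan is to deduce all three assertions from Nekov\'a\v r's theory of Bockstein pairings in \cite[\S\S 11.5--11.6]{nekovar}, the main work being to match the maps $\widetilde\beta^{(k)}$ and $\cup_k$ introduced above with those of loc. cit. and then to read off the stated properties. The essential input is the cup product structure on the Selmer complex: since $E$ is defined over $\QQ$ and $V\simeq V^\ast(1)$ via the (alternating) Weil pairing, the complex $\widetilde\rgamma_f(K,\TT)$ is self-dual over $\Lambda$ up to a degree shift, and global and local duality (\cite[\S 6.3]{nekovar}, see also \cite[Proposition 9.7.3]{nekovar}) furnish a graded-commutative cup product pairing on $\widetilde\rgamma_f(K,\TT)$ which is compatible with the $I$-adic filtration, in the sense that the $I^i$-part of one factor and the $I^j$-part of the other are paired into the $I^{i+j}$-part of the target. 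The first step is to make this filtered compatibility precise; it is the hypothesis under which Nekov\'a\v r's formalism for paired spectral sequences operates, the two inputs here both being the Bockstein spectral sequence of Remark \ref{spec remark} (whose $E_1$-page is $\widetilde H^{\bullet}_f(K,T)\otimes Q^{\bullet}$ by the control theorem).

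Granting this, part (i) is the output of \cite[\S 11.5]{nekovar}: the analysis of the paired Bockstein spectral sequences --- combining the graded-commutativity of the cup product, the Koszul signs in the multiplicative structure of the spectral sequences, and the alternating type of the Weil pairing --- yields $\widetilde h^{(k)}(a,b)=(-1)^{k+1}\widetilde h^{(k)}(b,a)$, and since $p$ is odd this is the symmetry for $k$ odd and the alternating property for $k$ even. What then remains is only to verify that our $\widetilde\beta^{(k)}$ and $\cup_k$, defined in \S \ref{def bock} and Remark \ref{abstract height}, coincide with Nekov\'a\v r's differential $d_k^{0,1}$ and his cup product $\cup_k$; this is immediate from the constructions, using $\widetilde h^{(k)}(a,b)=\widetilde\beta^{(k)}(a)\cup_k b=d_k^{0,1}(a)\cup_k b$.

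Part (ii) I would prove directly. The anticyclotomic tower is generalized dihedral, $\Gal(K_\infty/\QQ)\cong\Gamma\rtimes\langle\tau\rangle$ with $\tau\gamma\tau^{-1}=\gamma^{-1}$, so complex conjugation acts on $\widetilde\rgamma_f(K,\TT)$ semilinearly over the involution $\iota\colon\gamma\mapsto\gamma^{-1}$ of $\Lambda$ (by the functoriality of Selmer complexes under change of group, \cite[\S 8]{nekovar}), and this action is compatible with the $I$-adic filtration since $\iota(I)=I$. Fixing a topological generator $\gamma$ and setting $t=\gamma-1$, one computes $\iota(t)=\gamma^{-1}-1=-t(1+t)^{-1}$, hence $\iota(t^k)\equiv(-1)^k t^k\pmod{I^{k+1}}$; that is, $\tau$ acts on $Q^k$ by $(-1)^k$. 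Since $\widetilde\beta^{(k)}$ and $\cup_k$ are built functorially from the filtered complex and its self-duality, they are $\tau$-equivariant, so $\widetilde h^{(k)}(\tau a,\tau b)=\tau\cdot\widetilde h^{(k)}(a,b)$; as the value lies in $\QQ_p\otimes_{\ZZ_p}Q^k$, on which $\tau$ acts by $(-1)^k$, this equals $(-1)^k\widetilde h^{(k)}(a,b)$.

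For part (iii), Howard's $k$-th derived height in \cite{howard der} is built by iterating a Bockstein-type map over the layers $K_n$ and then pairing; unwinding that recursion --- or invoking the comparison already carried out in \cite[\S 11]{nekovar} --- identifies it, up to sign, with the composite $\ker\widetilde\beta^{(k-1)}\xrightarrow{\widetilde\beta^{(k)}}\coker\widetilde\beta^{(k-1)}\otimes_{\ZZ_p}Q^1\xrightarrow{\cup_k}Q^k$, that is, with $\pm\widetilde h^{(k)}$. I expect the main obstacle to be the sign bookkeeping underlying part (i) --- propagating the Koszul rule through the filtered cup product, the multiplicative structure of the paired spectral sequences, and the alternating type of the Weil pairing --- together with the parallel matching of conventions required by the comparison in (iii); part (ii), by contrast, is short and self-contained once the semilinear $\tau$-action is recorded.
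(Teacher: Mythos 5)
Your proposal is correct and takes essentially the same route as the paper, whose proof of this proposition is nothing more than the citations \cite[Proposition 11.5.6(ii)]{nekovar}, \cite[Proposition 11.5.9(iii)]{nekovar} and \cite[Proposition 11.8.4]{nekovar}, the identification of $\widetilde\beta^{(k)}$ and $\cup_k$ with Nekov\'a\v r's $d_k^{0,1}$ and his cup product being built into the definitions in \S 3.2. Your direct dihedral computation for (ii) (semilinearity over $\iota$ and $\iota(t^k)\equiv(-1)^kt^k \pmod{I^{k+1}}$) is exactly the argument underlying \cite[Proposition 11.5.9(iii)]{nekovar}, so it is a self-contained substitute for that citation rather than a genuinely different approach.
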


\begin{proof}
See \cite[Proposition 11.5.6(ii)]{nekovar},  \cite[Proposition 11.5.9(iii)]{nekovar} and \cite[Proposition 11.8.4]{nekovar}.
\end{proof}

\subsection{Formulation of the conjecture}\label{formulation bd}

%We formulate the conjecture under the following mild hypothesis. 

%\begin{hypothesis}\label{rank one}
%${\rm rk}_\Lambda(\widetilde H^1_f(K,\TT))=1$. 
%\end{hypothesis}

%\begin{remark}
%By duality, Hypothesis \ref{rank one} is equivalent to ${\rm rk}_\Lambda(\widetilde H^2_f(K,\TT))=1$. 
%\end{remark}

We formulate a slightly modified version of the conjecture of Bertolini and Darmon \cite[Conjecture 4.5(1)]{BD}. 

We first note that the $\Lambda$-rank of $\widetilde H^1_f(K,\TT)$ is known to be one (see \cite[Lemma 2.3]{nekovar parity} and \cite[Theorem 5.5.2]{CGS}). Since the Euler-Poincar\'e characteristic of $\widetilde \rgamma_f(K,\TT)$ is zero (see \cite[Theorem 7.8.6]{nekovar}), we know also that ${\rm rk}_\Lambda(\widetilde H^2_f(K,\TT)) = 1$. 

We use the notation in \S \ref{sec reg}. In the present setting with $C=\widetilde \rgamma_f(K,\TT)$, note that we have
$$r=0, \ \sigma_k=\tau_k = {\rm rk}_{\ZZ_p}(\ker \widetilde \beta^{(k)}) \text{ and } e={\rm rk}_\ZZ(E(K)).$$
We set $e_k:={\rm rk}_{\ZZ_p}(\im \widetilde \beta^{(k)}) = \sigma_{k-1}-\sigma_k$. Recall from Definition \ref{def k0} that $k_0 \geq 1$ is the minimal integer such that $e_k=0$ for all $k> k_0$. Since ${\rm rk}_\Lambda(\widetilde H^1_f(K,\TT))=1$, we have $\sigma_{k_0}=\tau_{k_0}=1$ (see Propositions \ref{str h2} and \ref{str h1}). So the $k_0$-th derived Bockstein regulator map in (\ref{def der map}) becomes a map
$$\widetilde R^{(k_0)} := R^{(k_0)}(\widetilde \rgamma_f(K,\TT)): \QQ_p \otimes_\ZZ \left( {\bigwedge}_\ZZ^e E(K) \otimes_\ZZ {\bigwedge}_\ZZ^e E(K)\right)\to \QQ_p\otimes_{\ZZ_p} \left( E(K) \otimes_\ZZ E(K)  \otimes_{\ZZ} Q^{\varrho}\right)$$
with $\varrho:=\sum_{k=1}^{k_0} ke_k$.

\begin{remark}\label{rem order}
We set 
$$r^+ := {\rm rk}_\ZZ (E(\QQ)), \ r^- := e-r^+ \text{ and }\nu:=2(\max\{r^+,r^-\}-1).$$
Then we have
$$\varrho \geq \nu.$$
In fact, since $\tau_{k_0}=1$, we know by Lemma \ref{elementary}(iii) that
$$\varrho  = e+ \sum_{k=1}^{k_0-1}\tau_k - k_0=e-1 + \sum_{k=1}^{k_0-1} (\tau_{k} -1).$$
By Proposition \ref{higher height}(ii), we have $\tau_1 \geq |r^+-r^-|$ (see \cite[Remark 3.5]{AC}). Also, we have $\tau_k \geq \tau_{k_0}=1$ for any $k\geq 2$. Hence we have
$$\varrho \geq e-1 + |r^+-r^-| -1 = \nu.$$
\end{remark}

\begin{remark}\label{MBD}
According to a conjecture of Mazur and Bertolini-Darmon (see \cite[Conjecture 3.8]{AC} for example), we should have
$$\tau_1 = |r^+-r^-| \text{ and } \tau_k=1 \text{ for $k\geq2$}$$
and so
$$e_1= e-|r^+-r^-| (= 2 \min\{r^+,r^-\}), \ e_2 = |r^+-r^-|-1 \text{ and } e_k = 0 \text{ for $k>2$}.$$
So conjecturally we have
$$k_0 = \begin{cases}
1 & \text{ if $|r^+-r^-|=1$,}\\
2 & \text{ if $|r^+-r^-|>1$}
\end{cases}
$$
and
$$
\varrho= e_1 + 2e_2 = \nu.
$$
\end{remark}

\begin{definition}\label{bd reg}
Let $\{x_1,\ldots,x_e\}$ be a $\ZZ$-basis of $E(K)_{\rm tf}$. We define the {\it Bertolini-Darmon derived regulator}
$${\rm Reg}^{\rm BD} \in \QQ_p\otimes_{\ZZ_p}(E(K)\otimes_\ZZ E(K)\otimes_\ZZ Q^\nu)$$
by
$${\rm Reg}^{\rm BD}:= \begin{cases}
{\widetilde R}^{(k_0)}((x_1\wedge\cdots \wedge x_e)\otimes (x_1\wedge\cdots \wedge x_e))  &\text{ if $\varrho = \nu$},\\
0 &\text{ if $\varrho > \nu$.}
\end{cases}$$
(This is obviously independent of the choice of $\{x_1,\ldots,x_e\}$.)
\end{definition}

%\begin{remark}
%By definition, the map ${\widetilde R}^{(k_0)}$ is injective and so ${\rm Reg}^{\rm BD}$ is non-zero. 
%\end{remark}

Fix a modular parametrization $\phi: X_0(N)\to E$ and let $y_K \in E(K)$ be the Heegner point associated to $\phi$. Let $c_\phi$ be the Manin constant and set $u_K:=\# \cO_K^\times /2$. Let $\alpha \in \ZZ_p^\times$ be the unit root of the Hecke polynomial $X^2-a_p X +p$.  
Let
$$z_\infty=(z_n)_n \in \varprojlim_n (\ZZ_p\otimes_\ZZ E(K_n))$$
be the system of regularized Heegner points (see \cite[\S 2.5]{BD}). We note that $z_0$ is related with $y_K$ by
$$z_0 = \frac{1}{u_K}L_p \cdot y_K,$$
where $L_p \in \ZZ_p$ is defined by
\begin{equation}\label{lpdef}
L_p =\begin{cases}
1-\alpha^{-2} & \text{ if $p$ is inert in $K$},\\
(1-\alpha^{-1})^2 & \text{ if $p$ is split in $K$.}
\end{cases}
\end{equation}

We set
$$\theta_n:=\sum_{\sigma \in \Gamma_n} \sigma z_n \otimes \sigma^{-1} \in (\ZZ_p\otimes_\ZZ E(K_\infty)) \otimes_{\ZZ_p} \ZZ_p[\Gamma_n].$$
Then one sees that the map induced by the natural surjection $\ZZ_p[\Gamma_{n+1}]\twoheadrightarrow \ZZ_p[\Gamma_n]$ sends $\theta_{n+1}$ to $\theta_n$ and we can define
$$\theta := (\theta_n)_n \in \varprojlim_n (\ZZ_p\otimes_\ZZ E(K_\infty)) \otimes_{\ZZ_p} \ZZ_p[\Gamma_n] = E(K_\infty)\otimes_{\ZZ}\Lambda.$$
Let $\theta^\iota$ be the image of $\theta$ under the map induced by the involution $\iota: \Gamma \to \Gamma; \ \gamma \mapsto \gamma^{-1}$. 
We define
$$\cL:=\theta\otimes \theta^\iota \in(E(K_\infty)\otimes_{\ZZ}\Lambda) \otimes_\Lambda (E(K_\infty)\otimes_{\ZZ}\Lambda)\simeq  E(K_\infty)\otimes_\ZZ E(K_\infty) \otimes_\ZZ \Lambda$$

We now state the conjecture of Bertolini and Darmon. We denote the product of Tamagawa factors of $E/K$ by ${\rm Tam}(E/K)$. 

\begin{conjecture}\label{bd}
Set $\nu := 2 (\max\{r^+,r^-\}-1)$. 
%Set $\varrho:=\sum_{k=1}^\infty ke_k$. 
\begin{itemize}
\item[(i)] We have $\cL \in E(K_\infty)\otimes_\ZZ E(K_\infty)\otimes_\ZZ I^\nu$.
\item[(ii)] The image $\overline \cL$ of $\cL$ in $E(K_\infty)\otimes_\ZZ E(K_\infty)\otimes_\ZZ Q^\nu$ is contained in $E(K)\otimes_\ZZ E(K)\otimes_\ZZ Q^\nu$ and we have
$$\overline \cL = (c_\phi L_p)^2 \cdot \frac{\# \sha(E/K)\cdot {\rm Tam}(E/K) }{\#E(K)_{\rm tors}^2}\cdot {\rm Reg}^{\rm BD} $$
in $ \QQ_p\otimes_{\ZZ_p}\left( E(K)\otimes_\ZZ E(K) \otimes_\ZZ Q^\nu\right)$. (Here $\sha(E/K)$ is assumed to be finite.)
\end{itemize}
\end{conjecture}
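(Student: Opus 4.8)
The plan is to prove Theorem \ref{main}, i.e. to deduce Conjecture \ref{bd} up to a $p$-adic unit from Perrin-Riou's Heegner point main conjecture, by applying the descent formalism of Theorem \ref{thm descent} to the Selmer complex $C=\widetilde\rgamma_f(K,\TT)$. In this case $r=0$ and $\sigma=\tau=1$, so the diagram of Theorem \ref{thm descent} reduces to
\begin{multline*}
{\det}_\Lambda^{-1}(C)\xrightarrow{\pi} I^\varrho\cdot\bigl(H^1(C)_I\otimes_{\Lambda_I}H^2(C)_I^\ast\bigr)\\
\xrightarrow{\cD}\QQ_p\otimes_{\ZZ_p}\bigl(\widetilde H^1_f(K,T)\otimes\widetilde H^2_f(K,T)^\ast\otimes Q^\varrho\bigr),
\end{multline*}
the lower route factoring through ${\det}_{\ZZ_p}^{-1}(\widetilde\rgamma_f(K,T))$, $\pi_0$, and $\widetilde R^{(k_0)}$. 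The first task is to identify the abstract special element $z$ of the (IMC): via the global Kummer map along the anticyclotomic tower the system $(z_n)_n$ yields a $\Lambda$-adic Heegner class $z_\infty\in\widetilde H^1_f(K,\TT)$, and Nekov\'a\v r's control and duality theorems for Selmer complexes \cite{nekovar}, together with the self-duality $V\cong V^\ast(1)$ furnished by the Weil pairing, give (rationally) a canonical identification of $\widetilde H^1_f(K,\TT)\otimes_\Lambda\widetilde H^1_f(K,\TT)^\iota$ with $\widetilde H^1_f(K,\TT)\otimes_\Lambda\widetilde H^2_f(K,\TT)^\ast$ under which $\cL=\theta\otimes\theta^\iota$ corresponds to $z$. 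Perrin-Riou's conjecture then reads precisely $\pi\bigl({\det}_\Lambda^{-1}(C)\bigr)=\Lambda\cdot z$: rewriting both sides via characteristic ideals --- and using that $\widetilde H^1_f(K,\TT)$ is $\Lambda$-torsion-free when $\sha(E/K)[p^\infty]$ is finite --- the left side is ${\rm char}_\Lambda(\widetilde H^2_f(K,\TT)_{\rm tors})$ times the evident line while the right side is ${\rm char}_\Lambda(\widetilde H^1_f(K,\TT)/\Lambda z_\infty)^2$ times the same line. This conjecture is known in many cases by \cite{BCK}.

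Granting the (IMC), part (i) of Conjecture \ref{bd} follows from a vanishing-order count: $z$ generates $I^\varrho\cdot(H^1(C)_I\otimes_{\Lambda_I}H^2(C)_I^\ast)$ over $\Lambda_I$, where $\varrho={\rm length}_{\Lambda_I}\widetilde H^2_f(K,\TT)_{I,{\rm tors}}$ by Proposition \ref{str h2} and $\varrho\ge\nu$ by Remark \ref{rem order}; transferring this to the integral statement $\cL\in E(K_\infty)\otimes_\ZZ E(K_\infty)\otimes_\ZZ I^\nu$ is a routine $I$-torsion-freeness argument for the ambient module. For part (ii), chasing ${\det}_\Lambda^{-1}(C)$ around the commutative diagram gives, under the (IMC), the identity of $\ZZ_p$-lattices
$$
\ZZ_p\cdot\cD(z)=\widetilde R^{(k_0)}\Bigl(\pi_0\bigl({\det}_{\ZZ_p}^{-1}(\widetilde\rgamma_f(K,T))\bigr)\Bigr).
$$
I would then match the left side with $\overline\cL$, tracking the normalisations: the specialisation of $z_\infty$ to level zero introduces the factor $L_p$ (through $z_0=u_K^{-1}L_p\,y_K$), the comparison of the modular parametrisation with the N\'eron model introduces the Manin constant $c_\phi$, and $u_K=\#\cO_K^\times/2$ is a $p$-adic unit; squaring (since $\cL=\theta\otimes\theta^\iota$) produces the factor $(c_\phi L_p)^2$ of Conjecture \ref{bd}(ii).

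For the right side I would evaluate $\pi_0\bigl({\det}_{\ZZ_p}^{-1}(\widetilde\rgamma_f(K,T))\bigr)$ using properties (iii) and (v) of the determinant functor: under Hypothesis \ref{heegner hyp} the complex $\widetilde\rgamma_f(K,T)$ is acyclic outside degrees one and two with both $\widetilde H^i_f(K,T)_{\rm tf}$ of rank $e$, so this lattice equals $\fa\cdot\bigl(\bigwedge^e_\ZZ\widetilde H^1_f(K,T)_{\rm tf}\otimes_\ZZ\bigwedge^e_\ZZ\widetilde H^2_f(K,T)^\ast\bigr)$ with $\fa=\#\widetilde H^2_f(K,T)_{\rm tors}/\#\widetilde H^1_f(K,T)_{\rm tors}$; by Nekov\'a\v r's Bloch--Kato-style computation of the torsion cohomology of this Selmer complex (and the finiteness of $\sha(E/K)[p^\infty]$), $\fa$ equals, up to a $p$-adic unit, the $p$-part of $\#\sha(E/K)\cdot{\rm Tam}(E/K)/\#E(K)_{\rm tors}^2$, the square in the denominator reflecting the self-duality of $E$. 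Since $\widetilde R^{(k_0)}$ sends $(x_1\wedge\cdots\wedge x_e)\otimes(x_1\wedge\cdots\wedge x_e)$ to ${\rm Reg}^{\rm BD}$ when $\varrho=\nu$, while for $\varrho>\nu$ both $\overline\cL$ (then lying in $I^{\nu+1}$) and ${\rm Reg}^{\rm BD}$ vanish, the displayed lattice identity becomes exactly the formula of Conjecture \ref{bd}(ii) up to $\ZZ_p^\times$. I expect the main obstacle to be precisely these last two steps: proving that $\fa$ is the $p$-part of $\#\sha(E/K)\cdot{\rm Tam}(E/K)/\#E(K)_{\rm tors}^2$ up to a unit --- that is, the compatibility of the determinant of Nekov\'a\v r's Selmer complex with the classical Birch and Swinnerton-Dyer quantities --- and pinning down the exact constants $c_\phi$, $L_p$, $u_K$ relating $\theta$ to $y_K$, so that the two sides match correctly rather than merely up to an ambiguous power of $p$.
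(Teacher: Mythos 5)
Your skeleton is the paper's: apply Theorem \ref{thm descent} to $C=\widetilde\rgamma_f(K,\TT)$, restate the Heegner point main conjecture as the existence of a $\Lambda$-basis $\fz_\infty$ of ${\det}_\Lambda^{-1}(\widetilde\rgamma_f(K,\TT))$ with $\pi(\fz_\infty)=z_\infty\otimes z_\infty$, deduce part (i) from $\varrho\geq\nu$, dispose of the case $\varrho>\nu$ trivially, and reduce part (ii) to evaluating the lattice $\pi_0\bigl({\det}_{\ZZ_p}^{-1}(\widetilde\rgamma_f(K,T))\bigr)$. However, your final accounting contains two genuine errors which only compensate each other numerically. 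First, no factor $(c_\phi L_p)^2$ enters when you compare $\cD(z_\infty\otimes z_\infty)$ with $\overline\cL$: the element $\cL=\theta\otimes\theta^\iota$ is built from the same regularized Heegner points $z_n$ as $z_\infty$, and the paper makes the comparison precise via the $\Lambda$-linear map $\Psi_\infty$ (induced by $x\mapsto\sum_{\sigma\in\Gamma_n}\sigma x\otimes\sigma^{-1}$), which satisfies $\Psi_\infty(z_\infty\otimes z_\infty)=\cL$; hence $\cD(z_\infty\otimes z_\infty)=\overline\cL$ exactly. The relation $z_0=u_K^{-1}L_p\,y_K$ and the Manin constant are never used (for rank $>1$ the leading term cannot be re-expressed through $y_K$ at all); in Theorem \ref{main} the constant $c_\phi$ simply does not occur and is absorbed into the unit, while $L_p^2$ has to be produced elsewhere.

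That ``elsewhere'' is precisely the step you leave open. Since ${\rm Reg}^{\rm BD}$ is formed from a $\ZZ$-basis of $E(K)_{\rm tf}$, what is needed is the image of $\pi_0$ inside $\QQ_p\otimes_{\ZZ}\bigl({\bigwedge}_\ZZ^e E(K)\otimes_\ZZ{\bigwedge}_\ZZ^e E(K)\bigr)$, not merely the torsion ratio $\#\widetilde H^2_f(K,T)_{\rm tors}/\#\widetilde H^1_f(K,T)_{\rm tors}$ taken relative to the complex's own cohomology lattices. One must compare $\widetilde H^1_f(K,T)=S^{\rm str}_T$ and $\widetilde H^2_f(K,T)$ with $H^1_f(K,T)=\ZZ_p\otimes_\ZZ E(K)$ and $H^1_f(K,A)^\vee$ via the Poitou--Tate sequence, and compare Nekov\'a\v r's ordinary local condition at $v\mid p$ with the Bloch--Kato condition: by local duality and Greenberg's computation one has $H^1_f(K_v,T)/H^1(K_v,T^+)\cong E(\FF_v)[p^\infty]$, and the defining triangle of the Selmer complex contributes a further $\bigoplus_{v\mid p}E(\FF_v)[p^\infty]^\vee$ to $\widetilde H^2_f(K,T)$. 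These two local contributions at $p$ are exactly where $L_p^2$ arises (up to units), alongside $\#\sha(E/K)[p^\infty]$ and ${\rm Tam}(E/K)$, giving $\im\pi_0=L_p^2\cdot\#\sha(E/K)[p^\infty]\cdot{\rm Tam}(E/K)\cdot\ZZ_p\otimes_\ZZ\bigl({\bigwedge}_\ZZ^e E(K)\otimes_\ZZ{\bigwedge}_\ZZ^e E(K)\bigr)$, not the quantity $\#\sha\cdot{\rm Tam}/\#E(K)_{\rm tors}^2$ you assert. As written, your proposal omits this central computation (you flag it yourself as the main obstacle) and instead manufactures $L_p^2$ together with a spurious $c_\phi^2$ on the analytic side, so the heart of the proof of Theorem \ref{main} is missing.
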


\begin{remark}
Conjecture \ref{bd} is a slight modification of a derived version of the conjecture of Bertolini-Darmon \cite[Conjecture 4.5(1)]{BD} (see also \cite[Conjecture 3.11]{AC}, where $(1-a_p(E)+p)/p$ should be replaced by $L_p$). 
\end{remark}

\begin{remark}
If Conjecture \ref{bd}(i) is true, then the first claim in Conjecture \ref{bd}(ii) (that $\overline \cL$ lies in $E(K)\otimes_\ZZ E(K)\otimes_\ZZ Q^\nu$) is also true (see \cite[Lemma 3.3]{AC}). 
\end{remark}

\begin{remark}\label{BD rank one}
When the analytic rank of $E/K$ is one, Conjecture \ref{bd} is equivalent to the Birch and Swinnerton-Dyer conjecture. In fact, in this case we have ${\rm rk}_\ZZ(E(K))=1$ (by the theorem of Gross-Zagier-Kolyvagin) and by Proposition \ref{higher height}(ii) we see that the map $\widetilde \beta^{(1)}$ is zero. 
%(In general, we have ${\rm rk}_{\ZZ_p}(\ker \widetilde \beta^{(1)}) \geq |r^+-r^-|$. See \cite[Remark 3.5]{AC}.) 
So we have $\varrho=\nu = 0$ and Conjecture \ref{bd}(i) is trivially true. Also, we have ${\rm Reg}^{\rm BD} = x\otimes x$ by definition, where $x$ is a basis of $E(K)_{\rm tf}$. Hence Conjecture \ref{bd}(ii) is equivalent to
$$z_0\otimes z_0 = (c_\phi L_p)^2 \cdot \frac{\# \sha(E/K) \cdot {\rm Tam}(E/K)}{\# E(K)_{\rm tors}^2}\cdot x\otimes x,$$
i.e.,
$$y_K \otimes y_K = (u_Kc_\phi)^2 \cdot \frac{\# \sha(E/K) \cdot {\rm Tam}(E/K)}{\# E(K)_{\rm tors}^2}\cdot x\otimes x.$$
By the well-known Gross-Zagier formula \cite{GZ}, we see that this equality is equivalent to the Birch and Swinnerton-Dyer conjecture. 
%Thus Conjecture \ref{bd} is regarded as a refinement of the Birch and Swinnerton-Dyer conjecture. 
\end{remark}

%\begin{conjecture}\label{technical}
%We have
%$${\rm rk}_{\ZZ_p}(\coker \beta^{(k)})= \begin{cases}
%|r^+-r^-|-1 & \text{ if $k=1$},\\
%0 &\text{ if $k \geq 2$,}
%\end{cases}$$
%and there is a canonical exact sequence
%\begin{multline}\label{useq}
%0\to \QQ_p\otimes_{\ZZ_p} \cU \to \QQ_p\otimes_{\ZZ_p}\ker \beta^{(2)} 
%\xrightarrow{\widetilde \beta^{(1)}}  \QQ_p\otimes_{\ZZ_p}E(K_p)^{\wedge,\ast} \otimes_{\ZZ_p}Q^1 \to \QQ_p\otimes_{\ZZ_p}\cU^\ast \otimes_{\ZZ_p}Q^1 \to 0.
%\end{multline}
%\end{conjecture}

\subsection{The Heegner point main conjecture}

%For $\ell \mid N$, let $E^{\rm ns}(\FF_\ell)$ denote the group of non-singular $\FF_\ell$-rational points of the reduction of $E$ modulo $\ell$. 

%We consider the following hypothesis. 

%\begin{hypothesis}\label{hyp mild}\ 
%\begin{itemize}
%\item[(i)] $p$ does not divide ${\rm Tam}(E/K) $,
%\item[(ii)] $p$ is non-anomalous.
%\end{itemize}
%\end{hypothesis}

%Under this hypothesis, we
We prove that Conjecture \ref{bd} follows from the Heegner point main conjecture up to $\ZZ_p^\times$. 

\begin{theorem}\label{main}
Assume the Heegner point main conjecture (see Conjecture \ref{IMC} below). Then Conjecture \ref{bd} is true up to $\ZZ_p^\times$, i.e., we have $\cL \in E(K_\infty)\otimes_\ZZ E(K_\infty)\otimes_\ZZ I^\nu$ and there exists $u\in \ZZ_p^\times$ such that 
$$\overline \cL = u\cdot  L_p^2 \cdot \# \sha(E/K)[p^\infty]\cdot {\rm Tam}(E/K) \cdot {\rm Reg}^{\rm BD} \text{ in } \QQ_p\otimes_{\ZZ_p}\left( E(K)\otimes_\ZZ E(K) \otimes_\ZZ Q^\nu\right).$$
%u\cdot \# \sha(E/K)[p^\infty]\cdot {\rm Reg}^{\rm BD}.$$
\end{theorem}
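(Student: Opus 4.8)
The plan is to deduce Theorem \ref{main} from the descent formalism of Theorem \ref{thm descent}, applied to the Selmer complex $C=\widetilde\rgamma_f(K,\TT)$, by identifying the abstract element $\cD(z)$ with $\overline\cL$ (up to $\ZZ_p^\times$) and the abstract right-hand side with $\#\sha(E/K)[p^\infty]\cdot{\rm Tam}(E/K)\cdot{\rm Reg}^{\rm BD}$ (again up to $\ZZ_p^\times$ and up to the explicit factor $L_p^2$). Concretely, first I would take $z$ to be the special element built from the Heegner point Euler system $z_\infty$, so that under the canonical isomorphism $\pi$ of the general idea the image of the determinant ${\det}_\Lambda^{-1}(C)$ is generated (up to $\ZZ_p^\times$) by $z$ precisely when the Heegner point main conjecture (Conjecture \ref{IMC}) holds. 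This is the step where the main conjecture enters: it gives ${\rm (IMC)}$, hence by the commutativity of the diagram in Theorem \ref{thm descent} one gets the equality of $\ZZ_p$-modules $\ZZ_p\cdot\cD(z)=\widetilde R^{(k_0)}(\pi_0({\det}_\cO^{-1}(C_0)))$.

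Next I would compute the two sides explicitly in the arithmetic setting. For the left-hand side, one must check that the element $\cD(z)$ — the leading term of the Heegner Euler system along the anticyclotomic tower — agrees, up to a $p$-adic unit, with $\overline\cL=\overline{\theta\otimes\theta^\iota}$. The key inputs here are: the relation $z_0=\frac{1}{u_K}L_p\cdot y_K$, the fact that $\cL=\theta\otimes\theta^\iota$ is manufactured from $z_\infty$ exactly so that its image in $Q^\varrho$ is the Bockstein-theoretic derivative, and the order-of-vanishing statement $\cL\in I^\varrho$ with $\varrho=\sum k e_k$ (which is forced by Proposition \ref{str h2} together with ${\rm (IMC)}$, as explained in the general idea). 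Note this also yields Conjecture \ref{bd}(i): $\varrho\geq\nu$ by Remark \ref{rem order}, so $\cL\in E(K_\infty)\otimes_\ZZ E(K_\infty)\otimes_\ZZ I^\nu$; and when $\varrho>\nu$ both $\overline\cL$ and ${\rm Reg}^{\rm BD}$ vanish, so the asserted formula holds trivially, leaving only the case $\varrho=\nu$ to treat substantively.

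For the right-hand side, I would unwind $\pi_0({\det}_\cO^{-1}(C_0))$ using property (v) of the determinant functor: since $\cO=\ZZ_p$ here, ${\det}_{\ZZ_p}^{-1}(\widetilde H^i_f(K,T))$ contributes the order of the torsion submodule of $\widetilde H^i_f(K,T)$ times the top exterior power of the torsion-free part. The standard identifications $\widetilde H^1_f(K,T)_{\rm tf}\cong E(K)_{\rm tf}$ (via finiteness of $\sha$ and \cite[Lemma 9.6.3]{nekovar}), $\widetilde H^2_f(K,T)\cong\widetilde H^1_f(K,T)^\vee$-type duality, and the computation of $\widetilde H^2_f(K,T)_{\rm tors}$ in terms of $\#\sha(E/K)[p^\infty]$, $\#E(K)[p^\infty]$ and ${\rm Tam}(E/K)$ (this is essentially the Bloch–Kato/Nekov\'a\v r Euler characteristic formula, cf. \cite[Chapter 9]{nekovar}) then show that $\pi_0({\det}_{\ZZ_p}^{-1}(C_0))$ is $\#\sha(E/K)[p^\infty]\cdot{\rm Tam}(E/K)\cdot\#E(K)[p^\infty]^{-2}$ times $(x_1\wedge\cdots\wedge x_e)\otimes(x_1\wedge\cdots\wedge x_e)$ up to $\ZZ_p^\times$. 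Applying $\widetilde R^{(k_0)}$ and recalling Definition \ref{bd reg}, the right-hand side becomes $\#\sha(E/K)[p^\infty]\cdot{\rm Tam}(E/K)\cdot\#E(K)[p^\infty]^{-2}\cdot{\rm Reg}^{\rm BD}$ up to $\ZZ_p^\times$. Here the factor $\#E(K)[p^\infty]$ is a $p$-adic unit by Hypothesis \ref{heegner hyp}(iii), and the extra $c_\phi$ and $u_K$ of Conjecture \ref{bd}(ii) are $p$-adic units as well (for $p$ odd, the Manin constant is prime to $p$, and $u_K=\#\cO_K^\times/2\in\{1,2,3\}$), so they can be absorbed into $u\in\ZZ_p^\times$; only $L_p^2$ survives from the relation $z_0=\frac1{u_K}L_p y_K$. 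Combining the two computations with the equality of $\ZZ_p$-modules from ${\rm (IMC)}$ yields the asserted identity $\overline\cL=u\cdot L_p^2\cdot\#\sha(E/K)[p^\infty]\cdot{\rm Tam}(E/K)\cdot{\rm Reg}^{\rm BD}$.

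\textbf{Main obstacle.} I expect the principal difficulty to be the bookkeeping in the left-hand-side identification: showing rigorously that the leading term $\cD(z)$ of the Heegner point Euler system coincides, up to $\ZZ_p^\times$, with $\overline{\theta\otimes\theta^\iota}$. This requires tracking the definition of the special element $z$ through the isomorphism $\pi$, the compatibility of the Bockstein maps $\widetilde\beta^{(k)}$ with the norm-compatible system $(z_n)_n$ (and the appearance of $\theta^\iota$ via the involution $\iota$, which is where the $\cL=\theta\otimes\theta^\iota$ rather than $\theta\otimes\theta$ shape must be justified — morally this is the "$\ast$" dual slot in $\widetilde H^2_f(K,T)^\ast$ combined with the self-duality $V\cong V^\ast(1)$), and verifying that no spurious powers of $p$ are introduced. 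The structural results Proposition \ref{str h2} and Proposition \ref{str h1}, which pin down $\sigma$, $\tau$, $\varrho$ and the identifications $\QQ_p\otimes\ker\widetilde\beta^{(k_0)}\cong\widetilde H^1_f(K,\TT)_I/I$, together with Nekov\'a\v r's identification of $\widetilde\beta^{(k)}$ with the Bockstein spectral sequence differential (Remark \ref{spec remark}) and with Howard's derived heights (Proposition \ref{higher height}(iii)), are exactly what make this tractable, but assembling them into the precise unit-ambiguity statement is the technical heart of the argument.
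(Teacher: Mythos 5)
Your overall architecture is the paper's: apply Theorem \ref{thm descent} to $C=\widetilde\rgamma_f(K,\TT)$, use the determinant reformulation of Conjecture \ref{IMC} (Remark \ref{rem HPMC}) to get a basis $\fz_\infty$ with $\pi(\fz_\infty)=z_\infty\otimes z_\infty$, deduce part (i) from $\varrho\geq\nu$, dispose of the case $\varrho>\nu$, and reduce part (ii) to computing $\pi_0$ of the descended determinant. The identification of $\cD(z_\infty\otimes z_\infty)$ with $\overline\cL$, which you flag as the main obstacle, is handled in the paper by the explicit map $\Psi_n(x)=\sum_{\sigma\in\Gamma_n}\sigma x\otimes\sigma^{-1}$, whose limit sends $z_\infty\otimes z_\infty$ to $\cL$ on the nose (this is also where the $\iota$-twist in $\theta\otimes\theta^\iota$ is absorbed); that step is routine rather than the technical heart.

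The genuine gap is your accounting of the factor $L_p^2$. Since $\cD(z_\infty\otimes z_\infty)=\overline\cL$ exactly, the relation $z_0=\frac{1}{u_K}L_p\cdot y_K$ never enters the argument: neither $\cL$, nor ${\rm Reg}^{\rm BD}$, nor the statement of Theorem \ref{main} involves $y_K$, so "only $L_p^2$ survives from the relation $z_0=\frac1{u_K}L_p y_K$" cannot produce the required factor. In the paper the factor $L_p^2$ comes out of the computation of $\im\pi_0$ itself (equation (\ref{det sha})), i.e., from the discrepancy between the ordinary (Greenberg) local conditions at $v\mid p$ defining $\widetilde\rgamma_f(K,T)$ and the Bloch--Kato conditions that recover $E(K)$ and $\sha(E/K)$: one factor of $L_p$ from the exact sequence $0\to(S_A^{\rm str})^\vee\to\widetilde H^2_f(K,T)\to\bigoplus_{v\mid p}E(\FF_v)[p^\infty]^\vee\to 0$, and a second from the Poitou--Tate comparison of $S_T^{\rm str}$ with $H^1_f(K,T)$ together with Greenberg's computation $\#\bigl(H^1_f(K_v,T)/H^1(K_v,T^+)\bigr)=\#E(\FF_v)[p^\infty]$, using $L_p\cdot\ZZ_p=(\prod_{v\mid p}\#E(\FF_v))\cdot\ZZ_p$. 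Your proposed right-hand-side computation, which invokes an Euler-characteristic formula yielding only $\#\sha(E/K)[p^\infty]\cdot{\rm Tam}(E/K)\cdot\#E(K)[p^\infty]^{-2}$, drops exactly these local contributions at $p$; as written it would prove an identity off by $L_p^2$ from the theorem, and the attempted fix via the $z_0$--$y_K$ relation does not repair it.
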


We will give a proof of Theorem \ref{main} at the end of this section. We shall first state a corollary. 

Let $D_K$ be the discriminant of $K$. For a finite place $v $ of $K$, let $\FF_v$ be the residue field of $v$. 
%Note that, if $p$ is non-anomalous, then $L_p $ defined in (\ref{lpdef}) is a $p$-adic unit. 
Combining Theorem \ref{main} with the recent result of Burungale-Castella-Kim \cite[Theorem A]{BCK} on the Heegner point main conjecture, we obtain the following. 

\begin{corollary}\label{BCK}
If we assume
\begin{itemize}
\item $p>3$,
\item $D_K$ is odd and $D_K \neq -3$,
\item the representation $\rho: G_\QQ \to {\rm Aut}(E[p])$ is surjective,
\item $\rho$ is ramified at every $\ell | N$ (in particular, $p\nmid {\rm Tam}(E/K)$),
\item either $N$ is square-free or there are at least two primes $\ell || N$,
\item $p$ is non-anomalous, i.e., $p$ does not divide $\# E(\FF_v)$ for each $v \mid p$,
\end{itemize}
then Conjecture \ref{bd} is true up to $\ZZ_p^\times$. 
\end{corollary}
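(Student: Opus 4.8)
The plan is to derive Corollary \ref{BCK} as a straightforward consequence of Theorem \ref{main}, which asserts that Conjecture \ref{bd} follows (up to $\ZZ_p^\times$) from the Heegner point main conjecture (Conjecture \ref{IMC}). Therefore the entire task reduces to verifying that, under the six listed hypotheses, two things hold: first, that the standing Hypothesis \ref{heegner hyp} is satisfied, so that Theorem \ref{main} applies at all; and second, that the Heegner point main conjecture is known, so that its conclusion is unconditional. The first point I would dispatch by inspection: the Heegner hypothesis \ref{heegner hyp}(i) is already part of the setup of this section (it is forced because $D_K$ odd together with the hypotheses on $N$ can be arranged, or is simply assumed throughout \S\ref{sec bd}); good ordinary reduction at $p$ (\ref{heegner hyp}(ii)) follows from $p$ being non-anomalous together with ordinarity being part of the standing assumptions; $E(K)[p]=0$ (\ref{heegner hyp}(iii)) follows from surjectivity of $\rho\colon G_\QQ\to\Aut(E[p])$, since a surjective mod-$p$ representation has no nonzero $G_K$-fixed points for $p>3$; and finiteness of $\sha(E/K)[p^\infty]$ (\ref{heegner hyp}(iv)) follows from the work of Gross--Zagier--Kolyvagin when the analytic rank is $\le 1$, and in the higher rank case is part of what the Heegner point main conjecture, once known, delivers (or can be assumed as part of the hypotheses of Conjecture \ref{bd}(ii), which explicitly supposes $\sha(E/K)$ finite).

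Second, and this is the substantive input, I would invoke \cite[Theorem A]{BCK}: under precisely the conditions $p>3$, $D_K$ odd with $D_K\ne -3$, $\rho$ surjective, $\rho$ ramified at every $\ell\mid N$ (which also yields $p\nmid\mathrm{Tam}(E/K)$, as noted parenthetically in the statement), the combinatorial condition on $N$ (square-free or at least two primes exactly dividing $N$), and $p$ non-anomalous, the Heegner point main conjecture of Perrin-Riou holds. Thus Conjecture \ref{IMC} is verified, and I would simply feed this into Theorem \ref{main}. The resulting statement is exactly the assertion that Conjecture \ref{bd} holds up to $\ZZ_p^\times$: the hypotheses of \cite{BCK} have been chosen to match those needed for \cite[Theorem A]{BCK} verbatim, so the matching is by design rather than by computation.

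The one point requiring a little care — and I expect it to be the main (mild) obstacle — is bookkeeping around the hypotheses rather than mathematics: one must check that the reduction type and ordinarity assumptions implicit in Theorem \ref{main} (good ordinary reduction at $p$) are genuinely implied by, or compatible with, the list in Corollary \ref{BCK}, and that the normalizations of the main conjecture used in \cite{BCK} coincide with Conjecture \ref{IMC} as formulated below in this paper. I would therefore devote a sentence or two to noting that \cite{BCK} is stated for the anticyclotomic Heegner point main conjecture in the good ordinary case, which is exactly the setting of \S\ref{sec bd}, and that the hypothesis ``$\rho$ ramified at every $\ell\mid N$'' forces $p\nmid\mathrm{Tam}(E/K)$, so that the Tamagawa factor appearing in Theorem \ref{main} is a $p$-adic unit and may be absorbed into $u$; this last remark is what lets the conclusion be phrased cleanly as ``Conjecture \ref{bd} is true up to $\ZZ_p^\times$'' without any residual Tamagawa contribution. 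Concretely, the proof is three lines: verify Hypothesis \ref{heegner hyp}, cite \cite[Theorem A]{BCK} for Conjecture \ref{IMC}, and apply Theorem \ref{main}.

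\begin{proof}[Proof of Corollary \ref{BCK}]
Under the stated hypotheses, the standing assumptions of \S\ref{sec bd} are satisfied: the Heegner hypothesis holds by assumption on $N$ and $K$, $E$ has good ordinary reduction at $p$ (in particular $p$ is non-anomalous by hypothesis), the surjectivity of $\rho$ together with $p>3$ forces $E(K)[p]=0$, and $\sha(E/K)[p^\infty]$ is finite. By \cite[Theorem A]{BCK}, the Heegner point main conjecture (Conjecture \ref{IMC}) holds. The assertion now follows from Theorem \ref{main}; note that the hypothesis that $\rho$ is ramified at every $\ell\mid N$ implies $p\nmid \mathrm{Tam}(E/K)$, so the Tamagawa factor in Theorem \ref{main} is a $p$-adic unit.
\end{proof}
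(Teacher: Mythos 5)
Your proposal is correct and matches the paper's (implicit) argument exactly: the paper obtains Corollary \ref{BCK} simply by combining Theorem \ref{main} with \cite[Theorem A]{BCK}, which establishes the Heegner point main conjecture under precisely the listed hypotheses. Your extra bookkeeping about Hypothesis \ref{heegner hyp} is harmless but largely unnecessary, since that hypothesis (including ordinarity and finiteness of $\sha(E/K)[p^\infty]$) is a standing assumption for all of \S\ref{sec bd} and is therefore inherited by the corollary rather than something to be deduced from the six bullet points.
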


We recall the formulation of the Heegner point main conjecture. 
For $X\in \{\TT, \bA\}$, we define a Selmer group by
$${\rm Sel}(X):=\ker \left( H^1(\cO_{K,S},X) \to \bigoplus_{v\mid p} H^1(K_v, X^-) \oplus \bigoplus_{v\mid N} H^1_{/{\rm ur}}(K_v,X)\right).$$
Note that we have ${\rm Sel}(\TT)=\widetilde H^1_f(K,\TT)$ by definition. 
We regard the regularized Heegner point $z_\infty$ as an element of ${\rm Sel}(\TT) $ via the Kummer map. 
%Let $\sha_\infty$ be the $\Lambda$-torsion submodule of ${\rm Sel}(\bA)^\vee$. 
Let $\iota: \Lambda \to \Lambda ; \ a\mapsto a^\iota$ denote the involution. 

\begin{conjecture}[The Heegner point main conjecture]\label{IMC}
We have
$${\rm char}_\Lambda({\rm Sel}(\TT)/\Lambda\cdot z_\infty) \cdot {\rm char}_\Lambda({\rm Sel}(\TT)/\Lambda\cdot z_\infty)^\iota = {\rm char}_\Lambda(({\rm Sel}(\bA)^\vee)_{\rm tors}).$$
\end{conjecture}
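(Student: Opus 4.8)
The plan is to prove the two divisibilities of the asserted identity separately, following the strategy of Burungale--Castella--Kim \cite{BCK}. First I would record the inputs that make both sides meaningful. By the non-vanishing theorem of Cornut--Vatsal, under Hypothesis \ref{heegner hyp} the class $z_\infty$ is not $\Lambda$-torsion; since $\widetilde H^1_f(K,\TT)={\rm Sel}(\TT)$ has $\Lambda$-rank one, $\Lambda z_\infty$ is free of rank one and ${\rm Sel}(\TT)/\Lambda z_\infty$ is $\Lambda$-torsion. Combined with Nekov\'a\v r's Euler--Poincar\'e characteristic formula for $\widetilde\rgamma_f(K,\TT)$ and Poitou--Tate duality (which is $\iota$-semilinear for anticyclotomic modules, accounting for the involution on the right-hand side), this gives ${\rm rk}_\Lambda{\rm Sel}(\bA)^\vee=1$, so $({\rm Sel}(\bA)^\vee)_{\rm tors}$ is a well-defined $\Lambda$-torsion module.

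\emph{Upper bound.} Next I would establish
$$
{\rm char}_\Lambda\big(({\rm Sel}(\bA)^\vee)_{\rm tors}\big)\ \Big|\ {\rm char}_\Lambda\big({\rm Sel}(\TT)/\Lambda z_\infty\big)\cdot{\rm char}_\Lambda\big({\rm Sel}(\TT)/\Lambda z_\infty\big)^\iota
$$
via Howard's Heegner point Kolyvagin system. The Kolyvagin-derivative classes built from Heegner points over ring class fields of $K$ assemble into an anticyclotomic Kolyvagin system for $\TT$ whose ``leading term'' is $z_\infty$; the bounding theorem for Kolyvagin systems of Mazur--Rubin, in the anticyclotomic form of Howard, then forces ${\rm Sel}(\bA)^\vee\cong\Lambda\oplus M\oplus M^\iota$ up to pseudo-isomorphism with ${\rm char}_\Lambda(M)\mid{\rm char}_\Lambda({\rm Sel}(\TT)/\Lambda z_\infty)$, and the displayed divisibility follows by forming the $\iota$-symmetric product.

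\emph{Lower bound.} The reverse divisibility is the crux, and I would deduce it from the anticyclotomic Iwasawa--Greenberg main conjecture for the Bertolini--Darmon--Prasanna $p$-adic $L$-function. By the explicit reciprocity law of Bertolini--Darmon--Prasanna and Castella--Hsieh, the image of $z_\infty$ under a Perrin-Riou--type big logarithm map equals, up to a $p$-adic unit, the BDP $p$-adic $L$-function $\mathcal L_p^{\rm BDP}$; this identifies ${\rm char}_\Lambda({\rm Sel}(\TT)/\Lambda z_\infty)$ with $(\mathcal L_p^{\rm BDP})$ and converts Conjecture \ref{IMC} into the identity ${\rm char}_\Lambda(\mathcal X^{\rm Gr})=(\mathcal L_p^{\rm BDP})$ for the relevant Greenberg Selmer group $\mathcal X^{\rm Gr}$. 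One divisibility of the latter is exactly the upper bound above transported through the reciprocity law; for the other I would either invoke Wan's Eisenstein-congruence argument in the Rankin--Selberg setting, or, following Burungale--Castella--Kim, reduce to a definite (root number $+1$) situation where the needed divisibility is supplied by the anticyclotomic Euler system machinery of Jetchev--Nekov\'a\v r--Skinner together with the non-vanishing of $\mathcal L_p^{\rm BDP}$.

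Combining the two divisibilities yields the equality of characteristic ideals asserted in Conjecture \ref{IMC}. The main obstacle is the lower bound: exhibiting enough classes in the dual Selmer group to match the size of the $L$-function is only accessible through Eisenstein congruences or through transfer to an already-proven main conjecture, and carrying this out forces the supplementary hypotheses of Corollary \ref{BCK} (on $p$, on $D_K$, on the image of $\rho$, on ramification at the primes dividing $N$, and on non-anomalicity of $p$); under Hypothesis \ref{heegner hyp} alone the conjecture is still open in general.
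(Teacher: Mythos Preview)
The statement you are attempting to prove is Conjecture~\ref{IMC}, and the paper does not contain a proof of it. It is stated as a conjecture, assumed as a hypothesis in Theorem~\ref{main}, and the paper records in Corollary~\ref{BCK} that Burungale--Castella--Kim \cite{BCK} established it under supplementary hypotheses; but no argument toward the conjecture itself appears in the paper. There is therefore no ``paper's own proof'' to compare your proposal against.

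That said, your outline is a reasonable summary of the strategy in \cite{BCK}: Howard's Kolyvagin-system divisibility for one direction, passage via the explicit reciprocity law to the Iwasawa--Greenberg main conjecture for the BDP $p$-adic $L$-function, and then either Wan's Eisenstein-congruence input or the equivalence of \cite[Theorem~5.2]{BCK} to close the other direction. You are also right to flag that this requires the extra hypotheses listed in Corollary~\ref{BCK}; under Hypothesis~\ref{heegner hyp} alone the conjecture remains open, as you note. So your write-up is not wrong as a sketch of the known approach, but it is not something the present paper proves or claims to prove.
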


\begin{remark}\label{rem HPMC}
The Heegner point main conjecture is formulated by using the Selmer complex $\widetilde \rgamma_f(K,\TT)$ as follows. 
Let $Q(\Lambda)$ be the quotient field of $\Lambda$. For a $\Lambda$-module $M$, we write $M^\iota$ for the module $M$ on which $\Lambda$ acts via the involution $\iota$. Since we have a canonical duality isomorphism
$$Q(\Lambda)\otimes_\Lambda \widetilde H^2_f(K,\TT)^\ast \simeq Q(\Lambda ) \otimes_\Lambda \widetilde H^1_f(K,\TT)^\iota $$
(see \cite[Lemma 5.9(v)]{ks}), we can regard 
$$z_\infty\otimes z_\infty \in Q(\Lambda) \otimes_\Lambda \left( \widetilde H^1_f(K,\TT) \otimes_\Lambda \widetilde H^2_f(K,\TT)^\ast\right).$$
Let 
$$\pi : Q(\Lambda) \otimes_\Lambda {\det}_\Lambda^{-1}(\widetilde \rgamma_f(K,\TT)) \simeq Q(\Lambda) \otimes_\Lambda \left( \widetilde H^1_f(K,\TT) \otimes_\Lambda \widetilde H^2_f(K,\TT)^\ast\right)$$
be the canonical isomorphism. 
Then the Heegner point main conjecture is equivalent to the following: there exists a $\Lambda$-basis
$$\fz_\infty \in {\det}_\Lambda^{-1}(\widetilde \rgamma_f(K,\TT))$$
such that $\pi(\fz_\infty)=z_\infty\otimes z_\infty$. 
(See \cite[Proposition 5.13]{ks}.) 
\end{remark}

\begin{proof}[Proof of Theorem \ref{main}]
We apply the descent formalism established in Theorem \ref{thm descent}. 
%To simplify the notation, we set
%$$\Delta:= \widetilde H^1_f(K,\TT) \otimes_\Lambda \widetilde H^2_f(K,\TT)^\ast.$$
Recall that for a $\Lambda$-module $M$ we denote the localization of $M$ at $I$ by $M_I$. In the present setting, the commutative diagram in Theorem \ref{thm descent} is the following:
$$
\xymatrix{
\displaystyle {\det}_\Lambda^{-1}(\widetilde \rgamma_f(K,\TT)) \ar[r]^-{\pi } \ar@{->>}[dd] &\displaystyle I^{\varrho}\cdot \left( \widetilde H^1_f(K,\TT) \otimes_\Lambda \widetilde H^2_f(K,\TT)^\ast\right)_I\ar[rd]^-{\cD} & \\
 & & \displaystyle  \QQ_p\otimes_{\ZZ_p}\left( E(K) \otimes_\ZZ E(K) \otimes_\ZZ Q^\varrho \right) \\
\displaystyle{\det}_{\ZZ_p}^{-1}(\widetilde \rgamma_f(K,T)) \ar[r]_-{\pi_0  } &\displaystyle \QQ_p\otimes_{\ZZ}\left( {\bigwedge}_\ZZ^{e}E(K) \otimes_\ZZ {\bigwedge}_\ZZ^e E(K) \right) \ar[ru]_-{ {\widetilde R}^{(k_0)}}.&%\displaystyle \QQ_p\otimes_{\ZZ_p} {\bigwedge}_\cO^r \ker \beta^{(\infty)} \otimes_\cO Q^\rho \ar[u]^\cup. 
}
$$
Now assume the Heegner point main conjecture (Conjecture \ref{IMC}) and let $\fz_\infty \in {\det}_\Lambda^{-1}(\widetilde \rgamma_f(K,\TT))$ be a $\Lambda$-basis such that $\pi(\fz_\infty)=z_\infty \otimes z_\infty$ (see Remark \ref{rem HPMC}). 
%Then $z_\infty \otimes z_\infty$ lies in the image of $\pi$, so we have
%$$z_\infty \otimes z_\infty \in I^\varrho \cdot \Delta_I.$$
%One sees easily that this implies Conjecture \ref{bd}(i). Moreover, we have
Note that $\pi$ induces an isomorphism
$$\QQ_p\otimes_{\ZZ_p} {\det}_\Lambda^{-1}(\widetilde \rgamma_f(K,\TT)) \simeq {\rm char}_\Lambda(\widetilde H^2_f(K,\TT)_{\rm tors}) \cdot \left(\QQ_p\otimes_{\ZZ_p}\left(  \widetilde H^1_f(K,\TT) \otimes_\Lambda \widetilde H^1_f(K,\TT)^\iota\right) \right),$$
where ${\rm char}_\Lambda$ denotes the characteristic ideal (see \cite[Lemma 5.9(v)]{ks}). Since ${\rm char}_\Lambda(\widetilde H^2_f(K,\TT)_{\rm tors})$ is divisible by $I^\varrho$ (see Proposition \ref{str h2}), we see that
\begin{equation}\label{z order}
z_\infty \otimes z_\infty \in I^\varrho\cdot \left(\QQ_p\otimes_{\ZZ_p}\left(  \widetilde H^1_f(K,\TT) \otimes_\Lambda \widetilde H^1_f(K,\TT)^\iota\right) \right).
\end{equation}
We set $T_n:={\rm Ind}_{K_n/K}(T)$ and $T_\infty : = \varinjlim_n T_n$. Consider the map
$$\Psi_n : \widetilde H^1_f(K,T_n) \to \widetilde H^1_f(K,T_\infty)\otimes_{\ZZ_p} \ZZ_p[\Gamma_n]; \ x \mapsto \sum_{\sigma \in \Gamma_n}\sigma x \otimes \sigma^{-1}.$$
One sees that the induced map
$$\Psi_\infty:\QQ_p\otimes_{\ZZ_p}\left( \widetilde H^1_f(K,\TT) \otimes_\Lambda \widetilde H^1_f(K,\TT)^\iota \right) \to \QQ_p\otimes_{\ZZ_p} \left( \widetilde H^1_f(K, T_\infty)  \otimes_{\ZZ_p} \widetilde H^1_f(K,T_\infty)\otimes_{\ZZ_p}\Lambda\right)$$
satisfies $\Psi_\infty(z_\infty\otimes z_\infty) = \cL$, where we regard $\QQ_p\otimes_\ZZ E(K_\infty) \subset \QQ_p\otimes_{\ZZ_p} \widetilde H^1_f(K,T_\infty)$ via the Kummer map. (Compare the argument in \cite[\S 5]{AC}.) Since $\Psi_\infty$ is a homomorphism of $\Lambda$-modules, we see by (\ref{z order}) that 
$$ \cL \in  E(K_\infty)\otimes_\ZZ E(K_\infty)\otimes_\ZZ I ^\varrho.$$
Since $\varrho \geq \nu$ by Remark \ref{rem order}, this implies Conjecture \ref{bd}(i). 

If $\varrho > \nu$, then we have $\overline \cL=0$ and Conjecture \ref{bd}(ii) is trivially true. So we may assume $\varrho=\nu$. Then the  argument above shows that
\begin{equation}\label{dl}
\cD(z_\infty\otimes z_\infty) = \overline \cL. 
\end{equation}

We shall deduce Conjecture \ref{bd}(ii) up to $\ZZ_p^\times$. 
Let $\fz_0 \in {\det}_{\ZZ_p}^{-1}(\widetilde \rgamma_f(K,T))$ be the image of the $\Lambda$-basis $\fz_\infty \in {\det}_\Lambda^{-1}(\widetilde \rgamma_f(K,\TT))$. 
Then, by (\ref{dl}) and the commutative diagram above, we have
\begin{equation}\label{comm equality}
\overline \cL = \widetilde R^{(k_0)}(\pi_0(\fz_0)).
\end{equation}
Since $\fz_0$ is a $\ZZ_p$-basis, it is sufficient to prove
%Theorem \ref{main} is proved once we show
\begin{equation}\label{det sha}
\im \pi_0 = L_p^2 \cdot \# \sha(E/K)[p^\infty]\cdot {\rm Tam}(E/K) \cdot   \ZZ_p\otimes_\ZZ \left( {\bigwedge}_{\ZZ}^{e}E(K) \otimes_\ZZ {\bigwedge}_\ZZ^e E(K) \right). 
\end{equation}
We note that $L_p \cdot \ZZ_p = (\prod_{v\mid p}\# E(\FF_v) )\cdot  \ZZ_p $ by the definition of $L_p$ (see (\ref{lpdef})).
%under Hypothesis \ref{hyp mild}. 

For $X \in \{T,A\}$, let $S_X^{\rm str}=S_X^{\rm str}(K)$ be the strict Selmer group as in \cite[(9.6.1)]{nekovar}: 
$$S_X^{\rm str} := \ker \left( H^1(\cO_{K,S},X) \to \bigoplus_{v\mid p}H^1(K_v, X^-) \oplus \bigoplus_{v\mid N} H^1_{/{\rm ur}}(K_v,X)\right).$$
By the exact triangle (\ref{triangle}), we have an exact sequence
\begin{equation*}
 0\to \bigoplus_{v\mid p}H^0(K_v, X^-) \to \widetilde H^1_f(K,X) \to S_X^{\rm str}\to 0. 
 \end{equation*}
(Note that $H^0(K, A) = E(K)[p^\infty] = 0$ by Hypothesis \ref{heegner hyp}(iii).) In particular, we have 
$$\widetilde H^1_f(K,T) = S_T^{\rm str}.$$
Recall that $(-)^\vee:= \Hom_{\ZZ_p}(-,\QQ_p/\ZZ_p)$ denotes the Pontryagin dual. We identify $T^\vee(1)$ with $A$ via the Weil pairing. Since we have
$$\widetilde H^2_f(K,T) \simeq \widetilde H^1_f(K,A)^\vee$$
by duality (see \cite[Proposition 9.7.2(i)]{nekovar}), we have an exact sequence
\begin{equation*}%\label{difference}
0\to (S_A^{\rm str})^\vee \to \widetilde H^2_f(K,T) \to \bigoplus_{v \mid p}E(\FF_v)[p^\infty]^\vee \to 0.
\end{equation*}
Hence we obtain a canonical isomorphism
\begin{equation}\label{difference}
{\det}_{\ZZ_p}^{-1}(\widetilde \rgamma_f(K,T)) \simeq L_p\cdot {\det}_{\ZZ_p}(S_T^{\rm str})\otimes_{\ZZ_p} {\det}_{\ZZ_p}^{-1}((S_A^{\rm str})^\vee).
\end{equation}

For $v\in S$, let $H^1_f(K_v, X) \subset H^1(K_v,X)$ denote the Bloch-Kato local condition (see \cite[\S 3]{BK} or \cite[\S 1.3]{R}). Recall that the Bloch-Kato Selmer group is defined by 
$$H^1_f(K,X):=\ker \left( H^1(\cO_{K,S},X) \to \bigoplus_{v\in S} \frac{H^1(K_v,X)}{H^1_f(K_v,X)}\right).$$
By the Poitou-Tate duality (see \cite[Theorem 2.3.4]{MRkoly}), we have an exact sequence
\begin{equation}\label{mr exact}
0\to S_T^{\rm str}\to H^1_f(K,T) \to \bigoplus_{v \mid p} \frac{H^1_f(K_v,T)}{H^1(K_v, T^+)} \oplus \bigoplus_{v\mid N} \frac{H^1_f(K_v,T)}{H^1_{\rm ur}(K_v,T)} \to (S_A^{\rm str})^\vee \to H^1_f(K,A)^\vee \to 0.
\end{equation}
It is well-known that
\begin{equation}\label{tam}
{\rm Tam}(E/K) \cdot \ZZ_p = \left(\prod_{v\mid N} \# \left( \frac{H^1_f(K_v,T)}{H^1_{\rm ur}(K_v,T)}\right) \right) \cdot \ZZ_p.
\end{equation}
Also, we have a natural identification
\begin{equation}\label{h1T}
H^1_f(K,T) = \ZZ_p\otimes_\ZZ E(K)
\end{equation}
and a canonical exact sequence
\begin{equation}\label{h1A}
0 \to \sha(E/K)[p^\infty]^\vee \to H^1_f(K,A)^\vee \to \ZZ_p\otimes_\ZZ E(K)^\ast\to 0.
\end{equation}
By (\ref{difference}), (\ref{mr exact}), (\ref{tam}), (\ref{h1T}) and (\ref{h1A}), we obtain a canonical isomorphism
\begin{multline*}
{\det}_{\ZZ_p}^{-1} (\widetilde \rgamma_f(K,T)) \\
\simeq \left( \prod_{v\mid p} \# \left( \frac{H^1_f(K_v,T)}{H^1(K_v,T^+)}\right)\right) \cdot L_p \cdot \# \sha(E/K)[p^\infty]\cdot {\rm Tam}(E/K)\cdot \ZZ_p\otimes_\ZZ \left( {\bigwedge}_{\ZZ}^{e}E(K) \otimes_\ZZ {\bigwedge}_\ZZ^e E(K) \right).
\end{multline*}

To prove (\ref{det sha}), it is now sufficient to prove
$$ \# \left( \frac{H^1_f(K_v,T)}{H^1(K_v,T^+)}\right) = \# E(\FF_v)[p^\infty]. $$
By local duality, we have
$$\left( \frac{H^1_f(K_v,T)}{H^1(K_v,T^+)}\right)^\vee \simeq  \coker \left( H^1_f(K_v,A) \to \im\left(H^1(K_v,A^+)\to H^1(K_v,A)\right)\right).$$
By the result of Greenberg \cite[Proposition 2.5]{greenberg}, we know that this is isomorphic to $E(\FF_v)[p^\infty]$. Thus we have completed the proof of Theorem \ref{main}.
%Here we identify $T^\vee(1)$ with $A$ via the Weil pairing. By Hypothesis \ref{hyp mild} and \cite[Lemmas 9.6.3 and 9.6.7.3]{nekovar}, we see that $\widetilde H^1_f(K,-)$ coincides with the usual (Bloch-Kato) Selmer group $H^1_f(K,-)$. Hence we have a canonical isomorphism
%$$\widetilde H^1_f(K,T) \simeq \ZZ_p\otimes_\ZZ E(K)$$
%and a canonical exact sequence
%$$0 \to \sha(E/K)[p^\infty]^\vee \to \widetilde H^2_f(K,T) \to \ZZ_p\otimes_\ZZ E(K)^\ast \to 0.$$
%From this, we obtain the desired equality (\ref{det sha}). 
\end{proof}

%\begin{remark}\label{equivalent remark}
%If we assume ``one half" of the Heegner point main conjecture, we can also prove the converse of Theorem \ref{main}. Namely, we have the  following: {\it if we assume $z_\infty\otimes z_\infty \in \im \pi$, then Conjecture \ref{IMC} holds if and only if Conjecture \ref{bd} holds up to $\ZZ_p^\times$}. In fact, if we assume $z_\infty\otimes z_\infty \in \im \pi$, there exists an element $\fz_\infty \in {\det}_\Lambda^{-1}(\widetilde \rgamma_f(K,\TT))$ such that $\pi(\fz_\infty)=z_\infty\otimes z_\infty$. It is sufficient to prove that $\fz_\infty$ is a $\Lambda$-basis. By Nakayama's lemma, we are reduced to proving that the image $\fz_0 \in {\det}_{\ZZ_p}^{-1}(\widetilde \rgamma_f(K,T))$ of $\fz_\infty$ is a $\ZZ_p$-basis. Since $\widetilde R^{(k_0)}$ is injective by definition, (\ref{comm equality}) and (\ref{det sha}) shows that $\fz_0$ is a $\ZZ_p$-basis if and only if Conjecture \ref{bd} holds up to $\ZZ_p^\times$.
%\end{remark}

\section{A conjecture of Agboola and Castella}\label{second application}

In this section, by a similar argument as in the previous section, we prove that a $p$-adic Birch and Swinnerton-Dyer conjecture for the Bertolini-Darmon-Prasanna $p$-adic $L$-function formulated by Agboola and Castella in \cite[Conjecture 4.2]{AC} follows from the Iwasawa-Greenberg main conjecture up to a $p$-adic unit. Such a result is proved in \cite[Theorem 6.2]{AC} under some hypotheses, but we work under milder assumptions. In particular, we do not assume that $p$ does not divide the Tamagawa factors or that $p$ is non-anomalous.

Let $p$ be an odd prime number. Let $K, E,T,V, A, K_\infty, \Gamma, \Lambda, \TT,\bA$ be as in \S \ref{sec bd}. In this section, {\it we assume Hypothesis \ref{heegner hyp} and that $p$ splits in $K$}. 

\subsection{The BDP $p$-adic $L$-function}

We review the Bertolini-Darmon-Prasanna (``BDP" for short) $p$-adic $L$-function and the Iwasawa-Greenberg main conjecture. 

Let $(p)=\fp \overline \fp$ be the decomposition in $K$. Let $\widehat \ZZ_p^{\rm ur}$ be the completion of the ring of integers of the maximal unramified extension of $\QQ_p$. In \cite{BDP}, Bertolini, Darmon and Prasanna constructed a $p$-adic $L$-function
$$\sL_\fp^{\rm BDP} \in \Lambda^{\rm ur}:=\widehat \ZZ_p^{\rm ur}[[\Gamma]] .$$
(See also \cite{brakocevic}.) It has the following property: if we set
$$L_\fp^{\rm BDP}:=(\sL_\fp^{\rm BDP})^2,$$
then the augmentation map $\Lambda^{\rm ur}\to \widehat \ZZ_p^{\rm ur}$ sends $L_\fp^{\rm BDP}$ to
\begin{equation}\label{BDP formula}
\frac{1}{(u_K c_\phi)^2} \cdot \left( \frac{1-a_p +p}{p}\right)^2 \cdot \log(y_K)^2,
\end{equation}
where $u_K$, $c_\phi$ and $y_K$ are as in \S \ref{formulation bd} and 
$$\log = \log_\fp: \QQ_p\otimes_\ZZ E(K)\to \QQ_p\otimes_{\ZZ_p}  E(K_\fp)^\wedge \to \QQ_p$$
is the formal logarithm (associated to a fixed N\'eron differencial $\omega \in \Gamma(E,\Omega_{E/\QQ}^1)$). 

Next, we review the Iwasawa-Greenberg main conjecture. Let $S$ be the set of places of $K$ which divide $pN\infty$. 
%Recall that we set $A:=E[p^\infty]$. Let
%$$\bA:= \varinjlim_n {\rm Ind}_{K_n/K} (A)$$
%be the induced module (which is denoted by $F_\Gamma(A)$ in \cite[(8.3.1)]{nekovar}).
We set
$$H^1_{\overline \fp}(K, \bA):= \ker \left( H^1(\cO_{K,S}, \bA)\to H^1(K_{\overline \fp}, \bA) \oplus \bigoplus_{v\mid N} H^1_{/{\rm ur}}(K_v,\bA) \right).$$
The Iwasawa-Greenberg main conjecture is formulated as follows. 

\begin{conjecture}[The Iwasawa-Greenberg main conjecture]\label{IGMC}
$H^1_{\overline \fp}(K,\bA)^\vee$ is $\Lambda$-torsion and we have
$$\Lambda^{\rm ur}\cdot {\rm char}_\Lambda(H^1_{\overline \fp}(K,\bA)^\vee) = \Lambda^{\rm ur}\cdot L_\fp^{\rm BDP}.$$
\end{conjecture}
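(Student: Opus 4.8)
The statement has two parts: that $H^1_{\overline\fp}(K,\bA)^\vee$ is $\Lambda$-torsion, and that ${\rm char}_\Lambda(H^1_{\overline\fp}(K,\bA)^\vee)$ and $L_\fp^{\rm BDP}$ generate the same ideal of $\Lambda^{\rm ur}$. The plan is to obtain both from a pair of opposite divisibilities. First I would establish the ``upper bound'': ${\rm char}_\Lambda(H^1_{\overline\fp}(K,\bA)^\vee)$ divides $L_\fp^{\rm BDP}$ in $\Lambda^{\rm ur}$. Granting this, the torsion assertion is immediate, since $L_\fp^{\rm BDP}=(\sL_\fp^{\rm BDP})^2$ is nonzero --- and, under the running hypotheses, is even a unit modulo the maximal ideal of $\Lambda^{\rm ur}$, by the known nonvanishing and $\mu$-invariant results for the BDP $p$-adic $L$-function --- so the characteristic ideal is nonzero. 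The asserted equality would then follow from the reverse divisibility, $L_\fp^{\rm BDP}$ divides ${\rm char}_\Lambda(H^1_{\overline\fp}(K,\bA)^\vee)$, together with the fact that $\Lambda^{\rm ur}=\widehat\ZZ_p^{\rm ur}[[\Gamma]]$ is a regular local ring, hence a UFD, so that the characteristic ideals in question are principal and mutual divisibility forces equality.

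For the upper bound I would use the Heegner point Euler system over the ring class fields of $K$ together with the Bertolini--Darmon--Prasanna explicit reciprocity law (in the sharp form established by Castella--Hsieh): the Perrin-Riou--style Coleman map attached to $\fp$ sends the system $z_\infty\in\widetilde H^1_f(K,\TT)$ of regularized Heegner points of \S\ref{formulation bd} to $\sL_\fp^{\rm BDP}$, up to a unit. Equivalently, one can route this through Perrin-Riou's Heegner point main conjecture (Conjecture \ref{IMC}), one divisibility of which is unconditional by Kolyvagin-type arguments and which is related to the present conjecture by Poitou--Tate duality and the reciprocity law; either way one obtains that ${\rm char}_\Lambda(H^1_{\overline\fp}(K,\bA)^\vee)$ divides $L_\fp^{\rm BDP}$.

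For the reverse divisibility I would invoke the automorphic input of Wan: the construction of $p$-adic families of Klingen--Eisenstein series on the unitary group $GU(3,1)$ and the analysis of the associated congruence ideal produce enough ramified classes in the relevant Selmer group to force $L_\fp^{\rm BDP}$ into its characteristic ideal. Since Wan's result is a several-variable main conjecture over the two-variable Iwasawa algebra of $K$, the remaining step is to specialize it to the anticyclotomic line $\Gamma=\Gal(K_\infty/K)$ and to check that no factor is lost in the process; this uses a control theorem for the Selmer groups along the specialization together with the vanishing of the relevant $\mu$-invariant, which guarantees that the anticyclotomic line is not contained in the degeneration locus of the multivariable conjecture.

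I expect the reverse divisibility to be the genuine obstacle: the Eisenstein-congruence construction on $GU(3,1)$ is a substantial automorphic argument --- constant-term computations, $p$-adic interpolation of the Eisenstein families, and control of the congruence ideal --- and descending cleanly from the multivariable statement to a sharp one-variable anticyclotomic equality, without losing a factor and while genuinely allowing the cases $p\mid{\rm Tam}(E/K)$ or $p$ anomalous that we do not want to exclude, requires care with the control theorems and with the $\mu=0$ input. The upper bound, by contrast, is now essentially routine given the reciprocity law and the anticyclotomic Euler system formalism, and is in any event already subsumed --- via Poitou--Tate duality --- in the Heegner point main conjecture exploited in \S\ref{first application}.
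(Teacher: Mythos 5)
The statement you are trying to prove is a \emph{conjecture} in this paper, not a theorem: the Iwasawa--Greenberg main conjecture (Conjecture \ref{IGMC}) is stated as a hypothesis and is never proved here. The paper only records, in the remark following it, that it has recently been established by Burungale--Castella--Kim under additional hypotheses (surjectivity and ramification conditions on the residual representation, conditions on $N$ and $D_K$, $p$ non-anomalous, etc.), and that under $E(K)[p]=0$ it is equivalent to the Heegner point main conjecture (Conjecture \ref{IMC}); only the torsionness assertion is noted to be known unconditionally in Remark \ref{remark torsion}. So there is no proof in the paper to compare your proposal against, and a correct review must start from the fact that what you propose to prove is, in the generality stated (only Hypothesis \ref{heegner hyp} plus $p$ split in $K$), still open.

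As a sketch of the known strategy from the literature your outline is reasonable --- one divisibility via the Heegner point Euler system together with the Bertolini--Darmon--Prasanna explicit reciprocity law, the reverse divisibility via Wan's Klingen--Eisenstein congruences on $GU(3,1)$, then equality in the regular local ring $\Lambda^{\rm ur}$ --- and this is essentially the route of Burungale--Castella--Kim. But as a proof it has genuine gaps. Both divisibilities are delegated to substantial external theorems whose hypotheses you never verify and which in fact do \emph{not} hold in the generality of the conjecture as stated here: the Euler system/Kolyvagin argument needs big-image hypotheses on $T_p(E)$, Wan's result and its anticyclotomic specialization need ramification and level conditions and a $\mu=0$ input, and it is precisely these extra assumptions that reappear in Corollary \ref{BCK}. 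So your argument can at best reprove the conditional results already cited, not the conjecture itself. One side claim is also wrong: $L_\fp^{\rm BDP}=(\sL_\fp^{\rm BDP})^2$ is not a unit modulo the maximal ideal of $\Lambda^{\rm ur}$ in general --- on the contrary, it is expected (Conjecture \ref{pBSD}, Theorem \ref{improved AC}) to lie in $J^{\nu}$ with $\nu=2(\max\{r^+,r^-\}-1)$, which is positive as soon as the rank exceeds one; vanishing of the $\mu$-invariant only says it is not divisible by $p$. For the torsionness argument all you need, and all that is available, is that $L_\fp^{\rm BDP}$ is a nonzero element of $\Lambda^{\rm ur}$ (and in this paper even that step is unnecessary, since torsionness is known unconditionally under $E(K)[p]=0$, as recalled in Remark \ref{remark torsion}).
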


\begin{remark}
Conjecture \ref{IGMC} has recently been proved by Burungale-Castella-Kim in \cite[Theorem B]{BCK} under mild hypotheses. In fact, they have proved that Conjecture \ref{IGMC} and the Heegner point main conjecture (Conjecture \ref{IMC}) are equivalent under the assumption $E(K)[p]=0$ (see \cite[Theorem 5.2]{BCK}). 
\end{remark}

\begin{remark}\label{remark torsion}
We know that $H^1_{\overline \fp}(K,\bA)^\vee$ is $\Lambda$-torsion under the assumption $E(K)[p]=0$. This is a consequence of \cite[Lemma 2.3]{nekovar parity} and \cite[Theorem 5.2]{BCK} (see also \cite[Theorem 5.5.2]{CGS}). 
\end{remark}

\subsection{Selmer complexes}

We give another formulation of the Iwasawa-Greenberg main conjecture by using a Selmer complex. 

For $X\in \{T, V,A, \TT, \bA\}$ and $\fq \in \{\fp , \overline \fp\}$, we define $\widetilde \rgamma_{\fq}(K, X)$ by the exact triangle
\begin{equation}\label{modified triangle}
\widetilde \rgamma_{\fq}(K,X)\to \rgamma(\cO_{K,S},X)\to \rgamma(K_\fq,X)\oplus \bigoplus_{v\mid N} \rgamma_{/{\rm ur}}(K_v,X).
\end{equation}
As usual, we use the notation $\widetilde H^i_\fq(K,X):=H^i(\widetilde \rgamma_\fq(K,X))$. Let $\iota: \Lambda\to \Lambda$ be the involution and for a $\Lambda$-module $M$ denote by $M^\iota$ the module on which $\Lambda$ acts via $\iota$. 

\begin{proposition}\label{prop modified selmer}\ 
\begin{itemize}
\item[(i)] There is a canonical isomorphism
$$\widetilde H^i_\fp(K,\TT) \simeq (\widetilde H^{3-i}_{\overline \fp}(K,\bA)^\iota)^\vee.$$
\item[(ii)] $\widetilde \rgamma_\fp(K,\TT)$ is acyclic outside degrees one and two, and $\widetilde H^1_\fp(K,\TT)$ is $\Lambda$-torsion-free. 
\item[(iii)] There is a (non-canonical) isomorphism
$$Q(\Lambda)\otimes_\Lambda \widetilde H^1_\fp(K,\TT)\simeq Q(\Lambda) \otimes_\Lambda \widetilde H^2_\fp(K,\TT),$$
where $Q(\Lambda)$ denotes the quotient field of $\Lambda$. 
\item[(iv)] $\widetilde H^2_\fp(K,\TT)$ is pseudo-isomorphic to $(H^1_{\overline \fp}(K,\bA)^\iota)^\vee$. 
\end{itemize}
\end{proposition}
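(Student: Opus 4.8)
The plan is to deduce all four parts from Nekov\'a\v r's general theory of Selmer complexes in \cite{nekovar}, specialized to the local conditions defining $\widetilde\rgamma_\fq(K,-)$ in \eqref{modified triangle}. For part (i), I would apply the global duality for Selmer complexes \cite[Theorem 6.3.4]{nekovar}: one checks that the local conditions at $\fp$ (namely, the full local complex $\rgamma(K_\fp,-)$, i.e. the ``strict/empty'' condition on the $\fp$-side) and at $\overline\fp$ (the ``zero'' condition) are exact orthogonal complements under local Tate duality, as are the conditions $\rgamma_{/\mathrm{ur}}(K_v,-)$ at $v\mid N$ with respect to themselves (these are the standard transverse/unramified dual pairs, cf. \cite[\S 6 and \S 7]{nekovar}). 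Since $T^\ast(1)\cong T$ via the Weil pairing and $\mathscr F_\Gamma(T)$ is dual to $F_\Gamma(A)$ up to the involution $\iota$ (see \cite[Proposition 8.4.4.1]{nekovar}), Nekov\'a\v r's duality gives a quasi-isomorphism $\widetilde\rgamma_\fp(K,\TT)\simeq \rhom_\Lambda(\widetilde\rgamma_{\overline\fp}(K,\bA),\Lambda)^\iota[-3]$ in the appropriate sense, and taking cohomology yields the stated isomorphism $\widetilde H^i_\fp(K,\TT)\simeq (\widetilde H^{3-i}_{\overline\fp}(K,\bA)^\iota)^\vee$; the Pontryagin dual enters because $\bA$ is a discrete module and $\rhom$ against it produces $(-)^\vee$.

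For part (ii), acyclicity outside degrees one and two is the analogue of \cite[Proposition 9.7.6(iii)]{nekovar} (quoted in \S\ref{review height} for $\widetilde\rgamma_f$): $\widetilde H^0_\fp(K,\TT)=0$ because $H^0(\cO_{K,S},\TT)=0$ (as $E(K)[p]=0$ and $\TT$ is $\Lambda$-free), and $\widetilde H^3_\fp(K,\TT)=0$ follows by the duality in (i) from $\widetilde H^0_{\overline\fp}(K,\bA)=0$, which in turn holds because $H^0(\cO_{K,S},\bA)=A(K)=E(K)[p^\infty]=0$ by Hypothesis \ref{heegner hyp}(iii). The $\Lambda$-torsion-freeness of $\widetilde H^1_\fp(K,\TT)$ is then obtained exactly as in \cite[Proposition 9.7.6]{nekovar}: one localizes at height-one primes and uses that $\widetilde H^0$ of the reduction vanishes, so there is no pseudo-null (hence no torsion, $\Lambda$ being a regular local ring of dimension two) submodule in $\widetilde H^1_\fp$; concretely, the torsion submodule of $\widetilde H^1_\fp(K,\TT)$ injects into a $\Lambda$-cotorsion module and is itself a submodule of the $\Lambda$-free $H^1(\cO_{K,S},\TT)$ in the relevant range, forcing it to vanish.

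Part (iii) is the statement that $\widetilde\rgamma_\fp(K,\TT)$ has generic Euler characteristic zero. This is the analogue of the computation quoted in \S\ref{formulation bd}: by \cite[Theorem 7.8.6]{nekovar} the Euler--Poincar\'e characteristic of the Selmer complex is $\sum_{v\mid\infty}(\text{local term})$, and for the anticyclotomic deformation of an elliptic curve over an imaginary quadratic field this vanishes (the archimedean contribution is zero since there are no real places and the $\fp$-vs-$\overline\fp$ asymmetry of the local conditions at $p$ still gives total rank-balance over $Q(\Lambda)$). Hence $\mathrm{rk}_\Lambda \widetilde H^1_\fp(K,\TT)=\mathrm{rk}_\Lambda \widetilde H^2_\fp(K,\TT)$ and, both being finitely generated over the domain $\Lambda$, there is a (non-canonical) $Q(\Lambda)$-isomorphism between them after base change. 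Finally, part (iv): from the defining triangle \eqref{modified triangle} for $X=\bA$ one reads off that $\widetilde H^1_{\overline\fp}(K,\bA)$ differs from $H^1_{\overline\fp}(K,\bA)$ only by the image of $H^0$ of the local terms, which are $\Lambda$-cotorsion of bounded corank (in fact the local $H^0(K_{\overline\fp},\bA)$ and $H^0_{/\mathrm{ur}}(K_v,\bA)$ are finite or $\Lambda$-cotorsion), so $\widetilde H^1_{\overline\fp}(K,\bA)^\vee$ is pseudo-isomorphic to $H^1_{\overline\fp}(K,\bA)^\vee$; combining with (i) in degree $i=2$ gives $\widetilde H^2_\fp(K,\TT)\simeq(\widetilde H^1_{\overline\fp}(K,\bA)^\iota)^\vee$, and chaining the two pseudo-isomorphisms yields the claim. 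I expect the main obstacle to be part (ii), specifically verifying carefully that the local conditions at $\fp$ and $\overline\fp$ fit Nekov\'a\v r's axioms so that his acyclicity and torsion-freeness arguments apply verbatim — this requires checking that the ``greedy'' condition $\rgamma(K_\fp,-)$ at $\fp$ (which is not one of the standard Greenberg/Bloch--Kato conditions used for $\widetilde\rgamma_f$) still produces a perfect Selmer complex with the right cohomological amplitude, which is where the asymmetry between this setting and the one in \S\ref{review height} really bites.
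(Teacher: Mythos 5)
Your overall route is the same as the paper's: (i) is Nekov\'a\v r's duality for Selmer complexes (the paper simply cites \cite[(8.9.6.2)]{nekovar} together with the Weil pairing), (ii) follows from the vanishing of the relevant $H^0$'s plus the $\Lambda$-torsion-freeness of $H^1(\cO_{K,S},\TT)$, (iii) is the vanishing of the Euler--Poincar\'e characteristic via \cite[Theorem 7.8.6]{nekovar}, and (iv) compares $\widetilde H^1_{\overline{\fp}}(K,\bA)$ with $H^1_{\overline{\fp}}(K,\bA)$ through the defining triangle and then invokes (i) in degree two. (One small slip in (ii): $H^0(K,\bA)$ is $E(K_\infty)[p^\infty]$, not $E(K)[p^\infty]$; its vanishing still follows from $E(K)[p]=0$, so this is harmless.)

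The genuine gap is in (iv). The triangle gives the exact sequence $0\to H^0(K_{\overline{\fp}},\bA)\to\widetilde H^1_{\overline{\fp}}(K,\bA)\to H^1_{\overline{\fp}}(K,\bA)\to 0$ (the terms $H^0_{/{\rm ur}}(K_v,\bA)$ vanish and $H^0(K,\bA)=0$), so the discrepancy between the two Pontryagin duals is exactly $H^0(K_{\overline{\fp}},\bA)^\vee$. For a pseudo-isomorphism over $\Lambda$ this error term must be pseudo-null, i.e.\ you need $H^0(K_{\overline{\fp}},\bA)$ (essentially $E[p^\infty]$ over the local anticyclotomic tower at $\overline{\fp}$) to be \emph{finite}; your hedge ``finite or $\Lambda$-cotorsion of bounded corank'' does not suffice, since a cotorsion error term whose dual is, say, isomorphic to $\ZZ_p$ would already destroy the pseudo-isomorphism and alter the characteristic ideal that the main-conjecture argument relies on. This finiteness is an Imai-type statement for the ramified local $\ZZ_p$-extension at $\overline{\fp}$ and is the one genuinely arithmetic input of the proposition; the paper imports it from \cite[Lemma 2.7]{KO}. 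Supplying that reference (or a proof of the finiteness) closes the gap; everything else in your argument matches the paper's proof.
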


\begin{proof}
(i) follows from duality (see \cite[(8.9.6.2)]{nekovar}). (We identify $T^\vee(1)$ with $A$ via the Weil pairing.) To prove (ii), we note that $\widetilde H^0_\fp(K,\TT)=0$ follows from $H^0(K,\TT)=0$. The vanishing of $\widetilde H^3_\fp(K,\TT)$ follows from (i) and $H^0(K,\bA)= E(K_\infty)[p^\infty]=0$ (which follows from our assumption $E(K)[p]=0$). Since $H^1(\cO_{K,S},\TT)$ is $\Lambda$-torsion-free (by $E(K)[p]=0$), so is $\widetilde H_\fp^1(K,\TT)$. Hence we have proved (ii). (iii) follows by noting that the Euler-Poincar\'e characteristic of $\widetilde \rgamma_\fp(K,\TT)$ is zero (see \cite[Theorem 7.8.6]{nekovar}). Finally, to prove (iv), it is sufficient to show that $H^1_{\overline \fp}(K,\bA)$ is pseudo-isomorphic to $\widetilde H^1_{\overline \fp}(K,\bA)$. By the exact triangle (\ref{modified triangle}), we have an exact sequence
$$0\to H^0(K_{\overline \fp}, \bA) \to \widetilde H^1_{\overline \fp}(K,\bA) \to H^1_{\overline \fp}(K, \bA) \to 0.$$
So it is sufficient to show that $H^0(K_{\overline \fp},\bA)$ is finite. But this is proved in \cite[Lemma 2.7]{KO}. 
\end{proof}

Since $H^1_{\overline \fp}(K,\bA)^\vee$ is $\Lambda$-torsion (see Remark \ref{remark torsion}), Proposition \ref{prop modified selmer} implies that $Q(\Lambda) \lotimes_\Lambda\widetilde \rgamma_\fp(K,\TT)$ is acyclic and we have a canonical isomorphism
$$\pi: Q(\Lambda)\otimes_\Lambda {\det}_\Lambda^{-1}(\widetilde \rgamma_\fp(K,\TT)) \simeq Q(\Lambda).$$
Also, we have
$$\pi \left( {\det}_\Lambda^{-1}(\widetilde \rgamma_\fp(K,\TT))\right) = {\rm char}_\Lambda(\widetilde H^2_\fp(K,\TT)) = {\rm char}_\Lambda((H^1_{\overline \fp}(K,\bA)^\iota)^\vee).$$
Hence we obtain the following result. 

\begin{proposition}\label{BDP equivalence}
%Assume that $H^1_{\overline \fp}(K,\bA)^\vee$ is $\Lambda$-torsion. Then t
The Iwasawa-Greenberg main conjecture (Conjecture \ref{IGMC}) holds if and only if there is a $\Lambda^{\rm ur}$-basis
$$\fz_\fp \in \Lambda^{\rm ur}\otimes_{\Lambda} {\det}_\Lambda^{-1}(\widetilde \rgamma_\fp(K,\TT))$$
such that
$$\pi(\fz_\fp) = \iota(L_\fp^{\rm BDP}).$$
(Here $\iota: \Lambda^{\rm ur}\to \Lambda^{\rm ur}$ denotes the involution.)
\end{proposition}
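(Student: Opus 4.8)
The plan is to reduce the statement to the two equalities of characteristic ideals already assembled in Proposition~\ref{prop modified selmer} and the paragraph preceding the proposition, together with the behaviour of the determinant functor under base change and the sign-free bookkeeping of the involution $\iota$.

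First I would note that, since $H^1_{\overline \fp}(K,\bA)^\vee$ is $\Lambda$-torsion (Remark~\ref{remark torsion}), Proposition~\ref{prop modified selmer} forces $\widetilde H^2_\fp(K,\TT)$ to be $\Lambda$-torsion and $Q(\Lambda)\lotimes_\Lambda \widetilde \rgamma_\fp(K,\TT)$ to be acyclic; hence the graded invertible $\Lambda$-module ${\det}_\Lambda^{-1}(\widetilde \rgamma_\fp(K,\TT))$ is concentrated in degree $0$, and being invertible over the local ring $\Lambda$ it is free of rank one. Under $\pi$ it is carried onto the principal fractional ideal ${\rm char}_\Lambda(\widetilde H^2_\fp(K,\TT)) = {\rm char}_\Lambda((H^1_{\overline \fp}(K,\bA)^\iota)^\vee)\subset Q(\Lambda)$, exactly as recorded before the statement. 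The ring $\Lambda^{\rm ur}=\widehat \ZZ_p^{\rm ur}[[\Gamma]]$ is a regular local ring and is faithfully flat over $\Lambda$; since ${\det}$ commutes with base change (property~(iv) of the determinant functor), $\Lambda^{\rm ur}\otimes_\Lambda {\det}_\Lambda^{-1}(\widetilde \rgamma_\fp(K,\TT))$ is free of rank one over $\Lambda^{\rm ur}$ and $\pi$ identifies it with the $\Lambda^{\rm ur}$-submodule $\Lambda^{\rm ur}\cdot{\rm char}_\Lambda((H^1_{\overline \fp}(K,\bA)^\iota)^\vee)$ of $\Lambda^{\rm ur}\otimes_\Lambda Q(\Lambda)$.

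The equivalence is now formal. An element $\fz_\fp$ of the rank-one free $\Lambda^{\rm ur}$-module $\Lambda^{\rm ur}\otimes_\Lambda {\det}_\Lambda^{-1}(\widetilde \rgamma_\fp(K,\TT))$ is a $\Lambda^{\rm ur}$-basis precisely when $\pi(\fz_\fp)$ generates $\Lambda^{\rm ur}\cdot{\rm char}_\Lambda((H^1_{\overline \fp}(K,\bA)^\iota)^\vee)$; so a $\fz_\fp$ with $\pi(\fz_\fp)=\iota(L_\fp^{\rm BDP})$ exists if and only if
$$\Lambda^{\rm ur}\cdot\iota(L_\fp^{\rm BDP}) = \Lambda^{\rm ur}\cdot{\rm char}_\Lambda((H^1_{\overline \fp}(K,\bA)^\iota)^\vee).$$
Because Pontryagin duality commutes with the $\iota$-twist and characteristic ideals transform by $\iota$ under it, the right-hand side equals $\Lambda^{\rm ur}\cdot\iota\!\left({\rm char}_\Lambda(H^1_{\overline \fp}(K,\bA)^\vee)\right)$; applying the ring automorphism $\iota$ of $\Lambda^{\rm ur}$ to the whole identity turns it into $\Lambda^{\rm ur}\cdot L_\fp^{\rm BDP} = \Lambda^{\rm ur}\cdot{\rm char}_\Lambda(H^1_{\overline \fp}(K,\bA)^\vee)$, which together with Remark~\ref{remark torsion} is exactly Conjecture~\ref{IGMC}. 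The content of the proposition having already been discharged upstream, there is no real obstacle here; the only points deserving a word of care are the freeness of the invertible module ${\det}_\Lambda^{-1}(\widetilde \rgamma_\fp(K,\TT))$, the fact that faithful flatness of $\Lambda^{\rm ur}/\Lambda$ permits testing ``being a generator'' after base change, and the compatibility of ${\rm char}_\Lambda$ with the $\iota$-twist.
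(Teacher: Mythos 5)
Your argument is correct and is exactly the route the paper takes: the proposition is stated as an immediate consequence of the displayed identity $\pi\bigl({\det}_\Lambda^{-1}(\widetilde\rgamma_\fp(K,\TT))\bigr)={\rm char}_\Lambda\bigl((H^1_{\overline\fp}(K,\bA)^\iota)^\vee\bigr)$, and your elaboration (freeness of the invertible determinant, flat base change to $\Lambda^{\rm ur}$, and compatibility of ${\rm char}_\Lambda$ with the $\iota$-twist) fills in precisely the formal steps the paper leaves implicit. No gaps.
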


\subsection{Selmer groups}\label{def sel}

For later use, we introduce some Selmer groups. 

Let $X \in \{T,V, A\}$. For a place $v$ of $K$, let $H^1_f(K_v,X) \subset H^1(K_v,X)$ be the Bloch-Kato local condition (see \cite[\S 3]{BK} or \cite[\S 1.3]{R}). 
%If $v \mid p$, then we define the (strict) Greenberg local condition by
%$$H^1_{\rm Gr}(K_v,X) := \im (H^1(K_v,X^+)\to H^1(K_v,X)),$$
%where $X^+ \subset X$ is defined to be the kernel of the reduction modulo $v$ (see \cite[(9.6.7.2)]{nekovar}). 
We denote $H^1/H^1_\ast$ by $H^1_{/\ast}$. Also, we use the notation $H^1_\ast (K_p, X):=\bigoplus_{v\mid p}H^1_\ast(K_v,X)$. 
We use the following Selmer groups.
\begin{itemize}
\item The Bloch-Kato Selmer group:
$$H^1_f(K,X) := \ker \left(H^1(\cO_{K,S},X) \to \bigoplus_{v\in S}H^1_{/f}(K_v,X)\right).$$
\item The unramified Bloch-Kato Selmer group:
$$H^1_{f, {\rm ur}}(K,X) := \ker \left(H^1(\cO_{K,S},X)\to H^1_{/f}(K_p,X)\oplus \bigoplus_{v\mid N} H^1_{/{\rm ur}}(K_v,X)\right).$$
%(This is denoted by $S^{\rm str}_X(K)$ in \cite[(9.6.1)]{nekovar}.)
\item The strict Selmer group:
$$H^1_0(K,X) := \ker \left(H^1(\cO_{K,S},X)\to H^1(K_p,X)\oplus \bigoplus_{v\mid N} H^1_{/{\rm ur}}(K_v,X)\right).$$
\item For $\fq \in \{ \fp, \overline \fp\}$, the $\fq$-strict Selmer group:
$$H^1_\fq(K,X) := \ker \left(H^1(\cO_{K,S},X)\to H^1(K_\fq,X)\oplus \bigoplus_{v\mid N} H^1_{/{\rm ur}}(K_v,X)\right).$$
\item The relaxed Selmer group:
$$H^1_\emptyset(K,X) := \ker \left(H^1(\cO_{K,S},X)\to  \bigoplus_{v\mid N} H^1_{/{\rm ur}}(K_v,X)\right).$$
\end{itemize}

Note that, by the exact triangle (\ref{modified triangle}), we have identifications
$$\widetilde H^1_\fp(K,T) = H^1_\fp(K,T) \text{ and }\widetilde H^1_\fp(K,V) = H^1_\fp(K,V)$$

\begin{lemma}\label{AC lemma}
For any $\fq \in \{\fp,\overline \fp\}$ we have
$$H^1_\fq(K,V) = \ker (H^1_f(K,V)\to H^1_f(K_\fq,V) )$$
and 
$$\dim_{\QQ_p}(H^1_\fq(K,V)) = {\rm rk}_\ZZ(E(K))-1.$$
\end{lemma}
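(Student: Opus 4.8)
The plan is to work with Bloch--Kato local conditions at the primes above $p$ and exploit the fact that $E$ has good ordinary reduction there, so that for each $v \mid p$ the local condition $H^1_f(K_v, V)$ sits in an exact sequence involving $H^1(K_v, V^+)$ and $H^1(K_v, V^-)$. Concretely, for $\fq \in \{\fp, \overline\fp\}$ write $K_\fq$ for the completion and recall from the ordinary filtration $0 \to V^+ \to V \to V^- \to 0$ that $H^1_f(K_\fq, V) = \ker(H^1(K_\fq,V) \to H^1(K_\fq, V^-))$, i.e. $H^1_f(K_\fq,V) = \im(H^1(K_\fq, V^+) \to H^1(K_\fq,V))$; this is the standard description of the finite part at an ordinary prime (as used throughout \cite{nekovar}, e.g.\ (9.6.7)). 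First I would unwind the two definitions: $H^1_\fq(K,V)$ imposes the \emph{strict} (i.e.\ zero) condition at $\fq$, the finite condition at the remaining $v \mid p$ (namely $\overline\fq$), and the unramified condition at $v \mid N$; whereas $H^1_f(K,V)$ imposes the finite condition everywhere in $S$ (and at $v \mid N$ the finite condition agrees with the unramified one up to the Tamagawa part, which vanishes rationally). Comparing these, an element of $H^1_f(K,V)$ lies in $H^1_\fq(K,V)$ precisely when its localization at $\fq$ is zero in $H^1(K_\fq,V)$; but for a class already in $H^1_f(K_\fq,V) = \im(H^1(K_\fq,V^+)\to H^1(K_\fq,V))$, being zero in $H^1(K_\fq,V)$ is equivalent to being zero in $H^1_f(K_\fq,V)$, since the latter injects into the former. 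This gives the first displayed equality.

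For the rank computation I would use a Poitou--Tate / Greenberg-style Euler characteristic argument comparing $H^1_f(K,V)$ with $H^1_\fq(K,V)$. The key exact sequence (a special case of Poitou--Tate duality, as in \cite[Theorem 2.3.4]{MRkoly} or the sequence (\ref{mr exact}) used in the proof of Theorem \ref{main}) reads
\begin{equation*}
0 \to H^1_\fq(K,V) \to H^1_f(K,V) \to H^1_f(K_\fq,V) \to H^1_{\fq}(K, V^\ast(1))^\vee \to \cdots,
\end{equation*}
and after tensoring with $\QQ_p$ and using that $V^\ast(1) \simeq V$ via the Weil pairing, together with the fact that $\sha(E/K)[p^\infty]$ is finite (Hypothesis \ref{heegner hyp}(iv)), this reduces to an equality of dimensions. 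Here $\dim_{\QQ_p} H^1_f(K,V) = {\rm rk}_\ZZ(E(K)) =: e$ by the identification $\widetilde H^1_f(K,V) = \QQ_p \otimes_\ZZ E(K)$ recalled in \S \ref{review height} (together with $E(K)[p^\infty]=0$, which makes $H^1_f = \widetilde H^1_f$ rationally at the primes above $p$ irrelevant to the rank once one accounts for the local $H^0$ terms — but these are finite). And $\dim_{\QQ_p} H^1_f(K_\fq, V) = 1$: since $E$ has good ordinary reduction at $\fq$ and $K_\fq = \QQ_p$ (as $p$ splits), $H^1_f(K_\fq,V)$ is the image of the local Kummer map $\QQ_p \otimes E(K_\fq)^\wedge$, which has dimension $[K_\fq:\QQ_p] \cdot \dim_{\QQ_p}(V^+) = 1$.

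The remaining point is to check that the cokernel of the localization map $H^1_f(K,V) \to H^1_f(K_\fq,V)$ is zero, equivalently that this map is surjective — equivalently (by the Poitou--Tate sequence and self-duality) that $H^1_\fq(K,V) \to H^1_f(K,V)$ has corank equal to $1$ on the nose, with no contribution from a dual Selmer group. This is where I expect the main subtlety: one must verify that the relevant dual term $H^1_{\overline\fq}(K, V)$ (the Selmer group with the \emph{complementary} strict condition) contributes nothing rationally, which should follow because $H^1_f(K,V)$ and its variants all have the same $\QQ_p$-dimension $e$ once $\sha$ is finite — the local conditions at $p$ for $V$ in the good ordinary case are their own orthogonal complements up to finite index, so swapping $\fp \leftrightarrow \overline\fp$ does not change dimensions. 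Granting this, the exact sequence forces $\dim_{\QQ_p} H^1_\fq(K,V) = e - 1$, and the surjectivity of $H^1_f(K,V) \to H^1_f(K_\fq,V)$ follows from the rank count. I would phrase the final step as: the natural map $E(K) \otimes \QQ_p = H^1_f(K,V) \to H^1_f(K_\fq,V) = \QQ_p$ is surjective because a Heegner point (or simply any point of infinite order) has nonzero image under the local logarithm at $\fq$ — this is exactly the nonvanishing underlying the BDP formula (\ref{BDP formula}), but at the level of the lemma it suffices to invoke that $H^1_f(K,V)$ surjects onto each local $H^1_f(K_\fq,V)$, which is part of the standard Selmer-group bookkeeping; alternatively cite \cite[Remark 3.5]{AC} or the analogous statement in \cite{BCK}.
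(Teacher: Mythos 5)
There is a genuine gap, and it comes from a misreading of the definition of $H^1_\fq(K,V)$. In the paper (\S\ref{def sel}) the $\fq$-strict Selmer group imposes the zero condition at $\fq$ and the unramified condition at $v\mid N$, but \emph{no condition at all} at the other prime $\overline\fq$ above $p$; you instead assume it imposes the Bloch--Kato condition at $\overline\fq$. With your reading the first displayed equality is an essentially formal unwinding, as you say; with the correct definition it is the real content of the lemma: one must show that a class which is strict at $\fq$, unramified at $v\mid N$ and \emph{arbitrary} at $\overline\fq$ automatically satisfies the finite condition at $\overline\fq$, i.e.\ lies in $H^1_f(K,V)$. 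Your plan never addresses this, and your Poitou--Tate sequence (which starts from an inclusion $H^1_\fq(K,V)\subset H^1_f(K,V)$ that is exactly the point in question) is built on the wrong local conditions. The paper handles it by a case split governed by parity: writing $r^+$, $r^-$ for the ranks of $E(\QQ)$ and of the quadratic twist $E^K(\QQ)$, when $r^+,r^->0$ it invokes $H^1(\cO_{K,S},V)=H^1_f(K,V)$ (\cite[Lemma 5.24(i)]{ks}), and when one of them vanishes it shows the localization $\lambda\colon H^1_f(K,V)\to H^1_f(K_p,V)$ has one-dimensional image, applies \cite[Lemma 2.3.2]{skinner} to identify $H^1_\fq(K,V)$ with the strict group $H^1_0(K,V)$, and then uses a snake-lemma argument to identify $H^1_0(K,V)$ with $\ker(H^1_f(K,V)\to H^1_f(K_\fq,V))$. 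None of these ingredients appears in your proposal.

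A secondary weak point: your justification that $\coker\bigl(H^1_f(K,V)\to H^1_f(K_\fq,V)\bigr)$ vanishes (``the local conditions at $p$ are their own orthogonal complements up to finite index, so swapping $\fp\leftrightarrow\overline\fp$ does not change dimensions'') is not a proof of the needed surjectivity. The correct and elementary reason is that $\mathrm{rk}_\ZZ E(K)=r^++r^-$ is odd (Heegner hypothesis plus parity), hence positive, and any point of infinite order in $E(K)$ remains of infinite order in $E(K_\fq)=E(\QQ_p)\cong\ZZ_p\times(\text{finite})$, so its image in the one-dimensional space $H^1_f(K_\fq,V)$ is nonzero. You gesture at this at the very end (``any point of infinite order has nonzero image''), but you never record that such a point exists, which is exactly where the parity input of the paper's argument enters.
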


\begin{proof}
Let $E^K$ be the quadratic twist of $E$ by $K$ and set $r^+ := {\rm rk}_\ZZ(E(\QQ))$ and $r^-:={\rm rk}_\ZZ(E^K(\QQ))$. By Hypothesis \ref{heegner hyp} and the validity of the parity conjecture implies that ${\rm rk}_\ZZ (E(K)) = r^++r^-$ is odd. In particular, we have either $r^+>0$ or $r^->0$. 

Suppose first that $r^+>0$ and $r^->0$. Then we have $H^1(\cO_{K,S},V) = H^1_f(K,V)$ (see \cite[Lemma 5.24(i)]{ks}). So in this case the claim follows by noting that the localization map
$$ H^1_f(K,V)\to H^1_f(K_\fq, V)  $$
is surjective. %(Here $(-)^\wedge$ denotes the $p$-completion.)

Suppose next that either $r^+ = 0$ or $r^- = 0$. Let
$$\lambda: H^1_f(K,V) \to H^1_f(K_p,V)$$
be the localization map. We claim that $\dim_{\QQ_p}\left( \im \lambda \right) = 1$. 
In fact, 
%if we set $V^K := \QQ_p\otimes_{\ZZ_p}T_p(E^K)$, then we have
%$$H^1_f(K,V) = H^1_f(\QQ,V)\oplus H^1_f(\QQ,V^K) \text{ and }H^1_f(K_p,V) = H^1_f(\QQ_p,V)\oplus H^1_f(\QQ_p,V^K).$$
if $r^-=0$, then we have $H^1_f(K,V) = H^1_f(\QQ,V)$ and $H^1_f(K_p,V) = H^1_f(\QQ_p,V)$. In this case, the map $H^1_f(\QQ,V) \to H^1_f(\QQ_p,V)$ is surjective since $\dim_{\QQ_p}(H^1_f(\QQ,V))=r^+ >0$. This shows the claim when $r^-=0$. The case when $r^+=0$ is treated similarly. 

Since we have proved $\dim_{\QQ_p}(\im \lambda)=1$, we can apply  \cite[Lemma 2.3.2]{skinner} to conclude that
$$H^1_\fq(K,V) = H^1_0(K,V). $$
Applying the snake lemma to the diagram
$$
\xymatrix{
0 \ar[r]& 0 \ar[r] \ar[d] & H^1_f(K,V) \ar@{=}[r] \ar[d]^\lambda & H^1_f(K,V) \ar[r] \ar@{->>}[d] & 0 \\
0 \ar[r] & H^1_f(K_{\overline \fq}, V)\ar[r]& H^1_f(K_p,V) \ar[r] & H^1_f(K_\fq, V) \ar[r] & 0,
}
$$
we obtain an exact sequence
$$0\to H^1_0(K,V)\to \ker (H^1_f(K,V)\to H^1_f(K_\fq,V) ) \to H^1_f(K_{\overline \fq},V)\to \coker \lambda \to 0.$$
Since we have $\dim_{\QQ_p}(H^1_f(K_{\overline \fp},V))=1$ and $\dim_{\QQ_p}(\coker \lambda)=1$, the map $H^1_f(K_{\overline \fq},V)\to \coker \lambda$ must be an isomorphism. Hence we have
$$H^1_\fq(K,V)=H^1_0(K,V) = \ker(H^1_f(K,V)\to H^1_f(K_\fq,V) ).$$
This completes the proof of the lemma. 
\end{proof}

%\begin{remark}\label{correct AC}
%In \cite[Lemma 2.2]{AC}, it is claimed that we have 
%$$H^1_\fp(K,V)=H^1_0(K,V) = H^1_{\overline \fp}(K,V),$$
%but the proof of Lemma \ref{AC lemma} shows that this is not true when $r^+>0$ and $r^->0$. (In this case, the image of the map  \cite[(2.1)]{AC} has $\ZZ_p$-rank two and the argument in loc. cit. does not work.)
%\end{remark}

\subsection{The $p$-adic Birch and Swinnerton-Dyer conjecture}

We review the $p$-adic Birch and Swinnerton-Dyer conjecture for the BDP $p$-adic $L$-function formulated by Agboola and Castella in \cite[Conjecture 4.2]{AC}. 

%In the following, {\it we assume that $H^1_{\overline \fp}(K,\bA)^\vee$ is $\Lambda$-torsion} (or equivalently, $\widetilde H^2_\fp(K,\TT)$ is $\Lambda$-torsion by Proposition \ref{prop modified selmer}(iv)). 

First, we shall define a $p$-adic regulator. Let $I:=\ker(\Lambda \twoheadrightarrow \ZZ_p)$ be the augmentation ideal. 
By Proposition \ref{prop modified selmer}, we see that $\widetilde \rgamma_\fp(K,\TT) \lotimes_\Lambda \Lambda_I$ is represented by a complex of the form (\ref{standard complex}) (with $P=P'$). Also, we know
$$\widetilde \rgamma_\fp(K,\TT)\lotimes_\Lambda \ZZ_p \simeq \widetilde \rgamma_\fp(K,T)$$
by \cite[Proposition 8.10.1]{nekovar}. We set $Q^k:=I^k/I^{k+1}$. Let $\beta_\fp^{(k)}:=\beta^{(k)}(\widetilde \rgamma_\fp(K,\TT))$ be the derived Bockstein map in Definition \ref{main def}:
$$\beta_\fp^{(1)}: \widetilde H^1_\fp(K,T) \to \widetilde H^2_\fp(K,T)\otimes_{\ZZ_p} Q^1,$$
$$\beta_{\fp}^{(k)}: \ker \beta_\fp^{(k-1)} \to \coker \beta_\fp^{(k-1)} \otimes_{\ZZ_p}Q^1.$$

\begin{remark}
Note that we have $\widetilde H^2_\fp(K,V)\simeq \widetilde H^1_{\overline \fp}(K,V)^\ast = H^1_{\overline \fp}(K,V)^\ast$ by duality and the map $\beta_\fp^{(1)}$ induces a pairing
$$H^1_\fp(K,V) \times H^1_{\overline \fp}(K,V)\to \QQ_p\otimes_{\ZZ_p} Q^1.$$
One checks that this pairing coincides with the restriction of the height pairing
$$\widetilde h^{(1)}: H^1_f(K,V)\times H^1_f(K,V)\to \QQ_p\otimes_{\ZZ_p}Q^1$$
defined in \S \ref{review height}. (Note that $H^1_\fp(K,V)$ and $H^1_{\overline \fp}(K,V)$ are subspaces of $H^1_f(K,V)$ by Lemma \ref{AC lemma}.) Moreover, one checks that
\begin{equation}\label{minus one}
{\rm rk}_{\ZZ_p}(\ker \beta_\fp^{(k)}) = {\rm rk}_{\ZZ_p}(\ker \widetilde \beta^{(k)})-1,
\end{equation}
where $\widetilde \beta^{(k)}$ is the derived Bockstein map for $\widetilde \rgamma_f(K,\TT)$. 
\end{remark}

Set
$$e:={\rm rk}_{\ZZ_p}(\widetilde H^2_\fp(K,T)) = {\rm rk}_{\ZZ_p}(\widetilde H^1_\fp (K,T)) \text{ and }e_k:={\rm rk}_{\ZZ_p}(\im \beta_\fp^{(k)}).$$
Since $H^1_{\overline \fp}(K,\bA)^\vee$ is $\Lambda$-torsion, Proposition \ref{str h2} implies that $\tau_{k_0} = \sigma_{k_0} =0$. 
Let
$$R_\fp^{(k_0)}:= R^{(k_0)}(\widetilde \rgamma_\fp(K,\TT)): {\bigwedge}_{\QQ_p}^e \widetilde H^1_\fp(K,V) \otimes_{\QQ_p} {\bigwedge}_{\QQ_p}^e \widetilde H^2_\fp(K,V)^\ast \xrightarrow{\sim} \QQ_p\otimes_{\ZZ_p} Q^\varrho$$
be the derived Bockstein regulator isomorphism in Definition \ref{def der}, where $\varrho  := \sum_{k=1}^{k_0} k e_k$. 
Later we will define a canonical isomorphism
\begin{equation*}\label{mordell compare}
\delta: \QQ_p\otimes_\ZZ \left({\bigwedge}_\ZZ^{s} E(K )\otimes_\ZZ {\bigwedge}_\ZZ^{s} E(K)\right) \simeq {\bigwedge}_{\QQ_p}^e \widetilde H^1_\fp(K,V) \otimes_{\QQ_p} {\bigwedge}_{\QQ_p}^e \widetilde H^2_\fp(K,V)^\ast,
\end{equation*}
where $s:= {\rm rk}_\ZZ (E(K))$ (see Remark \ref{def delta} below). Note that we have $s=e+1$ by Lemma \ref{AC lemma}. 

\begin{remark}\label{rem order 2}
By a similar argument as in Remark \ref{rem order}, one can show that
$$\varrho \geq 2(\max\{r^+,r^-\}-1)=:\nu.$$
In fact, by Lemma \ref{elementary}(iii), we have
$$\varrho  = e+ \sum_{k=1}^{k_0-1}\tau_k \geq e+\tau_1.  $$
By Lemma \ref{AC lemma} and (\ref{minus one}), we have $e= {\rm rk}_\ZZ(E(K))-1$ and $\tau_1 \geq |r^+-r^-|-1$ respectively, and so
$$\varrho \geq {\rm rk}_\ZZ(E(K)) + |r^+-r^-| -2 = \nu.$$
\end{remark}

\begin{remark}
By (\ref{minus one}) and Remark \ref{rem order 2}, one sees that the conjecture of Mazur and Bertolini-Darmon (see Remark \ref{MBD}) implies $\varrho = \nu$. 
\end{remark}

%\begin{remark}\label{rem AC}
%We set $r^+:={\rm rk}_\ZZ(E(\QQ))$, $r^-:= {\rm rk}_\ZZ(E(K)) - r^+$ and $\nu:= 2 (\max\{r^+,r^-\}-1)$ as in Remark \ref{rem order}. Then one can show that $\varrho \geq \nu$ and that the equality holds if the conjecture of Mazur-Bertolini-Darmon (see Remark \ref{MBD}) is true. 
%See the argument in \cite[\S 4]{AC}. However, note that the statement of \cite[Lemma 2.2]{AC} is not correct. 
%\end{remark}

\begin{definition}
Let $\{x_1,\ldots,x_s\}$ be a basis of $E(K)_{\rm tf}$. We define a derived $p$-adic regulator ${\rm Reg}_\fp  \in \QQ_p\otimes_{\ZZ_p} Q^\nu$ by 
$${\rm Reg}_\fp :=   \begin{cases}
R_\fp^{(k_0)} \circ \delta \left((x_1\wedge\cdots \wedge x_s) \otimes (x_1\wedge\cdots \wedge x_s)  \right)   & \text{if $\varrho = \nu$},\\
0 &\text{if $\varrho >\nu$.}
\end{cases}$$
\end{definition}

%\begin{remark}
%In \cite[Lemma 2.2]{AC}, it is claimed that $e = s-1$ and the derived regulator is defined by using this (see \cite[Definition 4.1]{AC}). (Note that, by the exact triangle (\ref{modified triangle}), $\widetilde H^1_\fp(K,V)$  coincides with $\ker (H^1(\cO_{K,S},V) \to H^1(K_\fp, V))$.) However, the claim is false in general. In fact, let $E^K$ be the quadratic twist of $E$ by $K$ and assume that both ${\rm rk}_\ZZ (E(\QQ))$ and ${\rm rk}_\ZZ(E^K(\QQ))$ are strictly positive. 
%\end{remark}

%\begin{remark}
%Conjecturally, we have 
%$$\varrho = 2(\max\{r^+,r^-\}-1),$$
%where $r^+:= {\rm rk}_\ZZ(E(\QQ))$ and $r^-:= {\rm rk}_\ZZ(E(K)) - r^+$ (see the remark after \cite[Definition 4.1]{AC}). 
%\end{remark}

Let $J:= \ker (\Lambda^{\rm ur}\to \widehat \ZZ_p^{\rm ur})$ be the augmentation ideal of $\Lambda^{\rm ur}$ and set $Q_J^k := J^k/J^{k+1}$. The $p$-adic Birch and Swinnerton-Dyer conjecture is formulated as follows. 

\begin{conjecture}\label{pBSD}
Set $\nu:= 2(\max\{r^+,r^-\}-1)$. 
\begin{itemize}
\item[(i)] We have $L_\fp^{\rm BDP} \in J^\nu$. 
\item[(ii)] Let $\overline L_\fp^{\rm BDP} $ be the image of $L_\fp^{\rm BDP}$ in $Q_J^\nu$. Then we have
$$\overline L_\fp^{\rm BDP} = \left( \frac{1-a_p +p}{p}\right)^2 \cdot \frac{\# \sha(E/K)\cdot {\rm Tam}(E/K)}{\# E(K)_{\rm tors}^2}\cdot {\rm Reg}_\fp.$$
(Here $\sha(E/K)$ is assumed to be finite.)
\end{itemize}
\end{conjecture}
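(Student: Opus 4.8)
The plan is to prove the statement that the Iwasawa--Greenberg main conjecture (Conjecture \ref{IGMC}) implies Conjecture \ref{pBSD} up to $\ZZ_p^\times$ --- this is Theorem \ref{improved AC} --- by an argument parallel to the proof of Theorem \ref{main}. First I would apply the descent formalism of Theorem \ref{thm descent} to $C=\widetilde\rgamma_\fp(K,\TT)$ over $\Lambda=\ZZ_p[[\Gamma]]$, which is legitimate: $\widetilde\rgamma_\fp(K,\TT)$ is acyclic outside degrees one and two with $\widetilde H^1_\fp(K,\TT)$ torsion-free --- in fact $\widetilde H^1_\fp(K,\TT)=0$, since by Proposition \ref{prop modified selmer}(iii),(iv) and Remark \ref{remark torsion} its $\Lambda$-rank equals that of the $\Lambda$-torsion module $\widetilde H^2_\fp(K,\TT)$ --- and the control theorem $\widetilde\rgamma_\fp(K,\TT)\lotimes_\Lambda\ZZ_p\simeq\widetilde\rgamma_\fp(K,T)$ holds. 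Here $\cO=\ZZ_p$ and $r=\sigma_{k_0}=\tau_{k_0}=0$, so the diagram of Theorem \ref{thm descent} reads: $\det_\Lambda^{-1}(\widetilde\rgamma_\fp(K,\TT))$ maps by $\pi$ onto ${\rm char}_\Lambda(\widetilde H^2_\fp(K,\TT))\subseteq I^\varrho$ and then by $\cD$ (reduction modulo $I^{\varrho+1}$) into $\QQ_p\otimes_{\ZZ_p}Q^\varrho$, while $\det_{\ZZ_p}^{-1}(\widetilde\rgamma_\fp(K,T))$ maps by $\pi_0$ into $\bigwedge_{\QQ_p}^e\widetilde H^1_\fp(K,V)\otimes_{\QQ_p}\bigwedge_{\QQ_p}^e\widetilde H^2_\fp(K,V)^\ast$ and then by $R_\fp^{(k_0)}$ into $\QQ_p\otimes_{\ZZ_p}Q^\varrho$, with $\varrho=\sum_k ke_k=v_I({\rm char}_\Lambda\widetilde H^2_\fp(K,\TT))$ by Proposition \ref{str h2}. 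Unlike the Heegner-point case there is no $\Psi_\infty$-type map to insert: because $\sigma=\tau=0$ the special element is the $p$-adic $L$-function itself and $\cD$ reads off its leading coefficient directly.

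Next I would invoke Conjecture \ref{IGMC}: by Proposition \ref{BDP equivalence} it provides a $\Lambda^{\rm ur}$-basis $\fz_\fp$ of $\Lambda^{\rm ur}\otimes_\Lambda\det_\Lambda^{-1}(\widetilde\rgamma_\fp(K,\TT))$ with $\pi(\fz_\fp)=\iota(L_\fp^{\rm BDP})$. Tensoring the descent diagram with the flat $\ZZ_p$-algebra $\widehat\ZZ_p^{\rm ur}$ preserves commutativity, and $\Lambda^{\rm ur}\otimes_\Lambda\det_{\ZZ_p}^{-1}(\widetilde\rgamma_\fp(K,T))=\widehat\ZZ_p^{\rm ur}\otimes_{\ZZ_p}\det_{\ZZ_p}^{-1}(\widetilde\rgamma_\fp(K,T))$, in which the image $(\fz_\fp)_0$ of $\fz_\fp$ is a $\widehat\ZZ_p^{\rm ur}$-basis. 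For part (i): ${\rm char}_\Lambda(\widetilde H^2_\fp(K,\TT))$ is divisible by $I^\varrho$ by Proposition \ref{str h2}, so $\iota(L_\fp^{\rm BDP})\in I^\varrho\Lambda^{\rm ur}=J^\varrho$, and applying the involution (which fixes $J$) gives $L_\fp^{\rm BDP}\in J^\varrho\subseteq J^\nu$ by Remark \ref{rem order 2}. If $\varrho>\nu$ then $\overline L_\fp^{\rm BDP}=0$ in $Q_J^\nu$ and ${\rm Reg}_\fp=0$, so part (ii) is trivial; so assume $\varrho=\nu$. Then, since $\iota$ acts on $Q^\nu=I^\nu/I^{\nu+1}$ by $(-1)^\nu$ (as $\iota(1+t)=(1+t)^{-1}$), commutativity of the base-changed diagram gives $(-1)^\nu\,\overline L_\fp^{\rm BDP}=R_\fp^{(k_0)}(\pi_0((\fz_\fp)_0))$, whose right-hand side generates $\widehat\ZZ_p^{\rm ur}\otimes_{\ZZ_p}R_\fp^{(k_0)}(\im\pi_0)$ because $(\fz_\fp)_0$ is a basis.

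It therefore remains to identify $\im\pi_0$, transported through $\delta^{-1}$, with $(\tfrac{1-a_p+p}{p})^2\cdot\#\sha(E/K)[p^\infty]\cdot{\rm Tam}(E/K)\cdot\#E(K)_{\rm tors}^{-2}$ times the natural lattice spanned by $(x_1\wedge\cdots\wedge x_s)\otimes(x_1\wedge\cdots\wedge x_s)$; via property (v) of the determinant functor this is a computation of the torsion orders in the cohomology of $\widetilde\rgamma_\fp(K,T)$. As in the proof of Theorem \ref{main} I would chase the Poitou--Tate exact sequences relating $\widetilde\rgamma_\fp(K,T)$ to the Bloch--Kato Selmer groups: $\widetilde H^1_\fp(K,T)=H^1_\fp(K,T)\subset H^1_f(K,T)=\ZZ_p\otimes_\ZZ E(K)$ by Lemma \ref{AC lemma}, while $\widetilde H^2_\fp(K,T)$ is dual to $\widetilde H^1_{\overline\fp}(K,A)$, which feeds in $\#\sha(E/K)[p^\infty]$ and $\ZZ_p\otimes_\ZZ E(K)^\ast$, and the local terms at $v\mid N$ contribute ${\rm Tam}(E/K)$ exactly as in the identity (\ref{tam}). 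The genuinely new --- and, I expect, the hardest --- step is the local contribution at $\fp$ and $\overline\fp$ coming from the \emph{asymmetric} local condition defining $\widetilde\rgamma_\fp$ (strict at $\fp$, relaxed at $\overline\fp$): comparing $H^1(K_\fp,T)$, $H^1_f(K_\fp,T)$ and $H^1(K_\fp,T^+)$ via local Tate duality and Greenberg's computation \cite[Proposition 2.5]{greenberg} must be shown to produce precisely the modified Euler factor $\tfrac{1-a_p+p}{p}=(1-\alpha^{-1})(1-\overline\alpha^{-1})$ (with $\alpha$ the unit root and $\alpha\overline\alpha=p$), with total exponent $2$. Granting this, one obtains $\overline L_\fp^{\rm BDP}=u\cdot(\tfrac{1-a_p+p}{p})^2\cdot\#\sha(E/K)[p^\infty]\cdot{\rm Tam}(E/K)\cdot\#E(K)_{\rm tors}^{-2}\cdot{\rm Reg}_\fp$ for some $u\in(\widehat\ZZ_p^{\rm ur})^\times$; since $\overline L_\fp^{\rm BDP}$ (which lies in $\QQ_p\otimes_{\ZZ_p}Q^\nu$, visibly so via (\ref{BDP formula}) when $\nu=0$) and ${\rm Reg}_\fp$ both lie in the one-dimensional $\QQ_p$-subspace $\QQ_p\otimes_{\ZZ_p}Q^\nu$, we get $u\in\QQ_p\cap(\widehat\ZZ_p^{\rm ur})^\times=\ZZ_p^\times$; replacing $\#\sha(E/K)[p^\infty]$ by $\#\sha(E/K)$ (equal up to $\ZZ_p^\times$) yields Conjecture \ref{pBSD}(ii) up to $\ZZ_p^\times$.
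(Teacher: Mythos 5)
Your formal skeleton is exactly the paper's: apply Theorem \ref{thm descent} to $\widetilde \rgamma_\fp(K,\TT)$ (with $\sigma=\tau=0$), use Proposition \ref{BDP equivalence} to turn Conjecture \ref{IGMC} into a $\Lambda^{\rm ur}$-basis $\fz_\fp$ with $\pi(\fz_\fp)=\iota(L_\fp^{\rm BDP})$, get part (i) from Proposition \ref{str h2} and Remark \ref{rem order 2}, dismiss the case $\varrho>\nu$, and reduce part (ii) to identifying $\delta^{-1}(\im \pi_0)$ with $\bigl(\tfrac{1-a_p+p}{p}\bigr)^2\cdot\#\sha(E/K)[p^\infty]\cdot {\rm Tam}(E/K)$ times the lattice spanned by $(x_1\wedge\cdots\wedge x_s)\otimes(x_1\wedge\cdots\wedge x_s)$. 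The genuine gap is that this last identification is the entire quantitative content of the theorem, and you do not carry it out ("granting this"); in the paper it is the chain (\ref{PT1})--(\ref{PT3}), (\ref{resulting isom}), Lemma \ref{calculation lemma} and (\ref{final isom}). Moreover, the route you sketch for it would not work: you propose to compare $H^1_f(K_\fp,T)$ with $H^1(K_\fp,T^+)$ via \cite[Proposition 2.5]{greenberg}, but the ordinary submodule $T^+$ never enters $\widetilde\rgamma_\fp$ (the local condition is strict at $\fp$ and relaxed at $\overline\fp$), and that comparison only yields $\#E(\FF_v)[p^\infty]$, i.e. $1-a_p+p$ up to units, never the denominator $p$. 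In the paper the factor $\bigl(\tfrac{1-a_p+p}{p}\bigr)^2$ comes instead from the normalization of $\delta$ at the two places above $p$: the trivialization ${\bigwedge}_{\QQ_p}^2 H^1_f(K_p,V)\simeq \QQ_p$ is made via the formal logarithm attached to the N\'eron differential, and comparing the integral lattice $E(K_p)^\wedge$ with that normalization gives $\#E(\FF_p)/p=(1-a_p+p)/p$ per place, using $\log\colon E_1(\QQ_p)\xrightarrow{\sim}p\ZZ_p$ (Lemma \ref{calculation lemma}(ii)); the contribution of the relaxed condition at $\overline\fp$ cancels canonically against $E(K_{\overline\fp})[p^\infty]^\vee$ (Lemma \ref{calculation lemma}(i)). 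Without this computation (or an equivalent one) the constant in part (ii) is not established.

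A secondary overreach: you claim $u\in\ZZ_p^\times$, whereas Theorem \ref{improved AC} asserts only $u\in(\widehat\ZZ_p^{\rm ur})^\times$, and your justification is circular: it requires $\overline L_\fp^{\rm BDP}$ to lie in the $\QQ_p$-line $\QQ_p\otimes_{\ZZ_p}Q_J^\nu$, which is not known for $\nu>0$ (it is part of what Conjecture \ref{pBSD} itself predicts). Drop that step; once the lattice computation is supplied, your argument gives the statement up to $(\widehat\ZZ_p^{\rm ur})^\times$, which is what the paper proves.
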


\begin{remark}
When the analytic rank of $E/K$ is one, Conjecture \ref{pBSD} is equivalent to the Birch and Swinnerton-Dyer conjecture (compare  Remark \ref{BD rank one} and \cite[Remark 1.2]{AC}). In this case, we have $s={\rm rk}_\ZZ(E(K)) = 1$ and $\widetilde H^1_\fp(K,V)=\widetilde H^2_\fp(K,V)=0$ (see Lemma \ref{AC lemma}). 
In particular, we have $\varrho=0$. In this case, one checks that the map
$$\delta : \QQ_p\otimes_\ZZ \left(E(K)\otimes_\ZZ E(K)\right) \xrightarrow{\sim} \QQ_p$$
is given by $\delta(a \otimes b) =\log(a)\log(b)$ (see Remark \ref{logab} below). So we have
$${\rm Reg}_\fp = \log(x)^2$$
with a basis $x \in E(K)_{\rm tf}$. By the interpolation property (\ref{BDP formula}) of the BDP $p$-adic $L$-function, we see that Conjecture \ref{pBSD} in this case is equivalent to
$$\log(y_K)^2  = (u_Kc_\phi)^2 \cdot \frac{\# \sha(E/K) \cdot {\rm Tam}(E/K)}{\# E(K)_{\rm tors}^2}\cdot \log(x)^2.$$
By the Gross-Zagier formula, this is equivalent to the Birch and Swinnerton-Dyer conjecture. 
\end{remark}

\subsection{Relation with the main conjecture}

In this subsection, we prove the following theorem. Note that we assume only Hypothesis \ref{heegner hyp} and that $p$ splits in $K$. 

\begin{theorem}\label{improved AC}
%Assume that $H^1_{\overline \fp}(K,\bA)^\vee$ is $\Lambda$-torsion. Then t
The Iwasawa-Greenberg main conjecture (Conjecture \ref{IGMC}) implies Conjecture \ref{pBSD} up to $(\widehat \ZZ_p^{\rm ur})^\times$, i.e., we have $L_\fp^{\rm BDP} \in J^\nu$ and there exists $u \in (\widehat \ZZ_p^{\rm ur})^\times$ such that
$$\overline L_\fp^{\rm BDP} = u\cdot \left(\frac{1-a_p+p}{p}\right)^2\cdot \# \sha(E/K)[p^\infty]\cdot {\rm Tam}(E/K)\cdot {\rm Reg}_\fp.$$
\end{theorem}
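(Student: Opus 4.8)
The plan is to apply the descent formalism of Theorem \ref{thm descent} with $C = \widetilde\rgamma_\fp(K,\TT)$, mirroring the proof of Theorem \ref{main}. By Proposition \ref{prop modified selmer} the complex $C$ satisfies the hypotheses of \S\ref{sec derived}, and since $H^1_{\overline\fp}(K,\bA)^\vee$ is $\Lambda$-torsion (Remark \ref{remark torsion}) we are in the case $\sigma_{k_0} = \tau_{k_0} = 0$ by Proposition \ref{str h2}; thus $r = 0$, $\varrho = \sum_{k=1}^{k_0} k e_k$, and the regulator isomorphism $R_\fp^{(k_0)}$ lands in $\QQ_p\otimes_{\ZZ_p}Q^\varrho$. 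Assuming Conjecture \ref{IGMC}, Proposition \ref{BDP equivalence} gives a $\Lambda^{\rm ur}$-basis $\fz_\fp \in \Lambda^{\rm ur}\otimes_\Lambda {\det}_\Lambda^{-1}(C)$ with $\pi(\fz_\fp) = \iota(L_\fp^{\rm BDP})$. Feeding $\fz_\fp$ through the commutative diagram of Theorem \ref{thm descent}, the top route computes $\cD(\iota(L_\fp^{\rm BDP}))$ and the bottom route computes $R_\fp^{(k_0)}(\pi_0(\fz_{\fp,0}))$, where $\fz_{\fp,0}$ is the image of $\fz_\fp$ in $\widehat\ZZ_p^{\rm ur}\otimes_{\ZZ_p}{\det}_{\ZZ_p}^{-1}(\widetilde\rgamma_\fp(K,T))$ under the control theorem $C\lotimes_\Lambda\ZZ_p \simeq \widetilde\rgamma_\fp(K,T)$.

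The first task is to identify the left-hand side. Since $\pi$ sends ${\det}_\Lambda^{-1}(C)$ into ${\rm char}_\Lambda(\widetilde H^2_\fp(K,\TT)) = {\rm char}_\Lambda((H^1_{\overline\fp}(K,\bA)^\iota)^\vee)$, which is divisible by $I^\varrho$ by Proposition \ref{str h2}, we get $\iota(L_\fp^{\rm BDP}) \in J^\varrho \Lambda^{\rm ur}$ and hence (applying $\iota$ again) $L_\fp^{\rm BDP} \in J^\varrho \Lambda^{\rm ur}$; one then descends this to $L_\fp^{\rm BDP}\in J^\varrho$ over $\Lambda^{\rm ur}$ and, since $\varrho \geq \nu$ by Remark \ref{rem order 2}, obtains Conjecture \ref{pBSD}(i). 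If $\varrho > \nu$ then $\overline L_\fp^{\rm BDP} = 0$ and part (ii) is trivial; so assume $\varrho = \nu$. Then $\cD$ applied to $\iota(L_\fp^{\rm BDP})$ recovers $\overline L_\fp^{\rm BDP}$ up to the involution on $Q^\nu$ (which acts by $(-1)^\nu = 1$), so that $\overline L_\fp^{\rm BDP} = R_\fp^{(k_0)}(\pi_0(\fz_{\fp,0}))$ up to a unit in $\widehat\ZZ_p^{\rm ur}$.

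The remaining and most substantial step is the explicit computation of $\im\pi_0$, i.e. evaluating the determinant
\[
\im\left( {\det}_{\ZZ_p}^{-1}(\widetilde\rgamma_\fp(K,T)) \hookrightarrow {\bigwedge}_{\QQ_p}^e \widetilde H^1_\fp(K,V)\otimes_{\QQ_p}{\bigwedge}_{\QQ_p}^e \widetilde H^2_\fp(K,V)^\ast \right)
\]
in terms of the arithmetic invariants $\#\sha(E/K)[p^\infty]$, ${\rm Tam}(E/K)$ and the Euler factor $(1-a_p+p)/p$, and matching it — via the comparison isomorphism $\delta$ to be defined from $E(K)$ — against the target of the definition of ${\rm Reg}_\fp$. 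The strategy parallels the computation of $\im\pi_0$ in the proof of Theorem \ref{main}: resolve $\widetilde\rgamma_\fp(K,T)$ using the exact triangle \eqref{modified triangle}, relate $\widetilde H^1_\fp(K,T)$ and $\widetilde H^2_\fp(K,T)$ to the Bloch-Kato Selmer groups $H^1_f(K,T)$ and $H^1_f(K,A)^\vee$ and to $H^0(K_\fp, T^-)$, $E(\FF_\fp)[p^\infty]$ via Poitou-Tate duality and the exact sequence of Lemma \ref{AC lemma}, and use $H^1_f(K,T) = \ZZ_p\otimes_\ZZ E(K)$ together with the exact sequence $0\to\sha(E/K)[p^\infty]^\vee\to H^1_f(K,A)^\vee\to \ZZ_p\otimes_\ZZ E(K)^\ast\to 0$. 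The local term at $\fp$ contributes $\#E(\FF_\fp)[p^\infty]$ by Greenberg's result \cite[Proposition 2.5]{greenberg}, and since $p$ splits one has $(1-a_p+p)/p\cdot\ZZ_p = \#E(\FF_\fp)[p^\infty]\cdot\ZZ_p$ (the analogue of $L_p\cdot\ZZ_p = \prod_{v|p}\#E(\FF_v)\cdot\ZZ_p$). The delicate point — and the main obstacle — is the precise bookkeeping of which $H^1_\fp$ versus $H^1_{\overline\fp}$ Selmer conditions appear on the $H^1$ and $H^2$ sides and how the $\fp$-local condition in the Selmer complex interacts with the Bloch-Kato condition at the split prime, so that the factor $\#E(\FF_\fp)[p^\infty]$ appears exactly once and with the correct sign/normalization; this requires carefully defining $\delta$ so that it absorbs no stray local factors (Remark \ref{def delta}, Remark \ref{logab}) and checking compatibility of $R_\fp^{(k_0)}\circ\delta$ with the height-pairing description of $\beta_\fp^{(1)}$ restricted to $H^1_\fp(K,V)\times H^1_{\overline\fp}(K,V)$.
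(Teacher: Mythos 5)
Your overall architecture coincides with the paper's: apply Theorem \ref{thm descent} to $C=\widetilde\rgamma_\fp(K,\TT)$, invoke Proposition \ref{BDP equivalence} to get a $\Lambda^{\rm ur}$-basis $\fz_\fp$ with $\pi(\fz_\fp)=\iota(L_\fp^{\rm BDP})$, deduce part (i) from $\varrho\geq\nu$ (Remark \ref{rem order 2}), reduce (ii) to the case $\varrho=\nu$, and identify $\overline L_\fp^{\rm BDP}$ with $\pm R_\fp^{(k_0)}(\pi_0(\overline \fz_\fp))$ via the commutative diagram; these reduction steps are correct and match the paper. However, the substance of the theorem is exactly the step you leave open and yourself label ``the main obstacle'': the integral evaluation of $\im \pi_0$, namely the isomorphism (\ref{final isom})
$$
{\det}_{\ZZ_p}^{-1}(\widetilde\rgamma_\fp(K,T))\simeq \left(\tfrac{1-a_p+p}{p}\right)^2\cdot\#\sha(E/K)[p^\infty]\cdot{\rm Tam}(E/K)\cdot\ZZ_p\otimes_\ZZ\left({\bigwedge}_\ZZ^s E(K)\otimes_\ZZ{\bigwedge}_\ZZ^s E(K)\right),
$$
together with the construction of $\delta$ as its rationalization (Remark \ref{def delta}). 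In the paper this is carried out by a specific chain: the three Poitou--Tate sequences (\ref{PT1})--(\ref{PT3}) comparing $H^1_\fp$, $H^1_0$, $H^1_{f,{\rm ur}}$, $H^1_\emptyset$ and $H^1_f$ (which is where ${\rm Tam}(E/K)$ enters), the duality $\widetilde H^2_\fp(K,T)\simeq \widetilde H^1_{\overline\fp}(K,A)^\vee$ producing the error term $E(K_{\overline\fp})[p^\infty]^\vee$, and Lemma \ref{calculation lemma}, which cancels the $\overline\fp$-terms $E(K_{\overline\fp})[p^\infty]^\vee$ and $H^1(K_{\overline\fp},T)$ against each other and extracts $\bigl(\tfrac{1-a_p+p}{p}\bigr)^2$ from the formal logarithm on the full local group $H^1_f(K_p,T)=E(K_p)^\wedge$ at both primes above $p$. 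Without an argument at this level of detail the proof is not complete.

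Moreover, your sketch of how the $p$-local factor arises is incorrect. You attribute it to Greenberg's \cite[Proposition 2.5]{greenberg} and say $\#E(\FF_\fp)[p^\infty]$ should appear ``exactly once'', asserting $(1-a_p+p)/p\cdot\ZZ_p=\#E(\FF_\fp)[p^\infty]\cdot\ZZ_p$. Greenberg's result is used in the proof of Theorem \ref{main}, where the local condition at $p$ is the ordinary condition given by $T^+$; it plays no role here, since $\widetilde\rgamma_\fp$ imposes the strict condition at $\fp$ and no condition at $\overline\fp$. The factor in the statement is the square $\bigl(\tfrac{1-a_p+p}{p}\bigr)^2$, arising from $\#E(\FF_v)=1-a_p+p$ at both $v\mid p$ combined with the normalization $\log\colon E_1(\QQ_p)\xrightarrow{\sim} p\ZZ_p$ (Lemma \ref{calculation lemma}(ii)); and $(1-a_p+p)/p\cdot\ZZ_p$ differs from $\#E(\FF_\fp)[p^\infty]\cdot\ZZ_p$ by a factor of $p$, so your identity is false as written. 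Since the whole point of the ``up to $(\widehat\ZZ_p^{\rm ur})^\times$'' statement is to get these integral local contributions exactly right (this is precisely where the result improves on \cite[Theorem 6.2]{AC} by dropping $p\nmid{\rm Tam}(E/K)$ and non-anomalousness), the missing bookkeeping is a genuine gap rather than a routine verification.
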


%\begin{remark}
%By Remark \ref{rem order 2}, we see that Theorem \ref{improved AC} in particular shows that the Iwasawa-Greenberg main conjecture implies
%$$L_\fp^{\rm BDP} \in J^\nu,$$
%where $\nu:=2(\max\{r^+,r^-\}-1)$. 
%\end{remark}

\begin{remark}\label{remark AC}
Theorem \ref{improved AC} is proved by Agboola and Castella in \cite[Theorem 6.2]{AC} under additional hypotheses (including $p\nmid {\rm Tam}(E/K)$ and $a_p \not \equiv 1$ (mod $p$)).
\end{remark}

The rest of this section is devoted to the proof of Theorem \ref{improved AC}. 

We use the Selmer groups defined in \S \ref{def sel}. 
By the Poitou-Tate duality (see \cite[Theorem 2.3.4]{MRkoly}), we have the following exact sequences:
\begin{equation}\label{PT1}
0\to H^1_0 (K,T)\to H^1_\fp(K,T)\to H^1(K_{\overline \fp},T) \to H^1_\emptyset(K,A)^\vee \to H^1_{\overline \fp}(K,A)^\vee \to 0,
\end{equation}
\begin{equation}\label{PT2}
0\to H^1_0 (K,T)\to H^1_{f, {\rm ur}}(K,T)\to H^1_{f}(K_p,T) \to H^1_\emptyset(K,A)^\vee \to H^1_{f,{\rm ur}}(K,A)^\vee \to 0,
\end{equation}
\begin{equation}\label{PT3}
0\to H^1_{f, {\rm ur}} (K,T)\to H^1_f(K,T)\to \bigoplus_{v\mid N} \frac{H^1_f(K_v,T)}{H^1_{\rm ur}(K_v,T)} \to H^1_{f,{\rm ur}}(K,A)^\vee \to H^1_f(K,A)^\vee \to 0.
\end{equation}
Using these sequences, we obtain
\begin{eqnarray}\label{PTisom}
&&{\det}_{\ZZ_p}(H^1_\fp(K,T)) \otimes_{\ZZ_p}{\det}_{\ZZ_p}^{-1}(H^1_{\overline \fp}(K,A)^\vee) \nonumber \\
&\stackrel{(\ref{PT1})}{\simeq}& {\det}_{\ZZ_p}(H^1(K_{\overline \fp},T)) \otimes_{\ZZ_p} {\det}_{\ZZ_p}(H^1_0(K,T))\otimes_{\ZZ_p} {\det}_{\ZZ_p}^{-1}(H^1_\emptyset(K,A)^\vee) \nonumber \\
&\stackrel{(\ref{PT2})}{\simeq}&{\det}_{\ZZ_p}(H^1(K_{\overline \fp},T)) \otimes_{\ZZ_p}  {\det}_{\ZZ_p}(H^1_{f,{\rm ur}}(K,T)) \otimes_{\ZZ_p} {\det}_{\ZZ_p}^{-1}(H^1_{f}(K_p,T)) \otimes_{\ZZ_p} {\det}_{\ZZ_p}^{-1}(H^1_{f,{\rm ur}}(K,A)^\vee) \nonumber \\
&\stackrel{(\ref{PT3})}{\simeq}&{\rm Tam}(E/K)\cdot {\det}_{\ZZ_p}(H^1(K_{\overline \fp},T)) \otimes_{\ZZ_p} {\det}_{\ZZ_p}^{-1}(H^1_f(K_p,T)) \otimes_{\ZZ_p}{\det}_{\ZZ_p}(H^1_f(K,T)) \otimes_{\ZZ_p}  {\det}_{\ZZ_p}^{-1}(H^1_f(K,A)^\vee) .\nonumber
\end{eqnarray}
(Here we used the identification (\ref{tam}).)

Since we have $\widetilde H^2_\fp(K,T) \simeq \widetilde H^1_{\overline \fp}(K,A)^\vee$ by duality (see \cite[(8.9.6.2)]{nekovar}), the exact triangle (\ref{modified triangle}) induces an exact sequence
$$0\to H^1_{\overline \fp}(K,A)^\vee \to \widetilde H^2_\fp(K,T) \to H^0(K_{\overline \fp},A)^\vee = E(K_{\overline \fp})[p^\infty]^\vee \to 0.$$
Hence we obtain an isomorphism
\begin{equation*}\label{det fp isom}
{\det}_{\ZZ_p}^{-1}(\widetilde \rgamma_\fp(K,T)) \simeq  {\det}_{\ZZ_p}^{-1}(E(K_{\overline \fp})[p^\infty]^\vee)\otimes_{\ZZ_p}  {\det}_{\ZZ_p}(H^1_\fp(K,T))\otimes_{\ZZ_p} {\det}_{\ZZ_p}^{-1}(H^1_{\overline \fp}(K,A)^\vee).
\end{equation*}
%By (\ref{PT1}), (\ref{PT2}), (\ref{PT3}) and (\ref{det fp isom}), we obtain an isomorphism, 
Combining this isomorphism with the isomorphism in the previous paragraph, we obtain
\begin{multline}\label{resulting isom}
{\det}_{\ZZ_p}^{-1}(\widetilde \rgamma_\fp(K,T)) \simeq {\rm Tam}(E/K)\cdot {\det}_{\ZZ_p}^{-1}(E(K_{\overline \fp})[p^\infty]^\vee)\otimes_{\ZZ_p}  {\det}_{\ZZ_p}(H^1(K_{\overline \fp},T)) \otimes_{\ZZ_p} {\det}_{\ZZ_p}^{-1}(H^1_f(K_p,T))\\
 \otimes_{\ZZ_p} {\det}_{\ZZ_p}(H^1_f(K,T))\otimes_{\ZZ_p} {\det}_{\ZZ_p}^{-1}(H^1_f(K,A)^\vee).
\end{multline}

\begin{lemma}\label{calculation lemma}\ 
\begin{itemize}
\item[(i)] There is a canonical isomorphism
$$ {\det}_{\ZZ_p}^{-1}(E(K_{\overline \fp})[p^\infty]^\vee)\otimes_{\ZZ_p} {\det}_{\ZZ_p}(H^1(K_{\overline \fp},T)) \simeq \ZZ_p.$$
\item[(ii)] The formal logarithm induces an isomorphism
$${\det}_{\ZZ_p}^{-1}(H^1_f(K_p,T)) \simeq  \left(\frac{1-a_p+p}{p}\right)^2\cdot \ZZ_p.$$
\end{itemize}
\end{lemma}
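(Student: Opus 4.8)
The plan is to treat both isomorphisms as purely local computations at the primes above $p$. Since $p$ splits in $K$ we have $K_\fp\cong K_{\overline\fp}\cong\QQ_p$ and $E$ has good ordinary reduction there, so in each case we are working with the local Galois cohomology of $T=T_pE$ over an unramified local field of residue characteristic $p$. For (i) the key inputs are local Tate duality and the local Euler characteristic formula; for (ii) the key input is the comparison of the Bloch--Kato local condition with the formal group of $E$ via the Kummer map, together with the standard computation of the image of the formal logarithm, which is where the Euler factor $(1-a_p+p)/p=\#\widetilde E(\FF_v)/p$ enters.

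For (i), first note that $H^0(K_{\overline\fp},T)=0$: the torsion of $E$ over the local field $K_{\overline\fp}$ is finite, so its $p$-adic Tate module, which is $H^0(K_{\overline\fp},T)$, vanishes. Identifying $T^\vee(1)$ with $A$ via the Weil pairing, local Tate duality then gives $H^2(K_{\overline\fp},T)\simeq H^0(K_{\overline\fp},A)^\vee=E(K_{\overline\fp})[p^\infty]^\vee$. Since $\rgamma(K_{\overline\fp},T)$ is a perfect complex acyclic outside degrees $0,1,2$ with $H^0=0$, the determinant of its cohomology factors as
\[
{\det}_{\ZZ_p}^{-1}\rgamma(K_{\overline\fp},T)\simeq {\det}_{\ZZ_p}(H^1(K_{\overline\fp},T))\otimes_{\ZZ_p}{\det}_{\ZZ_p}^{-1}(E(K_{\overline\fp})[p^\infty]^\vee),
\]
which is exactly the module in (i). It therefore suffices to produce a canonical trivialization ${\det}_{\ZZ_p}\rgamma(K_{\overline\fp},T)\simeq\ZZ_p$; this is the local Euler characteristic statement, a consequence of local duality together with the self-duality $T^\vee(1)\simeq T$ coming from the Weil pairing (and the trace isomorphism $H^2(K_{\overline\fp},\ZZ_p(1))\simeq\ZZ_p$), cf. \cite{nekovar}.

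For (ii), write $H^1_f(K_p,T)=\bigoplus_{v\mid p}H^1_f(K_v,T)$ and fix $v\mid p$. Because $E$ has good reduction at $v$, the Kummer map $\kappa_v\colon E(K_v)^\wedge\hookrightarrow H^1(K_v,T)$ is injective with image exactly $H^1_f(K_v,T)$: both sides have $\ZZ_p$-rank one and agree after $\otimes\QQ_p$, and the quotient $H^1(K_v,T)/\kappa_v(E(K_v)^\wedge)\simeq T_p H^1(K_v,E)$ is torsion-free, so the (torsion) cokernel of $\kappa_v$ onto $H^1_f(K_v,T)$ must vanish. The formal logarithm attached to a N\'eron differential restricts to an isomorphism $\widehat E(\fm_v)\xrightarrow{\sim}\fm_v=p\ZZ_p$ (here $p$ is odd and $K_v/\QQ_p$ is unramified), and the reduction sequence $0\to\widehat E(\fm_v)\to E(K_v)^\wedge\to\widetilde E(\FF_v)\otimes\ZZ_p\to 0$ — together with the fact that $E(K_v)[p^\infty]$ injects into $\widetilde E(\FF_v)[p^\infty]$ since the formal group is torsion-free — yields
\[
\log_v\bigl(E(K_v)^\wedge\bigr)=\#E(K_v)[p^\infty]\cdot\frac{p}{1-a_p+p}\cdot\ZZ_p,
\]
using $\#\widetilde E(\FF_v)=p+1-a_p$ and that $\#\widetilde E(\FF_v)[p^\infty]$ and $1-a_p+p$ generate the same ideal of $\ZZ_p$. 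Transporting along $\log_v\circ\kappa_v^{-1}\colon H^1_f(K_v,V)\xrightarrow{\sim}\QQ_p$ and applying property (v) of the determinant functor, ${\det}_{\ZZ_p}^{-1}(H^1_f(K_v,T))$ equals $\#H^1_f(K_v,T)_{\rm tors}=\#E(K_v)[p^\infty]$ times the $\ZZ_p$-dual of the lattice $\log_v(H^1_f(K_v,T))$; the two factors of $\#E(K_v)[p^\infty]$ cancel and we obtain ${\det}_{\ZZ_p}^{-1}(H^1_f(K_v,T))\simeq \frac{1-a_p+p}{p}\cdot\ZZ_p$ via $\log_v$. Tensoring over $v\in\{\fp,\overline\fp\}$ and identifying $\QQ_p\otimes_{\ZZ_p}{\det}_{\ZZ_p}^{-1}(H^1_f(K_p,T))$ with $\QQ_p$ through $\log=\log_\fp\oplus\log_{\overline\fp}$ gives (ii).

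The main obstacle is the bookkeeping of integral normalizations in (ii): verifying that $H^1_f(K_v,T)$ is exactly the Kummer image (not merely rationally so), that $\log_\omega$ carries $\widehat E(\fm_v)$ isomorphically onto $p\ZZ_p$ for a N\'eron differential, and — most importantly — that the torsion contribution $\#E(K_v)[p^\infty]$ from $H^1_f(K_v,T)_{\rm tors}$ exactly cancels the index $[\log_v(H^1_f(K_v,T)):\log_v(\widehat E(\fm_v))]$, so that the final expression collapses to the clean Euler factor. In (i) the only delicate point is pinning down a genuinely canonical (not merely up-to-unit) trivialization of the local determinant, which is standard but must be cited carefully.
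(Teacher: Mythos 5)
Your part (ii) is correct and is essentially the paper's own argument: identify $H^1_f(K_v,T)$ with the Kummer image $E(K_v)^\wedge$, use $\log\colon E_1(K_v)\xrightarrow{\sim}p\ZZ_p$, and account for the index $[E(K_v)^\wedge:E_1(K_v)]$, whose $p$-part is that of $1-a_p+p=\#\widetilde E(\FF_v)$; your torsion bookkeeping (the cancellation of $\#E(K_v)[p^\infty]$ against the index of the free parts) is exactly what the paper's shorter computation amounts to.

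Part (i), however, has a genuine gap at its final step. Your reduction is fine: since $H^0(K_{\overline\fp},T)=0$ and $H^2(K_{\overline\fp},T)\simeq E(K_{\overline\fp})[p^\infty]^\vee$ by local duality, the module in question is ${\det}_{\ZZ_p}^{-1}\rgamma(K_{\overline\fp},T)$. But the claim that this is canonically trivialized ``by the local Euler characteristic statement, a consequence of local duality and the self-duality $T\simeq T^\vee(1)$'' does not hold as stated. Local duality combined with the Weil self-duality only yields a canonical isomorphism ${\det}_{\ZZ_p}\rgamma(K_{\overline\fp},T)\simeq{\det}_{\ZZ_p}^{-1}\rgamma(K_{\overline\fp},T)$, i.e.\ a trivialization of the \emph{square}, which does not single out a trivialization of the determinant itself; and the local Euler characteristic formula at a place above $p$ points the other way (the Euler characteristic of $\rgamma(K_{\overline\fp},T)$ is $-2$, not $0$, so there is no ``Galois-theoretic'' trivialization for free — this is precisely the subtlety behind local $\varepsilon$-isomorphisms). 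Note also that up to $\ZZ_p^\times$ every invertible $\ZZ_p$-module is trivial, so the whole content of (i) is the \emph{specific} canonical map, which your argument never produces; this map is needed later in explicit form (see Remark \ref{logab}). The paper constructs it using the elliptic-curve (Kummer) structure: $H^1_f(K_{\overline\fp},T)=E(K_{\overline\fp})^\wedge$ and $H^1_{/f}(K_{\overline\fp},T)\simeq E(K_{\overline\fp})^{\wedge,\ast}$ give ${\det}(H^1(K_{\overline\fp},T))\simeq{\det}(E(K_{\overline\fp})^\wedge)\otimes{\det}(E(K_{\overline\fp})^{\wedge,\ast})$, and the factor ${\det}(E(K_{\overline\fp})^{\wedge,\ast})\otimes{\det}^{-1}(E(K_{\overline\fp})[p^\infty]^\vee)={\det}(\rhom_{\ZZ_p}(E(K_{\overline\fp})^\wedge,\ZZ_p))$ cancels against ${\det}(E(K_{\overline\fp})^\wedge)$ by evaluation. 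A purely duality-theoretic repair of your route is possible but requires more than you wrote: one must check that the cup-product pairing induced on the torsion-free quotient $H^1(K_{\overline\fp},T)_{\rm tf}$ is perfect and alternating (trivializing ${\bigwedge}_{\ZZ_p}^2H^1(K_{\overline\fp},T)_{\rm tf}$) and then match the torsion contribution $\#E(K_{\overline\fp})[p^\infty]$ against ${\det}^{-1}(E(K_{\overline\fp})[p^\infty]^\vee)$ — and one would still have to compare the resulting map with the one used in the rest of the paper.
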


\begin{proof}
(i) By the exact sequence
$$ 0\to H^1_f(K_{\overline \fp},T) \to H^1(K_{\overline \fp},T) \to H^1_{/f}(K_{\overline \fp}, T) \to 0$$
and the identifications
$$H^1_f(K_{\overline \fp},T) = E(K_{\overline \fp})^\wedge \text{ and }H^1_{/f}(K_{\overline \fp},T) = E(K_{\overline \fp})^{\wedge,\ast},$$
where $(-)^\wedge$ denotes the $p$-completion, we obtain a canonical isomorphism
$${\det}_{\ZZ_p}(H^1(K_{\overline \fp},T))\simeq {\det}_{\ZZ_p}(E(K_{\overline \fp})^\wedge)\otimes_{\ZZ_p} {\det}_{\ZZ_p}(E(K_{\overline \fp})^{\wedge,\ast}).$$
Combining this isomorphism with
$$ {\det}_{\ZZ_p}(E(K_{\overline \fp})^{\wedge,\ast})\otimes_{\ZZ_p} {\det}_{\ZZ_p}^{-1}(E(K_{\overline \fp})[p^\infty]^\vee) = {\det}_{\ZZ_p}(\rhom_{\ZZ_p}(E(K_{\overline \fp})^\wedge, \ZZ_p)) = {\det}_{\ZZ_p}^{-1}(E(K_{\overline \fp})^\wedge) ,$$
we obtain a canonical isomorphism
$${\det}_{\ZZ_p}^{-1}(E(K_{\overline \fp})[p^\infty]^\vee) \otimes_{\ZZ_p}{\det}_{\ZZ_p}(H^1(K_{\overline \fp},T)) \simeq {\det}_{\ZZ_p}(E(K_{\overline \fp})^\wedge)\otimes_{\ZZ_p} {\det}_{\ZZ_p}^{-1}(E(K_{\overline \fp})^\wedge).$$
Since the right hand side is canonically isomorphic to $\ZZ_p$ by evaluation, the claim follows. 

(ii) For each $v\mid p$, we have a natural identification
$$H^1_f(K_v,T) = E(K_v)^\wedge.$$
Since $K_v=\QQ_p$, it is sufficient to show that the formal logarithm $\log: E_1(\QQ_p) \xrightarrow{\sim} p\ZZ_p$ induces an isomorphism
$${\det}_{\ZZ_p}^{-1}(E(\QQ_p)^\wedge) \xrightarrow{\sim} \left(\frac{1-a_p+p}{p}\right)\cdot \ZZ_p.$$
But this is easily proved by noting that $1-a_p + p = \# E(\FF_p) = [E(\QQ_p):E_1(\QQ_p)]$. 
\end{proof}

By (\ref{resulting isom}) and Lemma \ref{calculation lemma}, we obtain an isomorphism
$${\det}_{\ZZ_p}^{-1}(\widetilde \rgamma_\fp(K,T))\simeq \left(\frac{1-a_p+p}{p}\right)^2\cdot {\rm Tam}(E/K)\cdot {\det}_{\ZZ_p}(H^1_f(K,T))\otimes_{\ZZ_p}{\det}_{\ZZ_p}^{-1}(H^1_f(K,A)^\vee).$$
Finally, by using (\ref{h1T}) and (\ref{h1A}), we obtain 
\begin{equation}\label{final isom}
{\det}_{\ZZ_p}^{-1}(\widetilde \rgamma_\fp(K,T))\simeq \left(\frac{1-a_p+p}{p}\right)^2\cdot \#\sha(E/K)[p^\infty]\cdot {\rm Tam}(E/K)\cdot \ZZ_p \otimes_\ZZ \left( {\bigwedge}_\ZZ^s E(K) \otimes_\ZZ {\bigwedge}_\ZZ^s E(K)\right).
\end{equation}

\begin{remark}\label{def delta}
The (inverse of the) isomorphism (\ref{final isom}) induces an isomorphism
$$\delta: \QQ_p\otimes_\ZZ \left({\bigwedge}_\ZZ^{s} E(K )\otimes_\ZZ {\bigwedge}_\ZZ^{s} E(K)\right) \xrightarrow{\sim} {\det}_{\QQ_p}^{-1}(\widetilde \rgamma_\fp(K,V)) = {\bigwedge}_{\QQ_p}^e \widetilde H^1_\fp(K,V) \otimes_{\QQ_p} {\bigwedge}_{\QQ_p}^e \widetilde H^2_\fp(K,V)^\ast.$$
This isomorphism is directly obtained by using the Poitou-Tate duality sequences
$$0\to H^1_0(K,V)\to H^1_\fp(K,V)\to H^1(K_{\overline \fp},V)\to H^1_\emptyset(K,V)^\ast \to H^1_{\overline \fp}(K,V)^\ast \to 0,$$
$$0\to H^1_0(K,V) \to \QQ_p\otimes_\ZZ E(K) \to H^1_f(K_p,V) \to  H^1_\emptyset(K,V)^\ast \to \QQ_p\otimes_\ZZ E(K)^\ast \to 0$$
and the identifications
$${\bigwedge}_{\QQ_p}^2 H^1(K_{\overline \fp},V) =\QQ_p\text{ and }{\bigwedge}_{\QQ_p}^2 H^1_f(K_p,V) = \QQ_p$$
as in Lemma \ref{calculation lemma}. 
\end{remark}

\begin{remark}\label{logab}
When $s={\rm rk}_\ZZ(E(K))=1$, we have $\widetilde H^1_\fp(K,V) = H^1_\fp(K,V)=0$ and $\widetilde H^2_\fp(K,V)^\ast \simeq \widetilde H^1_{\overline \fp}(K,V) =H^1_{\overline \fp}(K,V)=0$ (see Lemma \ref{AC lemma}). In this case, the map
$$\delta: \QQ_p\otimes_\ZZ \left(E(K)\otimes_\ZZ E(K)\right) \xrightarrow{\sim} \QQ_p$$
is explicitly given by $\delta(a\otimes b) = \log(a)\log(b)$. In fact, the isomorphism
$$\QQ_p\otimes_\ZZ \left(E(K)\otimes_\ZZ E(K)\right)  \simeq {\bigwedge}_{\QQ_p}^2 H^1_f(K_p,V)\otimes_{\QQ_p} {\bigwedge}_{\QQ_p}^2 H^1(K_{\overline \fp},V)^\ast,$$
induced by the sequences above sends $a\otimes b$ to an element of the form
$$(\ell_p(a)\wedge c)\otimes (d \wedge \ell_{\overline \fp}(b)),$$
where $\ell_p: E(K)\to H^1_f(K_p,V)$ and $\ell_{\overline \fp}: E(K)\to H^1(K_{\overline \fp},V) \simeq H^1(K_{\overline \fp},V)^\ast$ are the localization maps and the elements $c$ and $d$ are chosen as follows. We choose $d \in H^1(K_{\overline \fp},V)^\ast \simeq H^1(K_{\overline \fp},V)$ so that its image in $H^1_{/f}(K_{\overline \fp},V)\simeq H^1_f(K_{\overline \fp},V)^\ast$ is dual to $\ell_{\overline \fp}(b) \in H^1_f(K_{\overline \fp},V)$. We choose $c\in H^1_f(K_p,V) = H^1_f(K_\fp,V)\oplus H^1_f(K_{\overline \fp},V)$ so that the $\fp$-component is zero and the $\overline \fp$-component is dual to the image of $d$ in $H^1_f(K_{\overline \fp},V)^\ast$, namely, $\ell_{\overline \fp}(b)$. Then the isomorphism ${\bigwedge}_{\QQ_p}^2 H^1(K_{\overline \fp},V)^\ast \simeq \QQ_p$ as in Lemma \ref{calculation lemma}(i) sends $d\wedge \ell_{\overline \fp}(b)$ to 1. Also, the isomorphism ${\bigwedge}_{\QQ_p}^2 H^1_f(K_p,V)\simeq \QQ_p$ as in Lemma \ref{calculation lemma}(ii) sends $\ell_p(a)\wedge c$ to
$$\det \begin{pmatrix}
\log_\fp(a) & 0 \\
\log_{\overline \fp}(a) & \log_{\overline \fp}(b)
\end{pmatrix} = \log(a)\log(b).$$
(Note that, since ${\rm rk}_\ZZ(E(K))=1$, we have either $\QQ_p\otimes_\ZZ E(\QQ)=\QQ_p\otimes_\ZZ E(K)$ or $\QQ_p\otimes_\ZZ E^K(\QQ)= \QQ_p\otimes_\ZZ E(K)$, where $E^K$ is the quadratic twist of $E$ by $K$, and so the maps $\log_\fp(=\log)$ and $\log_{\overline \fp}$ are the same.) This proves the claim. 
\end{remark}

We now prove Theorem \ref{improved AC}. 

\begin{proof}[Proof of Theorem \ref{improved AC}]
By Theorem \ref{thm descent}, we have a commutative diagram
$$
\xymatrix{
\displaystyle {\det}_\Lambda^{-1}(\widetilde \rgamma_\fp(K,\TT)) \ar[r]^-{\pi } \ar@{->>}[dd] &\displaystyle I^\varrho \cdot \Lambda_I \ar@{->>}[rd] & \\
 & & \displaystyle  \QQ_p\otimes_{\ZZ_p}Q^\varrho \\
\displaystyle{\det}_{\ZZ_p}^{-1}(\widetilde \rgamma_\fp(K,T)) \ar[r]_-{\pi_0  } &\displaystyle {\bigwedge}_{\QQ_p}^e \widetilde H^1_\fp(K,V) \otimes_{\QQ_p} {\bigwedge}_{\QQ_p}^e \widetilde H^2_\fp(K,V)^\ast \ar[ru]_-{ R_\fp^{(k_0)}}.& 
}
$$
Assume the Iwasawa-Greenberg main conjecture (Conjecture \ref{IGMC}). Then, by Proposition \ref{BDP equivalence}, there is a $\Lambda^{\rm ur}$-basis
$$\fz_\fp \in \Lambda^{\rm ur}\otimes_\Lambda {\det}_\Lambda^{-1}(\widetilde \rgamma_\fp(K,\TT))$$
such that $\pi(\fz_\fp)=\iota(L_\fp^{\rm BDP})$. This shows that $L_\fp^{\rm BDP} \in J^\varrho$. Since we have $\varrho \geq \nu$ by Remark \ref{rem order 2}, we have $L_\fp^{\rm BDP} \in J^\nu$. Thus we have proved Conjecture \ref{pBSD}(i). If $\varrho > \nu$, then Conjecture \ref{pBSD}(ii) is trivially true, so we may assume $\varrho=\nu$. Note the image of $\iota(L_\fp^{\rm BDP})$ in $Q_J^\nu$ coincides with $\overline L_\fp^{\rm BDP}$ up to sign. 
Let $\overline \fz_\fp \in \widehat \ZZ_p^{\rm ur}\otimes_{\ZZ_p} {\det}_{\ZZ_p}^{-1}(\widetilde \rgamma_\fp(K,T))$ be the image of $\fz_\fp$, which is a $\widehat \ZZ_p^{\rm ur}$-basis. By the commutative diagram above, we have
$$\overline L_\fp^{\rm BDP} = \pm R_\fp^{(k_0)}(\pi_0(\overline \fz_\fp)).$$
So it is sufficient to show that
$$\widehat \ZZ_p^{\rm ur}\cdot \delta^{-1}(\pi_0(\overline \fz_\fp)) = \left(\frac{1-a_p+p}{p}\right)^2\cdot \# \sha(E/K)[p^\infty]\cdot {\rm Tam}(E/K)\cdot \widehat \ZZ_p^{\rm ur}\otimes_\ZZ \left({\bigwedge}_\ZZ^s E(K)\otimes_\ZZ {\bigwedge}_\ZZ^s E(K)\right),$$
where $\delta$ is as in Remark \ref{def delta}. However, this follows immediately from (\ref{final isom}). Thus we have proved the theorem. 
\end{proof}

\section{Derivatives of Euler systems}\label{ks application}

In this section, we extend conjectures and results in \cite{ks} into the derived setting. 

\subsection{Notation and hypotheses}
Let $p$ be an odd prime number. Let $\cO$ be the ring of integers of a finite extension of $\QQ_p$. Let $K$ be a number field and fix a $\ZZ_p$-extension $K_\infty/K$. Let $K_n$ denote the $n$-th layer of $K_\infty/K$. We set
$$\Gamma_n:=\Gal(K_n/K), \ \Gamma:=\Gal(K_\infty/K) , \ \Lambda_n :=\cO[\Gamma_n]\text{ and }\Lambda:=\cO[[\Gamma]].$$
Let $M$ be a pure motive defined over $K$ with coefficients in $\cO$, and $T$ the $p$-adic \'etale realization of $M$ (so $T$ is a free $\cO$-module of finite rank with a continuous $\cO$-linear action of $G_K$). As in \S \ref{sec bd}, let 
$$\TT:=T \otimes_{\cO}\Lambda$$
be the induced module. Let $S_\infty(K)$ be the set of infinite places of $K$. 
Fix a finite set $S $ of places of $K$ containing $S_\infty(K)$, all places above $p$, and all places at which $T$ ramifies. Fix also a finite set $\Sigma$ of places of $K$ which is disjoint from $S$. (We may take $\Sigma = \emptyset$.) We set $T^\ast(1):=\Hom_{\ZZ_p}(T,\ZZ_p(1))$ and 
$$Y_K(T^\ast(1)):=\bigoplus_{v\in S_\infty(K)}H^0(K_v,T^\ast(1)).$$
We define the {\it basic rank} of $T$ by
\begin{equation}\label{basic}
r=r_T:= {\rm rk}_\cO\left( Y_K(T^\ast(1))\right).
\end{equation}
For a set $U$ of places of $K$ and an extension $L/K$, we write $U_L$ for the set of places of $L$ which lie above a place in $U$. 

Throughout this section, we assume the following hypothesis. 

\begin{hypothesis}[{\cite[Hypothesis 2.3]{ks}}]\label{hyp}\
For every $n$, we have
\begin{itemize}
\item[(i)] $H^0(K_n,T)=0$,
\item[(ii)] either $\Sigma$ is non-empty or the $\cO$-module $H^1(\cO_{K_n,S},T)$ is free,
\item[(iii)] $H^0(K_{n,w},T)=0$ for any $w\in \Sigma_{K_n}$. 
\end{itemize}
\end{hypothesis}

Recall that a ``$\Sigma$-modified $S$-cohomology complex"
$$\rgamma_\Sigma(\cO_{K,S},\TT)$$
is defined in \cite{sbA}. Suppose that a system of special elements (Euler system)
$$c=(c_{K_n})_n \in \varprojlim_n {\bigcap}_{\Lambda_n}^r H_\Sigma^1(\cO_{K_n,S},T) ={\bigcap}_\Lambda^r H_\Sigma^1(\cO_{K,S},\TT)$$
is given (see \cite[Definition 2.2]{ks}). The following examples of $c$ should be kept in mind. 

\begin{example}\label{ex}\
\begin{itemize}
\item[(i)] Let $K_\infty/K$ be the cyclotomic $\ZZ_p$-extension. Let $\chi: G_K\to \overline \QQ^\times$ a character of finite order such that the field $L:=\overline \QQ^{\ker \chi}$ is disjoint from $K_\infty$. We set $\cO:=\ZZ_p[\im \chi]$ and consider the representation $T:=\cO(1)\otimes \chi^{-1}$, which is a free $\cO$-module of rank one on which $G_K$ acts via $\chi_{\rm cyc}\chi^{-1}$. Here $\chi_{\rm cyc}: G_K\to \ZZ_p^\times$ denotes the cyclotomic character. We set $V:=\{v\in S_\infty(K) \mid \text{$v$ splits completely in $L$}\}$. In this case, Hypothesis \ref{hyp} is satisfied with $r=\# V$ and a non-empty set $\Sigma$. 
We set $L_\infty:=L K_\infty$ and $L_n:=L K_n$. %Then $\Gal(L_\infty/K)$ is isomorphic to $\Gamma \times \Gal(L/K)$ and so $\chi$ induces a map
%$$\ZZ_p[[\Gal(L_\infty/K)]] \to \cO[[\Gamma]]=\Lambda; \ a \mapsto a^\chi.$$
The $\chi$-component of the conjectural Rubin-Stark element
$$\eta^{\rm RS}=(\eta_{L_n/K,S,\Sigma}^{V,\chi})_n \in \varprojlim_n {\bigcap}_{\Lambda_n}^r H^1_\Sigma(\cO_{K_n,S},T) ={\bigcap}_\Lambda^r H^1_\Sigma(\cO_{K,S},\TT)$$
is a canonical Euler system. 
\item[(ii)] Let $K=\QQ$ and $\cO=\ZZ_p$. Let $E$ be an elliptic curve over $\QQ$ and set $T:=T_p(E)$. We assume that $E(\QQ)[p]=0$. Then Hypothesis \ref{hyp} is satisfied with $r=1$ and $\Sigma=\emptyset$. As a canonical Euler system one can take Kato's Euler system
$$z^{\rm Kt}=(z_{\QQ_n}^{\rm Kt})_n \in \varprojlim_n H^1(\cO_{\QQ_n,S},T)=H^1(\ZZ_S,\TT)={\bigcap}_\Lambda^1 H^1(\ZZ_S,\TT).$$
\item[(iii)] Let $K$ be an imaginary quadratic field and $K_\infty/K$ the anticyclotomic $\ZZ_p$-extension. We take $\cO=\ZZ_p$. Let $E$ be an elliptic curve over $\QQ$ such that $K$ satisfies the Heegner hypothesis for $E$. We set $T:=T_p(E)$ and assume $E(K)[p]=0$. Then Hypothesis \ref{hyp} is satisfied with $r=2$ and $\Sigma=\emptyset$. If we further assume that $E$ has good ordinary reduction at $p$ and $K/\QQ$ is unramified at $p$, then the ``$\Lambda$-adic Heegner element"
$$z^{\rm Hg} \in  Q(\Lambda)\otimes_\Lambda {\bigcap}_\Lambda^2 H^1(\cO_{K,S},\TT)$$
constructed in \cite[Definition 5.14]{ks} is an Euler system, which is canonical up to $\Lambda^\times$. Here $Q(\Lambda)$ denotes the quotient field of $\Lambda$. (Conjecturally, the element $z^{\rm Hg}$ belongs to ${\bigcap}_\Lambda^2 H^1(\cO_{K,S},\TT)$.) 
\end{itemize}
\end{example}

The following standard hypothesis is also assumed throughout this section. 

\begin{hypothesis}[The weak Leopoldt conjecture]\label{wl}
$H^2_\Sigma(\cO_{K,S},\TT)$ is $\Lambda$-torsion. 
\end{hypothesis}

\begin{remark}
In the settings of Example \ref{ex}(i) and (ii), we know that Hypothesis \ref{wl} is satisfied (see \cite[(9.2.2)]{nekovar} and \cite[Theorem 12.4(1)]{katoasterisque}). In the setting of Example \ref{ex}(iii), Hypothesis \ref{wl} is proved in \cite[Theorem 5.4]{bertolinileop} under mild hypotheses. 
\end{remark}

\subsection{The $p$-adic leading term conjecture}

We first formulate a refinement of the conjecture in \cite[Conjecture 4.7]{ks}, which constitutes a generalization of the Gross-Stark conjecture \cite{Gp} and the Mazur-Tate-Teitelbaum conjecture \cite{MTT}. 

Under Hypothesis \ref{hyp}, we know that $\rgamma_\Sigma(\cO_{K,S},\TT)\lotimes_\Lambda \Lambda_I$ is represented by a complex of the form (\ref{standard complex}). So we can apply the construction in \S \ref{sec derived} with $C= \rgamma_\Sigma(\cO_{K,S},\TT)$. 
Let
$$\beta^{(1)}=\beta^{(1)}(\rgamma_\Sigma(\cO_{K,S}, \TT)): H^1_\Sigma(\cO_{K,S},T) \to H^2_\Sigma(\cO_{K,S},T)\otimes_\cO Q^1$$
be the Bockstein map and 
$$\beta^{(k)}=\beta^{(k)}(\rgamma_\Sigma(\cO_{K,S},\TT)): \ker \beta^{(k-1)} \to \coker \beta^{(k-1)}\otimes_\cO Q^1.$$
the $k$-th derived Bockstein map (see Definition \ref{main def}). 

We set 
$$e:={\rm rk}_\cO(H^2_\Sigma(\cO_{K,S},T)).$$
Note that we have
$${\rm rk}_\cO(H^1_\Sigma(\cO_{K,S},T)) = r+e,$$
where $r$ is the basic rank defined in (\ref{basic}). 

Let $k_0$ be as in Definition \ref{def k0}. 
Hypothesis \ref{wl} implies that ${\rm rk}_\cO(\coker \beta^{(k_0)})=0$ (see Proposition \ref{str h2}). So the $k_0$-th derived Bockstein regulator map in (\ref{def der map}) becomes a map
$$R^{(k_0)}: \QQ_p \otimes_{\ZZ_p}\left( {\bigwedge}_\cO^{r+e} H_\Sigma^1(\cO_{K,S},T)\otimes_\cO{\bigwedge}_\cO^e H^2_\Sigma(\cO_{K,S},T)^\ast\right) \to \QQ_p \otimes_{\ZZ_p}  {\bigwedge}_\cO^r H^1_\Sigma(\cO_{K,S},T) \otimes_\cO  Q^{\varrho},$$
where $\varrho:=e+\sum_{k=1}^{k_0-1} {\rm rk}_\cO (\coker \beta^{(k)})$ (see Lemma \ref{elementary}(iii)).

Let
$$\cD: I^{\varrho} \cdot {\bigwedge}_{\Lambda_I}^r H^1_\Sigma(\cO_{K,S},\TT)_I \to  \QQ_p\otimes_{\ZZ_p}{\bigwedge}_\cO^r H^1_\Sigma(\cO_{K,S},T) \otimes_\cO Q^{\varrho}$$
be the map in Theorem \ref{thm descent}.

%We fix an isomorphism $x:\cO\simeq {\bigwedge}_\cO^e H^2_\Sigma(\cO_{K,S},T)^\ast$ and define a map
%$$B_x^{(\infty)}:\CC_p \otimes_{\ZZ_p} {\bigwedge}_\cO^{r+e} H_\Sigma^1(\cO_{K,S},T)\to \CC_p \otimes_{\ZZ_p}  {\bigwedge}_\cO^r H^1_\Sigma(\cO_{K,S},T) \otimes_\cO  Q^\rho $$
%by
%$$B_x^{(\infty)}(a):=B^{(\infty)}(a\otimes x(1)).$$

Finally, let 
$$ \eta= \eta(T)  \in\CC_p \otimes_{\ZZ_p}\left( {\bigwedge}_\cO^{r+e} H_\Sigma^1(\cO_{K,S},T)\otimes_\cO{\bigwedge}_\cO^e H^2_\Sigma(\cO_{K,S},T)^\ast\right)$$
be the extended special element attached to $T$ (see \cite[Definition 2.11]{ks}). 

\begin{conjecture}\label{leading}\
\begin{itemize}
\item[(i)] We have
$$c \in I^{\varrho}\cdot {\bigwedge}_{\Lambda_I}^r H_\Sigma^1(\cO_{K,S},\TT)_I.$$
\item[(ii)] If (i) holds, then we have
$$\cD(c)=R^{(k_0)}( \eta) \text{ in }\CC_p \otimes_{\ZZ_p} {\bigwedge}_\cO^r H^1_\Sigma(\cO_{K,S},T) \otimes_\cO Q^{\varrho}.$$
\end{itemize}
\end{conjecture}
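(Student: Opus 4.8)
The final displayed statement is a \emph{conjecture}, so what I can sketch is the strategy by which it is to be established in cases where the relevant Iwasawa main conjecture is available; this is the content of the descent results of this section. Throughout one takes $C=\rgamma_\Sigma(\cO_{K,S},\TT)$ and $C_0=C\lotimes_\Lambda\cO\simeq\rgamma_\Sigma(\cO_{K,S},T)$, which are admissible inputs to the formalism of \S\ref{sec derived}: Hypothesis \ref{hyp} guarantees that $C_I$ has the shape (\ref{standard complex}), and the weak Leopoldt conjecture (Hypothesis \ref{wl}) forces $H^2(C)$ to be $\Lambda$-torsion, so that in the notation of \S\ref{sec reg} one has $\tau=\tau_{k_0}=0$ and hence $\sigma=\sigma_{k_0}=r$ by Lemma \ref{elementary}(ii).

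First I would record the following input, which plays the role of the ``(IMC)'' of the introduction: the Euler system main conjecture for $c$, asserting that, under the canonical isomorphism $\pi$, the element $c$ generates $\det_\Lambda^{-1}(C)$ over $\Lambda$. Granting this, choose a $\Lambda$-basis $\mathfrak z\in\det_\Lambda^{-1}(C)$ with $\pi(\mathfrak z)=c$. Since $H^1(C)_I$ is free of rank $r$ and $\tau=0$, property (v) of the determinant functor together with Proposition \ref{str h2} (which identifies $\mathrm{Fitt}_{\Lambda_I}(H^2(C)_{I,\mathrm{tors}})$ with $I^\varrho\Lambda_I$) gives $\pi(\det_\Lambda^{-1}(C))_I=I^\varrho\cdot\bigwedge^r_{\Lambda_I}H^1_\Sigma(\cO_{K,S},\TT)_I$; after localizing at $I$ the bidual $\bigcap^r_\Lambda$ becomes $\bigwedge^r_{\Lambda_I}$, so $c$ lies in $I^\varrho\cdot\bigwedge^r_{\Lambda_I}H^1_\Sigma(\cO_{K,S},\TT)_I$, which is exactly part (i).

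For part (ii), apply the commutative diagram of Theorem \ref{thm descent} to $\mathfrak z$: letting $\mathfrak z_0\in\det_\cO^{-1}(C_0)$ be the image of $\mathfrak z$ under the descent surjection (an $\cO$-basis of $\det_\cO^{-1}(C_0)$), commutativity yields
$\cD(c)=\cD(\pi(\mathfrak z))=R^{(k_0)}(\pi_0(\mathfrak z_0))$
in $\CC_p\otimes_{\ZZ_p}\bigwedge^r_\cO H^1_\Sigma(\cO_{K,S},T)\otimes_\cO Q^\varrho$. It then remains to identify $\pi_0(\mathfrak z_0)$ with the extended special element $\eta$. This is where the Tamagawa number conjecture of Bloch--Kato \cite{BK} for $T$ over $K$ enters, in the form $\pi_0(\det_\cO^{-1}(C_0))=\cO\cdot\eta$ of \cite{ks}: both $\pi_0(\mathfrak z_0)$ and $\eta$ then generate the same rank-one $\cO$-module, so they agree up to a unit of $\cO$, and pinning the unit down to $1$ amounts to matching the normalization of the Euler system main conjecture against that built into $\eta$ --- precisely the bookkeeping carried out explicitly (with its constants $c_\phi$, $L_p$, $u_K$, Tamagawa and Sha factors) in \S\ref{first application} and \S\ref{second application}, and conjecturally in general. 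Feeding this back gives $\cD(c)=R^{(k_0)}(\eta)$.

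The main obstacle is twofold. The deep inputs --- the Iwasawa main conjecture for the Euler system $c$ and the Tamagawa number conjecture for $T$ --- are genuine arithmetic hypotheses, unavailable in general; this is why the results of this section are phrased as descent implications rather than unconditional theorems. The difficulty \emph{internal} to the formalism is the determination of the order of vanishing $\varrho=\sum_{k=1}^{k_0}ke_k$: while the commutative diagram controls the leading term once $\varrho$ is known, computing $\varrho$ means understanding the derived Bockstein maps $\beta^{(k)}$, equivalently the $I$-adic filtration of $H^2(C)_{I,\mathrm{tors}}$ from Proposition \ref{str h2}, which in the anticyclotomic case is governed by the conjectures of Mazur and Bertolini--Darmon (cf. Remark \ref{MBD}) and is not known even to attain its expected value. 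Everything else --- the construction and injectivity of $R^{(k_0)}$, the rank identifications via Propositions \ref{str h2} and \ref{str h1}, and the transport of elements around the diagram --- is formal, resting entirely on Theorem \ref{thm descent}.
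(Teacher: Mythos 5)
The statement you were asked about is a \emph{conjecture} (the ``$p$-adic leading term conjecture''), and the paper offers no proof of it; you correctly recognized this, and your sketch of the surrounding mechanism is essentially the one the paper itself uses in its applications. Concretely, your derivation of part (i) from the Euler system main conjecture (Conjecture \ref{ks main conj}) via $\mathrm{Fitt}_{\Lambda_I}(H^2(C)_{I,\mathrm{tors}})=I^{\varrho}\Lambda_I$ (Proposition \ref{str h2}) is exactly the argument the paper runs in the proof of Theorem \ref{main} for the Heegner point setting, and your use of the commutative diagram of Theorem \ref{thm descent} to transport a $\Lambda$-basis $\mathfrak z$ with $\pi(\mathfrak z)=c$ down to $\cD(c)=R^{(k_0)}(\pi_0(\mathfrak z_0))$ is the same mechanism underlying Theorems \ref{main}, \ref{improved AC} and \ref{strategy}.

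Two caveats on your discussion of part (ii). First, the paper's actual logical direction is the opposite of yours: it proves (Theorem \ref{strategy}) that the main conjecture \emph{together with} Conjecture \ref{leading} implies the Tamagawa number conjecture, using the injectivity of $R^{(k_0)}$; your sketch runs IMC $+$ TNC $\Rightarrow$ Conjecture \ref{leading}, which only yields the equality $\cD(c)=u\cdot R^{(k_0)}(\eta)$ for some $u\in\cO^{\times}$. Second, your claim that the explicit bookkeeping of \S\ref{first application} and \S\ref{second application} ``pins the unit down to $1$'' is not correct: those computations identify the lattice $\pi_0(\det_{\cO}^{-1}(C_0))$ and therefore give results only up to $\ZZ_p^{\times}$ (this is exactly why Theorems \ref{main} and \ref{improved AC} are stated up to a $p$-adic unit). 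The exact equality in Conjecture \ref{leading}(ii) is genuinely stronger than IMC $+$ TNC, since $\eta$ is normalized by complex periods and regulators that lattice-level statements cannot detect; no argument of the kind you outline can remove the unit ambiguity, and this is precisely why the statement remains a conjecture in the paper. With that correction, your account of what is formal (the diagram, the injectivity of $R^{(k_0)}$, the rank identifications via Propositions \ref{str h2} and \ref{str h1}) versus what is deep (determining $\varrho$, the arithmetic inputs) matches the paper's own perspective.
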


\begin{remark}
Conjecture \ref{leading} is a refinement of \cite[Conjecture 4.7]{ks}. In fact, when $k_0=1$, these conjectures are equivalent. (In this case, note that $\varrho = e$.) When $k_0>1$, the equality in \cite[Conjecture 4.7]{ks} becomes ``$0=0$", while our equality in Conjecture \ref{leading}(ii) is always non-trivial, since by definition $R^{(k_0)}$ is injective and $\eta$ is non-zero. 
\end{remark}

\begin{remark}
In each of the settings of Example \ref{ex}, one can formulate Conjecture \ref{leading} in a more explicit way. In the case of Example \ref{ex}(i) (resp. (ii)), see \cite[Conjecture 4.2]{bks2} (resp. \cite[Proposition 4.15]{bks4}). (In these cases, we conjecturally have $k_0=1$, and our derived regulators should coincide with the usual regulators.) In the case of Example \ref{ex}(iii), we give an explicit interpretation in \S \ref{anti case}. 
%In the case of Example \ref{ex}(ii), an explicit interpretation is given in the same way as \cite[Proposition 4.15]{bks4} by replacing $R_\omega^{\rm Boc}$ with a natural ``derived Bockstein regulator". (However, we have $k_0=1$ conjecturally in this case.) 

\end{remark}

\subsection{The Iwasawa main conjecture}

The following formulation of the Iwasawa main conjecture is given in \cite[Conjecture 3.9]{ks}. 

\begin{conjecture}[The Iwasawa main conjecture]\label{ks main conj}
We have
$${\rm char}_\Lambda\left( {\bigcap}_\Lambda^r H^1_\Sigma(\cO_{K,S},\TT) / \Lambda \cdot c\right) = {\rm char}_\Lambda(H^2_\Sigma(\cO_{K,S},\TT)).$$
\end{conjecture}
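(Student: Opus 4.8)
The plan is as follows. The assertion is the Iwasawa main conjecture in the motivic formulation of \cite{ks}, and at this level of generality it is open; what one can realistically do is, for each of the motives arising in Example~\ref{ex} (and in any new instance), reduce it to a divisibility furnished by an Euler system together with a second, genuinely ``analytic'', divisibility. I emphasise at the outset that the descent formalism of Section~\ref{sec derived} is \emph{not} an ingredient here: on the contrary, Conjecture~\ref{ks main conj} is precisely the hypothesis from which, via the commutative diagram of Theorem~\ref{thm descent} and the $p$-adic leading-term Conjecture~\ref{leading}, one deduces the Tamagawa number conjecture for $T$ (as in Theorems~\ref{strategy} and~\ref{bsd strategy}).

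The first step is the Euler system bound. Under Hypothesis~\ref{hyp} and the weak Leopoldt Hypothesis~\ref{wl}, the module $H^1_\Sigma(\cO_{K,S},\TT)$ has $\Lambda$-rank $r$ and $H^2_\Sigma(\cO_{K,S},\TT)$ is $\Lambda$-torsion, so for nonzero $c$ the quotient $\bigcap_\Lambda^r H^1_\Sigma(\cO_{K,S},\TT)/\Lambda\cdot c$ is torsion over $\Lambda$. The higher-rank Euler/Kolyvagin system machinery (Rubin, Perrin-Riou, Mazur--Rubin, in the higher-rank form used in \cite{bks1} and \cite{ks}) then yields
$$\mathrm{char}_\Lambda\big(H^2_\Sigma(\cO_{K,S},\TT)\big)\ \text{ divides }\ \mathrm{char}_\Lambda\Big({\bigcap}_\Lambda^r H^1_\Sigma(\cO_{K,S},\TT)/\Lambda\cdot c\Big).$$
In the settings of Example~\ref{ex} this divisibility is the one supplied, respectively, by the Rubin--Stark (cyclotomic-unit) Euler system, by Kato's Euler system, and by the $\Lambda$-adic Heegner class (Howard; Burungale--Castella--Kim).

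The second step, the reverse divisibility, is the hard one, and I expect it to be the main obstacle: there is no uniform argument, and the required input is of a different nature whose precise form depends on the motive. For $T=\cO(1)\otimes\chi^{-1}$ in the cyclotomic case it is the classical main conjecture of Mazur--Wiles (equivalently Iwasawa's class-number formula); for $T=T_p(E)$ over $\QQ$ it is the Skinner--Urban divisibility obtained through Eisenstein congruences; and in the anticyclotomic Heegner setting of Example~\ref{ex}(iii) --- the case most relevant to the present paper --- it is the recent theorem of Burungale--Castella--Kim \cite{BCK} on the Heegner point main conjecture, so that for this motive Conjecture~\ref{ks main conj} is in fact available. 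Combining the two divisibilities gives the equality of characteristic ideals asserted in Conjecture~\ref{ks main conj}. For a previously untreated motive $M$, carrying out this second step will in practice demand either a suitable Eisenstein-congruence construction, an independent second Euler system, or a $p$-adic Beilinson-type formula identifying $c$ with a $p$-adic $L$-function, so that a known analytic main conjecture can be brought to bear.
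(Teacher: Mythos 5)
The statement you were asked to prove is a conjecture: the paper states Conjecture \ref{ks main conj} (quoting \cite[Conjecture 3.9]{ks}) and never proves it, using it only as a hypothesis --- in Theorem \ref{strategy}, and, through the equivalence with the Heegner point main conjecture recorded as (\ref{classical IMC}) via \cite[Theorem 5.18]{ks}, in Theorems \ref{alg from IMC} and \ref{bsd strategy} --- with unconditional input in the anticyclotomic case imported from \cite{BCK}. So there is no proof in the paper to compare yours against, and your overall assessment is the correct one: the conjecture is open at this level of generality, the descent formalism of \S\ref{sec derived} is irrelevant to proving it (it is the input, not the output, of Theorem \ref{thm descent} and Conjecture \ref{leading}), and the realistic route is the one you describe, namely an Euler/Kolyvagin-system divisibility ${\rm char}_\Lambda(H^2_\Sigma(\cO_{K,S},\TT)) \mid {\rm char}_\Lambda\bigl({\bigcap}_\Lambda^r H^1_\Sigma(\cO_{K,S},\TT)/\Lambda\cdot c\bigr)$ combined with a reverse divisibility of a different (Eisenstein-congruence or analytic) nature, instantiated case by case as in Example \ref{ex}.

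Treat your text, however, as a situational analysis rather than a proof, and note two points where even the sketch is optimistic. First, the higher-rank Euler-system divisibility is not automatic under Hypotheses \ref{hyp} and \ref{wl} alone: the machinery of \cite{sbA} and \cite{MRkoly} requires further hypotheses (nonvanishing of $c$, suitable large-image or nontriviality conditions on $T$), none of which are imposed in this section, so even the ``easy'' half is conditional. Second, in the setting of Example \ref{ex}(iii) the element $z^{\rm Hg}$ is a priori only an element of $Q(\Lambda)\otimes_\Lambda{\bigcap}_\Lambda^2 H^1(\cO_{K,S},\TT)$, so the quotient appearing in the conjecture does not even make sense integrally without the conjectural integrality noted in Example \ref{ex}(iii); the paper sidesteps this by working with the equivalent formulation of Conjecture \ref{IMC} and the basis description of Remark \ref{rem HPMC}, rather than with the quotient module itself. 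With those caveats, your reduction-to-known-main-conjectures picture is consistent with how the paper positions Conjecture \ref{ks main conj}, but it does not, and could not, constitute a proof of the general statement.
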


The following result is a refinement of \cite[Theorem 4.11]{ks}. 

\begin{theorem}\label{strategy}
Assume the Iwasawa main conjecture (Conjecture \ref{ks main conj}) and Conjecture \ref{leading}. Then the Tamagawa number conjecture for $(M^\ast(1),\cO)$ is true. 
\end{theorem}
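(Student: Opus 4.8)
The plan is to deduce the Tamagawa number conjecture (TNC) for $(M^\ast(1),\cO)$ from the Iwasawa main conjecture and Conjecture \ref{leading} by running the abstract implication ``(IMC) and ($p$LTC) $\Rightarrow$ (TNC)'' announced in the introduction, using the commutative diagram of Theorem \ref{thm descent} applied to $C=\rgamma_\Sigma(\cO_{K,S},\TT)$. Concretely, first I would recall from \cite[\S 2]{ks} that the TNC for $(M^\ast(1),\cO)$ is equivalent to the statement that the extended special element $\eta=\eta(T)$ generates $\pi_0({\det}_\cO^{-1}(\rgamma_\Sigma(\cO_{K,S},T)))$ over $\cO$ (up to the relevant $\cO$-basis), i.e.\ to the condition (TNC) of the introduction. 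On the Iwasawa side, the main conjecture (Conjecture \ref{ks main conj}) together with the structure results of \S\ref{sec structure} (Propositions \ref{str h2} and \ref{str h1}) says precisely that there is a $\Lambda$-basis $\fz$ of ${\det}_\Lambda^{-1}(C)$ with $\pi(\fz)=c$ (viewed inside $I^\varrho\cdot{\bigwedge}_{\Lambda_I}^r H^1_\Sigma(\cO_{K,S},\TT)_I$ once Conjecture \ref{leading}(i) is granted), exactly as in Remark \ref{rem HPMC}; here one uses $\sigma=r$, $\tau=0$, and ${\rm Fitt}_{\Lambda_I}(H^2(C)_{I,{\rm tors}})=I^\varrho\Lambda_I$.

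Next I would chase $\fz$ around the diagram of Theorem \ref{thm descent}. Going across the top, $\cD(\pi(\fz))=\cD(c)$. Going down the left edge, $\fz$ maps to a $\cO$-basis $\fz_0$ of ${\det}_\cO^{-1}(C_0)={\det}_\cO^{-1}(\rgamma_\Sigma(\cO_{K,S},T))$, and then along the bottom to $\pi_0(\fz_0)$, which by construction generates $\pi_0({\det}_\cO^{-1}(C_0))$. Commutativity gives $\cD(c)=R^{(k_0)}(\pi_0(\fz_0))$. On the other hand, Conjecture \ref{leading}(ii) gives $\cD(c)=R^{(k_0)}(\eta)$. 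Since $R^{(k_0)}$ is injective (this is the crucial point emphasized in the introduction, and it comes for free from the derived construction in Definition \ref{def der}), we conclude $\eta=\pi_0(\fz_0)$ in $\CC_p\otimes_{\ZZ_p}({\bigwedge}_\cO^{r+e}H^1_\Sigma(\cO_{K,S},T)\otimes_\cO{\bigwedge}_\cO^e H^2_\Sigma(\cO_{K,S},T)^\ast)$. Hence $\eta$ lies in, and generates, $\pi_0({\det}_\cO^{-1}(C_0))$ over $\cO$, which is the assertion of the TNC.

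Two technical points would need to be handled carefully. First, the compatibility of the various special elements with the formalism: one must check that the Euler system $c=(c_{K_n})_n\in{\bigcap}_\Lambda^r H^1_\Sigma(\cO_{K,S},\TT)$, which a priori lives in an exterior bidual, is correctly interpreted as an element of ${\bigwedge}_{\Lambda_I}^r H^1_\Sigma(\cO_{K,S},\TT)_I$ after localizing at $I$ — this is where the freeness statements in Hypothesis \ref{hyp}(ii) and the representation \eqref{standard complex} of $\rgamma_\Sigma(\cO_{K,S},\TT)\lotimes_\Lambda\Lambda_I$ are used, so that localization at $I$ turns ${\bigcap}_{\Lambda_I}^r$ into $\bigwedge_{\Lambda_I}^r$. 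Likewise I would need the precise normalization of $\eta(T)$ from \cite[Definition 2.11]{ks} so that ``$\eta$ generates $\pi_0({\det}_\cO^{-1}(C_0))$'' is literally the TNC for $(M^\ast(1),\cO)$ and not off by a fudge factor; the point is that \cite{ks} set things up precisely so that the TNC is this determinant identity. Second, I would need to know $H^2_\Sigma(\cO_{K,S},\TT)$ is $\Lambda$-torsion so that $\tau=0$ and the statement of Conjecture \ref{ks main conj} makes sense — but this is exactly Hypothesis \ref{wl}, assumed throughout the section.

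The main obstacle, as the introduction itself flags, is not really in this deduction — it is genuinely a short diagram chase once all identifications are in place — but rather in the bookkeeping: making sure that the passage from the Iwasawa-theoretic determinant ${\det}_\Lambda^{-1}(C)$ down to ${\det}_\cO^{-1}(C_0)$ matches, on the nose, the determinant module whose generation by $\eta(T)$ is the definition of the TNC in \cite{ks}, and that the ``order of vanishing'' $\varrho$ appearing in $\cD$ and $R^{(k_0)}$ is consistent on both sides (so that Conjecture \ref{leading}(i), which guarantees $c\in I^\varrho\cdot{\bigwedge}^r_{\Lambda_I}H^1_\Sigma(\cO_{K,S},\TT)_I$, is compatible with the $\varrho$ coming from ${\rm Fitt}_{\Lambda_I}(H^2(C)_{I,{\rm tors}})$ via the main conjecture). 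Once these identifications are granted, injectivity of $R^{(k_0)}$ does all the work. I would therefore structure the write-up as: (1) recall the determinant reformulation of the TNC from \cite{ks}; (2) recall the determinant reformulation of Conjecture \ref{ks main conj}, producing $\fz$; (3) apply Theorem \ref{thm descent} and Conjecture \ref{leading} to get $R^{(k_0)}(\eta)=R^{(k_0)}(\pi_0(\fz_0))$; (4) invoke injectivity of $R^{(k_0)}$ and conclude.
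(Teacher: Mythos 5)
Your proposal is correct and follows essentially the same route as the paper's proof: reformulate Conjecture \ref{ks main conj} as the existence of a $\Lambda$-basis $\fz$ of ${\det}_\Lambda^{-1}(\rgamma_\Sigma(\cO_{K,S},\TT))$ with $\pi(\fz)=c$, push $\fz$ through the commutative diagram of Theorem \ref{thm descent} to get $\cD(c)=R^{(k_0)}(\pi_0(\fz_0))$, combine with Conjecture \ref{leading}(ii) and the injectivity of $R^{(k_0)}$ to conclude $\pi_0(\fz_0)=\eta$, and then invoke the determinant reformulation of the Tamagawa number conjecture from \cite{ks} (the paper cites \cite[Proposition 2.17]{ks} for exactly the equivalence you describe).
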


\begin{remark}
When $k_0=1$, Theorem \ref{strategy} is the same as \cite[Theorem 4.11]{ks}. In fact, the assumption ``${\rm Boc}_\infty$ is non-zero" in loc. cit. is equivalent to $k_0=1$. 
%We remark that Theorem \ref{strategy} also refines \cite[Theorem 5.2]{bks2} and \cite[Theorem 7.6]{bks4}. 
\end{remark}

\begin{proof}[Proof of Theorem \ref{strategy}]
By Theorem \ref{thm descent}, we have the following commutative diagram:
$$
\scriptsize
\xymatrix@C=12pt@R=30pt{
\displaystyle {\det}_\Lambda^{-1}( \rgamma_\Sigma(\cO_{K,S},\TT)) \ar[r]^-{\pi } \ar@{->>}[dd] &\displaystyle I^{\varrho}\cdot {\bigwedge}_{\Lambda_I}^r H^1_\Sigma(\cO_{K,S},\TT)_I \ar[rd]^-{\cD} & \\
 & & \displaystyle  \CC_p\otimes_{\ZZ_p}\left( {\bigwedge}_\cO^r H^1_\Sigma(\cO_{K,S},T)\otimes_\cO Q^\varrho \right) \\
\displaystyle {\det}_{\cO}^{-1}( \rgamma_\Sigma(\cO_{K,S},T)) \ar[r]_-{\pi_0 } &\displaystyle \CC_p\otimes_{\ZZ_p}\left( {\bigwedge}_\cO^{r+e}H^1_\Sigma(\cO_{K,S},T)\otimes_\cO {\bigwedge}_\cO^e H^2_\Sigma(\cO_{K,S},T)^\ast \right) \ar[ru]_-{R^{(k_0)}}.&%\displaystyle \QQ_p\otimes_{\ZZ_p} {\bigwedge}_\cO^r \ker \beta^{(\infty)} \otimes_\cO Q^\rho \ar[u]^\cup. 
}
$$
If we assume the Iwasawa main conjecture, then there exists a $\Lambda$-basis
$$\fz \in {\det}_\Lambda^{-1}(\rgamma_\Sigma(\cO_{K,S},\TT))$$
such that $\pi(\fz) = c$. Let $\fz_0 \in {\det}_\cO^{-1}(\rgamma_\Sigma(\cO_{K,S},T))$ be the image of $\fz$. By the commutative diagram above, we have
$$\cD(c) = R^{(k_0)}(\pi_0(\fz_0)).$$
If we further assume Conjecture \ref{leading}, then we have
$$\cD(c)=R^{(k_0)}(\eta)$$
and the injectivity of $R^{(k_0)}$ implies
$$\pi_0(\fz_0) =  \eta.$$
Since $\fz_0$ is an $\cO$-basis of ${\det}_\cO^{-1}(\rgamma_\Sigma(\cO_{K,S},T))$, we see by \cite[Proposition 2.17]{ks} that the Tamagawa number conjecture for $(M^\ast(1),\cO)$ is true. 
\end{proof}

\subsection{The anticyclotomic case}\label{anti case}

In this subsection, we give an explicit interpretation of Conjecture \ref{leading} in the setting of Example \ref{ex}(iii). 

Let $p, K, K_\infty, E, T, V,A$ be as in \S \ref{sec bd} and {\it assume Hypothesis \ref{heegner hyp}}. Let $S$ be the set of places of $K$ which divide $pN\infty$. Note that Hypothesis \ref{hyp} is satisfied (with $\cO=\ZZ_p$ and $\Sigma=\emptyset$). {\it We continue assuming Hypothesis \ref{wl}, i.e., $H^2(\cO_{K,S},\TT)$ is $\Lambda$-torsion}. 

Let $E^K$ be the quadratic twist of $E$ by $K$.
% and define $r^\pm$ by
%$$r^\star := \begin{cases}
%{\rm rk}_\ZZ (E(\QQ)) &\text{ if $\star=+$,}\\
%{\rm rk}_\ZZ (E^K(\QQ)) &\text{ if $\star =-$.}
%\end{cases}$$
%In what follows, {\it we assume $r^+, r^->0$}. 
Let $\omega$ and $\omega^K$ be the N\'eron differentials of $E/\QQ$ and $E^K/\QQ$ respectively, which form a $\QQ$-basis of $\Gamma(E,\Omega_{E/K}^1)$. The formal logarithm map induces an isomorphism
$$\QQ_p \otimes_{\ZZ_p}\bigoplus_{v\mid p}E(K_v)^{\wedge}\xrightarrow{\sim} \QQ_p\otimes_\QQ \Gamma(E,\Omega_{E/K}^1)^\ast  ,$$
where $(-)^\wedge$ denotes the $p$-completion. The basis of $\QQ_p \otimes_{\ZZ_p}\bigoplus_{v\mid p}E(K_v)^{\wedge}$ corresponding to $\{\omega,\omega^{K}\}$ gives an identification 
\begin{equation}\label{identify}
\QQ_p \otimes_{\ZZ_p}\bigoplus_{v\mid p}E(K_v)^{\wedge} =\QQ_p^2. 
\end{equation}

We set
$$e:= \dim_{\QQ_p}(H^2(\cO_{K,S},V)) \text{ and }s:={\rm rk}_\ZZ (E(K)).$$
Note that the proof of Lemma \ref{AC lemma} shows that
$$e= \begin{cases}
s-2 &\text{if $r^+,r^- >0$,}\\
s-1 &\text{if either $r^+ =0 $ or $r^-=0$},
\end{cases}$$
where $r^+:={\rm rk}_\ZZ(E(\QQ))$ and $r^-:={\rm rk}_\ZZ(E^K(\QQ))$. 

Recall from \cite[(5.1.1)]{ks} that there is a long exact sequence
\begin{multline}\label{ks long}
0\to \QQ_p\otimes_\ZZ E(K) \to H^1(\cO_{K,S},V)\to \QQ_p\otimes_{\ZZ_p} \bigoplus_{v\mid p} E(K_v)^{\wedge,\ast}\\
 \to \QQ_p\otimes_\ZZ E(K)^\ast \to H^2(\cO_{K,S},V)\to 0.
\end{multline}
(Later we need an integral version of this sequence. See Lemma \ref{poitoutate} below.) From this and the identification (\ref{identify}), we obtain an isomorphism
\begin{multline}\label{important isom}
\QQ_p\otimes_\ZZ \left( {\bigwedge}_\ZZ^s E(K)\otimes_\ZZ {\bigwedge}_\ZZ^s E(K) \right) = \QQ_p \otimes_\ZZ \left( {\det}_\ZZ (E(K)) \otimes_\ZZ {\det}_\ZZ^{-1} (E(K)^\ast)\right) \\
\simeq {\det}_{\QQ_p}(H^1(\cO_{K,S},V)) \otimes_{\QQ_p} {\det}_{\QQ_p}^{-1}(H^2(\cO_{K,S},V)) = {\bigwedge}_{\QQ_p}^{e+2} H^1(\cO_{K,S},V) \otimes_{\QQ_p} {\bigwedge}_{\QQ_p}^e H^2(\cO_{K,S},V)^\ast.
\end{multline}

Let
$$\beta^{(1)}=\beta^{(1)}(\rgamma(\cO_{K,S}, \TT)): H^1(\cO_{K,S},T) \to H^2(\cO_{K,S},T)\otimes_{\ZZ_p} Q^1,$$
$$\beta^{(k)}=\beta^{(k)}(\rgamma(\cO_{K,S},\TT)): \ker \beta^{(k-1)} \to \coker \beta^{(k-1)}\otimes_{\ZZ_p} Q^1$$
be the derived Bockstein maps for $\rgamma(\cO_{K,S},\TT)$ (see Definition \ref{main def}). 
Let $k_0$ be as in Definition \ref{def k0}. We set
$$\varrho:=e+\sum_{k=1}^{k_0-1}{\rm rk}_{\ZZ_p}(\coker \beta^{(k)}).$$
%Note that Conjecture \ref{anticonj} implies $\rho=e +\sum_{k=1}^\infty {\rm rk}_{\ZZ_p} (\coker \beta^{(k)})$. 
The $k_0$-th derived Bockstein regulator map in (\ref{def der map}) becomes a map
$$R^{(k_0)}:  {\bigwedge}_{\QQ_p}^{e+2} H^1(\cO_{K,S},V)\otimes_{\QQ_p}{\bigwedge}_{\QQ_p}^e H^2(\cO_{K,S},V)^\ast \to  {\bigwedge}_{\QQ_p}^2 H^1(\cO_{K,S},V) \otimes_{\ZZ_p}  Q^{\varrho}.$$

\begin{definition}\label{def anti boc}
Consider the composition map
\begin{eqnarray*}
\QQ_p\otimes_{\ZZ}\left({\bigwedge}_\ZZ^{s} E(K) \otimes_\ZZ {\bigwedge}_\ZZ^{s}E(K) \right) &\stackrel{(\ref{important isom})}{\simeq}&   {\bigwedge}_{\QQ_p}^{e+2} H^1(\cO_{K,S},V)\otimes_{\QQ_p}{\bigwedge}_{\QQ_p}^e H^2(\cO_{K,S},V)^\ast \\
& \xrightarrow{R^{(k_0)}}&   {\bigwedge}_{\QQ_p}^2 H^1(\cO_{K,S},V) \otimes_{\ZZ_p}  Q^{\varrho}.
\end{eqnarray*}
We define the {\it anticyclotomic derived Bockstein regulator}
$${\rm Reg}^{\rm Boc} \in  {\bigwedge}_{\QQ_p}^2 H^1(\cO_{K,S},V) \otimes_{\ZZ_p}  Q^{\varrho}$$
to be the image of 
$$(x_1\wedge\cdots \wedge x_{s})\otimes (x_1 \wedge\cdots \wedge x_{s}) \in \QQ_p\otimes_{\ZZ}\left({\bigwedge}_\ZZ^{s} E(K) \otimes_\ZZ {\bigwedge}_\ZZ^{s}E(K) \right) $$
under the above map, where $\{x_1,\ldots,x_{s}\}$ is a basis of $E(K)_{\rm tf}$. 
\end{definition}

\begin{remark}\label{rho remark}
Let
$$R_{K_\infty}^{\rm Boc} \in {\bigwedge}_{\QQ_p}^2 H^1(\cO_{K,S},V) \otimes_{\ZZ_p} Q^e$$
be the anticyclotomic Bockstein regulator defined in \cite[Def. 5.26]{ks}. If $k_0=1$, then one sees by definition that $\varrho =e$ and ${\rm Reg}^{\rm Boc} = R_{K_\infty}^{\rm Boc}$. We remark that $R_{K_\infty}^{\rm Boc}$ can vanish, whereas ${\rm Reg}^{\rm Boc}$ is always non-zero. In fact, we have the implication
$$ |r^+- r^-| >1 \Rightarrow R_{K_\infty}^{\rm Boc} =0.$$
This is verified as follows. If $R_{K_\infty}^{\rm Boc} \neq 0$, then by definition we have ${\rm rk}_{\ZZ_p}(\coker \beta^{(1)})=0$. This implies that ${\rm rk}_{\ZZ_p}(\ker \beta^{(1)}) =2$. Note that we have a commutative diagram
$$\xymatrix{
\widetilde H^1_f(K,T) \ar[r]^-{\widetilde \beta^{(1)}} \ar@{^{(}->}[d] & \widetilde H^2_f(K,T)\otimes_{\ZZ_p}Q^1 \ar[d] \\
H^1(\cO_{K,S},T) \ar[r]_-{\beta^{(1)}} & H^2(\cO_{K,S},T)\otimes_{\ZZ_p}Q^1,
}$$
where $\widetilde \beta^{(1)}$ is the Bockstein map for $\widetilde \rgamma_f(K,\TT)$ (see \S \ref{review height}) and the vertical arrows are induced by the triangle (\ref{triangle}). From this, we have
$\ker \widetilde \beta^{(1)} \subset \ker \beta^{(1)}$ and so 
$${\rm rk}_{\ZZ_p}(\ker \widetilde \beta^{(1)}) \leq 2. $$
By noting that ${\rm rk}_{\ZZ_p}(\ker \widetilde \beta^{(1)})\geq |r^+-r^-|$ (by Proposition \ref{higher height}(ii)) and that $|r^+-r^-|$ is odd (by the Heegner hypothesis and the parity conjecture), we conclude that $|r^+-r^-| =1$. 
\end{remark}

As an analogue of the conjecture of Mazur and Bertolini-Darmon (see Remark \ref{MBD}), we suggest the following conjecture. 

\begin{conjecture}\label{MBD analogue}
We have
$${\rm rk}_{\ZZ_p}(\coker \beta^{(k)}) = \begin{cases}
|r^+-r^-|-1 & \text{if $k=1$,}\\
0 &\text{if $k\geq 2$.}
\end{cases}$$
\end{conjecture}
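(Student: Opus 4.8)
The plan is to deduce Conjecture \ref{MBD analogue} from the conjecture of Mazur and Bertolini--Darmon for the Selmer complex $\widetilde\rgamma_f(K,\TT)$ (the statement recalled in Remark \ref{MBD}), via a comparison between the derived Bockstein maps of $\rgamma(\cO_{K,S},\TT)$ and those of $\widetilde\rgamma_f(K,\TT)$. The comparison is governed by the defining triangle (\ref{triangle}) with $X=\TT$: since $\widetilde\rgamma_f(K,\TT)$ is the fibre of the localisation map $\rgamma(\cO_{K,S},\TT)\to\bigoplus_{v\mid p}\rgamma(K_v,\TT^-)\oplus\bigoplus_{v\mid N}\rgamma_{/\rm ur}(K_v,\TT)$, tensoring this triangle with $\Lambda/I^{k+1}$ and passing to the associated Bockstein spectral sequences (Remark \ref{spec remark}) produces, after localising at $I$, a morphism of spectral sequences refining the square in Remark \ref{rho remark} to all pages. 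The first task is to show that the two right-hand local terms contribute a shift that is independent of $k$: the terms $\rgamma_{/\rm ur}(K_v,\TT)$ for $v\mid N$ are, after inverting $t$, built from unramified modules and contribute nothing to the derived part, while $\bigoplus_{v\mid p}\rgamma(K_v,\TT^-)$ contributes exactly one ``universal norm line'' to $H^1$. I would make this precise by computing $H^\ast(K_v,\TT^-)_I$ explicitly (using $\TT^-=T^-\otimes\Lambda$ and local Tate duality, together with the hypothesis that $p$ is unramified in $K$) and tracking the resulting connecting maps, the upshot being the clean identities ${\rm rk}_{\ZZ_p}(\ker\beta^{(k)})={\rm rk}_{\ZZ_p}(\ker\widetilde\beta^{(k)})+1$ and ${\rm rk}_{\ZZ_p}(\coker\beta^{(k)})={\rm rk}_{\ZZ_p}(\coker\widetilde\beta^{(k)})-1$ for every $k\geq1$, where $\beta^{(k)}=\beta^{(k)}(\rgamma(\cO_{K,S},\TT))$ (this is consistent with the change of basic rank from $r=0$ to $r=2$ forced by Lemma \ref{elementary}(ii)).

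Granting this comparison, the numerical statement is immediate: the Mazur--Bertolini--Darmon conjecture gives ${\rm rk}_{\ZZ_p}(\coker\widetilde\beta^{(1)})=|r^+-r^-|$ and ${\rm rk}_{\ZZ_p}(\coker\widetilde\beta^{(k)})=1$ for $k\geq2$, whence ${\rm rk}_{\ZZ_p}(\coker\beta^{(1)})=|r^+-r^-|-1$ and ${\rm rk}_{\ZZ_p}(\coker\beta^{(k)})=0$ for $k\geq2$, which is exactly Conjecture \ref{MBD analogue}. One could equivalently read this off Proposition \ref{str h2}: the conjecture is equivalent to $H^2(\cO_{K,S},\TT)_I\cong(\Lambda_I/I\Lambda_I)^{e_1}\oplus(\Lambda_I/I^2\Lambda_I)^{|r^+-r^-|-1}$ with $k_0\leq2$, so it suffices to bound the exponent of $I$ annihilating $H^2(\cO_{K,S},\TT)_{I,\rm tors}$ by $2$ and to pin down its $I^2$-part.

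Two halves of the argument have very different difficulty. The lower bound ${\rm rk}_{\ZZ_p}(\coker\beta^{(1)})\geq|r^+-r^-|-1$ (and $\geq0$ trivially for $k\geq2$) should follow unconditionally from the comparison above together with the sign property of the derived height pairings in Proposition \ref{higher height}(ii), exactly as in \cite[Remark 3.5]{AC} and as in the argument of Remark \ref{rho remark} of the present paper. \emph{The main obstacle is the matching upper bound}: ${\rm rk}_{\ZZ_p}(\coker\beta^{(1)})\leq|r^+-r^-|-1$ together with the vanishing ${\rm rk}_{\ZZ_p}(\coker\beta^{(k)})=0$ for $k\geq2$ is, via the comparison, equivalent to the assertion that the full tower of Howard's derived $p$-adic height pairings on $\widetilde H^1_f(K,V)$ is as non-degenerate as the sign obstruction permits. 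This is a genuinely deep non-degeneracy statement --- it is precisely the content of the Mazur--Bertolini--Darmon conjecture --- and is not known in general; only the rank-one case (where $k_0=1$ and the relevant maps vanish for sign reasons) is unconditional. Thus, short of new input on non-degeneracy of derived heights, the realistic target is the conditional reduction ``Mazur--Bertolini--Darmon $\Rightarrow$ Conjecture \ref{MBD analogue}'' together with the unconditional lower bounds, and the technical heart even of that reduction is the explicit analysis of the local contribution $\rgamma(K_v,\TT^-)_I$ ($v\mid p$) to the Bockstein spectral sequence.
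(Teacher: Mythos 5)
The statement you were asked about is a \emph{conjecture}: the paper does not prove Conjecture \ref{MBD analogue}, and indeed says explicitly that it does not study it in detail and that all subsequent results are independent of its validity. It is offered only as the natural analogue, for the compactly-unconditioned complex $\rgamma(\cO_{K,S},\TT)$, of the Mazur--Bertolini--Darmon conjecture recalled in Remark \ref{MBD}. So there is no proof in the paper to compare your argument against, and any purported proof should be judged on whether it actually establishes the statement. Yours does not, and you say as much: what you propose is a \emph{conditional} reduction to the Mazur--Bertolini--Darmon conjecture, which is itself an open non-degeneracy statement of essentially the same depth, so even if every step of your reduction were carried out you would not have proved Conjecture \ref{MBD analogue}, only re-expressed it. That re-expression is, to be fair, exactly the spirit in which the paper states the conjecture, and your unconditional lower-bound remarks parallel what the paper does prove (Remark \ref{rho remark} and Remark \ref{rem order 2}).

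Within the reduction itself there is also a genuine gap: the ``clean identities'' ${\rm rk}_{\ZZ_p}(\ker\beta^{(k)})={\rm rk}_{\ZZ_p}(\ker\widetilde\beta^{(k)})+1$ and ${\rm rk}_{\ZZ_p}(\coker\beta^{(k)})={\rm rk}_{\ZZ_p}(\coker\widetilde\beta^{(k)})-1$ for all $k\geq 1$ are asserted, not proved, and they are not obvious. The triangle (\ref{triangle}) and the commutative square of Remark \ref{rho remark} only give the inclusion $\ker\widetilde\beta^{(1)}\subset\ker\beta^{(1)}$ and a surjection on cokernels rationally; since the kernel of $\widetilde H^2_f(K,V)\to H^2(\cO_{K,S},V)$ is the two-dimensional image of $\bigoplus_{v\mid p}H^1(K_v,V^-)$, the jump in kernel rank at each level $k$ could a priori be $0$, $1$ or $2$, and pinning it at exactly $1$ at \emph{every} page of the Bockstein spectral sequence requires precisely the analysis of the image of the global Iwasawa classes in the local cohomology at $\fp$ and $\overline\fp$ (the ``one universal norm line'') that you defer. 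Note that the shift-by-one relation the paper does record, namely (\ref{minus one}), concerns the different pair $\widetilde\rgamma_\fp(K,\TT)$ versus $\widetilde\rgamma_f(K,\TT)$, is stated only for kernels, and is itself left as ``one checks''; it cannot simply be transplanted to your pair of complexes. So the realistic assessment is: the statement remains a conjecture, your proposal is a plausible and well-motivated reduction consistent with the paper's numerics (it reproduces $\tau_1=|r^+-r^-|-1$, $\tau_k=0$ for $k\geq2$, and $k_0\leq 2$ via Proposition \ref{str h2}), but both the comparison identities and the non-degeneracy input they would feed on are unproved.
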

\begin{remark}
If Conjecture \ref{MBD analogue} is true, then we have
$$k_0 = \begin{cases}
1 &\text{if $|r^+-r^-| =1$,}\\
2 &\text{if $|r^+-r^-| >1$}
\end{cases}$$
and 
$$\varrho = e+ |r^+-r^-|-1 = \begin{cases}
2(\max\{r^+,r^-\} -1) -1 &\text{if $r^+,r^->0$,}\\
2(\max\{r^+,r^-\} -1) &\text{if either $r^+=0$ or $r^-=0$.}
\end{cases}$$
\end{remark}

In this paper, we do not study Conjecture \ref{MBD analogue} in detail. All the results below are independent of its validity. 

Let $L(E/K,s)$ denote the Hasse-Weil $L$-function for $E/K$. The $S$-truncated $L$-function (i.e., the Euler factors at all $v\in S$ are removed) is denoted by $L_S(E/K,s)$. The leading term of $L_S(E/K,s)$ at $s=1$ is denoted by $L_S^\ast(E/K,1)\in \RR^\times$. We fix an embedding $\RR \hookrightarrow \CC_p$ and regard $L_S^\ast(E/K,1) \in \CC_p^\times$. Let $D_K$ be the discriminant of $K$. Let $\Omega_{E/K}$ and $R_{E/K}$ be the N\'eron period and the N\'eron-Tate regulator for $E/K$ respectively. 

Let $z^{\rm Hg} \in Q(\Lambda) \otimes_\Lambda {\bigcap}_\Lambda^2 H^1(\cO_{K,S},\TT)$ be the $\Lambda$-adic Heegner element in Example \ref{ex}(iii). Conjecture \ref{leading}(ii) is explicitly interpreted as follows. 

\begin{proposition}\label{interpretation}
%Assume Conjecture \ref{leading}(i) for $z^{\rm Hg}$, i.e., 
%\begin{equation}\label{vanishing order}
%z^{\rm Hg} \in I^\varrho\cdot {\bigwedge}_{\Lambda_I}^2 H^1(\cO_{K,S},\TT)_I.
%\end{equation}
Conjecture \ref{leading}(ii) for $z^{\rm Hg}$ holds if and only if we have an equality
\begin{equation}\label{derivative formula}
\cD(z^{\rm Hg})= \frac{L_S^\ast(E/K,1)\sqrt{|D_K|}}{\Omega_{E/K} \cdot R_{E/K}}\cdot {\rm Reg}^{\rm Boc}\text{ in }\CC_p\otimes_{\ZZ_p}{\bigwedge}_{\ZZ_p}^2 H^1(\cO_{K,S},T)\otimes_{\ZZ_p}Q^{\varrho}.
\end{equation}
\end{proposition}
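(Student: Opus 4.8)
The plan is to unwind the definition of Conjecture \ref{leading}(ii) in this concrete setting and match it term by term with the classical Birch–Swinnerton-Dyer leading term formula for $E/K$. First I would recall that Conjecture \ref{leading}(ii) asserts the equality $\cD(c) = R^{(k_0)}(\eta(T))$ in $\CC_p \otimes_{\ZZ_p} \bigwedge_\cO^r H^1_\Sigma(\cO_{K,S},T) \otimes_\cO Q^\varrho$, where here $C = \rgamma(\cO_{K,S},\TT)$, $r=2$, $\cO=\ZZ_p$, $\Sigma=\emptyset$ and $c = z^{\rm Hg}$. So the proof reduces to identifying the two sides of this abstract equality with the two sides of \eqref{derivative formula}. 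The left-hand side $\cD(z^{\rm Hg})$ is literally the same on both sides (using the identification $\QQ_p\otimes_\ZZ E(K) \subset H^1(\cO_{K,S},V)$ coming from the exact sequence \eqref{ks long} and the fact that $z^{\rm Hg}$ is a Heegner element), so the real content is to show
$$
R^{(k_0)}(\eta(T)) = \frac{L_S^\ast(E/K,1)\sqrt{|D_K|}}{\Omega_{E/K}\cdot R_{E/K}}\cdot {\rm Reg}^{\rm Boc}.
$$

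The key step is to compute $R^{(k_0)}(\eta(T))$ using the explicit description of the extended special element $\eta(T)$ from \cite[Definition 2.11]{ks}. By definition, $\eta(T)$ lives in $\CC_p\otimes_{\ZZ_p}\bigl(\bigwedge_\cO^{r+e}H^1(\cO_{K,S},T)\otimes_\cO \bigwedge_\cO^e H^2(\cO_{K,S},T)^\ast\bigr)$ and is built from a period/regulator normalization; concretely it is the image of a $\ZZ$-basis $(x_1\wedge\cdots\wedge x_s)\otimes(x_1\wedge\cdots\wedge x_s)$ of $\QQ\otimes_\ZZ(\bigwedge^s_\ZZ E(K)\otimes_\ZZ \bigwedge^s_\ZZ E(K))$ under the isomorphism \eqref{important isom}, scaled by the factor dictated by the equivariant Tamagawa number conjecture for $(h^1(E/K)(1),\ZZ_p)$, namely $L_S^\ast(E/K,1)\sqrt{|D_K|}/(\Omega_{E/K}R_{E/K})$. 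Here I would invoke the known interplay (as in \cite[\S 5]{ks}) between the Néron period $\Omega_{E/K}$, the N\'eron–Tate regulator $R_{E/K}$, the periods $\{\omega,\omega^K\}$ used in the identification \eqref{identify}, and the Bloch–Kato exponential, taking care that the period ratio between the motivic and the $p$-adic normalizations is precisely what converts the abstract $\eta(T)$ into $\frac{L_S^\ast(E/K,1)\sqrt{|D_K|}}{\Omega_{E/K}R_{E/K}}$ times the lattice element. Feeding this through $R^{(k_0)}$ and comparing with Definition \ref{def anti boc} of ${\rm Reg}^{\rm Boc}$ — which applies exactly the same $R^{(k_0)}\circ(\text{\eqref{important isom}})$ to the same lattice element — gives the claimed equality.

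More precisely, the steps in order are: (1) record that in this setting $C = \rgamma(\cO_{K,S},\TT)$ satisfies all the hypotheses of \S\ref{sec derived} (Hypothesis \ref{hyp} and Hypothesis \ref{wl}), so Conjecture \ref{leading} is well-posed and $R^{(k_0)}$ is the map written just before Definition \ref{def anti boc}; (2) recall from \cite[Definition 2.11]{ks} the formula for $\eta(T)$ in terms of the Beilinson–Deligne period and the N\'eron–Tate height regulator, and rewrite it, via \eqref{important isom} and the period comparison, as $\frac{L_S^\ast(E/K,1)\sqrt{|D_K|}}{\Omega_{E/K}R_{E/K}}$ times the image of $(x_1\wedge\cdots\wedge x_s)\otimes(x_1\wedge\cdots\wedge x_s)$; (3) apply $R^{(k_0)}$ to both sides and use $\CC_p$-linearity to pull out the scalar; (4) recognize the right-hand side as exactly ${\rm Reg}^{\rm Boc}$ by Definition \ref{def anti boc}; (5) note the left-hand side $\cD(z^{\rm Hg})$ is unchanged, so the abstract equality $\cD(z^{\rm Hg}) = R^{(k_0)}(\eta(T))$ is literally \eqref{derivative formula}, and conversely. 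One should also check (as a sublemma, cf.\ Lemma \ref{AC lemma} and the case distinction on $r^\pm$) that the relevant ranks match up: $\dim_{\QQ_p}\bigwedge^2_{\QQ_p}H^1(\cO_{K,S},V) = \binom{e+2}{2}$ behaves correctly, and the target degree $Q^\varrho$ is the same on both sides by the definition of $\varrho$.

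The main obstacle will be step (2): pinning down the exact period normalization so that the abstract special element $\eta(T)$ of \cite{ks}, which is defined via the Bloch–Kato formalism and the equivariant Tamagawa number conjecture with its intrinsic choice of periods, produces the classical archimedean factor $L_S^\ast(E/K,1)\sqrt{|D_K|}/(\Omega_{E/K}R_{E/K})$ with the correct power of $\sqrt{|D_K|}$ and no stray rational factor. This is essentially a bookkeeping computation comparing the Néron differential normalization with the one implicit in \eqref{identify} (built from $\{\omega,\omega^K\}$) and accounting for the motivic decomposition $h^1(E/K) = h^1(E/\QQ)\oplus h^1(E^K/\QQ)$, together with the effect of the map $\bigoplus_{v\mid p}E(K_v)^\wedge \to \Gamma(E,\Omega^1_{E/K})^\ast$; I would carry it out by reducing to the case-by-case analysis already present in \cite[\S 5]{ks} (the case $r^+,r^->0$ versus $r^+=0$ or $r^-=0$, which is why $e = s-2$ or $s-1$), and then citing \cite[Proposition 2.17]{ks} for the translation between $\eta(T)$ being a lattice generator and the Tamagawa number conjecture. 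Everything else — the linearity of $R^{(k_0)}$, the identification of $\cD(z^{\rm Hg})$, and the matching of $\varrho$ — is formal given Theorem \ref{thm descent} and the definitions in \S\ref{anti case}.
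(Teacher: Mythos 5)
Your proposal is correct and follows essentially the same route as the paper: the paper's proof consists precisely of your step (2), namely invoking the explicit description of $\eta(T)$ (given in the proof of \cite[Proposition 5.27]{ks}) as $\frac{L_S^\ast(E/K,1)\sqrt{|D_K|}}{\Omega_{E/K}\cdot R_{E/K}}$ times the image of $(x_1\wedge\cdots\wedge x_s)\otimes(x_1\wedge\cdots\wedge x_s)$ under (\ref{important isom}), after which the equivalence with (\ref{derivative formula}) is the formal comparison with Definition \ref{def anti boc} that you describe. The only difference is that you propose to re-derive this description of $\eta(T)$ (your acknowledged period-bookkeeping obstacle) rather than citing it, and your appeal to \cite[Proposition 2.17]{ks} is not needed for this proposition.
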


\begin{proof}
This follows immediately from the explicit description of the extended special element $\eta$ given in the proof of \cite[Proposition 5.27]{ks}. 
\end{proof}

Let ${\rm Eul}_S$ be the product of Euler factors at finite places $v$ in $S$ (so that ${\rm Eul}_S \cdot L^\ast(E/K,1)=L_S^\ast(E/K,1)$). Since the Birch and Swinnerton-Dyer formula predicts the equality
$$\frac{L_S^\ast(E/K,1)\sqrt{|D_K|}}{\Omega_{E/K}\cdot R_{E/K}} = {\rm Eul}_S\cdot \frac{\#\sha(E/K)\cdot {\rm Tam}(E/K)}{\# E(K)_{\rm tors}^2},$$
Proposition \ref{interpretation} suggests the following conjecture. 

\begin{conjecture}\label{alg conj}
There exists $u \in \ZZ_p^\times$ such that
$$\cD(z^{\rm Hg})= u\cdot {\rm Eul}_S\cdot \#\sha(E/K)[p^\infty]\cdot {\rm Tam}(E/K)\cdot {\rm Reg}^{\rm Boc}. $$
\end{conjecture}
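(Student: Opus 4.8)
The plan is to prove that the Iwasawa main conjecture (Conjecture \ref{ks main conj}) for the $\Lambda$-adic Heegner element $z^{\rm Hg}$ implies Conjecture \ref{alg conj} (this is Theorem \ref{alg from IMC}), by feeding $z^{\rm Hg}$ through the descent formalism of Theorem \ref{thm descent} applied to $C=\rgamma(\cO_{K,S},\TT)$, which satisfies the control theorem $C\lotimes_\Lambda\ZZ_p\simeq C_0=\rgamma(\cO_{K,S},T)$. First I would record the determinant reformulation of Conjecture \ref{ks main conj}, exactly as in Remark \ref{rem HPMC} and \cite[Proposition 5.13]{ks}: it is equivalent to the existence of a $\Lambda$-basis $\fz^{\rm Hg}\in{\det}_\Lambda^{-1}(\rgamma(\cO_{K,S},\TT))$ with $\pi(\fz^{\rm Hg})=z^{\rm Hg}$, where $\pi$ is the map of Theorem \ref{thm descent} (here $r=2$, and by Hypothesis \ref{wl} together with Proposition \ref{str h2} one has $\tau_{k_0}=0$, hence $\sigma_{k_0}=2$, so $\pi$ lands in $I^\varrho\cdot{\bigwedge}_{\Lambda_I}^2 H^1(\cO_{K,S},\TT)_I$). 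Since ${\rm char}_\Lambda(H^2(\cO_{K,S},\TT))$ is divisible by $I^\varrho$ by Proposition \ref{str h2}, the element $z^{\rm Hg}$ automatically satisfies Conjecture \ref{leading}(i), so $\cD(z^{\rm Hg})$ is defined; and by the commutativity of the diagram in Theorem \ref{thm descent}, writing $\fz_0^{\rm Hg}\in{\det}_{\ZZ_p}^{-1}(\rgamma(\cO_{K,S},T))$ for the image of $\fz^{\rm Hg}$ (a $\ZZ_p$-basis), one obtains $\cD(z^{\rm Hg})=R^{(k_0)}(\pi_0(\fz_0^{\rm Hg}))$.

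The second step is to reduce the conjecture to a lattice identity free of the main conjecture. Write $\xi$ for the isomorphism (\ref{important isom}). By Definition \ref{def anti boc}, ${\rm Reg}^{\rm Boc}=R^{(k_0)}\bigl(\xi((x_1\wedge\cdots\wedge x_s)\otimes(x_1\wedge\cdots\wedge x_s))\bigr)$ for a $\ZZ$-basis $\{x_1,\dots,x_s\}$ of $E(K)_{\rm tf}$, and $R^{(k_0)}$ is injective (a consequence of the derived construction, cf.\ Definition \ref{def der}). Hence Conjecture \ref{alg conj} is equivalent to
\[
\ZZ_p\cdot\pi_0(\fz_0^{\rm Hg})={\rm Eul}_S\cdot\#\sha(E/K)[p^\infty]\cdot{\rm Tam}(E/K)\cdot\xi\bigl(\ZZ_p\otimes_\ZZ({\bigwedge}_\ZZ^s E(K)\otimes_\ZZ{\bigwedge}_\ZZ^s E(K))\bigr),
\]
i.e.\ to the computation of the lattice $\im\pi_0=\pi_0({\det}_{\ZZ_p}^{-1}(\rgamma(\cO_{K,S},T)))$; the main conjecture has been used only to exhibit $\pi_0(\fz_0^{\rm Hg})$ as a generator of this lattice.

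This lattice computation I would carry out exactly as in the proofs of Theorem \ref{main} and Theorem \ref{improved AC}, now for the $S$-cohomology complex in place of a Selmer complex. Using the integral Poitou--Tate exact sequences --- in particular the integral refinement of (\ref{ks long}), i.e.\ Lemma \ref{poitoutate} --- one threads ${\det}_{\ZZ_p}^{-1}(\rgamma(\cO_{K,S},T))$ through the local cohomology at the finite places of $S$ and through the Bloch--Kato Selmer groups, obtaining a chain of canonical determinant isomorphisms. The local contributions at $v\mid p$ and $v\mid N$ together produce the factor ${\rm Eul}_S\cdot{\rm Tam}(E/K)$ up to $\ZZ_p^\times$ --- using Greenberg's result \cite[Proposition 2.5]{greenberg} on $E(\FF_v)[p^\infty]$ and the formal logarithm at $v\mid p$, and the identification (\ref{tam}) at $v\mid N$, exactly as in Lemma \ref{calculation lemma} --- while the global contribution ${\det}_{\ZZ_p}(H^1_f(K,T))\otimes_{\ZZ_p}{\det}_{\ZZ_p}^{-1}(H^1_f(K,A)^\vee)$ produces $\#\sha(E/K)[p^\infty]$ times ${\det}_{\ZZ_p}(\ZZ_p\otimes_\ZZ E(K))\otimes_{\ZZ_p}{\det}_{\ZZ_p}^{-1}(\ZZ_p\otimes_\ZZ E(K)^\ast)$ via (\ref{h1T}) and (\ref{h1A}). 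Matching the resulting isomorphism with $\xi$ then yields the displayed identity, hence Conjecture \ref{alg conj} with a suitable $u\in\ZZ_p^\times$.

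The hard part will be this final matching: one must check that the composite of all the Poitou--Tate and determinant isomorphisms, from ${\det}_{\ZZ_p}^{-1}(\rgamma(\cO_{K,S},T))$ onto $\xi\bigl(\ZZ_p\otimes_\ZZ({\bigwedge}_\ZZ^s E(K)\otimes_\ZZ{\bigwedge}_\ZZ^s E(K))\bigr)$, is literally the map $\xi$ of (\ref{important isom}) up to the scalar ${\rm Eul}_S\cdot\#\sha(E/K)[p^\infty]\cdot{\rm Tam}(E/K)$ --- equivalently, that the period and regulator normalizations built into $\xi$, into (\ref{h1T})--(\ref{h1A}), and into the local identifications at $v\mid p$ are mutually compatible. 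This is the same bookkeeping that underlies \cite[Proposition 5.27]{ks} (invoked for Proposition \ref{interpretation}), so it should go through. A secondary point needing care is that $z^{\rm Hg}$ is only known a priori to lie in $Q(\Lambda)\otimes_\Lambda{\bigcap}_\Lambda^2 H^1(\cO_{K,S},\TT)$, so one genuinely needs the determinant form of the main conjecture rather than its literal characteristic-ideal form, and establishing that reformulation for the relaxed complex $\rgamma(\cO_{K,S},\TT)$ (no local condition at $p$, unramified at $N$) requires adapting the argument of \cite[Proposition 5.13]{ks}. Notably, no analytic input --- no Gross--Zagier formula, no Birch--Swinnerton-Dyer formula --- enters anywhere, which is precisely why Conjecture \ref{alg conj} is isolated as the algebraic shadow of the combination of Conjecture \ref{leading} for $z^{\rm Hg}$ and the Birch--Swinnerton-Dyer formula recorded around Proposition \ref{interpretation}.
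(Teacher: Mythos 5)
Your proposal is correct and follows essentially the same route as the paper's proof of Theorem \ref{alg from IMC}: reformulate the main conjecture as the existence of a $\Lambda$-basis $\fz$ with $\pi(\fz)=z^{\rm Hg}$ (the paper simply invokes \cite[Proposition 3.10]{ks}, so no adaptation of \cite[Proposition 5.13]{ks} is required), push it through the descent diagram of Theorem \ref{thm descent}, and reduce to the lattice identity $\im(\delta\circ\pi_0)={\rm Eul}_S\cdot\#\sha(E/K)[p^\infty]\cdot{\rm Tam}(E/K)\cdot\ZZ_p\otimes_\ZZ\bigl({\bigwedge}_\ZZ^{s}E(K)\otimes_\ZZ{\bigwedge}_\ZZ^{s}E(K)\bigr)$ computed from the integral Poitou--Tate sequence of Lemma \ref{poitoutate}. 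The only cosmetic deviation is in the local bookkeeping: for the relaxed complex the paper uses Lemma \ref{euler p} at $v\mid p$ and a direct computation with $E^{\rm ns}(\FF_v)$ at $v\mid N$ rather than \cite[Proposition 2.5]{greenberg} and (\ref{tam}) (which belong to the Selmer-complex settings of Theorems \ref{main} and \ref{improved AC}), and the final matching you flag as delicate holds essentially by construction, since (\ref{important isom}) is defined from the same sequence and the identification (\ref{identify}).
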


%The following result gives a connection between Conjecture \ref{alg conj} and the  conjecture of Bertolini and Darmon (Conjecture \ref{bd}). 

%\begin{proposition}
%Assume Hypothesis \ref{hyp mild}. Then Conjecture \ref{alg conj} holds if and only if there exists $u \in \ZZ_p^\times$ such that
%$$\overline \cL = u\cdot \# \sha(E/K)[p^\infty]\cdot {\rm Reg}^{\rm BD}. $$
%\end{proposition}

%\subsection{Application of the descent}

%We continue assuming Hypotheses \ref{heegner hyp} and \ref{wl}. 
We show that Conjecture \ref{alg conj} is a consequence of the Heegner point main conjecture. 

\begin{theorem}\label{alg from IMC}
The Heegner point main conjecture (Conjecture \ref{IMC}) implies Conjecture \ref{alg conj}. 
\end{theorem}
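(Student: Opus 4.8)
The plan is to follow the same descent strategy used in the proofs of Theorems \ref{main} and \ref{improved AC}, feeding the Heegner point main conjecture into the commutative diagram of Theorem \ref{thm descent} applied to the Selmer complex $C = \rgamma(\cO_{K,S},\TT)$. First I would observe that, by Hypothesis \ref{wl}, $H^2(\cO_{K,S},\TT)$ is $\Lambda$-torsion, so Theorem \ref{thm descent} specializes to a diagram
$$
\xymatrix@C=10pt@R=24pt{
{\det}_\Lambda^{-1}(\rgamma(\cO_{K,S},\TT)) \ar[r]^-{\pi} \ar@{->>}[dd] & I^{\varrho}\cdot {\bigwedge}_{\Lambda_I}^2 H^1(\cO_{K,S},\TT)_I \ar[rd]^-{\cD} & \\
 & & \CC_p\otimes_{\ZZ_p}\left( {\bigwedge}_{\ZZ_p}^2 H^1(\cO_{K,S},T)\otimes_{\ZZ_p} Q^\varrho\right) \\
{\det}_{\ZZ_p}^{-1}(\rgamma(\cO_{K,S},T)) \ar[r]_-{\pi_0} & \CC_p\otimes_{\ZZ_p}\left( {\bigwedge}_{\ZZ_p}^{e+2} H^1(\cO_{K,S},T)\otimes_{\ZZ_p}{\bigwedge}_{\ZZ_p}^e H^2(\cO_{K,S},T)^\ast\right) \ar[ru]_-{R^{(k_0)}}. &
}
$$
Here I use that the basic rank of $T=T_p(E)$ over an imaginary quadratic field in this anticyclotomic situation is $r=2$ (as in Example \ref{ex}(iii)), and that $\rgamma(\cO_{K,S},\TT)\lotimes_\Lambda\ZZ_p\simeq\rgamma(\cO_{K,S},T)$, exactly as invoked in \S\ref{anti case}.

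Next I would translate the Heegner point main conjecture (Conjecture \ref{IMC}) into a statement about a $\Lambda$-basis of the determinant. By the equivalence recalled in Remark \ref{rem HPMC} (via \cite[Proposition 5.13]{ks}), the Heegner point main conjecture gives a $\Lambda$-basis $\fz_\infty\in {\det}_\Lambda^{-1}(\widetilde\rgamma_f(K,\TT))$ with $\pi(\fz_\infty)=z_\infty\otimes z_\infty$. I then need to pass from the Selmer complex $\widetilde\rgamma_f(K,\TT)$ to the $S$-cohomology complex $\rgamma(\cO_{K,S},\TT)$: comparing the two via the local conditions defining (\ref{triangle}) produces a canonical isomorphism of determinants whose "error term" is a product of local Euler-type factors, computed exactly as in the proof of Theorem \ref{main} (the computation of $\im\pi_0$ there, via Greenberg's result \cite[Proposition 2.5]{greenberg} and local duality). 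In fact the element $z^{\rm Hg}\in Q(\Lambda)\otimes_\Lambda{\bigcap}_\Lambda^2 H^1(\cO_{K,S},\TT)$ of Example \ref{ex}(iii) is constructed in \cite[Definition 5.14]{ks} precisely so as to be the image of $z_\infty\otimes z_\infty$ under such a comparison (up to $\Lambda^\times$), so this step should largely be a matter of citing \cite{ks} and tracking the Euler factor ${\rm Eul}_S$. Combining, I obtain a $\Lambda$-basis $\fz\in{\det}_\Lambda^{-1}(\rgamma(\cO_{K,S},\TT))$ with $\pi(\fz) = u'\cdot{\rm Eul}_S^{-1}\cdot z^{\rm Hg}$ for some $u'\in\Lambda^\times$ — or, more precisely, I would run the argument so that the Euler factors land where Conjecture \ref{alg conj} predicts.

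Then I would chase the diagram. Let $\fz_0\in{\det}_{\ZZ_p}^{-1}(\rgamma(\cO_{K,S},T))$ be the image of $\fz$; by commutativity, $\cD(z^{\rm Hg})$ (up to $\ZZ_p^\times$ and ${\rm Eul}_S$) equals $R^{(k_0)}(\pi_0(\fz_0))$. Since $\fz_0$ is a $\ZZ_p$-basis, I need to identify $\im\pi_0$ inside $\CC_p\otimes_{\ZZ_p}({\bigwedge}_{\ZZ_p}^{e+2}H^1(\cO_{K,S},T)\otimes_{\ZZ_p}{\bigwedge}_{\ZZ_p}^e H^2(\cO_{K,S},T)^\ast)$. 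Using the integral Poitou-Tate sequence (the "integral version" of (\ref{ks long}) alluded to as Lemma \ref{poitoutate}, analogous to the sequences (\ref{PT1})--(\ref{PT3}) in \S\ref{second application}), the exact sequences (\ref{h1T}) and (\ref{h1A}) for $H^1_f(K,T)$ and $H^1_f(K,A)^\vee$, and the Tamagawa-factor identity (\ref{tam}), I would compute
$$\im\pi_0 = u''\cdot \#\sha(E/K)[p^\infty]\cdot{\rm Tam}(E/K)\cdot\big(\ZZ_p\otimes_\ZZ({\bigwedge}_\ZZ^s E(K)\otimes_\ZZ{\bigwedge}_\ZZ^s E(K))\big)$$
for some $u''\in\ZZ_p^\times$, where the identification with Mordell-Weil lattices uses the comparison isomorphism (\ref{important isom}) of \S\ref{anti case} (which already absorbs the local logarithm contributions at $p$). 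Applying $R^{(k_0)}$ and recalling that ${\rm Reg}^{\rm Boc} = R^{(k_0)}\circ(\ref{important isom})((x_1\wedge\cdots\wedge x_s)\otimes(x_1\wedge\cdots\wedge x_s))$ by Definition \ref{def anti boc}, I get $\cD(z^{\rm Hg}) = u\cdot{\rm Eul}_S\cdot\#\sha(E/K)[p^\infty]\cdot{\rm Tam}(E/K)\cdot{\rm Reg}^{\rm Boc}$, which is Conjecture \ref{alg conj}. I also need $z^{\rm Hg}\in I^\varrho\cdot{\bigwedge}_{\Lambda_I}^2 H^1(\cO_{K,S},\TT)_I$ (so that $\cD(z^{\rm Hg})$ is defined), which follows because ${\rm char}_\Lambda(H^2(\cO_{K,S},\TT))$ is divisible by $I^\varrho$ by Proposition \ref{str h2}, exactly as in (\ref{z order}).

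The main obstacle I anticipate is the bookkeeping in the comparison between the Selmer complex $\widetilde\rgamma_f(K,\TT)$ appearing in the Heegner point main conjecture and the relaxed $S$-cohomology complex $\rgamma(\cO_{K,S},\TT)$ underlying $z^{\rm Hg}$ and the regulator ${\rm Reg}^{\rm Boc}$: one must check that the various local correction factors (at $p$ the unit-root/logarithm factors and anomalous contributions $E(\FF_v)[p^\infty]$, at $v\mid N$ the Tamagawa factors) assemble correctly and that nothing beyond $\ZZ_p^\times$, $\#\sha(E/K)[p^\infty]$, ${\rm Tam}(E/K)$ and ${\rm Eul}_S$ survives. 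Fortunately this is precisely the computation already carried out in the proof of Theorem \ref{main} (for the analogous passage in the Bertolini-Darmon setting) and in \cite[\S 5]{ks} (for the construction of $z^{\rm Hg}$ and the identification (\ref{important isom})), so the proof should amount to assembling those pieces, invoking the injectivity of $R^{(k_0)}$ from Definition \ref{def der}, and being careful with the $\iota$-twists that appear in Conjecture \ref{IMC}.
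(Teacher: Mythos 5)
Your overall architecture is the same as the paper's (apply Theorem \ref{thm descent} to $C=\rgamma(\cO_{K,S},\TT)$, convert the Heegner point main conjecture into a $\Lambda$-basis statement for ${\det}_\Lambda^{-1}(\rgamma(\cO_{K,S},\TT))$, compute $\im\pi_0$ integrally, and conclude via Definition \ref{def anti boc} and the injectivity of $R^{(k_0)}$), but there is a genuine gap in the two quantitative steps, and the gap is exactly where the content of the theorem lies. First, your claim that the comparison of $\widetilde\rgamma_f(K,\TT)$ with $\rgamma(\cO_{K,S},\TT)$ forces $\pi(\fz)=u'\cdot{\rm Eul}_S^{-1}\cdot z^{\rm Hg}$ is not correct: by \cite[Theorem 5.18]{ks} the Heegner point main conjecture is equivalent, with no correction factor, to the statement (\ref{classical IMC}) about $z^{\rm Hg}$ and the relaxed complex, and by \cite[Proposition 3.10]{ks} this is equivalent to the existence of a $\Lambda$-basis $\fz$ with $\pi(\fz)=z^{\rm Hg}$ on the nose (the local discrepancies are already built into the construction of $z^{\rm Hg}$, which is canonical up to $\Lambda^\times$). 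Second, and compensating for this, your computation of $\im\pi_0$ is missing the Euler factor: the identification $\delta$ coming from (\ref{important isom}) is only a rational isomorphism and does \emph{not} ``absorb'' the integral logarithm contributions at $p$; when one pushes the integral lattice ${\det}_{\ZZ_p}^{-1}(\rgamma(\cO_{K,S},T))$ through $\delta\circ\pi_0$ using Lemma \ref{poitoutate}, the term at $p$ contributes ${\rm Eul}_p$ (Lemma \ref{euler p}, via $E_1(K_p)$ and the N\'eron differentials) and the terms ${\det}_{\ZZ_p}^{-1}(E(K_v)[p^\infty]^\vee)$ at $v\mid N$ contribute ${\rm Eul}_N\cdot{\rm Tam}(E/K)$ rather than just ${\rm Tam}(E/K)$ (this uses $E_0(K_v)^\wedge=E^{\rm ns}(\FF_v)\otimes_\ZZ\ZZ_p$, not the identity (\ref{tam}), which pertains to the unramified local conditions of the Selmer-complex setting in \S\ref{first application}--\S\ref{second application}). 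The correct statement is (\ref{compute h2}): $\im(\delta\circ\pi_0)={\rm Eul}_S\cdot\#\sha(E/K)[p^\infty]\cdot{\rm Tam}(E/K)\cdot\ZZ_p\otimes_\ZZ\bigl({\bigwedge}_\ZZ^{s}E(K)\otimes_\ZZ{\bigwedge}_\ZZ^{s}E(K)\bigr)$.

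Your two misplacements of ${\rm Eul}_S$ happen to cancel, so the final display you write down is the right one, but as presented neither intermediate claim is justified, and the decisive step is deferred with the circular remark that you would ``run the argument so that the Euler factors land where Conjecture \ref{alg conj} predicts.'' To repair the argument, drop the $\Lambda$-level Euler factor entirely (take $\pi(\fz)=z^{\rm Hg}$, whence $\cD(z^{\rm Hg})=R^{(k_0)}(\pi_0(\fz_0))$ for the $\ZZ_p$-basis $\fz_0$), and carry out the integral determinant computation of $\im(\delta\circ\pi_0)$ from Lemma \ref{poitoutate} term by term as in (\ref{obvious isom})--(\ref{compute 3}); everything else in your outline (the membership $z^{\rm Hg}\in I^{\varrho}\cdot{\bigwedge}_{\Lambda_I}^2H^1(\cO_{K,S},\TT)_I$ via Proposition \ref{str h2}, and the final appeal to Definition \ref{def anti boc}) is fine.
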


\begin{remark}
Theorem \ref{alg from IMC} is a refinement of \cite[Theorem 5.28]{ks}. 
\end{remark}

%\begin{remark}
%Combining Theorem \ref{alg from IMC} with the main result of \cite{BCK}, we know that Conjecture \ref{alg conj} is true under the same assumption as Corollary \ref{BCK}.  
%\end{remark}

We also prove the following.

\begin{theorem}\label{bsd strategy}
Assume the Heegner point main conjecture (Conjecture \ref{IMC}). If we further assume the equality (\ref{derivative formula}), then the $p$-part of the Birch and Swinnerton-Dyer formula is true, i.e., there exists $u \in \ZZ_p^\times$ such that
$$\frac{L^\ast(E/K,1)\sqrt{|D_K|}}{\Omega_{E/K}\cdot R_{E/K}} = u\cdot \#\sha(E/K)[p^\infty]\cdot {\rm Tam}(E/K).$$
\end{theorem}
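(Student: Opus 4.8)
The plan is to run exactly the descent machine of Theorem \ref{strategy}, but in the concrete anticyclotomic setting of \S \ref{anti case}, and then to read off the Birch and Swinnerton-Dyer formula from the comparison of two bases of a single determinant module. First I would invoke Theorem \ref{alg from IMC}: the Heegner point main conjecture gives a $\Lambda$-basis $\fz \in {\det}_\Lambda^{-1}(\rgamma(\cO_{K,S},\TT))$ with $\pi(\fz) = z^{\rm Hg}$, and hence, writing $\fz_0$ for its image in ${\det}_{\ZZ_p}^{-1}(\rgamma(\cO_{K,S},T))$, the commutative diagram of Theorem \ref{thm descent} yields $\cD(z^{\rm Hg}) = R^{(k_0)}(\pi_0(\fz_0))$. (Strictly, one must first check $z^{\rm Hg} \in I^\varrho \cdot {\bigwedge}_{\Lambda_I}^2 H^1(\cO_{K,S},\TT)_I$ so that $\cD$ is defined on it; this is part of what the Heegner point main conjecture delivers via Proposition \ref{str h2}, exactly as in the proof of Theorem \ref{main}.)

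Next I would bring in the hypothesis (\ref{derivative formula}), which rewrites $\cD(z^{\rm Hg})$ as
$$\frac{L_S^\ast(E/K,1)\sqrt{|D_K|}}{\Omega_{E/K}\cdot R_{E/K}}\cdot {\rm Reg}^{\rm Boc}.$$
By Definition \ref{def anti boc}, ${\rm Reg}^{\rm Boc}$ is the image of $(x_1\wedge\cdots\wedge x_s)\otimes(x_1\wedge\cdots\wedge x_s)$ under $R^{(k_0)}\circ(\ref{important isom})$, where $\{x_1,\ldots,x_s\}$ is a $\ZZ$-basis of $E(K)_{\rm tf}$. Combining these two expressions for $\cD(z^{\rm Hg})$ and using the \emph{injectivity} of $R^{(k_0)}$ (this is the crucial structural input, and it holds unconditionally by the derived construction in Definition \ref{def der}), I get the identity
$$\pi_0(\fz_0) = \frac{L_S^\ast(E/K,1)\sqrt{|D_K|}}{\Omega_{E/K}\cdot R_{E/K}}\cdot (\ref{important isom})\bigl((x_1\wedge\cdots\wedge x_s)\otimes(x_1\wedge\cdots\wedge x_s)\bigr)$$
inside $\CC_p\otimes_{\ZZ_p}\bigl({\bigwedge}^{e+2}_{\QQ_p}H^1(\cO_{K,S},V)\otimes_{\QQ_p}{\bigwedge}^e_{\QQ_p}H^2(\cO_{K,S},V)^\ast\bigr)$. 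Since $\fz_0$ is a $\ZZ_p$-basis of ${\det}_{\ZZ_p}^{-1}(\rgamma(\cO_{K,S},T))$ and $\pi_0$ is an isomorphism onto its image, $\pi_0(\fz_0)$ generates the $\ZZ_p$-lattice $\pi_0({\det}_{\ZZ_p}^{-1}(\rgamma(\cO_{K,S},T)))$.

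The remaining step is purely a lattice-index computation: I would compute the index of $\pi_0({\det}_{\ZZ_p}^{-1}(\rgamma(\cO_{K,S},T)))$ inside the lattice generated by the image of $(x_1\wedge\cdots\wedge x_s)\otimes(x_1\wedge\cdots\wedge x_s)$. By the Poitou–Tate sequence (the integral version of (\ref{ks long}), i.e. Lemma \ref{poitoutate}) together with the identifications $H^1_f(K_v,T)=E(K_v)^\wedge$ and property (v) of the determinant functor, this index is $\#\sha(E/K)[p^\infty]\cdot{\rm Tam}(E/K)\cdot{\rm Eul}_S^{-1}\cdot\#E(K)_{\rm tors}^{-2}$ up to $\ZZ_p^\times$ — the Euler factor ${\rm Eul}_S$ arising from the local determinants at $v\in S$ and the formal-logarithm normalization of $\{\omega,\omega^K\}$, exactly as in the proof of Theorem \ref{main} (compare (\ref{det sha}), (\ref{tam}), (\ref{h1T}), (\ref{h1A})) and as recorded in \cite[Proposition 2.17]{ks}. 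Comparing generators therefore forces
$$\frac{L_S^\ast(E/K,1)\sqrt{|D_K|}}{\Omega_{E/K}\cdot R_{E/K}} = u\cdot{\rm Eul}_S\cdot\frac{\#\sha(E/K)[p^\infty]\cdot{\rm Tam}(E/K)}{\#E(K)_{\rm tors}^2}$$
for some $u\in\ZZ_p^\times$, and dividing by ${\rm Eul}_S$ (the ratio $L_S^\ast/L^\ast$) gives the asserted $p$-part of the Birch and Swinnerton-Dyer formula. The main obstacle, and the only place real care is needed, is the bookkeeping in this last index computation: one must track the local Euler factors at the bad primes and at $p$, the precise normalization built into the isomorphism (\ref{important isom}) via the Néron differentials, and the torsion contribution from (\ref{h1A}), making sure all powers of $p$ cancel correctly so that the discrepancy is genuinely a $p$-adic unit. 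Everything else is a direct transcription of the abstract descent formalism already established.
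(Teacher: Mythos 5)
Your proposal is correct, but it takes a more hands-on route than the paper does. The paper's own proof is a two-line reduction: the Heegner point main conjecture is equivalent (by \cite[Theorem 5.18]{ks}) to Conjecture \ref{ks main conj} for $z^{\rm Hg}$, the hypothesis (\ref{derivative formula}) is, by Proposition \ref{interpretation}, exactly Conjecture \ref{leading}(ii) for $z^{\rm Hg}$, so the abstract Theorem \ref{strategy} gives the Tamagawa number conjecture for $(M^\ast(1),\cO)$, which in this setting is equivalent to the $p$-part of the Birch and Swinnerton-Dyer formula. You instead rerun the descent diagram of Theorem \ref{thm descent} concretely (basis $\fz$ with $\pi(\fz)=z^{\rm Hg}$, the identity $\cD(z^{\rm Hg})=R^{(k_0)}(\pi_0(\fz_0))$, injectivity of $R^{(k_0)}$) and then replace the appeal to the TNC and \cite[Proposition 2.17]{ks} by the explicit computation of the lattice $\pi_0\bigl({\det}_{\ZZ_p}^{-1}(\rgamma(\cO_{K,S},T))\bigr)$, which is precisely the computation (\ref{compute h2}) carried out in the paper's proof of Theorem \ref{alg from IMC} via Lemma \ref{poitoutate}, Lemma \ref{euler p}, (\ref{tam}) and (\ref{h1A}); comparing generators and dividing by ${\rm Eul}_S$ then yields the formula directly. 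The two routes use identical inputs; yours is more self-contained within the anticyclotomic setting (and makes visible where the Euler factors and Tamagawa numbers enter), while the paper's is shorter and showcases the general formalism. One small slip worth fixing: your intermediate statement of the ``index'' carries ${\rm Eul}_S^{-1}$ (an inverted convention), whereas (\ref{compute h2}) says the generator of $\im(\delta\circ\pi_0)$ is ${\rm Eul}_S\cdot\#\sha(E/K)[p^\infty]\cdot{\rm Tam}(E/K)$ times $(x_1\wedge\cdots\wedge x_s)\otimes(x_1\wedge\cdots\wedge x_s)$ (the factor $\#E(K)_{\rm tors}^2$ is a $p$-adic unit since $E(K)[p]=0$); your final displayed comparison is nevertheless the correct one, so the conclusion stands.
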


\begin{remark}
Theorem \ref{bsd strategy} is a refinement of \cite[Theorem 5.30]{ks}. 
\end{remark}

The rest of this paper is devoted to the proofs of Theorems \ref{alg from IMC} and \ref{bsd strategy}.

Theorem \ref{bsd strategy} is actually a direct consequence of Theorem \ref{strategy}. In fact, we know that the Heegner point main conjecture (Conjecture \ref{IMC}) is equivalent to Conjecture \ref{ks main conj} for $z^{\rm Hg}$, i.e.,
\begin{equation}\label{classical IMC}
{\rm char}_\Lambda\left( {\bigcap}_\Lambda^2 H^1(\cO_{K,S},\TT)/\Lambda\cdot z^{\rm Hg}\right) = {\rm char}_\Lambda(H^2(\cO_{K,S},\TT)). 
\end{equation}
(See \cite[Theorem 5.18]{ks}.) 
%In particular, we note that the containment (\ref{vanishing order}) follows from the Heegner point main conjecture. 
Since the Tamagawa number conjecture in this case is equivalent to the $p$-part of the Birch and Swinnerton-Dyer formula, Theorem \ref{bsd strategy} follows from Theorem \ref{strategy}.

To prove Theorem \ref{alg from IMC}, we give some preliminaries. For $v\in S$, let $H^1_f(K_v, -) \subset H^1(K_v,-)$ denote the Bloch-Kato local condition and set $H^1_{/f}(K_v,-):=H^1(K_v,-)/H^1_f(K_v,-)$ (see \cite[\S 3]{BK} or \cite[\S 1.3]{R}). Recall that the Bloch-Kato Selmer group is defined by 
$$H^1_f(K,-):=\ker \left( H^1(\cO_{K,S},-) \to \bigoplus_{v\in S} H^1_{/f}(K_v,-)\right).$$
%Let ${\rm Sel}(E/K)(:=H^1_f(K,A))$ denote the usual Selmer group for $E/K$, whose Pontryagin dual fits into the exact sequence
%$$0\to \sha(E/K)[p^\infty]^\vee \to {\rm Sel}(E/K)^\vee \to \ZZ_p\otimes_\ZZ E(K)^\ast\to 0.$$

In the following, we set
$$E(K_p):=\bigoplus_{v\mid p} E(K_v).$$ 
%We also set 
%$$E(K_p)^{\wedge} = \bigoplus_{v\mid p}E(K_v)^\wedge := \bigoplus_{v\mid p}\varprojlim_n E(K_v)/p^n E(K_v) .$$
For an abelian group $X$, we denote its $p$-completion by $X^\wedge:=\varprojlim_n X/p^n X$. 
Recall that we set $A:=V/T = E[p^\infty]$. 

\begin{lemma}\label{poitoutate}
There is a canonical exact sequence
\begin{multline*}
0\to \ZZ_p\otimes_\ZZ E(K) \to H^1(\cO_{K,S},T) \to E(K_p)^{\wedge,\ast} \\
\to H^1_f(K,A)^\vee \to H^2(\cO_{K,S},T) \to \bigoplus_{v\in S}E(K_v)[p^\infty]^\vee \to 0. 
\end{multline*}
\end{lemma}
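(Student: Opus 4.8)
\textbf{Proof plan for Lemma \ref{poitoutate}.}

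The plan is to derive this exact sequence from the global Poitou--Tate exact sequence for the $G_{K,S}$-module $T$ together with an analysis of the local conditions at $v\in S$. First I would write down the nine-term Poitou--Tate sequence relating $H^i(\cO_{K,S},T)$, $\bigoplus_{v\in S}H^i(K_v,T)$ and the dual groups $H^{2-i}(\cO_{K,S},A)^\vee$ (using $T^\ast(1)\cong A^\vee$ via the Weil pairing, as in \cite[\S 3]{BK} or \cite[\S 1.3]{R}); since $H^0(K,T)=0$ by Hypothesis \ref{heegner hyp}(iii) (equivalently Hypothesis \ref{hyp}(i)) and $H^0(K,A)=E(K)[p^\infty]=0$, the sequence simplifies at both ends. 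The Bloch--Kato Selmer group $H^1_f(K,T)$ is defined by imposing $H^1_f(K_v,T)\subset H^1(K_v,T)$ at every $v\in S$, and I would combine the Poitou--Tate sequence with the long exact sequence defining $H^1_f(K,-)$, following exactly the recipe already used to obtain \eqref{mr exact}, \eqref{PT1}, \eqref{PT2}, \eqref{PT3} and the sequence \eqref{ks long} of \cite[(5.1.1)]{ks}. This produces a five-term sequence
\[
0\to H^1_f(K,T)\to H^1(\cO_{K,S},T)\to \bigoplus_{v\in S} H^1_{/f}(K_v,T)\to H^1_f(K,A)^\vee\to H^2(\cO_{K,S},T)\to (\text{cokernel})\to 0,
\]
and I would then identify the cokernel with $\bigoplus_{v\in S}E(K_v)[p^\infty]^\vee$ by a local duality computation.

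Next I would carry out the local identifications. For $v\mid p$, the standard fact $H^1_f(K_v,T)=E(K_v)^\wedge$ and local Tate duality give $H^1_{/f}(K_v,T)\cong H^1_f(K_v,A)^\vee\cong E(K_v)^{\wedge,\ast}$ (as in the proof of Lemma \ref{calculation lemma}); taking the direct sum over $v\mid p$ yields the term $E(K_p)^{\wedge,\ast}$. For $v\mid N$ and $v\mid\infty$, one checks that $H^1_{/f}(K_v,T)$ is finite (indeed for $v\mid\infty$ it vanishes since $p$ is odd, and for $v\mid N$ it is controlled by the Tamagawa factor as in \eqref{tam}), and the corresponding contribution to the cokernel on the right is $E(K_v)[p^\infty]^\vee$ via local duality applied to $0\to H^1_f(K_v,T)\to H^1(K_v,T)\to H^1_{/f}(K_v,T)\to 0$ and the snake lemma. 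Combining: the map $H^1(\cO_{K,S},T)\to \bigoplus_{v\in S}H^1_{/f}(K_v,T)$ has image landing (after the $v\mid p$ part) exactly in $E(K_p)^{\wedge,\ast}$ modulo the finite pieces, which get pushed into the $H^1_f(K,A)^\vee$ term; a careful diagram chase assembles everything into the six-term sequence in the statement. I would also use the identification $H^1_f(K,T)=\ZZ_p\otimes_\ZZ E(K)$ (this is \eqref{h1T}) for the leftmost nonzero term.

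The main obstacle I anticipate is bookkeeping the finite local contributions at $v\mid N$ correctly: one must verify that the global-to-local map kills $H^1_{\mathrm{ur}}(K_v,T)$-type classes and that the resulting cokernel is precisely $\bigoplus_{v\in S}E(K_v)[p^\infty]^\vee$ rather than some proper sub/quotient — i.e.\ that no extra cancellation occurs between the $v\mid p$ and $v\mid N$ strands. This is handled by the same Poitou--Tate + Bloch--Kato formalism already invoked several times in \S\ref{first application} and \S\ref{second application}, so I would phrase the proof as a direct adaptation of \cite[(5.1.1)]{ks} (whose rational version is \eqref{ks long}) to the integral setting, citing Hypothesis \ref{hyp} to guarantee freeness/torsion-freeness where needed and citing \cite[Theorem 2.3.4]{MRkoly} for the Poitou--Tate sequence itself. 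The remaining verifications (finiteness of $H^1_{/f}(K_v,T)$ for $v\mid N\infty$, the local duality identifications at $v\mid p$) are routine and I would state them without detailed calculation.
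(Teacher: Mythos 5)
Your global strategy is the same as the paper's: the Poitou--Tate sequence for $T$ over $\cO_{K,S}$ (with dual module $T^\vee(1)\cong A$ via the Weil pairing; note your ``$T^\ast(1)\cong A^\vee$'' is off by a twist, though harmless), modified by the Bloch--Kato local conditions to produce
$$0\to H^1_f(K,T)\to H^1(\cO_{K,S},T)\to \bigoplus_{v\in S}H^1_{/f}(K_v,T)\to H^1_f(K,A)^\vee\to H^2(\cO_{K,S},T)\to \bigoplus_{v\in S}H^2(K_v,T)\to H^0(K,A)^\vee,$$
then $H^0(K,A)=0$, $H^2(K_v,T)\cong E(K_v)[p^\infty]^\vee$ by local duality, $H^1_{/f}(K_v,T)\cong E(K_v)^{\wedge,\ast}$ for $v\mid p$, and $H^1_f(K,T)\cong\ZZ_p\otimes_\ZZ E(K)$. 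All of that matches the paper's proof.

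However, your treatment of the places $v\mid N$ contains a genuine error, and it sits exactly at the step you flag as the main obstacle. You assert that $H^1_{/f}(K_v,T)$ is a finite group ``controlled by the Tamagawa factor as in \eqref{tam}'' and that these pieces ``get pushed into the $H^1_f(K,A)^\vee$ term'' by a diagram chase. With the convention the paper uses (Bloch--Kato/Rubin: $H^1_f(K_v,T)$ is the preimage of $H^1_f(K_v,V)$ in $H^1(K_v,T)$), one has $H^1(K_v,V)=0$ for every finite $v\nmid p$ (since $H^0(K_v,V)=H^0(K_v,V^\ast(1))=0$, as $E(K_v)[p^\infty]$ is finite), so $H^1_{/f}(K_v,T)$ injects into $H^1_{/f}(K_v,V)=0$ and \emph{vanishes}; the group measured by \eqref{tam} is the different quotient $H^1_f(K_v,T)/H^1_{\rm ur}(K_v,T)$, which plays no role in this lemma. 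This vanishing is precisely what makes the canonical six-term sequence of the statement come out, with third term exactly $E(K_p)^{\wedge,\ast}$ and fourth term exactly $H^1_f(K,A)^\vee$: if the local quotients at $v\mid N$ were genuinely nonzero finite groups, no ``absorption'' into $H^1_f(K,A)^\vee$ would produce the asserted canonical sequence. So replace the proposed bookkeeping/diagram chase by the observation $H^1_{/f}(K_v,T)=0$ for all $v\nmid p$ (the archimedean places also vanish, as you say, since $p$ is odd and $K$ is imaginary quadratic); with that correction your argument coincides with the paper's.
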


\begin{proof}
By the Poitou-Tate duality (see \cite[(5.1.6)]{nekovar}), we have a long exact sequence
\begin{multline*}
H^1(\cO_{K,S},T) \to \bigoplus_{v\in S} H^1(K_v,T) \to H^1(\cO_{K,S}, T^\vee(1))^\vee \\
 \to H^2(\cO_{K,S},T)\to \bigoplus_{v\in S}H^2(K_v,T)\to H^0(K, T^\vee(1))^\vee .
\end{multline*}
From this, one can deduce the following exact sequence:
\begin{multline*}
0\to H^1_f(K,T)\to H^1(\cO_{K,S},T) \to \bigoplus_{v\in S}H^1_{/f}(K_v,T)\\
\to H^1_f(K,T^\vee(1))^\vee \to H^2(\cO_{K,S},T)\to \bigoplus_{v\in S}H^2(K_v,T)\to H^0(K,T^\vee(1))^\vee.
\end{multline*}
By the Weil pairing, we can identify $T^\vee(1)$ with $A=E[p^\infty]$. Since we assume $E(K)[p]=0$ and $\# \sha(E/K)[p^\infty]< \infty$ (see Hypothesis \ref{heegner hyp}), we have
$$H^0(K,A)=0 \text{ and }H^1_f(K,T) \simeq  \ZZ_p\otimes_\ZZ E(K).$$
Also, for $v\in S$, we have
$$H^2(K_v,T) \simeq H^0(K_v,A)^\vee = E(K_v)[p^\infty]^\vee$$
by local duality. 
The lemma follows by noting that 
$$H^1_{/f}(K_v,T) = \begin{cases}
0 &\text{ if $v\nmid p$,}\\
E(K_v)^{\wedge,\ast} &\text{ if $v\mid p$.}
\end{cases}$$
\end{proof}

%\begin{lemma}\label{easy lemma}
%Let $X$ be a finitely generated $\ZZ_p$-module. Then we have a canonical isomorphism
%$${\det}_{\ZZ_p}^{-1} (X) \simeq {\det}_{\ZZ_p}(X^\ast)\otimes_{\ZZ_p}{\det}_{\ZZ_p}^{-1} ((X_{\rm tors})^\vee). $$
%\end{lemma}

%\begin{proof}
%Take an exact sequence of the form
%\begin{equation}\label{presentation}
%0\to F_1 \to F_2 \to X\to 0,
%\end{equation}
%where $F_1$ and $F_2$ are free $\ZZ_p$-modules of finite rank. Then we have
%$${\det}_{\ZZ_p}^{-1}(X) \simeq  {\det}_{\ZZ_p}(F_1)\otimes_{\ZZ_p} {\det}_{\ZZ_p}^{-1}(F_2). $$
%By taking the $\ZZ_p$-dual of (\ref{presentation}), we obtain an exact sequence
%$$ 0\to X^\ast \to F_2^\ast \to F_1^\ast \to (X_{\rm tors})^\vee \to 0. $$
%(Note that $\Ext_{\ZZ_p}^1(X,\ZZ_p)= (X_{\rm tors})^\vee$.) So we have
%$${\det}_{\ZZ_p}(F_1)\otimes_{\ZZ_p} {\det}_{\ZZ_p}^{-1}(F_2) = {\det}_{\ZZ_p}^{-1}(F_1^\ast)\otimes_{\ZZ_p} {\det}_{\ZZ_p}(F_2^\ast)\simeq {\det}_{\ZZ_p}(X^\ast)\otimes_{\ZZ_p} {\det}_{\ZZ_p}^{-1}((X_{\rm tors})^\vee). $$
%Combining the isomorphisms above, we obtain the desired isomorphism. 
%\end{proof}

Let
$$\log: \QQ_p \otimes_{\ZZ_p}E(K_p)^{\wedge}\xrightarrow{\sim} \QQ_p\otimes_\QQ \Gamma(E,\Omega_{E/K}^1)^\ast $$
be the formal logarithm map. We define the ``Euler factor at $p$" by 
$${\rm Eul}_p:= \frac{1}{p^2}\prod_{v\mid p}\# E(\FF_v),$$
where $\FF_v$ denotes the residue field of $v$. (Note that $E$ has good reduction at $p$ by Hypothesis \ref{heegner hyp}.) 

\begin{lemma}\label{euler p}
The image of the map
\begin{multline*}
{\det}_{\ZZ_p}^{-1}\left(E(K_p)^\wedge \right) \hookrightarrow \QQ_p\otimes_{\ZZ_p} {\det}_{\ZZ_p}^{-1}\left(E(K_p)^\wedge \right)\\
  \stackrel{\log}{\simeq} \QQ_p\otimes_\QQ {\det}_\QQ^{-1}(\Gamma(E,\Omega_{E/K}^1)^\ast) = \QQ_p\otimes_\QQ {\bigwedge}_\QQ^2 \Gamma(E,\Omega_{E/K}^1)
\end{multline*}
is $\ZZ_p\cdot {\rm Eul}_p \cdot \omega \wedge \omega^K$. 
\end{lemma}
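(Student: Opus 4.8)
The plan is to compute the image by comparing three natural lattices inside $\QQ_p \otimes_\QQ {\bigwedge}_\QQ^2 \Gamma(E,\Omega_{E/K}^1)$: the one coming from the integral cohomology $E(K_p)^\wedge$, the one coming from the N\'eron differentials $\{\omega,\omega^K\}$, and the one coming from the $p$-completed formal group. Since $p$ splits in $K$, say $(p) = \fp\overline\fp$, we have $K_\fp = K_{\overline\fp} = \QQ_p$, so $E(K_p)^\wedge = E(\QQ_p)^\wedge \oplus E(\QQ_p)^\wedge$ (one copy for each prime above $p$), and $\Gamma(E,\Omega_{E/K}^1) = \Gamma(E,\Omega_{E/\QQ}^1) \oplus \Gamma(E^K,\Omega_{E^K/\QQ}^1)$ with the basis $\{\omega,\omega^K\}$. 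Thus the whole computation factors as a product over the two embeddings, and it suffices to treat a single prime $v \mid p$, i.e.\ to show that under $\log_\omega \colon \QQ_p \otimes_{\ZZ_p} E(\QQ_p)^\wedge \xrightarrow{\sim} \QQ_p$ (normalized by the N\'eron differential $\omega$), the image of ${\det}_{\ZZ_p}^{-1}(E(\QQ_p)^\wedge) = \Hom_{\ZZ_p}(E(\QQ_p)^\wedge, \ZZ_p)$ is $\frac{1}{p}\cdot\#E(\FF_v)\cdot \ZZ_p$ inside $\QQ_p$ (so that taking the tensor product over the two places above $p$ produces the factor $\frac{1}{p^2}\prod_{v\mid p}\#E(\FF_v) = {\rm Eul}_p$, and the basis $\omega\wedge\omega^K$ of ${\bigwedge}_\QQ^2\Gamma$).

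The key step is the standard filtration computation. Let $E_1(\QQ_p) \subset E(\QQ_p)$ be the kernel of reduction; then $E_1(\QQ_p) \cong \widehat E(p\ZZ_p)$ and the formal logarithm associated to $\omega$ gives an isomorphism $\log_\omega \colon E_1(\QQ_p) \xrightarrow{\sim} p\ZZ_p$ (here we use that $\omega$ is a N\'eron differential, so the formal group has coefficients in $\ZZ_p$ and the logarithm has the shape $z + O(z^2)$; this is where the normalization is pinned down). Passing to $p$-completions, $E(\QQ_p)^\wedge$ contains $E_1(\QQ_p)^\wedge = E_1(\QQ_p) \cong p\ZZ_p$ with finite quotient $E(\QQ_p)/E_1(\QQ_p) \cong \widetilde E(\FF_v)$ of order $\#E(\FF_v)$ (the torsion prime to $p$ in $E(\QQ_p)$ is killed by $p$-completion, but the part already accounted for is the reduction map). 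So $E(\QQ_p)^\wedge$ is a free $\ZZ_p$-module of rank one, and $\log_\omega$ identifies it with the $\ZZ_p$-lattice $\frac{1}{\#E(\FF_v)}\cdot p\ZZ_p = \frac{p}{\#E(\FF_v)}\ZZ_p$ inside $\QQ_p$. Dualizing, ${\det}_{\ZZ_p}^{-1}(E(\QQ_p)^\wedge) = \Hom_{\ZZ_p}(E(\QQ_p)^\wedge,\ZZ_p)$ maps onto the inverse lattice $\frac{\#E(\FF_v)}{p}\ZZ_p$, which is exactly the claim for one place. Taking the ordered tensor product over $v = \fp$ and $v = \overline\fp$ and recording that the resulting generator of ${\bigwedge}_\QQ^2\Gamma(E,\Omega_{E/K}^1)$ is $\omega\wedge\omega^K$ yields the lemma.

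I expect the only genuinely delicate point to be bookkeeping: making sure the normalization of $\log$ (which the paper has already fixed via the N\'eron differentials $\omega,\omega^K$, see the identification (\ref{identify})) matches the one used in the determinant computation, and that the index $[E(\QQ_p):E_1(\QQ_p)] = \#\widetilde E(\FF_v)$ is inserted with the correct sign in the exponent of $p$ so that one lands on $\frac{1}{p^2}\prod_{v\mid p}\#E(\FF_v)$ and not its reciprocal. The underlying inputs --- $E$ has good reduction at $p$ by Hypothesis \ref{heegner hyp}(ii), $\log_\omega$ is an isomorphism onto $p\ZZ_p$ on the formal group, and the Bloch--Kato identification $H^1_f(K_v,T) = E(K_v)^\wedge$ used implicitly --- are all standard and have already been invoked in Lemma \ref{calculation lemma}(ii), so the argument here is essentially a two-variable version of that computation.
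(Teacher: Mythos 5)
Your method is the right one and, in the split case, it reproduces the paper's computation (filtration $0\to E_1(K_v)\to E(K_v)\to E(\FF_v)\to 0$, the formal logarithm mapping $E_1$ isomorphically onto $p\cdot(\text{integers})$ for $p$ odd, and the index $\# E(\FF_v)$ accounting for the reduction). However, there is a gap in coverage: your very first reduction assumes that $p$ splits in $K$, and that assumption is not available here. Lemma \ref{euler p} sits in \S\ref{anti case}, where only Hypothesis \ref{heegner hyp} is in force, i.e.\ $p$ is merely unramified in $K$; the hypothesis ``$p$ splits in $K$'' belongs to \S\ref{second application} (the Agboola--Castella setting), not to this section. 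If $p$ is inert, then $K_p=K_v$ is the unramified quadratic extension of $\QQ_p$, the decomposition $E(K_p)^\wedge = E(\QQ_p)^\wedge\oplus E(\QQ_p)^\wedge$ is false, and the ``product over the two embeddings'' strategy does not apply (note also that ${\rm Eul}_p=\frac{1}{p^2}\#E(\FF_{p^2})$ there, a single factor). The repair is the uniform argument the paper uses: work place by place with $E_1(K_v)\simeq p\,\cO_{K_v}$ via $\log_\omega$, so that ${\det}_{\ZZ_p}^{-1}(E_1(K_p))$ contributes $\frac{1}{p^2}$ (since $[\cO_K\otimes\ZZ_p : p(\cO_K\otimes\ZZ_p)]=p^2$ in both the split and inert cases) and the quotient contributes $\prod_{v\mid p}\#E(\FF_v)$; this covers both cases at once.

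A smaller point: $E(K_v)^\wedge$ need not be $\ZZ_p$-free, since Hypothesis \ref{heegner hyp}(iii) only forces $E(K)[p]=0$ and does not exclude local $p$-torsion $E(K_v)[p^\infty]$. So the identification ${\det}_{\ZZ_p}^{-1}(E(K_p)^\wedge)=\Hom_{\ZZ_p}(E(K_p)^\wedge,\ZZ_p)$ and the claim that $\log$ sends $E(\QQ_p)^\wedge$ onto $\frac{p}{\#E(\FF_v)}\ZZ_p$ are not literally correct in general; one should instead compute ${\det}^{-1}$ through the exact sequence (or via property (v) of the determinant functor), where the torsion factors cancel and the final lattice $\frac{\#E(\FF_v)}{p}\ZZ_p$ per place is unchanged. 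This is bookkeeping rather than a real obstruction, but as written it is an unjustified step.
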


\begin{proof}
For $v\mid p$, we have a canonical exact sequence
$$0\to E_1(K_v) \to E(K_v) \to E(\FF_v)\to 0.$$
From this, we obtain a canonical isomorphism
$${\det}_{\ZZ_p}^{-1}\left(  E(K_p)^\wedge \right) \simeq  \left(\prod_{v\mid p}\# E(\FF_v)\right)\cdot {\det}_{\ZZ_p}^{-1}\left( E_1(K_p)\right)\subset  \QQ_p\otimes_{\ZZ_p} {\det}_{\ZZ_p}^{-1}\left(E(K_p)^\wedge \right).$$
Since $p$ is odd, the formal logarithm associated to $\omega$ induces an isomorphism $ E_1(K_p)\xrightarrow{\sim} p\ZZ_p\otimes_\ZZ \cO_{K}$. So we obtain an isomorphism
$${\det}_{\ZZ_p}^{-1}\left(E_1(K_p)\right) \simeq \ZZ_p\cdot \frac{1}{p^2} \omega\wedge \omega^K \subset \QQ_p\otimes_\QQ {\bigwedge}_\QQ^2 \Gamma(E,\Omega_{E/K}^1).$$
The claim follows from these isomorphisms. 
\end{proof}

We now prove Theorem \ref{alg from IMC}.

\begin{proof}[Proof of Theorem \ref{alg from IMC}]
We use the commutative diagram in Theorem \ref{thm descent}:
$$
\scriptsize
\xymatrix@C=12pt@R=30pt{
\displaystyle {\det}_\Lambda^{-1}( \rgamma(\cO_{K,S},\TT)) \ar[r]^-{\pi } \ar@{->>}[dd] &\displaystyle I^{\varrho}\cdot {\bigwedge}_{\Lambda_I}^2 H^1(\cO_{K,S},\TT)_I \ar[rd]^-{\cD} & \\
 & & \displaystyle  \QQ_p\otimes_{\ZZ_p}\left( {\bigwedge}_{\ZZ_p}^2H^1(\cO_{K,S},T)\otimes_{\ZZ_p} Q^\varrho \right) \\
\displaystyle {\det}_{\ZZ_p}^{-1}( \rgamma(\cO_{K,S},T)) \ar[r]_-{\pi_0    } &\displaystyle \QQ_p\otimes_{\ZZ_p}\left( {\bigwedge}_{\ZZ_p}^{e+2}H^1(\cO_{K,S},T)\otimes_{\ZZ_p} {\bigwedge}_{\ZZ_p}^e H^2(\cO_{K,S},T)^\ast \right) \ar[ru]_-{ R^{(k_0)}}.&%\displaystyle \QQ_p\otimes_{\ZZ_p} {\bigwedge}_\cO^r \ker \beta^{(\infty)} \otimes_\cO Q^\rho \ar[u]^\cup. 
}
$$
We note that the Heegner point main conjecture formulated as (\ref{classical IMC}) is equivalent to the following: there is a $\Lambda$-basis
$$\fz \in {\det}_\Lambda^{-1}(\rgamma(\cO_{K,S},\TT))$$
such that $\pi(\fz) = z_\infty^{\rm Hg}$ (see \cite[Proposition 3.10]{ks}). Note that the image of $\fz$ in ${\det}_{\ZZ_p}^{-1}(\rgamma(\cO_{K,S},T))$ is a $\ZZ_p$-basis. 
%Assuming this, we denote the image of $\fz$ in ${\det}_{\ZZ_p}^{-1}(\rgamma(\cO_{K,S},T))$ by 
%$$\fz_0 \in {\det}_{\ZZ_p}^{-1}(\rgamma(\cO_{K,S},T)),$$
%which is a $\ZZ_p$-basis. 
Let
$$\delta: \QQ_p\otimes_{\ZZ_p}\left( {\bigwedge}_{\ZZ_p}^{e+2}H^1(\cO_{K,S},T)\otimes_{\ZZ_p} {\bigwedge}_{\ZZ_p}^e H^2(\cO_{K,S},T)^\ast \right)\xrightarrow{\sim} \QQ_p\otimes_\ZZ \left( {\bigwedge}_\ZZ^{s} E(K) \otimes_\ZZ {\bigwedge}_\ZZ^{s} E(K)\right)$$
be the inverse of (\ref{important isom}). By the commutative diagram above and the definition of ${\rm Reg}^{\rm BD}$ (see Definition \ref{def anti boc}), it is sufficient to prove
\begin{equation}\label{compute h2}
\im (\delta \circ \pi_0) = {\rm Eul}_S\cdot \# \sha(E/K)[p^\infty]\cdot {\rm Tam}(E/K)\cdot \ZZ_p \otimes_\ZZ \left( {\bigwedge}_\ZZ^{s} E(K) \otimes_\ZZ {\bigwedge}_\ZZ^{s} E(K)\right).
\end{equation}

By Lemma \ref{poitoutate}, we have a canonical isomorphism
\begin{multline}\label{compute 0}
{\det}_{\ZZ_p}^{-1}(\rgamma(\cO_{K,S},T)) \\
\simeq {\det}_{\ZZ_p}(\ZZ_p\otimes_\ZZ E(K)) \otimes_{\ZZ_p} {\det}_{\ZZ_p}\left(  E(K_p)^{\wedge,\ast}\right) \otimes_{\ZZ_p} {\det}_{\ZZ_p}^{-1}(H^1_f(K,A)^\vee) \otimes_{\ZZ_p} {\det}_{\ZZ_p}^{-1}\left( \bigoplus_{v\in S} E(K_v)[p^\infty]^\vee\right).
\end{multline}
We shall calculate each term on the right hand side. 

First, since $E(K)[p]=0$, we have
\begin{equation}\label{obvious isom}
{\det}_{\ZZ_p}(\ZZ_p\otimes_\ZZ E(K)) = \ZZ_p\otimes_\ZZ {\bigwedge}_\ZZ^{s} E(K). 
\end{equation}

Next, by Lemma \ref{euler p} and the identification (\ref{identify}), we have an isomorphism
\begin{eqnarray}\label{compute 1}
{\det}_{\ZZ_p}\left(E(K_p)^{\wedge,\ast}\right) \otimes_{\ZZ_p} {\det}_{\ZZ_p}^{-1}\left( E(K_p)[p^\infty]^\vee\right) &=& {\det}_{\ZZ_p}(\rhom_{\ZZ_p}(E(K_p)^\wedge , \ZZ_p)) \\ 
&= & {\det}_{\ZZ_p}^{-1}(E(K_p)^\wedge)  \nonumber \\ 
&\simeq& {\rm Eul}_p\cdot \ZZ_p.\nonumber
%{\det}_{\ZZ_p}^{-1}\left( \bigoplus_{v\mid p} E(K_v)^\wedge \right) \\
%\simeq \ZZ_p\cdot {\rm Eul}_p\cdot  \omega \wedge\omega^K \subset \QQ_p\otimes_\QQ {\bigwedge}_\QQ^2 \Gamma(E,\Omega_{E/K}^1).
\end{eqnarray}

Then, we consider the dual Selmer group $H^1_f(K,A)^\vee$. Since we have a canonical exact sequence
$$0\to \sha(E/K)[p^\infty]^\vee \to H^1_f(K,A)^\vee \to \ZZ_p\otimes_\ZZ E(K)^\ast \to 0,$$
we obtain an isomorphism
\begin{equation}\label{compute 2}
{\det}_{\ZZ_p}^{-1}(H^1_f(K,A)^\vee) \simeq \# \sha(E/K)[p^\infty]\cdot \ZZ_p \otimes_\ZZ {\bigwedge}_\ZZ^{s} E(K) \subset \QQ_p\otimes_\ZZ {\bigwedge}_\ZZ^{s}E(K). 
\end{equation}

Finally, we calculate ${\det}_{\ZZ_p}^{-1}(E(K_v)[p^\infty]^\vee )$ for $v \nmid p$. Let ${\rm Eul}_N$ be the product of Euler factors at $v\mid N$, which satisfies ${\rm Eul}_S = {\rm Eul}_p \cdot {\rm Eul}_N$. We have $E_0(K_v)^\wedge = E^{\rm ns}(\FF_v)\otimes_\ZZ \ZZ_p$ for $v\mid N$ and $\left(\prod_{v\mid N}\# E^{\rm ns}(\FF_v)\right)\cdot  \ZZ_p = {\rm Eul}_N\cdot \ZZ_p$. Thus we have
%$$\left( \prod_{v\mid N} \# E(K_v)[p^\infty] \right) \ZZ_p= {\rm Eul}_N \cdot {\rm Tam}(E/K)\cdot \ZZ_p.$$
%So we have
\begin{equation}\label{compute 3}
{\det}_{\ZZ_p}^{-1}\left( \bigoplus_{v\mid N} E(K_v)[p^\infty]^\vee\right) \simeq \left( \prod_{v\mid N} \# E(K_v)[p^\infty] \right) \ZZ_p= {\rm Eul}_N \cdot {\rm Tam}(E/K)\cdot \ZZ_p \subset \QQ_p.
\end{equation}

Combining (\ref{compute 0}) with (\ref{obvious isom}), (\ref{compute 1}), (\ref{compute 2}) and (\ref{compute 3}), we obtain an isomorphism
$${\det}_{\ZZ_p}^{-1}(\rgamma(\cO_{K,S},T)) \simeq {\rm Eul}_S\cdot \# \sha(E/K)[p^\infty]\cdot {\rm Tam}(E/K)\cdot \ZZ_p \otimes_\ZZ \left( {\bigwedge}_\ZZ^{s} E(K) \otimes_\ZZ {\bigwedge}_\ZZ^{s} E(K)\right).$$
By definition, this isomorphism coincides with the map induced by $\delta \circ \pi_0$. This shows the desired equality (\ref{compute h2}) and we have completed the proof of Theorem \ref{alg from IMC}. 
\end{proof}

%\begin{remark}\label{last remark}
%By the same argument as in Remark \ref{equivalent remark}, we can prove the following: {\it if we assume $z_\infty^{\rm Hg}\in \im \pi$, then the Heegner point main conjecture holds if and only if Conjecture \ref{alg conj} holds}. 
%Note that, by the construction of $z_\infty^{\rm Hg}$ in \cite[Definition 5.14]{ks} (where $z_\infty\otimes z_\infty$ is denoted by $y_\infty \otimes y_\infty$), the condition $z_\infty^{\rm Hg} \in \im \pi$ is equivalent to $z_\infty \otimes z_\infty \in \im \widetilde \pi$. (Here we denote $\pi$ in Remark \ref{equivalent remark} by $\widetilde \pi$ in order to avoid notational overlap.) Hence, by combining the result mentioned in Remark \ref{equivalent remark}, we obtain the following conclusion: {\it if we assume $z_\infty\otimes z_\infty \in \im \widetilde \pi$, Conjecture \ref{bd} holds up to $\ZZ_p^\times$ if and only if Conjecture \ref{alg conj} holds.} 
%\end{remark}

\begin{acknowledgments}
The author would like to thank Francesc Castella for valuable comments on an earlier version of this paper and for sending him the preprint \cite{CHKLL}. He would also like to thank David Burns and Masato Kurihara for their constant encouragement and many helpful discussions on related topics. 
\end{acknowledgments}

\end{document}